\newtheorem{theorem}{Theorem}[section]
\newtheorem{lemma}[theorem]{Lemma}
\newtheorem{proposition}[theorem]{Proposition}
\newtheorem{corollary}[theorem]{Corollary}
\newtheorem{conjecture}[theorem]{Conjecture}
\newtheorem{question}[theorem]{Question}
\newtheorem{theorem-star}[theorem]{Theorem*}
\newtheorem{lemma-star}[theorem]{Lemma*}
\newtheorem{proposition-star}[theorem]{Proposition*}
\newtheorem{corollary-star}[theorem]{Corollary*}
\theoremstyle{definition}
\newtheorem{definition}[theorem]{Definition}
\newtheorem{notation}[theorem]{Notation}
\newtheorem{example}[theorem]{Example}
\newtheorem{construction}[theorem]{Construction}
\newtheorem{assumption}[theorem]{Assumption}
\newtheorem*{setup1}{Setup I}
\newtheorem*{setup2}{Setup II}
\newtheorem*{setup3}{Setup III}
\newtheorem*{setup4}{Setup IV}
\newtheorem*{setup1star}{Setup I*}
\newtheorem*{setup2star}{Setup II*}
\newtheorem*{setup3star}{Setup III*}
\newtheorem*{setup4star}{Setup IV*}
\newtheorem{definition-star}[theorem]{Definition*}
\newtheorem{definition-assumption-star}[theorem]{Definition/Assumption*}
\newtheorem{theorem-assumption-star}[theorem]{Theorem/Assumption*}
\theoremstyle{remark}
\newtheorem{remark}[theorem]{Remark}
\numberwithin{equation}{section}
\newcommand{\R}{\mathbb{R}}
\newcommand{\Sch}{\mathcal{S}}
\newcommand{\N}{\mathbb{N}}
\newcommand{\Z}{\mathbb{Z}}
\newcommand{\SO}{\mathrm{SO}}
\newcommand{\alm}{\mathfrak{alm}}
\renewcommand{\L}{\mathcal{L}}
\newcommand{\M}{\mathcal{M}}
\newcommand{\lt}{\left}
\newcommand{\rt}{\right}
\newcommand{\tms}{\times}
\newcommand{\rmk}{\begin{remark}}
\newcommand{\ermk}{\end{remark}}
\newcommand{\cor}{\begin{corollary}}
\newcommand{\ecor}{\end{corollary}}
\newcommand{\eq}{\begin{equation}}
\newcommand{\eeq}{\end{equation}}
\newcommand{\eqs}{\begin{equation*}}
\newcommand{\eeqs}{\end{equation*}}
\newcommand{\prop}{\begin{proposition}}
\newcommand{\eprop}{\end{proposition}}
\newcommand{\thm}{\begin{theorem}}
\newcommand{\ethm}{\end{theorem}}
\newcommand{\conj}{\begin{conjecture}}
\newcommand{\econj}{\end{conjecture}}
\newcommand{\lem}{\begin{lemma}}
\newcommand{\elem}{\end{lemma}}
\newcommand{\defi}{\begin{definition}}
\newcommand{\edefi}{\end{definition}}
\newcommand{\ex}{\begin{example}}
\newcommand{\eex}{\end{example}}
\newcommand{\alis}{\begin{align*}}
\newcommand{\ealis}{\end{align*}}
\newcommand{\pf}{\begin{proof}}
\newcommand{\epf}{\end{proof}}
\newcommand{\ali}{\begin{align}}
\newcommand{\eali}{\end{align}}
\newcommand{\qus}{\begin{question}}
\newcommand{\equs}{\end{question}}
\newcommand{\mc}{\mathcal}
\renewcommand{\bf}{\textbf}
\newcommand{\C}{\mathbb{C}}
\newcommand{\sub}{\subset}
\newcommand{\ov}{\overline}
\newcommand{\bb}{\mathbb}
\newcommand{\op}{\operatorname}
\renewcommand{\a}{\alpha}
\renewcommand{\b}{\beta}
\renewcommand{\d}{\partial}
\newcommand{\e}{\epsilon}
\newcommand{\g}{\gamma}
\newcommand{\s}{\sigma}
\renewcommand{\t}{\theta}
\renewcommand{\l}{\lambda}
\renewcommand{\o}{\omega}
\newcommand{\z}{\zeta}
\newcommand{\fk}{\mathfrak}
\newcommand{\G}{\Gamma}
\renewcommand{\L}{\Lambda}
\renewcommand{\O}{\Omega}
\renewcommand{\S}{\Sigma}
\newcommand{\T}{\Theta}
\newcommand{\cP}{\mathcal{P}}
\newcommand{\Q}{\mathbb{Q}}
\newcommand{\CZ}{\operatorname{CZ}}
\newcommand{\Diff}{\operatorname{Diff}}
\newcommand{\Sign}{\operatorname{Sign}}
\newcommand{\Sym}{\operatorname{Sym}}
\newcommand{\good}{\mathrm{good}}
\newcommand{\oo}{\mathfrak o}
\newcommand{\Aut}{\operatorname{Aut}}
\newcommand{\codim}{\operatorname{codim}}
\newcommand{\Mbar}{\overline{\mathcal M}}
\newcommand{\vdim}{\operatorname{vdim}}
\newcommand{\vir}{\mathrm{vir}}
\newcommand{\I}{\mathrm I}
\newcommand{\II}{\mathrm{II}}
\newcommand{\III}{\mathrm{III}}
\newcommand{\IV}{\mathrm{IV}}
\newcommand{\bu}{\bullet}
\renewcommand{\ov}{\overline}
\begin{document}
\title{Homological invariants of codimension 2 contact submanifolds}
\author[Laurent C\^ot\'e]{Laurent C\^ot\'e}
\address{Department of Mathematics, Harvard University, 1 Oxford Street, Cambridge, MA 02138}
\email{lcote@math.harvard.edu}

\author[Fran\c{c}ois-Simon Fauteux-Chapleau]{Fran\c{c}ois-Simon Fauteux-Chapleau}
\address{Department of Mathematics, Stanford University, 450 Serra Mall, Stanford, CA 94305}
\email{fsimon@stanford.edu}

\date{\today}  
\begin{abstract} 
Codimension $2$ contact submanifolds are the natural generalization of transverse knots to contact manifolds of arbitrary dimension. In this paper, we construct new invariants of codimension $2$ contact submanifolds. Our main invariant can be viewed as a deformation of the contact homology algebra of the ambient manifold. We describe various applications of these invariants to contact topology. In particular, we exhibit examples of codimension $2$ contact embeddings into overtwisted and tight contact manifolds which are formally isotopic but fail to be isotopic through contact embeddings. We also give new obstructions to certain relative symplectic and Lagrangian cobordisms. 
\end{abstract}
\maketitle

\setcounter{tocdepth}{1}

\tableofcontents

\section{Introduction}

\subsection{Overview}
The purpose of this paper is to introduce new invariants of codimension $2$ contact submanifolds. Given a closed, co-oriented contact manifold $(Y, \xi)$ and a codimension $2$ contact submanifold $V$ with trivial normal bundle, our main construction produces a unital, $\Z/2$-graded $\Q[U]$-algebra 
\eq \label{equation:intro-main-invariant} CH_\bu(Y, \xi, V; \fk{r}).\eeq 
Here $\fk{r}=(\a_V, \tau, r)$ is a triple consisting of (i) a non-degenerate contact form $\a_V$ on $V$ inducing $\xi_V:= \xi|_V \cap TV$, (ii) a trivialization $\tau$ of $N_{Y/V}$, and (iii) a real number $r>0$. This triple is required to satisfy certain conditions (stated in \Cref{definition:local-datum}), and should be viewed as encoding a choice of Reeb dynamics in an infinitesimally small neighborhood of $V$. The (non-empty) set of all such triples is denoted by $\fk{R}(Y, \xi, V)$. 

The invariant \eqref{equation:intro-main-invariant} can be viewed as a deformation of the contact homology algebra $CH_\bu(Y, \xi)$, as explained in \Cref{remark:deformation} below. In particular, $U$ is a formal variable and there is a natural map 
\eq \op{ev}_{U=1}: CH_\bu(Y, \xi, V; \fk{r}) \to CH_\bu(Y, \xi)\eeq obtained by setting $U=1$.

The algebra $CH_\bu(Y, \xi, V; \fk{r})$ is generated by (good) Reeb orbits for an auxiliary non-degenerate contact form $\l$ on $(Y, \xi)$. See \cite[Def.\ 2.59]{pardon} for the notion of a good Reeb orbit; all non-degenerate Reeb orbits are good except for certain even degree covers of simple Reeb orbits with odd Conley--Zehnder index. The form $\l$ is required to be \emph{adapted} to $\fk{r} \in \fk{R}(Y, \xi, V)$, which means in particular that $V$ is preserved by the Reeb flow of $\l$; see \Cref{definition:adapted-contact-form}.  

The differential is defined as in ordinary contact homology by counting pseudo-holomorphic curves in the symplectization $\hat Y$, where the additional $U$ variable keeps track of the intersection number of curves with the symplectization $\hat{V} \sub \hat{Y}$. More precisely, we fix an almost-complex structure $J: \xi \to \xi$ which is compatible with the symplectic form $d \l$ and preserves $\xi|_V \cap TV \sub TY|_V$. We then consider $\hat{J}$-holomorphic curves in $\hat{Y}$, where $\hat{J}= -\d_t \otimes \l + R_{\l} \otimes dt+ J$. The differential is defined on generators by (roughly)\footnote{Strictly speaking, \eqref{equation:intro-differential} should be refined as follows: (i) one should indicate that the virtual moduli counts depend on a choice of ``virtual perturbation data"; (ii) one should indicate that the counts depend on the order of a certain group of automorphisms of the triple $(\g, \g_1 \sqcup \dots \sqcup \g_l, \beta)$ which acts on $\g$ by the identity; (iii) one must specify signs (or, more invariantly and following \cite{pardon}, orientations lines). See \eqref{equation:twist-differential-I} for the precise formula.} the following formula:
\eq \label{equation:intro-differential} d(\g)= \sum_{\beta \in \pi_2(\hat{Y}, \g \sqcup \g_1 \sqcup \dots \sqcup \g_l)} \#\ov{\mc{M}}(\beta) \, U^{\hat{V} * \beta + \G^-(\beta, V)} \, \g_1\dots \g_l, \eeq
where $\g$ is a positive orbit (i.e.\ associated to the convex end of the symplectization) and $\g_1, \dots, \g_l$ are negative orbits (i.e.\ associated to the concave end). We define $\G^-(\beta, V)= \#\{\g_i \sub V\}$ to be the number of negative orbits of $\beta$ which are contained in $V$.

We denote by $\ov{\mc{M}}(\beta)$ the compactification of the moduli space $\mc{M}(\beta)$ of $\hat{J}$-holomorphic curves in the class $\beta$. Such moduli spaces are in general non-transversally cut out, so the moduli counts $\#\ov{\mc{M}}(\beta)$ appearing in \eqref{equation:intro-differential} are defined as in Pardon's construction of contact homology, via his theory of virtual fundamental cycles \cites{pardon-vfc, pardon}. In particular, $\#\ov{\mc{M}}(\beta)$ is non-zero only for moduli spaces of virtual dimension zero. Finally, the pairing $(- * -)$ is a count of intersections between $\hat{V}$ and $\beta$ which was introduced by Siefring \cite{siefring, moreno-siefring}. 

We also construct a closely related invariant 
\eq \label{equation:intro-reduced-invariant} \widetilde{CH}_\bu(Y, \xi, V; \fk{r}) \eeq
which we call \emph{reduced}. This is a unital, $\Z/2$-graded $\Q$-algebra which is generated by Reeb orbits in the complement of $V$. The differential counts pseudo-holomorphic curves which do not intersect $\hat{V}$. For appropriately chosen pairs $\fk{r}, \fk{r}' \in  \fk{R}(Y, \xi, V)$, we have a morphism of $\Q$-algebras
\eq CH_\bu(Y, \xi, V; \fk{r}') \to \widetilde{CH}_\bu(Y, \xi, V; \fk{r}).\eeq
The invariant $\widetilde{CH}_\bu(-;-)$ is called \emph{reduced} because it carries less information (in particular, it does not involve taking the kernel of an augmentation as in e.g.\ reduced singular homology). However, it is easier to compute.

\rmk \label{remark:deformation} The $\Q[U]$-algebra $CH_\bu(Y, \xi, V; \fk{r})$ can be viewed as a deformation of the contact homology $\Q$-algebra $CH_\bu(Y, \xi)$ in the following way. First, recall that for a ring $R$ and a differential graded $R$-algebra $(A, d)$, a \emph{(formal) deformation} of $(A,d)$ is the data of a differential $d_t:=d+td_1+t^2d_2+ \dots$ on the $R[[t]]$-algebra $A[[t]]$ satisfying the graded Leibnitz rule, where each $d_i$ is an endomorphism of $A$ (see \cite{gerstenhaber-wilkerson}). Now, let us set $U=e^t$ in \eqref{equation:intro-differential} and expand in $t$. We then get
\eq d(\g)= \sum_\b \sum_{k=0}^{\infty} t^k \cdot (\#\ov{\mc{M}}(\beta)) \cdot \frac{(\hat{V}* \beta + \G^-(\beta, V))^k}{k!} \g_1\dots \g_l. \eeq
Thus $CH_\bu(Y, \xi, V; \fk{r})$ is indeed a deformation of ordinary contact homology, which can be recovered by sending $t \to 0$. In the case where $\G^-(\beta, V) = 0$, the coefficient $\#\ov{\mc{M}}(\beta) \cdot \frac{(\hat{V}* \beta + \G^-(\beta, V))^k}{k!} = \#\ov{\mc{M}}(\beta) \cdot \frac{(\hat{V}* \beta)^k}{k!}$ could be interpreted as a count of rigid pseudo-holomorphic curves which send $k$ marked points in the source to the pseudo-holomorphic divisor $\hat{V}$. 
\ermk

\subsection{Energy and positivity of intersection}

In order to ensure that \eqref{equation:intro-differential} defines a differential over $\Q[U]$, we need to ensure that $\hat{V}  * \beta + \G^-(\beta, V) \geq 0$ whenever $\#\ov{\mc{M}}(\beta) \neq 0$.  If $\mc{M}(\beta)$ is nonempty\footnote{Since \cite{pardon} uses virtual techniques to define contact homology without making any transversality assumptions, it is possible for the compactification $\ov{\mc{M}}(\b)$ to be nonempty even if $\mc{M}(\b)$ is empty. Positivity of intersection still holds when this happens, but the proof requires a bit more work. Details can be found in sections \ref{subsection:intersection-buildings} and \ref{subsection:twisting-maps-contact-subman}.} and at least one of the asymptotic orbits of $\beta$ is disjoint from $V$, then this is a consequence of the familiar phenomenon of positivity of intersection. Indeed, in this case, $\beta$ admits a $\hat{J}$-holomorphic representative $u$ which is not contained in $\hat{V}$. Positivity of intersection then implies that $\hat{V}*\beta = \hat{V}*u \geq 0$.

The situation is more complicated when all of the asymptotic orbits of $\beta$ are contained in $V$. Indeed, in this case, the $\hat{J}$-holomorphic representatives of $\beta$ may be contained in $\hat{V}$ and positivity of intersection fails in general. However, one can show that there is a universal lower bound on the intersection number
\eq \label{equation:intro-lower-bound} \hat{V}  * \beta \geq - \G^-(\beta, V). \eeq
This explains the appearance of the correction term $\G^-(\beta, V)$ in \eqref{equation:intro-differential}.

In order to construct $CH_\bu(Y, \xi, V; \fk{r})$, it is not enough to define a differential: one also needs to define continuation maps, composition homotopies, etc. These maps are defined by counting curves in more complicated setups. For example, the continuation map is obtained by counting curves in a suitably marked exact relative symplectic cobordism $(\hat{X},\hat{\l},H)$.\footnote{An exact relative symplectic cobordism is the data of an exact symplectic cobordism $(\hat X, \hat \l)$ which looks like $(\hat Y^\pm, \hat{\l}^\pm)$ near the ends, together with a codimension $2$ symplectic submanifold $H \sub \hat X$ which looks like $\hat V^\pm$ near the ends; see \Cref{definition:relative-sympcobordism} for the details.} More precisely, one obtains an algebra map similar to \eqref{equation:intro-differential} by counting $\hat{J}$-holomorphic curves in $(\hat{X}, \hat \l)$ weighted by their intersection number with $H$, for a compatible almost-complex structure $\hat{J}$ which agrees with $\hat{J}^\pm$ near the ends.

Unfortunately, for an arbitrary relative symplectic cobordism, a lower bound of the type \eqref{equation:intro-lower-bound} fails to hold. A key step in constructing the invariants \eqref{equation:intro-main-invariant} is to identify a sufficiently large class of relative symplectic cobordisms for which such a lower bound does hold. This leads us to introduce notions of energy for exact symplectic cobordisms and almost-complex structures on exact relative symplectic cobordisms. These energy notions are developed in \Cref{section:energy-twisting-maps} and are of central importance in this paper.

We prove that a lower bound as in \eqref{equation:intro-lower-bound} holds under a certain condition which relates the behavior of $\l^\pm$ near $V^\pm$ to the energy of $\hat{J}$. We also prove analogous statements for other related setups. This allows us to prove that $CH_\bu(Y, \xi, V; \fk{r})$ is well-defined, i.e.\ it does not depend on the auxiliary contact form and almost-complex structure. We also prove that an exact relative symplectic cobordism $(\hat X, \hat \l, H)$ induces a map 
\eq \label{equation:intro-cobordism-map} CH_\bu(Y^+, \xi^+, V^+; \fk{r}^+) \to CH_\bu(Y^-, \xi^-, V^-; \fk{r}^-) \eeq
provided that a certain inequality is satisfied, where the inequality involves $\fk{r}^\pm$ and the energy of the (sub)cobordism $H \sub (\hat X, \hat \l)$.

Energy considerations play a similarly central role in our construction of the reduced invariant $\widetilde{CH}_\bu(-;-)$. Although the continuation map for the reduced invariant does not count curves contained in $H$, one needs to ensure that sequences of curves disjoint from $H$ do not degenerate into $H$. This requires hypotheses on the energy of the relevant cobordism. In general, the arguments involved in constructing $CH_\bu(-;-)$ and $\widetilde{CH}_\bu(-;-)$ turn out to be very similar. 

Energy is not in general well-behaved under gluing symplectic cobordisms, unless one of them happens to be a symplectization. As a result, cobordism maps cannot be composed arbitrarily. This lack of functoriality of the invariants \eqref{equation:intro-main-invariant} and \eqref{equation:intro-reduced-invariant} can be remedied by considering variants of these invariants which are obtained by taking certain (co)limits over $\fk{r} \in \fk{R}(Y, \xi, V)$; see \Cref{subsection:asymptotic-invariants}. These variants are fully functorial but also seem harder to compute. 

\rmk
The apparent failure of positivity of intersection in the absence of energy bounds is not a deficiency of our method: if one could define maps \eqref{equation:intro-cobordism-map} without additional hypotheses involving energy and the $\fk{r}^\pm$, then this would imply that \eqref{equation:intro-main-invariant} and \eqref{equation:intro-reduced-invariant} are independent of $\fk{r}$. To see that this cannot hold in general, consider an ``irrational ellipsoid" $E(r_1, r_2)= \{z \in \C^2 \mid \pi |z_1|^2/r_1+ \pi |z_2|^2/r_2 \leq 1\} \sub \R^4$, where $r_1/r_2$ is irrational. Following  \cite[Sec.\ 4.2]{hutchings}, there are exactly two families of Reeb orbits $\{\g_1^k\}_{k \in \mathbb{N}_+}$ and $\{\g_2^k\}_{k \in \mathbb{N}_+}$, where $\g_i^1 \sub \{z_i=0\} \cap \partial E(r_1, r_2)$ and $\g_i^k$ denotes the $k$-fold cover of $\g_i^1$. These orbits generate the (ordinary) contact homology of the $3$-sphere, but the Conley--Zehnder indices of the $\g_i^k$ are highly sensitive to $r_1, r_2$. If we could define the contact homology of the complement of (say) $\g_1^1$, then it would be generated by the $\g_2^k$. One can verify that this is not an invariant, since changing the $r_i$ does not change the contactomorphism type of $S^3-\g_1^1$, but drastically changes the indices of the $\g_2^k$.
\ermk

\subsection{Legendrian invariants and the surgery formula} \label{subsection:intro-surgery}

Contact homology is one of many invariants which can be constructed using the framework of Symplectic Field Theory (SFT). SFT  was first introduced by Eliashberg--Givental--Hofer \cite{sft-egh} and provides (among other things) a conjectural mechanism for constructing invariants in symplectic and contact topology by counting punctured pseudo-holomorphic curves in symplectic manifolds with cylindrical ends. 

In some of the later sections of this paper, we discuss how the invariants \eqref{equation:intro-main-invariant} and \eqref{equation:intro-reduced-invariant} are related to other SFT-type invariants. For computational purposes, it is particularly useful to explore the behavior of the invariants \eqref{equation:intro-main-invariant} and \eqref{equation:intro-reduced-invariant} under Weinstein handle attachment, following the work of Bourgeois--Ekholm--Eliashberg \cite{bee}. 

To this end, we introduce analogs of \eqref{equation:intro-main-invariant} and \eqref{equation:intro-reduced-invariant} for Legendrian submanifolds. With $(Y, \xi, V)$ as above, suppose that $\L \sub (Y-V, \xi)$ is a Legendrian submanifold. We then define (under mild topological assumptions) invariants 
\eq  \label{equation:intro-legendrian} \mc{L}(Y, \xi, V, \L; \fk{r})\text{ and }\widetilde{\mc{L}}(Y, \xi, V, \L; \fk{r}). \eeq The first invariant can be thought of as a deformation of the Chekanov-Eliashberg dg algebra of $\L \sub (Y, \xi)$, while the second invariant is a reduced version. 

We describe a conjectural surgery exact sequence which relates (linearized versions of) the invariants \eqref{equation:intro-main-invariant}, \eqref{equation:intro-reduced-invariant} and \eqref{equation:intro-legendrian} under Weinstein handle attachments. This surgery exact sequence is an analog of the conjectural surgery exact sequence for linearized contact homology of Bourgeois--Eklholm--Eliashberg \cite[Thm.\ 5.2]{bee}. 

\rmk \label{remark:star-convention}
The main invariants of this paper \eqref{equation:intro-main-invariant} and \eqref{equation:intro-reduced-invariant} are constructed fully rigorously using Pardon's virtual perturbation framework \cite{pardon}. However, our discussion of the surgery formula (and of the Legendrian invariants therein) requires certain transversality assumptions which are essentially the same as in \cite{bee}. We fully expect that if \cite{bee} can be made rigorous using Pardon's techniques \cite{pardon}, then extending this to our context should pose no substantial additional difficulties. 

For the reader's convenience, \emph{all statements in this paper which depend on unproved assumptions are labeled by a star.} The proofs of starred statements depend only on a limited set of assumptions which are clearly identified in the text.

One could also attempt to define the invariants in this paper using other perturbation frameworks such as polyfolds \cite{h-w-z1, h-w-z2, h-w-z3} or the techniques of Ekholm \cite{ekholm2}, but do not pursue this here.
\ermk

\subsection{Applications to contact and Legendrian embeddings}
Transverse knots are important objects of study in three dimensional contact topology. The notion of a codimension $2$ contact embedding generalizes transverse knots to contact manifolds of arbitrary dimension. However, until recently, it was not understood whether the high-dimensional theory of codimension $2$ contact embeddings is interesting from the perspective of contact topology, or whether it reduces entirely to differential topology. 

\defi \label{definition:contact embedding}
Given a pair of contact manifolds $(V^{2m-1}, \zeta)$ and $(Y^{2n-1}, \xi)$, a \emph{contact embedding} is a smooth embedding 
\eq i: (V, \zeta) \to (Y, \xi) \eeq
 such that $i^*(\xi_{i(V)} \cap di(TV))= \zeta$. Such a map is also referred to as an isocontact embedding in the literature (see e.g.\ \cite{casals-etnyre, eliashberg-mishachev, pancholi-pandit}), but we will not use this terminology. A contact submanifold $V \sub (Y, \xi)$ is a submanifold with the property that $\xi|_V \cap TV$ is a contact structure. 
\edefi
Observe that if $2n-1=3$ and $2m-1=1$ in the above definition, then we recover the familiar notion of a (parametrized) transverse knot. The following basic examples of codimension $2$ contact embeddings will play an important role in this paper. 

\ex[cf.\ \Cref{definition:girouxform} and \cite{girouxicm}]
Let $\pi: Y-B \to S^1$ be an open book decomposition which supports the contact structure $\xi$ on $Y$. Then the binding $B \sub (Y, \xi)$ is a codimension $2$ contact submanifold.
\eex

\ex[see \Cref{definition:contact-pushoff} and Def.\ 3.1 in \cite{casals-etnyre}]
Let $(Y, \xi)$ be a contact manifold and let $\Lambda \hookrightarrow Y$ be a Legendrian embedding. Then the Weinstein neighborhood theorem furnishes an embedding 
\eq \tau(\L): (\partial(D^*\L), \xi_{\op{std}}) \hookrightarrow (Y, \xi) \eeq 
which is canonical up to isotopy through codimension $2$ contact embeddings. We refer to $\tau(\L)$ as the \emph{contact pushoff} of $\L \hookrightarrow Y$. By abuse of notation, we will routinely identify $\tau(\L)$ with its image. 
\eex

As is customary in contact and symplectic topology, there is a notion of a \emph{formal} contact embedding. This notion encodes certain necessary bundle-theoretic conditions which must be satisfied by any (genuine) contact embedding. It is then natural to seek to understand to what extent the space of genuine contact embeddings of $(V, \zeta)$ into $(Y, \xi)$ differs from the space of formal contact embeddings.

In case $V$ is a closed manifold of codimension at least $4$ with respect to $Y$, or open and of codimension at least $2$, then an h-principle due to Gromov (see \cite[Thm.\ 12.3.1 and Rmk.\ 12.3]{eliashberg-mishachev}) implies that the space of contact embeddings is essentially equivalent to the space of formal contact embeddings. Thus, in these settings, the theory of contact embeddings reduces to differential topology. 

In contrast, a breakthrough result due to Casals and Etnyre \cite{casals-etnyre} shows that this h-principle fails in general for codimension $2$ contact embeddings of closed manifolds. More precisely, for $n \geq 3$, Casals and Etnyre \cite[Thm.\ 1]{casals-etnyre} exhibited a pair of contact embeddings of $(D^*S^{n-1}, \xi)= \d_{\infty} (T^*S^{n-1}, \l_{\op{can}})$ into the standard contact sphere $(S^{2n-1}, \xi_{\op{std}})$ which are formally isotopic but are not isotopic through contact embeddings (here and throughout the paper, $\d_\infty(-)$ denotes the ideal contact boundary). Building on these methods, Zhou \cite{zhou} recently proved that there are in fact infinitely many formally isotopic contact embeddings of $\d_{\infty} (T^*S^{n-1}, \l_{\op{can}})$ into the standard contact sphere which are not isotopic through contact embeddings, provided $n \geq 4$. 

There has also been recent work to establish existence results for codimension $2$ contact embeddings under certain conditions \cite{pancholi-pandit, lazarev}. This culminates in a full existence h-principle for codimension $2$ contact embeddings due to Casals--Pancholi--Presas \cite{casals-pancholi-presas}, which states that any formal codimension $2$ contact embedding is formally isotopic to a genuine contact embedding. 

The invariants constructed in this paper can be used to distinguish pairs of formally isotopic contact embeddings which are not isotopic through contact embeddings. We illustrate two types of applications, applying respectively to contact embeddings into overtwisted contact manifolds and into the standard contact sphere. 

Let us begin with the overtwisted case. In \Cref{construction:ot-infinite-topology}, we describe a procedure for constructing pairs of formally isotopic contact embeddings into certain overtwisted contact manifolds which are not isotopic through contact embeddings. Here is a special case of this construction: let $(Y, \xi)$ be an overtwisted contact manifold and fix an open book decomposition for $(Y, B, \pi)$ which supports $\xi$ (see \Cref{subsection:open-book-decomp}). Let $i: B \to Y$ be the tautological inclusion of the binding. Using the relative $h$-principle for contact structures of Borman--Eliashberg--Murphy \cite[Thm.\ 1.2]{bem}, it can be shown that there exists a codimension $2$ contact embedding $j:B \to Y$ which is formally isotopic to $i$, and such that $(Y-j(B), \xi)$ is overwisted. (\Cref{construction:ot-infinite-topology} is more general, but this is the most important example.)

\thm[see \Cref{theorem:distinguish-infinite-ot}] \label{theorem:intro-ot}
Let $i, j$ and $(Y, \xi)$ be constructed according to \Cref{construction:ot-infinite-topology}, where $(Y, \xi)$ is an overtwisted contact manifold and $i$ and $j$ are (formally isotopic) contact embeddings. Then $i$ and $j$ are not isotopic through contact embeddings. In fact, $i$ is not isotopic to any reparametrization of $j$ in the source, meaning that $i(V)$ and $j(V)$ are not isotopic as codimension $2$ contact submanifolds of $(Y, \xi)$.
\ethm

\Cref{theorem:intro-ot} can be proved using either of the invariants \eqref{equation:intro-main-invariant} or \eqref{equation:intro-reduced-invariant}. To the best of our knowledge, it cannot be proved in general using invariants already in the literature. However, in the special case where $i(B)$ is the binding of an open book decomposition (i.e.\ the example sketched above), then the conclusion of \Cref{theorem:intro-ot} follows from the fact that the complement of the binding of an open book decomposition is tight. This later fact is due to Etnyre and Vela-Vick \cite[Thm.\ 1.2]{etnyre-velavick} in dimension $3$; in higher dimensions, it follows from work of Klukas \cite[Cor.\ 3]{klukas}, who proved (following an outline of Wendl \cite[Rmk.\ 4.1]{wendllocalfilling}) the stronger statement that any local filling obstruction (such as an overtwisted disk) in a closed contact manifold must intersect the binding of any supporting open book. 

In some cases (see \Cref{corollary:leg-distinguish-infinite-ot}), the embeddings $i$ and $j$ in fact coincide with the contact pushoffs of Legendrian embeddings. It is not hard to show that an isotopy of Legendrian embeddings induces an isotopy of their contact pushoffs. Thus the invariants \eqref{equation:intro-main-invariant} and \eqref{equation:intro-reduced-invariant} also distinguish certain Legendrian embeddings in overtwisted contact manifolds. To our knowledge, these embeddings cannot in general be distinguished using invariants already in the literature (see \Cref{remark:intro-lag-cobordisms}).

Our second application concerns codimension $2$ contact embeddings into the standard contact spheres $(S^{4n-1}, \xi_{\op{std}})$. More precisely, we use the reduced invariant \eqref{equation:intro-reduced-invariant} to distinguish formally isotopic contact embeddings of $(S^*S^{2n-1}, \xi)= \d_{\infty}(T^*S^{2n-1}, \l_{\op{can}})$ into $(S^{4n-1}, \xi_{\op{std}})$, thus reproving the main result of Casals and Etnyre \cite[Thm.\ 1]{casals-etnyre} in dimensions $4n-1$ for $n>1$.

\begin{theorem-star}[see \Cref{theorem:tight-examples}] \label{theorem:intro-tight}
Let $(V, \xi)$ be the ideal boundary of $(T^*S^{2n-1}, \l_{\op{can}})$. Then for $n>1$, there exists a pair of formally isotopic contact embeddings 
\eq i_0, i_1: (V, \xi) \to (S^{4n-1}, \xi_{\op{std}}) \eeq 
which are not isotopic through contact embeddings.
\end{theorem-star}

The embeddings we exhibit turn out to coincide with those exhibited by Casals and Etnyre in their proof of \cite[Thm.\ 1.1]{casals-etnyre}, although this fact is not entirely obvious (see \Cref{remark:casals-etnyre-coincide}). However, the methods for distinguishing them are completely different. Casals and Etnyre consider double branched covers along the contact submanifolds $i_0(V)$ and $i_1(V)$. Using symplectic homology, they prove that these branched covers do not admit the same fillings. This implies that $i_0(V)$ and $i_1(V)$ cannot be isotopic, since otherwise they would have contactomorphic branched covers.  

In contrast, our proof of \Cref{theorem:intro-tight} uses the invariant $\widetilde{CH}_\bu(-;-)$ introduced in this paper. Roughly speaking, we prove \Cref{theorem:intro-tight} by partially computing (linearizations of) $\widetilde{CH}_\bu(-;-)$ associated to the two embeddings under consideration, and observing that they do not match. Our computations rely crucially on our version of the surgery formula discussed in \Cref{subsection:intro-surgery} as well as the well-definedness of the invariants therein. This explains why this theorem statement is starred, following the convention stated in \Cref{remark:star-convention}.  We also remark that although \Cref{theorem:intro-tight} only applies to spheres of dimension $4n-1$, we expect that the same invariant also distinguishes embeddings into spheres of dimension $4n-3$. However, proving this would likely require more involved computations than those carried out in this paper.

\subsection{Applications to symplectic and Lagrangian cobordisms}
\label{subsection:intro-cob-applications}
Consider a pair of contact manifolds $(Y^\pm, \xi^\pm)$ and codimension $2$ contact submanifolds $(V^\pm, {\xi^\pm}|_{V^\pm}) \sub (Y^\pm, \xi^\pm)$. An exact relative symplectic cobordism from $(Y^+, \xi^+, V^+)$ to $(Y^-, \xi^-, V^-)$ is a triple $(\hat X, \hat \l, H)$, where $(\hat X, \hat \l)$ is an exact symplectic cobordism from $(Y^+, \xi^+)$ to $(Y^-, \xi^-)$ and $H \sub \hat X$ is a codimension $2$ symplectic submanifold which coincides near the ends with the symplectization of $V^\pm$; see \Cref{definition:relative-sympcobordism}.\footnote{Note that our convention of regarding a cobordim as going \emph{from} the convex end \emph{to} the concave end is consistent with \cite{pardon}, but differs from most of the contact topology literature.} In the special case where $\hat X$ is the symplectization of $Y^\pm$ and $H$ is diffeomorphic to $\R \tms V^\pm$, we speak of a \emph{symplectic concordance} from $V^+$ to $V^-$. These notions were first considered by Bowden in his PhD thesis. Using gauge theory, he exhibited certain restrictions on symplectic cobordisms between transverse links in contact $3$-manifolds \cite[Sec.\ 7]{bowden}. 

The following theorem gives a constraint on exact symplectic cobordisms between certain pairs of codimension $2$ contact submanifolds of an ambient overtwisted manifold. To the best of our knowledge, this is the first negative result in the literature on relative symplectic cobordisms in dimensions greater than three.

\thm \label{theorem:intro-symplectic-cobordism}
Let $V= i(B), V'=j(B)$ be the codimension $2$ contact submanifolds of the overtwisted contact manifold $(Y, \xi)$ as described in \Cref{construction:ot-infinite-topology}. Then there does not exist an exact relative symplectic cobordism $(\hat X, \hat \l, H)$ from $(Y, \xi, V')$ to $(Y, \xi, V)$ with $H^1(H, (-\infty,0] \times V; \Z)= H^2(H, (-\infty,0] \times V; \Z)=0$. In particular, there is no symplectic concordance from $V'$ to $V$.
\ethm

One can similarly consider Lagrangian cobordisms and concordances between Legendrian submanifolds. An exact Lagrangian cobordism from $(Y^+, \xi^+, \L^+)$ to $(Y^-, \xi^-, V^-)$ is a triple $(\hat X, \hat \l, L)$ where $(\hat X, \hat \l)$ is an exact symplectic cobordism from $(Y^+, \xi^+)$ to $(Y^-, \xi^-)$ and $L \sub \hat X$ is a Lagrangian submanifold which coincides near the ends with the Lagrangian cone of $\L^\pm$; see \Cref{definition:lagrangian-cob}. If $\hat X$ is the symplectization of $Y^-$ and $L = \R \tms \L^-$, one speaks of a \emph{Lagrangian concordance} from $\L^+$ to $\L^-$. 

The theory of Lagrangian cobordisms has been extensively developed in the literature from various perspectives (see e.g.\ \cite{c-d-g-g, ekholm, ekholm-honda-kalman, sabloff-traynor}). While much is known in $(\R^{2n+1}, \xi_{\op{std}})$ and certain other tight contact manifolds, we are not aware of any results constraining cobordisms and concordances in overtwisted contact manifolds; see \Cref{remark:intro-lag-cobordisms}. The next theorem provides a first result in this direction. 

\thm \label{theorem:intro-lag-concordance}
Let $\L, \L'$ be the Legendrian submanifolds of the overtwisted contact manifold $(Y, \xi)$ as constructed in \Cref{construction:ot-legendrian}. Then $\L'$ is not concordant to $\L$. 
\ethm

In contrast, a result of Eliashberg and Murphy \cite[Thm.\ 2.2]{eliashberg-murphy-caps} implies that $\L$ is concordant to $\L'$.

\rmk \label{remark:intro-lag-cobordisms}
It is a basic fact that exact Lagrangian cobordisms induce morphisms on Legendrian contact homology which behave well under composition of cobordisms \cite[Sec.\ 5.1]{etnyre-ng}. This leads to a myriad of interesting obstructions to the existence of Lagrangian cobordisms and concordances. One can also obtain many interesting obstructions using finite-dimensional invariants (which are closely related to Legendrian contact homology) coming from generating functions or sheaf theory; see e.g.\ \cite{sabloff-traynor, li}. 

One drawback of these approaches is that they are necessary blind on overtwisted contact manifolds. Indeed, even if Legendrian contact homology could be rigorously defined in full generality following the framework of \cite[Sec.\ 2.8]{sft-egh}, it would provide no information for Legendrians in overtwisted contact manifolds: being a module over the contact homology algebra, it would vanish. In contrast, the invariants developed in this paper do give information about Legendrians even in the overtwisted case.
\ermk

Our final application states that certain Lagrangian concordances cannot be displaced from a codimension $2$ symplectic submanifold. More precisely, let $(Y,\xi)= \op{obd}(T^*S^{n-1}, \op{id})$ and let $V \sub Y$ be the binding of the open book. Let $\L \sub Y$ the zero section of a page and let $\L'$ be obtained by stabilizing $\L$ in the complement of $V$. It can be shown \cite[Prop.\ 2.9]{casals-murphy} that $\L \sub (Y, \xi)$ is a loose Legendrian; hence $\L, \L'$ are Legendrian isotopic in $(Y, \xi)$ and in particular concordant.
	
\begin{theorem-star} \label{theorem:introduction-non-displaceability}
Any Lagrangian concordance from $\L'$ to $\L$ must intersect the symplectization of $V$. 
\end{theorem-star}

In contrast, work of Eliashberg and Murphy  \cite[Thm.\ 2.2]{eliashberg-murphy-caps} implies that there exists a Lagrangian concordance from $\L$ to $\L'$ which is disjoint from the symplectization of $V$. Our proof of \Cref{theorem:introduction-non-displaceability} uses the deformed versions of the Chekanov-Eliashberg dg algebra in \eqref{equation:intro-legendrian}. Hence the statement is starred according to the convention stated in \Cref{subsection:intro-surgery}. 

\subsection{Context and related invariants}

The invariants constructed in this paper, when specialized to contact $3$-manifolds, are related to other invariants in the literature. The most closely related invariant is due to Momin \cite{momin}. Given a contact $3$-manifold $(Y^3, \xi)$, Momin considers the set of pairs $(\l, L)$ where $\l$ is a contact form and $L \sub Y$ is a link of Reeb orbits of $\l$. Two such pairs $(\l, L), (\l', L')$ are said to be equivalent if $L=L'$ and each component orbit (and all its multiple covers) has the same Conley-Zehnder index. Under certain assumptions on $(Y, \l, L)$, Momin defines an invariant which we denote by $CH^{mo}_\bu(Y, [(\l, L)])$. This is a $\Z$-graded $\Q$-vector space which depends only on $Y$ and the equivalence class of $(\l, L)$. 

The invariant constructed by Momin is in general distinct from the invariants described in this paper. In particular, he considers cylindrical contact homology, whereas we work with ordinary contact homology. However, in the special case where $(Y^3, \xi)$ is the standard contact sphere (or more generally a subcritical Stein manifold with $c_1(\xi)=0$) and $L \sub (Y, \xi)$ is a collection of Reeb orbits which bound a symplectic submanifold $H \sub B^4$, then we expect that 
\eq \label{equation:intro-momin} CH_\bu^{mo}(Y, [\l, L])= \widetilde{CH}_\bu^{\tilde{\e}}(Y, \xi, L; \fk{r}), \eeq for suitable $\fk{r}$ which depends on the equivalence class of $(\l, L)$.  Here the right hand side of \eqref{equation:intro-momin} denotes the linearization of $\widetilde{CH}_\bu(Y, \xi, L; \fk{r})$ with respect to an augmentation $\tilde{\e}$ induced by the relative filling $(B^4, \l_{\op{std}}, H)$ (recall that an augmentation of a dg-algebra is a morphism to the ground ring, viewed as a dg algebra concentrated in degree zero). See \Cref{subsection:augmentations-fillings} for details.

Momin's work has led to beautiful applications to Reeb dynamics on contact $3$-manifolds (see e.g.\ \cite{alves-pirnapasov, h-m-s}). It would be interesting to explore whether the invariants developed in this paper can be used in studying Reeb dynamics in higher dimensions.

Another related invariant is Hutchings' ``knot-filtered" embedded contact homology \cite{hutchings}. The setting for this invariant is a contact $3$-manifold $(Y, \xi)$ with $H_1(Y; \Z)=0$. Given a transverse knot $L \sub (Y, \xi)$ and an irrational parameter $\theta \in \R- \Q$, Hutchings defines a filtration on embedded contact homology with values in $\Z + \Z \t$ which is an invariant of $(L, \t)$. The basic idea is to choose a contact form $\xi= \op{ker} \l$ so that $L$ is a Reeb orbit, and to filter the generators of embedded contact homology by their linking number with $L$. Positivity of intersection considerations imply that the differential decreases the linking number for orbits which are disjoint from $L$. However, the situation is more complex when the differential involves $L$, which explains why the filtration is only valued in $\Z + \Z \t$. 

One could presumably carry over Hutchings' construction to the context of (cylindrical) contact homology in dimension $3$. We expect that the resulting invariant would carry related information to the one defined by Momin or to the invariants constructed in this paper. However, we do not have a precise formulation of what this relationship should be.

We remark that the invariants introduced by Momin and Hutchings are built using techniques from $4$-dimensional symplectic topology which cannot be generalized to higher dimensions. In contrast, the invariants introduced in this paper are constructed by a different approach which ultimately relies on Pardon's robust virtual fundamental cycles package \cite{pardon-vfc}.

In a slightly different direction, we also wish to highlight work of Ekholm--Etnyre--Ng--Sullivan \cite{ekholm-etnyre-ng-sullivan} which is similar in spirit to the present work.  Recall that the knot contact homology of a framed link $K \sub \R^3$ is an invariant which can be defined as the Legendrian dg algebra of the conormal lift of $K$ (see \cite{ng, ekholm-etnyre-ng-sullivan1}).  If $K$ is a transverse knot with respect to the standard contact structure, the authors define in \cite{ekholm-etnyre-ng-sullivan} a two-parameter deformation of knot contact homology by weighting holomorphic curve counts by their intersection number with a pair of canonically defined complex submanifolds in the symplectization. The resulting deformed dg algebra is an invariant of the transverse knot type of $K$. Unfortunately, we do not know a precise relationship between this invariant and the ones introduced in this paper.

\subsection{Notation and conventions}
All manifolds in this paper are assumed to be smooth. If $M$ is a manifold, a \emph{ball} $B \sub M$ is an open subset diffeomorphic to the open unit disk and whose closure is embedded and diffeomorphic to the closed unit disk. If $(M, \o)$ is symplectic, a \emph{Darboux ball} $B \sub M$ is a ball which is symplectomorphic to the open unit disk equipped with (some rescaling of) the standard symplectic form.

Unless otherwise specified, all contact manifolds considered in this paper are compact without boundary and co-oriented. Given such a contact manifold $(Y,\xi = \ker\l)$, the Reeb vector field associated to the contact form $\l$ will be denoted by $R_\l$. We will let $\xi_V := \xi|_V \cap TV$ denote the contact structure induced by $\xi$ on a contact submanifold $V \subset (Y,\xi)$.

\subsection{Acknowledgements}
We thank Yasha Eliashberg for suggesting this project, and for many helpful discussions. We also benefited from discussions and correspondence with C\'{e}dric De Groote, Georgios Dimitroglou Rizell, Sheel Ganatra, Oleg Lazarev, Josh Sabloff and Kyler Siegel. We are grateful to John Etnyre for pointing us to multiple useful references. Finally, we wish to thank the anonymous referee for many helpful comments and suggestions. 

The first author was partially supported by a Stanford University Benchmark Graduate Fellowship and by the National Science Foundation under Grant No. DMS-1926686.

\section{Geometric preliminaries}

\subsection{Symplectic cobordisms}
Let $(Y,\xi)$ be a closed co-oriented contact manifold. The \emph{symplectization} of $(Y,\xi)$ is the exact symplectic manifold $(SY,\l_Y)$ where $SY \subset T^*Y$ is the total space of the bundle of positive contact forms on $Y$ (i.e.\ a point $(p,\a) \in T^*Y$ is in $SY$ if and only if $\a : T_p Y \to \R$ vanishes on $\xi_p$ and the induced map $T_p Y / \xi_p \to \R$ is an orientation-preserving isomorphism) and $\l_Y$ is the restriction of the tautological Liouville form on $T^*Y$. Given a choice of positive contact form $\a$ for $(Y,\xi)$, there is a canonical identification
\eq \label{symplectization-marking}
\s_\a : (\R \times Y, e^s \a) \to (SY,\l_Y)
\eeq
given by $\s_\a(s,p) = (p,e^s \a_p)$. We will refer to $(\hat{Y},\hat{\a}) := (\R \times Y, e^s \a)$ as the symplectization of $(Y,\a)$.

A subset $U \subset SY$ will be called a \emph{neighborhood of $+\infty$} (resp. of $-\infty$) if it contains $\s_\a([N,\infty) \times Y)$ (resp. $\s_\a((-\infty,-N] \times \R$) for $N > 0$ sufficiently large (note that this notion doesn't depend on the choice of $\a$).

\defi \label{definition:symplectic-lift}
Given a contactomorphism $f:(Y, \xi) \to (Y, \xi)$, we define its \emph{symplectic lift} 
\begin{align}
\tilde{f}: (SY, \l_Y) &\to (SY, \l_Y)\\ 
 (p, \a) &\mapsto (f(p), \a \circ (df_p)^{-1}).
\end{align} 
One can verify that $\tilde{f}^* \l_Y= \l_Y$, so $\tilde{f}$ is in particular a symplectomorphism. There is a canonical bijection between (i) contact vector fields on $(Y, \xi)$, (ii) sections of $TY/\xi$, (iii) linear Hamiltonians on the symplectization (recall that $H$ is \emph{linear} if $Z H = H$, where $Z$ denotes the Liouville vector field). The correspondence between (i) and (ii) is clear; the correspondence between (ii) and (iii) takes a section $\s$ to the Hamiltonian $H(p,\a) =\a(\s(p))$. In particular, the symplectic lift of a (time-dependent) family of contactomorphisms is induced by a (time-dependent) family of linear Hamiltonians; cf.\ \cite[Prop.\ 2.2]{chantraine}. 
\edefi

\defi \label{definition:sympcobordism}
	Let $(Y^+,\xi^+)$ and $(Y^-,\xi^-)$ be closed co-oriented contact manifolds. An \emph{exact symplectic cobordism} from $(Y^+,\xi^+)$ to $(Y^-,\xi^-)$ is an exact symplectic manifold $(\hat{X},\hat{\l})$ equipped with embeddings
	\begin{align}
		e^+ : SY^+ &\to \hat{X} \label{positive-end}\\
		e^- : SY^- &\to \hat{X} \label{negative-end}
	\end{align}
	satisfying the following properties:
	\begin{itemize}
		\item $(e^\pm)^*\hat{\l} = \l_{Y^\pm}$;
		\item there exists a neighborhood $U^+ \subset SY^+$ of $+\infty$ and a neighborhood $U^- \subset SY^-$ of $-\infty$ such that the restriction of $e^\pm$ to $U^\pm$ is proper, the images $e^+(U^+)$ and $e^-(U^-)$ are disjoint and the complement $\hat{X} \setminus (e^+(U^+) \cup e^-(U^-))$ is compact.
	\end{itemize}
\edefi

\begin{definition}[{cf.\ \cite[Sec.\ 1.3]{pardon}}] \label{definition:strict-sympcobordism}
	Let $(Y^+,\l^+)$ and $(Y^-,\l^-)$ be closed manifolds equipped with contact forms. A \emph{(strict) exact symplectic cobordism} from $(Y^+,\l^+)$ to $(Y^-,\l^-)$ is an exact symplectic manifold $(\hat X,\hat\l)$ equipped with embeddings
	\begin{align}
		e^+ : \R \times Y^+ &\to \hat{X} \label{marked-positive-end}\\
		e^- : \R \times Y^- &\to \hat{X} \label{marked-negative-end}
	\end{align} 
    satisfying the following properties:
	\begin{itemize}
		\item $(e^\pm)^*\hat{\l} = \hat{\l}^\pm$;
		\item there exists an $N \in \R$ such that the restrictions of $e^+$ to $[N,\infty) \times Y^+$ and of $e^-$ to $(-\infty,-N] \times Y^-$ are proper and that the images $e^+([N,\infty) \times Y^+)$ and $e^-((-\infty,-N] \times Y^-)$ are disjoint and together cover a neighborhood of infinity (i.e. the complement of their union is compact).
	\end{itemize}
\end{definition}

\begin{notation}\label{notation:choice-forms}
Let $(\hat{X},\hat{\l})$ be an exact symplectic cobordism from $(Y^+,\xi^+)$ to $(Y^-,\xi^-)$ in the sense of \Cref{definition:sympcobordism}. Given any choice of contact forms $\l^\pm$ on $(Y^\pm,\xi^\pm)$, one can obtain from $\hat{X}$ a cobordism from $(Y^+,\l^+)$ to $(Y^-,\l^-)$ in the sense of \Cref{definition:strict-sympcobordism} by pre-composing the embeddings \eqref{positive-end}--\eqref{negative-end} with the canonical identifications $\R \times Y^\pm \to SY^\pm$ induced by $\l^\pm$. We will denote this cobordism by $(\hat{X},\hat{\l})^{\l^+}_{\l^-}$ or simply by $\hat{X}^{\l^+}_{\l^-}$ when this creates no ambiguity.

Similarly, any cobordism $(\hat{X},\hat{\l})$ in the sense of \Cref{definition:strict-sympcobordism} can be viewed as a cobordism in the sense of \Cref{definition:sympcobordism} as well.
\end{notation}

\begin{remark}
In light of the above discussion, \Cref{definition:sympcobordism} and \Cref{definition:strict-sympcobordism} are essentially equivalent. However, it will be convenient for us to be able to discuss symplectic cobordisms without fixing a particular choice of contact forms on the ends, so we adopt \Cref{definition:sympcobordism} as our main definition moving forward.
\end{remark}

\ex[Symplectizations]  \label{example:symplectization} The symplectization $(SY, \l_Y)$ of a contact manifold $(Y, \xi)$ is canonically endowed with the structure of an exact symplectic cobordism in the sense of \Cref{definition:sympcobordism} by letting $e^+=e^-=\op{id}$.  The additional data of a pair of contact forms $\l^+, \l^-$ for $(Y, \xi)$, endows $(SY, \l_Y)$ with the structure of a strict exact symplectic cobordism in the sense of \Cref{definition:strict-sympcobordism} and we write $(SY, \l_Y)^{\l^+}_{\l^-}$. 

\eex

\begin{definition}\label{definition:gluing-cobordisms}
Let $(\hat{X}^{01},\hat{\l}^{01})$ and $(\hat{X}^{12},\hat{\l}^{12})$ be exact symplectic cobordisms from $(Y^0,\xi^0)$ to $(Y^1,\xi^1)$ and from $(Y^1,\xi^1)$ to $(Y^2,\xi^2)$ respectively. Fix a real number $t \ge 0$ and let $\mu_t : SY^1 \to SY^1$ denote multiplication by $e^t$. The \emph{$t$-gluing} of $\hat{X}^{01}$ and $\hat{X}^{12}$, denoted by $\hat{X}^{01} \#_t \hat{X}^{12}$, is the smooth manifold obtained by gluing $\hat{X}^{01}$ and $\hat{X}^{12}$ along the maps
	\begin{center}
	\begin{tikzcd}
		SY^1 \ar{r}{\eqref{negative-end}} \ar{d}{\mu_t} & \hat{X}^{01} \\
		SY^1 \ar{r}{\eqref{positive-end}} & \hat{X}^{12}
	\end{tikzcd}
	\end{center}
	Since $\mu_t^*\l_{Y^1} = e^t \l_{Y^1}$, there is, for any $s \in \R$, a Liouville form on $\hat{X}^{01} \#_t \hat{X}^{12}$ which agrees with $e^{t + s}\hat{\l}^{01}$ on $\hat{X}^{01}$ and with $e^s \hat{\l}^{12}$ on $\hat{X}^{12}$. We will denote it by $\hat{\l}^{01} \#_{t,s} \hat{\l}^{12}$. Note that $(\hat{X}^{01} \#_t \hat{X}^{12}, \hat{\l}^{01} \#_{t,s} \hat{\l}^{12})$ is canonically equipped with the structure of an exact symplectic cobordism from $(Y^0,\xi^0)$ to $(Y^2,\xi^2)$ via the embeddings
	\begin{center}
	\begin{tikzcd}[row sep = small, column sep = large]
		SY^0 \ar{r}{\mu_{-t-s}} & SY^0 \ar{r}{\eqref{positive-end}} & \hat{X}^{01} \ar{r}{} & \hat{X}^{01} \#_t \hat{X}^{12} \\
		SY^2 \ar{r}{\mu_{-s}} & SY^2 \ar{r}{\eqref{negative-end}} & \hat{X}^{02} \ar{r}{} & \hat{X}^{01} \#_t \hat{X}^{12}
	\end{tikzcd}
	\end{center}
	The precise choice of $s$ doesn't really matter since the forms $\hat{\l}^{01} \#_{t,s} \hat{\l}^{12}$, $s \in \R$, are all constant multiples of each other. When $t = 0$, it is natural to choose $s = 0$, and we will denote the resulting cobordism simply by $(\hat{X}^{01} \# \hat{X}^{12}, \hat{\l}^{01} \# \hat{\l}^{12})$. There is no obvious choice for $t > 0$, but for the sake of definiteness we set $\hat{\l}^{01} \#_t \hat{\l}^{12} := \hat{\l}^{01} \#_{t,-t/2} \hat{\l}^{12}$ and will refer to $(\hat{X}^{01} \#_t \hat{X}^{12}, \hat{\l}^{01} \#_t \hat{\l}^{12})$ as ``the'' $t$-gluing of $(\hat{X}^{01},\hat{\l}^{01})$ and $(\hat{X}^{12},\hat{\l}^{12})$.
\end{definition}

\begin{remark}\label{remark:associativity}
When $t = s = 0$, it follows directly from the definition that the gluing operation is associative: $\bigl((\hat{X}^{01} \# \hat{X}^{12}) \# \hat{X}^{23}, (\hat{\l}^{01} \# \hat{\l}^{12}) \# \hat{\l}^{23} \bigr)$ and $\bigl(\hat{X}^{01} \# (\hat{X}^{12} \# \hat{X}^{23}), \hat{\l}^{01} \# (\hat{\l}^{12} \# \hat{\l}^{23}) \bigr)$ are canonically isomorphic.
\end{remark}

\begin{remark}
Multiplication by $e^t$ on $SY$ corresponds to translation by $t$ in the $\R$ coordinate under the identification $SY \cong \R \times Y$ induced by a choice of contact form on $Y$. \Cref{definition:gluing-cobordisms} is therefore consistent with the notion of ``$t$-gluing'' in \cite[Sec. 1.5]{pardon}.
\end{remark}

\begin{definition}\label{definition:iso-sympcobordism}
	Let $(\hat{X}^1,\hat{\l}^1)$ and $(\hat{X}^2,\hat{\l}^2)$ be cobordisms from $(Y^+,\xi^+)$ to $(Y^-,\xi^-)$. An \emph{isomorphism of exact symplectic cobordisms} $\phi : (\hat{X}^1,\hat{\l}^1) \to (\hat{X}^2,\hat{\l}^2)$ consists of a diffeomorphism $\phi : \hat{X}^1 \to \hat{X}^2$ such that $\phi^*\hat{\l}^2 = \hat{\l}^1$ and which is compatible with the ends in the sense that the following diagram commutes:
	\begin{center}
	\begin{tikzcd}
		& & \hat{X}^1 \arrow{dd}{\phi} & & \\
		SY^+ \ar{rru}{\eqref{positive-end}} \ar{rrd}[swap]{\eqref{positive-end}} & & & & SY^- \ar{llu}[swap]{\eqref{negative-end}} \ar{lld}{\eqref{negative-end}}\\
		& & \hat{X}^2
	\end{tikzcd}
	\end{center}
\end{definition}

\begin{example} \label{example:gluing-trivial-end}
	Let $(\hat{X},\hat{\l})$ be an exact symplectic cobordism from $(Y^+,\xi^+)$ to $(Y^-,\xi^-)$. Then for any $t \ge 0$ and $s \in \R$, the glued cobordisms $(SY^+ \#_t \hat{X}, \l_{Y^+} \#_{t,s} \hat{\l})$ and $(\hat{X} \#_t SY^-, \hat{\l} \#_{t,s} \l_{Y^-})$ are canonically isomorphic to $(\hat{X},\hat{\l})$.
\end{example}

\begin{definition}\label{definition:family-sympcobordisms}
	A \emph{one-parameter family of exact symplectic cobordisms} from $(Y^+,\xi^+)$ to $(Y^-,\xi^-)$ is a manifold $\hat{X}$ equipped with a family of Liouville forms $\{ \hat{\l}^t \}_{t \in I}$ (where $I \subset \R$ is an interval), together with embeddings
	\begin{align}
		e_t^+ : SY^+ &\to \hat{X} \label{family-marked-positive-end}\\
		e_t^- : SY^- &\to \hat{X} \label{family-marked-negative-end}
	\end{align}
	as in \Cref{definition:sympcobordism}. We will always assume that the family is fixed at infinity, meaning that for every compact subinterval $[a,b] \subset I$,
	\begin{itemize}
		\item  $\{ \hat{\l}^t \}_{t \in [a,b]}$ is constant outside of a compact subset of $\hat{X}$;
		\item $\{e_t^+\}_{t \in [a,b]}$ (resp. $\{e_t^-\}_{t \in [a,b]}$) is independent of $t$ on some neighborhood of $+\infty$ in $SY^+$ (resp. of $-\infty$ in $SY^-$).
	\end{itemize}

	Two cobordisms $(\hat{X}^0,\hat{\l}^0)$ and $(\hat{X}^1,\hat{\l}^1)$ are said to be \emph{deformation equivalent} if there exists a one-parameter family $(\hat{W},\hat{\mu}^t)_{t \in [0,1]}$ such that $(\hat{X}^0,\hat{\l}^0)$ is isomorphic to $(\hat{W},\hat{\mu}^0)$ and $(\hat{X}^1,\hat{\l}^1)$ is isomorphic to $(\hat{W},\hat{\mu}^1)$. The deformation class of a cobordism $(\hat{X},\hat{\l})$ will be denoted by $[\hat{X},\hat{\l}]$.
\end{definition}

\begin{lemma} \label{lemma:t-gluing}
	Given two cobordisms $(\hat{X}^{01},\hat{\l}^{01})$ and $(\hat{X}^{12},\hat{\l}^{12})$ as in \Cref{definition:gluing-cobordisms}, the glued cobordisms $(\hat{X}^{01} \#_t \hat{X}^{12}, \hat{\l}^{01} \#_t \hat{\l}^{12})_{t \in{} [0,\infty)}$ form a one-parameter family.\footnote{Strictly speaking, the underlying manifold of $\hat{X}^{01} \#_t \hat{X}^{12}$ depends on $t$, so in order to obtain a family in the sense of \Cref{definition:family-sympcobordisms} one needs to choose suitable diffeomorphisms $\hat{X}^{01} \#_t \hat{X}^{12} \cong \hat{X}^{01} \# \hat{X}^{12}$.} Similarly $(\hat{X}^{01} \#_t \hat{X}^{12}, \hat{\l}^{01} \#_{t,s} \hat{\l}^{12})_{s \in \R}$ is a one-parameter family for any fixed $t \ge 0$. In particular, we have that the deformation class $[\hat{X}^{01} \#_t \hat{X}^{12}, \hat{\l}^{01} \#_{t,s} \hat{\l}^{12}]$ is independent of both $t$ and $s$.
\end{lemma}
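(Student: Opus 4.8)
The plan is to realize each of the two collections of glued cobordisms as the fibers of a single one-parameter family in the sense of \Cref{definition:family-sympcobordisms}, and then read off the statement about deformation classes. Indeed, once this is done the ``in particular'' is formal: any two fibers of a one-parameter family over an interval are deformation equivalent (restrict to a compact subinterval and reparametrize to $[0,1]$), so the $s$-family gives that $[\hat{X}^{01}\#_t\hat{X}^{12},\hat{\l}^{01}\#_{t,s}\hat{\l}^{12}]$ is independent of $s$ for each fixed $t\ge 0$, the $t$-family gives independence in $t$, and chaining these together with $\hat{\l}^{01}\#_0\hat{\l}^{12}=\hat{\l}^{01}\#_{0,0}\hat{\l}^{12}=\hat{\l}^{01}\#\hat{\l}^{12}$ yields the final claim. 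So the entire content is the two family statements, and the only real difficulty is that the condition of being \emph{fixed at infinity} must be arranged by hand: in neither case is the underlying manifold, the Liouville form, or the end embedding literally constant in the parameter near infinity, so everything has to be transported to a fixed model by a carefully chosen smooth family of diffeomorphisms. Throughout I would fix a contact form $\l^1$ on $(Y^1,\xi^1)$, identifying $SY^1\cong\R\times Y^1$ so that $\mu_\tau$ becomes translation by $\tau$ in the $\R$-coordinate, and similarly record collar coordinates on the ends being glued.

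First I would treat the family in $s$ with $t\ge 0$ fixed. Here the underlying smooth manifold $\hat{X}:=\hat{X}^{01}\#_t\hat{X}^{12}$ does not change at all, and the Liouville form only rescales: $\hat{\l}^{01}\#_{t,s}\hat{\l}^{12}=e^{s}\,(\hat{\l}^{01}\#_{t,0}\hat{\l}^{12})$. Write $\hat{\mu}$ for the form at $s=0$, let $Z$ be its Liouville vector field, and note that near each end $Z$ is the coordinate field $\partial_\sigma$, so the Liouville flow exists there for all times and merely translates $\sigma$. I would choose a smooth family of diffeomorphisms $\Phi_s:\hat{X}\to\hat{X}$ equal near $\pm\infty$ to the appropriate time shift of the Liouville flow and interpolated to the identity across the compact core, arranged so that (i) $\Phi_s^{*}(e^{s}\hat{\mu})$ equals $\hat{\mu}$ on fixed neighborhoods of $\pm\infty$ and varies with $s$ only over a compact set, and (ii) $\Phi_s^{-1}$ composed with the $s$-dependent end embeddings of \Cref{definition:gluing-cobordisms} (which involve $\mu_{-t-s}$ and $\mu_{-s}$) is independent of $s$ near infinity. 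Setting $\hat{\l}^{s}:=\Phi_s^{*}(e^{s}\hat{\mu})$ and $e_s^{\pm}:=\Phi_s\circ(\text{end embeddings})$ then exhibits $(\hat{X},\{\hat{\l}^s\},\{e_s^\pm\})$ as a one-parameter family; each fiber is genuinely an exact symplectic cobordism because $\Phi_s$ is a diffeomorphism, and by construction the $s$-fiber is isomorphic (in the sense of \Cref{definition:iso-sympcobordism}) to $(\hat{X}^{01}\#_t\hat{X}^{12},\hat{\l}^{01}\#_{t,s}\hat{\l}^{12})$.

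Next I would treat the family in $t\in[0,\infty)$, say at $s=0$. Now the manifolds $\hat{X}^{01}\#_t\hat{X}^{12}$ genuinely depend on $t$, differing only in the offset by which the negative collar of $\hat{X}^{01}$ is glued to the positive collar of $\hat{X}^{12}$. Fixing the model $\hat{X}:=\hat{X}^{01}\#_0\hat{X}^{12}=\hat{X}^{01}\#\hat{X}^{12}$, I would define $\Psi_t:\hat{X}^{01}\#_t\hat{X}^{12}\to\hat{X}$ to be the identity on $\hat{X}^{01}$ and, on $\hat{X}^{12}$, the diffeomorphism shifting the positive collar by $\sigma\mapsto\sigma-\gamma_t(\sigma)$, where $\gamma_t$ interpolates from $0$ deep inside $\hat{X}^{12}$ to $t$ far out the positive end with $\partial_\sigma\gamma_t<1$. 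A direct check shows $\Psi_t$ is compatible with the two gluing prescriptions (so descends to a diffeomorphism), depends smoothly on $t$, and is locally constant in $t$ near all ends. Transporting $\hat{\l}^{01}\#_{t,0}\hat{\l}^{12}$ and the end embeddings through $\Psi_t$ gives a one-parameter family whose $t$-fiber is isomorphic to $(\hat{X}^{01}\#_t\hat{X}^{12},\hat{\l}^{01}\#_{t,0}\hat{\l}^{12})$; performing the $s$-construction of the previous paragraph fiberwise (smoothly in $t$) to pass from $s=0$ to $s=-t/2$ then yields the family $(\hat{X}^{01}\#_t\hat{X}^{12},\hat{\l}^{01}\#_t\hat{\l}^{12})_{t\in[0,\infty)}$ as stated. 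The assertion on deformation classes follows as above, with every $[\hat{X}^{01}\#_t\hat{X}^{12},\hat{\l}^{01}\#_{t,s}\hat{\l}^{12}]$ equal to $[\hat{X}^{01}\#\hat{X}^{12},\hat{\l}^{01}\#\hat{\l}^{12}]$.

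The step I expect to be the main obstacle is the coordinated bookkeeping ``at infinity'': the Liouville flow is not complete, so it must be truncated in the compact core, and the truncation has to be performed compatibly with the simultaneously moving end markings $\mu_{-t-s},\mu_{-s}$ (and, in the $t$-family, with the collar shift $\gamma_t$), so that after transport the Liouville form is \emph{strictly} constant in the parameter on some neighborhood of infinity while the end embeddings are likewise eventually constant there. One must also verify that $\sigma\mapsto\sigma-\gamma_t(\sigma)$ is a diffeomorphism for every $t\in[0,\infty)$ — e.g.\ by spreading the interpolation over an interval whose length grows with $t$ — and that all the gluings, rescalings and truncations assemble smoothly in the parameter. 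This is routine but must be done with care; everything else (that each fiber is an honest exact symplectic cobordism in the sense of \Cref{definition:sympcobordism}, and that fibers of a one-parameter family are deformation equivalent) is immediate from the definitions.
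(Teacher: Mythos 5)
Your overall strategy is the same as the paper's: transport the glued cobordisms to a fixed model $\hat{X}^{01}\#\hat{X}^{12}$ by a family of diffeomorphisms that interpolate between translations in collar coordinates, where the Liouville structure is cylindrical. The paper simply does this in one shot with a five-piece decomposition and three one-dimensional interpolating families $f_t$, $g_{t,s}$, $h_{t,s}$. Your first paragraph (the $s$-family at fixed $t$) is fine.

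The second paragraph has a genuine gap. Your $\Psi_t$ is the identity on all of $\hat{X}^{01}$, hence on the positive end $[C,\infty)\times Y^0$. But $\hat{\l}^{01}\#_{t,s}\hat{\l}^{12}$ restricts to $e^{t+s}\hat{\l}^{01}=e^{t+s+\sigma}\l^0$ on $\hat{X}^{01}$ by \Cref{definition:gluing-cobordisms}, so $(\Psi_t^{-1})^*(\hat{\l}^{01}\#_{t,0}\hat{\l}^{12})$ still equals $e^{t+\sigma}\l^0$ near $+\infty$ and the positive end embedding, which involves $\mu_{-t-s}$, still carries a translation by $-t$ at $s=0$; neither is absorbed by $\Psi_t$. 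So "transporting through $\Psi_t$ gives a one-parameter family" is false --- it is \emph{not} fixed at infinity in the sense of \Cref{definition:family-sympcobordisms}. Composing afterwards with your $\Phi_{-t/2}$ cannot repair this: conditions (i) and (ii) force $\Phi_s$ to translate by $-s$ at \emph{both} ends (a Liouville-flow time shift moves both ends the same way), whereas the total translations needed at $+\infty$ and $-\infty$ are $-t-s$ and $-s$ respectively, differing by $t$. At $s=-t/2$ these are $-t/2$ at $+\infty$ and $+t/2$ at $-\infty$, opposite in sign; a symmetric shift composed with a $\Psi_t$ that is the identity at both ends cannot produce this asymmetry, and a residual $t$-dependence survives at one end or the other. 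The paper's $g_{t,s}$ (translation by $-t-s$) and $h_{t,s}$ (translation by $-s$) are chosen independently on the two ends precisely to supply this asymmetry. Your argument can be repaired by letting $\Psi_t$ translate by $-t$ near $+\infty$, interpolated to the identity across $\bar{X}^{01}$, instead of being the identity on all of $\hat{X}^{01}$; after that fix it becomes essentially the paper's construction.
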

\begin{proof}
	We will construct a two-parameter family $\phi_{t,s} : \hat{X}^{01} \# \hat{X}^{12} \to \hat{X}^{01} \#_t \hat{X}^{12}$ of diffeomorphisms, with $\phi_{0,0} = \mathrm{id}$, such that the forms $\phi_{t,s}^*(\hat{\l}^{01} \#_{t,s} \hat{\l}^{12})$ agree with $\hat{\l}^{01} \# \hat{\l}^{12}$ outside of a compact set (depending on $t,s$) and form a smooth family.

	In order to simplify the notation, we fix contact forms $\l^i$ on $(Y^i,\xi^i)$, $i = 0, 1, 2$, so that we can view the symplectization of $Y^i$ as a product $\R \times Y^i$. For $C > 0$ sufficiently large, we can decompose the cobordisms $\hat{X}^{01}$ and $\hat{X}^{12}$ as
	\begin{align}
		\hat{X}^{01} &= (-\infty,1] \times Y^1 \cup \bar{X}^{01} \cup{} [C,\infty) \times Y^0 \\
		\hat{X}^{12} &= (-\infty,-C] \times Y^2 \cup \bar{X}^{12} \cup{} [-1,\infty) \times Y^1
	\end{align}
	where $\bar{X}^{01} \subset \hat{X}^{01}$ is a compact submanifold with boundary $\{1\} \times Y^1 \sqcup \{C\} \times Y^0$, and similarly for $\bar{X}^{12}$. This induces a decomposition of $\hat{X}^{01} \#_t \hat{X}^{12}$ of the form
	\eq
		\hat{X}^{01} \#_t \hat{X}^{12}
		= (-\infty,-C] \times Y^2 \cup \bar{X}^{12} \cup [-1,t + 1] \times Y^1 \cup \bar{X}^{01} \cup{} [C,\infty) \times Y^0
	\eeq
	for any $t \ge 0$. Hence, in order to define $\phi_{t,s}$, it suffices to make a choice of:
	\begin{itemize}
		\item a smooth family of diffeomorphisms $f_t : [-1,1] \to [-1,t+1]$ which coincide with the identity near $-1$ and with translation by $t$ near $1$;
		\item a smooth family of diffeomorphisms $g_{t,s} : [C,\infty) \to{} [C,\infty)$ which coincide with the identity near $C$ and with translation by $-t-s$ at infinity;
		\item a smooth family of diffeomorphisms $h_{t,s} : (-\infty,-C] \to (-\infty,-C]$ which coincide with the identity near $-C$ and with translation by $-s$ at infinity.
	\end{itemize}
	We of course also require that $f_0$, $g_{0,0}$ and $h_{0,0}$ be the identity on their respective domains.
\end{proof}

\begin{proposition}\label{proposition:invariance-gluing-deformation}
	The deformation class of $(\hat{X}^{01} \# \hat{X}^{12}, \hat{\l}^{01} \# \hat{\l}^{12})$ only depends on the deformation classes of $(\hat{X}^{01},\hat{\l}^{01})$ and $(\hat{X}^{12},\hat{\l}^{12})$.
\end{proposition}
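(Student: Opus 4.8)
The plan is to show that the gluing construction descends to deformation classes by varying one factor at a time through a one-parameter family while keeping the other fixed, and then concatenating. Concretely, I would reduce the proposition to the following two assertions: (a) if $(\hat{X}^{01},\hat{\l}^{01})$ and $(\wt{X}^{01},\wt{\l}^{01})$ are deformation equivalent cobordisms from $(Y^0,\xi^0)$ to $(Y^1,\xi^1)$, then for any fixed cobordism $(\hat{X}^{12},\hat{\l}^{12})$ the glued cobordisms $(\hat{X}^{01} \# \hat{X}^{12}, \hat{\l}^{01} \# \hat{\l}^{12})$ and $(\wt{X}^{01} \# \hat{X}^{12}, \wt{\l}^{01} \# \hat{\l}^{12})$ are deformation equivalent; and (b) the symmetric statement, varying the right factor. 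Granting (a) and (b), the proposition follows: given deformation equivalences $(\hat{X}^{01},\hat{\l}^{01}) \sim (\wt{X}^{01},\wt{\l}^{01})$ and $(\hat{X}^{12},\hat{\l}^{12}) \sim (\wt{X}^{12},\wt{\l}^{12})$, apply (a) to pass from $\hat{X}^{01} \# \hat{X}^{12}$ to $\wt{X}^{01} \# \hat{X}^{12}$, then (b) to pass from $\wt{X}^{01} \# \hat{X}^{12}$ to $\wt{X}^{01} \# \wt{X}^{12}$, and concatenate the two resulting one-parameter families (smoothing at the junction in the usual way).

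For (a), let $(\hat{W}^{01}, \hat{\mu}^t)_{t \in [0,1]}$ be a one-parameter family with $(\hat{W}^{01}, \hat{\mu}^0) \cong (\hat{X}^{01}, \hat{\l}^{01})$ and $(\hat{W}^{01}, \hat{\mu}^1) \cong (\wt{X}^{01}, \wt{\l}^{01})$. I would form the glued family $(\hat{W}^{01} \# \hat{X}^{12}, \hat{\mu}^t \# \hat{\l}^{12})_{t \in [0,1]}$, where at each $t$ the gluing is performed along the maps $SY^1 \to \hat{W}^{01}$ and $SY^1 \to \hat{X}^{12}$ exactly as in \Cref{definition:gluing-cobordisms}. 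The key point is that, by the definition of a one-parameter family, $\hat{\mu}^t$ agrees with a $t$-independent Liouville form on a neighborhood of $-\infty$ in $\hat{W}^{01}$ and the negative-end embedding $SY^1 \to \hat{W}^{01}$ is $t$-independent there; hence the glued total space is literally independent of $t$ (namely $\hat{W}^{01} \# \hat{X}^{12}$), the glued forms $\hat{\mu}^t \# \hat{\l}^{12}$ are constant outside a compact subset and vary smoothly, and the end embeddings (the positive end of $\hat{W}^{01}$, the negative end of $\hat{X}^{12}$) are $t$-independent near infinity. Thus $(\hat{W}^{01} \# \hat{X}^{12}, \hat{\mu}^t \# \hat{\l}^{12})_{t \in [0,1]}$ is a one-parameter family in the sense of \Cref{definition:family-sympcobordisms}.

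It remains to check that gluing is functorial with respect to isomorphisms of cobordisms: if $\phi : (\hat{W}^{01}, \hat{\mu}^0) \to (\hat{X}^{01}, \hat{\l}^{01})$ is an isomorphism as in \Cref{definition:iso-sympcobordism}, then since $\phi$ commutes with the end embeddings, $\phi \#\, \mathrm{id}$ descends to an isomorphism $(\hat{W}^{01} \# \hat{X}^{12}, \hat{\mu}^0 \# \hat{\l}^{12}) \to (\hat{X}^{01} \# \hat{X}^{12}, \hat{\l}^{01} \# \hat{\l}^{12})$, and likewise at $t = 1$. Combining this with the one-parameter family above yields the deformation equivalence in (a); statement (b) is proved identically, now gluing along the positive end of $\hat{X}^{01}$ and the varying negative end of the family replacing $\hat{X}^{12}$, and the corresponding statement for the general $t$-gluing follows by further combining with \Cref{lemma:t-gluing}. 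The only mildly delicate point — and the one I would be most careful about — is the bookkeeping in the previous paragraph: the gluing locus $SY^1$ is non-compact, so one must genuinely invoke the ``fixed at infinity'' clause of \Cref{definition:family-sympcobordisms} to see both that the glued total space is $t$-independent and that ``constant outside a compact subset'' is preserved under gluing; once this is unwound, no further analysis is required.
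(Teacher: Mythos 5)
Your high-level strategy of varying one factor at a time and then concatenating is legitimate and genuinely different from the paper's (the paper deforms both factors simultaneously with a single parameter); either plan can be made to work and neither is obviously preferable. The gap is in the key claim of your single-factor step: that ``the glued total space is literally independent of $t$.'' This is false, and the ``fixed at infinity'' clause is not enough to rescue it. The $0$-gluing $\hat{W}^{01} \#\, \hat{X}^{12}$ is a pushout along the \emph{entire} embedding $e_t^- : SY^1 \to \hat{W}^{01}$, while \Cref{definition:family-sympcobordisms} only makes $e_t^-$ independent of $t$ on some neighborhood $U^- \subset SY^1$ of $-\infty$. On $SY^1 \setminus U^-$ the embedding genuinely moves with $t$, so the identifications $e_t^-(p) \sim e^+(p)$ change with $t$, and the quotient manifolds are different sets for different $t$. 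You therefore do not obtain a family of Liouville forms on one fixed manifold $\hat{X}$, which is exactly what \Cref{definition:family-sympcobordisms} requires of a one-parameter family.

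The paper's fix is to perform a \emph{large} $t_0$-gluing and then \emph{shrink the gluing region}: take $V = \mu_{t_0}^{-1}(U^+) \cap U^- \subset SY^1$, which is nonempty once $t_0$ is large and on which both families' end embeddings are constant in the family parameter, glue the two cobordisms only along $V$ after discarding the complementary cylindrical pieces, and observe that this $\#_V$-gluing is canonically identified with the full $t_0$-gluing. Since $V$ lies inside the region where the embeddings are fixed, the glued total space is then genuinely independent of the family parameter; one then passes from the $t_0$-gluing back to the $0$-gluing via \Cref{lemma:t-gluing}. Without this trick (or a substitute, e.g.\ using isotopy extension to produce a compactly supported family of diffeomorphisms $\psi_t$ of $\hat{W}^{01}$ with $\psi_t \circ e_0^- = e_t^-$ and conjugating $\hat{\mu}^t$ by $\psi_t$ to make the end embeddings constant), your argument does not produce a one-parameter family of cobordisms in the required sense. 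The remainder of your sketch --- the reduction to single-factor variation, the functoriality of gluing under isomorphisms, and the concatenation of the two deformations --- is sound.
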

\begin{proof}
	Let $(\hat{W}^{01},\hat{\mu}^{01,s})_{s \in [0,1]}$ and $(\hat{W}^{12},\hat{\mu}^{12,s})_{s \in [0,1]}$ be one-parameter families of exact symplectic cobordisms from $(Y^0,\xi^0)$ to $(Y^1,\xi^1)$ and from $(Y^1,\xi^1)$ to $(Y^2,\xi^2)$ respectively. The negative end \eqref{family-marked-negative-end} of $(\hat{W}^{01},\hat{\mu}^{01,s})$ will be denoted by $e_s^- : SY^1 \to \hat{W}^{01}$ and the positive end \eqref{family-marked-positive-end} of $(\hat{W}^{12},\hat{\mu}^{12,s})$ will be denoted by $e_s^+ : SY^1 \to \hat{W}^{12}$. By definition, we can find a neighborhood $U^+ \subset SY^1$ of $+\infty$ and a neighborhood $U^- \subset SY^1$ such that the restriction of $e_s^\pm$ to $U^\pm$ is independent of $s$. This common restriction will be denoted by $e^\pm$.

	Fix a large $t > 0$ so that the intersection $V := \mu_t^{-1}(U^+) \cap U^-$ is nonempty. Let $\hat{W}^{01} \#_V \hat{W}^{12}$ be the space obtained by gluing $\hat{W}^{01} \setminus e^-(U^- \setminus V)$ and $\hat{W}^{12} \setminus e^+(U^+ \setminus \mu^t(V))$ along the maps
	\begin{center}
	\begin{tikzcd}
		V \ar{r}{e^-} \ar{d}{\mu_t} & \hat{W}^{01} \setminus e^-(U^- \setminus V)\\
		U^+ \ar{r}{e^+} & \hat{W}^{12} \setminus e^+(U^+ \setminus \mu^t(V))
	\end{tikzcd}
	\end{center}

	As a smooth manifold, $\hat{W}^{01} \#_V \hat{W}^{12}$ is canonically identified with $\hat{W}^{01} \#_t \hat{W}^{12}$. Thus we can view $\hat{\mu}^{01,s} \#_t \hat{\mu}^{12,s}$ as a Liouville form on $\hat{W}^{01} \#_V \hat{W}^{12}$ for each $s$, and this then makes $(\hat{W}^{01} \#_V \hat{W}^{12}, \hat{\mu}^{01,s} \#_t \hat{\mu}^{12,s})_{s \in [0,1]}$ into a one-parameter family of cobordisms. In particular, it follows that $(\hat{W}^{01} \#_t \hat{W}^{12},\hat{\mu}^{01,0} \#_t \hat{\mu}^{12,0})$ and that $(\hat{W}^{01} \#_t \hat{W}^{12},\hat{\mu}^{01,1} \#_t \hat{\mu}^{12,1})$ are deformation equivalent.
\end{proof}

\begin{corollary}
	There is a well-defined gluing operation on deformation classes of exact symplectic cobordisms given by
	\begin{equation}\label{equation:gluing-deformation-classes}
		[\hat{X}^{01},\hat{\l}^{01}] \# [\hat{X}^{12},\hat{\l}^{12}] = [\hat{X}^{01} \#_t \hat{X}^{12}, \hat{\l}^{01} \#_{t,s} \hat{\l}^{12}]
	\end{equation}
	for any $t \ge 0$ and $s \in \R$.
\end{corollary}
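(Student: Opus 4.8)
The plan is to deduce the corollary directly from \Cref{lemma:t-gluing} and \Cref{proposition:invariance-gluing-deformation}, which between them contain all the substantive content. First I would observe that, by \Cref{lemma:t-gluing}, the deformation class $[\hat{X}^{01} \#_t \hat{X}^{12}, \hat{\l}^{01} \#_{t,s} \hat{\l}^{12}]$ on the right-hand side of \eqref{equation:gluing-deformation-classes} does not depend on the choice of $t \ge 0$ or $s \in \R$; specializing to $t = s = 0$, it coincides with $[\hat{X}^{01} \# \hat{X}^{12}, \hat{\l}^{01} \# \hat{\l}^{12}]$. Hence \eqref{equation:gluing-deformation-classes} assigns a consistent value once we know that this last class depends only on the deformation classes of the two input cobordisms.

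That dependence is exactly \Cref{proposition:invariance-gluing-deformation}: given one-parameter families connecting $(\hat{X}^{01},\hat{\l}^{01})$ and $(\hat{X}^{12},\hat{\l}^{12})$ to other cobordisms, the proposition produces a one-parameter family of glued cobordisms connecting the corresponding gluings, so the glued deformation class is unchanged when each factor is replaced by a deformation-equivalent one. Combining the two statements, the assignment $([\hat{X}^{01},\hat{\l}^{01}], [\hat{X}^{12},\hat{\l}^{12}]) \mapsto [\hat{X}^{01} \#_t \hat{X}^{12}, \hat{\l}^{01} \#_{t,s} \hat{\l}^{12}]$ descends to deformation classes and is independent of $t \ge 0$ and $s \in \R$, which is the assertion of the corollary.

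There is essentially no obstacle here: all the geometric work — constructing the interpolating diffeomorphisms that present $t$-gluing and $s$-rescaling as one-parameter families (\Cref{lemma:t-gluing}), and gluing one-parameter families along a common neighborhood of infinity (\Cref{proposition:invariance-gluing-deformation}) — is already in place, and the corollary is the formal combination of these two facts.
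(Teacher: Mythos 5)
Your proposal is correct and matches the paper's implicit reasoning exactly: the corollary follows by combining \Cref{lemma:t-gluing} (independence of $t$ and $s$) with \Cref{proposition:invariance-gluing-deformation} (dependence only on deformation classes), and the paper states it without a separate proof precisely because it is this formal combination.
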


\begin{proposition}
	The gluing operation \eqref{equation:gluing-deformation-classes} is associative.
\end{proposition}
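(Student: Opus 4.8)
The plan is to reduce the statement to the strictly associative case $t = s = 0$ already recorded in \Cref{remark:associativity}. The key point is that, by the Corollary preceding this Proposition (which combines \Cref{lemma:t-gluing} with \Cref{proposition:invariance-gluing-deformation}), the gluing $[\hat{X}^{01},\hat{\l}^{01}] \# [\hat{X}^{12},\hat{\l}^{12}]$ of two deformation classes may be computed using \emph{any} choice of representatives and of parameters $t \ge 0$, $s \in \R$ in \eqref{equation:gluing-deformation-classes}. Taking $t = s = 0$, this gives the concrete formula
\[
	[\hat{X}^{01},\hat{\l}^{01}] \# [\hat{X}^{12},\hat{\l}^{12}] = [\hat{X}^{01} \# \hat{X}^{12}, \hat{\l}^{01} \# \hat{\l}^{12}],
\]
and similarly for a product of two classes built out of $(Y^1,\xi^1), (Y^2,\xi^2), (Y^3,\xi^3)$.

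First I would fix representatives $(\hat{X}^{01},\hat{\l}^{01})$, $(\hat{X}^{12},\hat{\l}^{12})$, $(\hat{X}^{23},\hat{\l}^{23})$ of the three given deformation classes. Applying the definition of $\#$ twice, using the $t = s = 0$ gluing at each stage and invoking \Cref{proposition:invariance-gluing-deformation} to see that the auxiliary choice of representative for the intermediate product is immaterial, I obtain
\[
	\bigl([\hat{X}^{01},\hat{\l}^{01}] \# [\hat{X}^{12},\hat{\l}^{12}]\bigr) \# [\hat{X}^{23},\hat{\l}^{23}] = \bigl[(\hat{X}^{01} \# \hat{X}^{12}) \# \hat{X}^{23},\ (\hat{\l}^{01} \# \hat{\l}^{12}) \# \hat{\l}^{23}\bigr],
\]
\[
	[\hat{X}^{01},\hat{\l}^{01}] \# \bigl([\hat{X}^{12},\hat{\l}^{12}] \# [\hat{X}^{23},\hat{\l}^{23}]\bigr) = \bigl[\hat{X}^{01} \# (\hat{X}^{12} \# \hat{X}^{23}),\ \hat{\l}^{01} \# (\hat{\l}^{12} \# \hat{\l}^{23})\bigr].
\]
By \Cref{remark:associativity}, the two cobordisms appearing on the right-hand sides are canonically isomorphic as exact symplectic cobordisms from $(Y^0,\xi^0)$ to $(Y^3,\xi^3)$; an isomorphism of exact symplectic cobordisms is in particular a deformation equivalence (take the constant family), so the two sides represent the same deformation class. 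This establishes associativity.

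The only point requiring any care — and the closest thing to an obstacle — is the justification that the gluing of deformation classes may legitimately be evaluated on the nose using $t = s = 0$ representatives, and that the intermediate representative chosen for $[\hat{X}^{01},\hat{\l}^{01}] \# [\hat{X}^{12},\hat{\l}^{12}]$ (resp.\ for $[\hat{X}^{12},\hat{\l}^{12}] \# [\hat{X}^{23},\hat{\l}^{23}]$) does not affect the final class; both facts are precisely what \eqref{equation:gluing-deformation-classes} and \Cref{proposition:invariance-gluing-deformation} supply. Beyond this bookkeeping, no new geometric input is needed: the entire content is the strict associativity of the $t = s = 0$ gluing from \Cref{remark:associativity}.
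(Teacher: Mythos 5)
Your proposal is correct and is essentially a careful unpacking of the paper's own one-line proof, which simply cites \Cref{remark:associativity}. You fill in the (genuinely needed but easy) bookkeeping — choosing $t=s=0$ representatives, invoking \Cref{proposition:invariance-gluing-deformation} to justify that the intermediate class is well-defined, and noting that a canonical isomorphism of cobordisms yields a deformation equivalence — all of which is implicit in the paper's terse citation.
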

\begin{proof}
	This follows from \Cref{remark:associativity}.
\end{proof}

We now discuss submanifolds in exact symplectic cobordisms. Let $V \sub (Y, \xi)$ be a contact submanifold. There is a canonical embedding
\eq (SV,\l_V) \to (SY,\l_Y) \eeq
which takes a pair $(p, \a_p) \in SV$ to the unique pair $(p, \tilde{\a}_p) \in SY$ such that $\tilde{\a}_p(w)=\a_p(w)$ for some (and hence any) $w \in T_pV \setminus (\xi_p \cap T_pV)$. 

\defi \label{definition:relative-sympcobordism}
	Let $V^+ \subset (Y^+,\xi^+)$ and $V^- \subset (Y^-,\xi^-)$ be contact submanifolds of the same codimension, and let $(\hat{X},\hat{\l})$ be an exact symplectic cobordism from $(Y^+,\xi^+)$ to $(Y^-,\xi^-)$. We say that a smooth submanifold $H \subset \hat{X}$ is \emph{cylindrical} with ends $V^\pm$ if it is closed (as a subset) and there exist neighborhoods $U^\pm \subset SY^\pm$ of $\pm \infty$ such that
	\eq
		(e^\pm)^{-1}(H) \cap U^\pm = SV^\pm \cap U^\pm,
	\eeq
	where $e^\pm : SY^\pm \to \hat{X}$ are the ends \eqref{positive-end}--\eqref{negative-end} of $(\hat{X},\hat{\l})$.

	If $H$ is a \emph{symplectic} cylindrical submanifold of $(\hat{X},\hat{\l})$, then we say that $(\hat{X},\hat{\l},H)$ is an \emph{exact relative symplectic cobordism} from $(Y^+,\xi^+,V^+)$ to $(Y^-,\xi^-,V^-)$.
	Note that in this case, the restrictions of $e^\pm$ to $SV^\pm \cap U^\pm$ endow $(H,\hat{\l}|_H)$ with the structure of an exact symplectic cobordism from $(V^+,\xi^+_{V^+})$ to $(V^-,\xi^-_{V^-})$.
\edefi

\begin{example}
	If $V$ is a contact submanifold of $(Y, \xi)$, then, as noted above, $SV$ can be viewed as a symplectic submanifold of $(SY,\l_Y)$, and $(SY,\l_Y,SV)$ is canonically endowed with the structure of an exact relative symplectic cobordism in the sense of \Cref{definition:relative-sympcobordism} by letting $e^+=e^-=\op{id}$.
\end{example}

\begin{notation} \label{notation:emphasis-relative}
	Let $\hat{X}, \hat{\l}, H$ be as in \Cref{definition:relative-sympcobordism}. As explained in \Cref{notation:choice-forms}, a choice of contact forms $\op{ker} \l^{\pm} = \xi^{\pm}$ endows $(\hat{X}, \hat{\l})$ with the structure of a \emph{strict} relative symplectic cobordism. We analogously speak of a \emph{strict} relative exact symplectic cobordism and write $(\hat{X},\hat{\l},H)_{\l^-}^{\l^+}$ when we wish to emphasize that we are fixing contact forms $\l^{\pm}$ on the ends.
\end{notation}

Let $\L \sub (Y, \xi)$ be a Legendrian submanifold. The \emph{Lagrangian cone} of $\L$ is the Lagrangian submanifold 
\eq L = \{(p, \a) \in SY \sub T^*Y \mid  p \in \L \} \sub (SY, \l_Y). \eeq

\defi \label{definition:lagrangian-cob}
Let $\L^+ \sub (Y^+, \xi^+)$ and $\L^- \sub (Y^-, \xi^-)$ be Legendrian submanifolds and let $(\hat X, \hat \l)$ be an exact symplectic cobordism from $(Y^+, \xi^+)$ to $(Y^-, \xi^-)$. We say that a Lagrangian submanifold $L \sub (\hat X, \hat \l)$ is \emph{cylindrical} with ends $\L^{\pm}$  if it is closed (as a subset) and there exist neighborhoods $U^\pm \subset SY^\pm$ of $\pm \infty$ such that
	\eq
		(e^\pm)^{-1}(L) \cap U^\pm = L^\pm \cap U^\pm,
	\eeq
	where $e^\pm : SY^\pm \to \hat{X}$ are the ends \eqref{positive-end}--\eqref{negative-end} of $(\hat{X},\hat{\l})$ and $L^{\pm}$ are the Lagrangian cones of $\L^{\pm}$.
	
The data of a triple $(\hat X, \hat \l, L)$ is called an \emph{(exact) Lagrangian cobordism} from $(Y^+, \xi^+, \L^+)$ to $(Y^-, \xi^-, \L^-)$. 
\edefi

\begin{definition} \label{definition:homotopy-classes-submanifold}
The set of equivalence classes of cylindrical codimension $2$ submanifolds of 
$\hat X$ with ends $V^\pm$, where two submanifolds are equivalent if they are isotopic via a compactly supported (smooth) isotopy, will be denoted by $\O_{2n-2}(\hat X, V^+ \sqcup V^-)$.
\end{definition}

\defi \label{definition:strong-contact}
A contact submanifold $V \sub (Y,\l)$ is said to be a \emph{strong contact submanifold} if it is (set-wise) invariant under the Reeb flow of $\l$ on $Y$. We will also say that $(\hat{X},\hat{\l},H)_{\l^-}^{\l^+}$ is a \emph{strong} relative exact symplectic cobordism if both $V^+ \subset (Y^+,\l^+)$ and $V^- \subset (Y^-,\l^-)$ are strong contact submanifolds.
\edefi

\begin{definition} \label{definition:gluing-relative-cobordisms}
Let $(\hat{X}^{01},\hat{\l}^{01},H^{01})$ and $(\hat{X}^{12},\hat{\l}^{12},H^{12})$ be exact relative symplectic cobordisms from $(Y^0,\xi^0,V^0)$ to $(Y^1,\xi^1,V^1)$ and from $(Y^1,\xi^1,V^1)$ to $(Y^2,\xi^2,V^2)$ respectively. For any sufficiently large real number $t \ge 0$, $H^{01} \#_t H^{12}$ sits naturally inside $(\hat{X}^{01} \#_t \hat{X}^{12}, \hat{\l}^{01} \#_t \hat{\l}^{12})$ as a symplectic submanifold, and $(\hat{X}^{01} \#_t \hat{X}^{12}, \hat{\l}^{01} \#_t \hat{\l}^{12}, H^{01} \#_t H^{12})$ is a relative cobordism from $(Y^0,\xi^0,V^0)$ to $(Y^2,\xi^2,V^2)$. We will refer to it as the \emph{$t$-gluing} of $(\hat{X}^{01},\hat{\l}^{01},H^{01})$ and $(\hat{X}^{12},\hat{\l}^{12},H^{12})$.
\end{definition}

\begin{definition}
Let $(\hat{X}^1,\hat{\l}^1,H^1)$ and $(\hat{X}^2,\hat{\l}^2,H^2)$ be relative cobordisms from $(Y^+,\xi^+,V^+)$ to $(Y^-,\xi^-,V^-)$. An \emph{isomorphism of exact relative symplectic cobordisms} $\phi : (\hat{X}^1,\hat{\l}^1,H^1) \to (\hat{X}^2,\hat{\l}^2,H^2)$ is an isomorphism $\phi : (\hat{X}^1,\hat{\l}^1) \to (\hat{X}^2,\hat{\l}^2)$ in the sense of \Cref{definition:iso-sympcobordism} which maps $H^1$ diffeomorphically onto $H^2$.
\end{definition}

\begin{example}
Let $(\hat{X},\hat{\l},H)$ be an exact relative symplectic cobordism from $(Y^+,\xi^+,V^+)$ to $(Y^-,\xi^-,V^-)$. Then for any $t \ge 0$, the glued cobordisms $(SY^+, \l_{Y^+}, SV^+) \#_t (\hat{X},\hat{\l},H)$ and $(\hat{X},\hat{\l},H) \#_t (SY^-,\l_{Y^-},SV^-)$ are defined and canonically isomorphic to $(\hat{X},\hat{\l},H)$.
\end{example}

\begin{definition}\label{definition:family-relative-sympcobordisms}
A \emph{one-parameter family of exact relative symplectic cobordisms} from $(Y^+,\xi^+,V^+)$ to $(Y^-,\xi^-,V^-)$ is a manifold $\hat{X}$ equipped with a family of Liouville forms $\{ \hat{\l}^t \}_{t \in I}$, a family of symplectic submanifolds $H^t \subset (\hat{X},\hat{\l}^t)$, and embeddings
	\begin{align}
		e_t^+ : SY^+ &\to \hat{X} \\
		e_t^- : SY^- &\to \hat{X}
	\end{align}
	as in \Cref{definition:relative-sympcobordism}. We will always assume that the family is fixed at infinity, meaning that for every compact subinterval $[a,b] \subset I$,
	\begin{itemize}
		\item  $\{ \hat{\l}^t \}_{t \in [a,b]}$ and $\{ H^t \}_{t \in [a,b]}$ are constant outside of a compact subset of $\hat{X}$;
		\item $\{e_t^+\}_{t \in [a,b]}$ (resp. $\{e_t^-\}_{t \in [a,b]}$) is independent of $t$ on some neighborhood of $+\infty$ in $SY^+$ (resp. of $-\infty$ in $SY^-$).
	\end{itemize}

	Two relative cobordisms $(\hat{X}^0,\hat{\l}^0,H^0)$ and $(\hat{X}^1,\hat{\l}^1,H^1)$ are said to be \emph{deformation equivalent} if there exists a one-parameter family $(\hat{W},\hat{\mu}^t,K^t)_{t \in [0,1]}$ such that $(\hat{X}^0,\hat{\l}^0,H^0)$ is isomorphic to $(\hat{W},\hat{\mu}^0,K^0)$ and $(\hat{X}^1,\hat{\l}^1,H^1)$ is isomorphic to $(\hat{W},\hat{\mu}^1,K^1)$. The deformation class of a cobordism $(\hat{X},\hat{\l},H)$ will be denoted by $[\hat{X},\hat{\l},H]$.
\end{definition}

\begin{example}
Given $(\hat{X}^{01},\hat{\l}^{01},H^{01})$ and $(\hat{X}^{12},\hat{\l}^{12},H^{12})$ as in \Cref{definition:gluing-relative-cobordisms}, the glued cobordisms $(\hat{X}^{01} \#_t \hat{X}^{12}, \hat{\l}^{01} \#_t \hat{\l}^{12}, H^{01} \#_t H^{12})_{t \in{} [N,\infty)}$ form a one-parameter family for $N > 0$ sufficiently large. Similarly, for any fixed $t \gg 0$, $(\hat{X}^{01} \#_t \hat{X}^{12}, \hat{\l}^{01} \#_{t,s} \hat{\l}^{12}, H^{01} \#_t H^{12})_{s \in \R}$ is a one-parameter family. As in \Cref{lemma:t-gluing}, it follows that the deformation class 
\eq [\hat{X}^{01} \#_t \hat{X}^{12}, \hat{\l}^{01} \#_{t,s} \hat{\l}^{12}, H^{01} \#_t H^{12}] \eeq 
is independent of $t \gg 0$ and $s \in \R$.
\end{example}

\begin{proposition}
	The deformation class of $(\hat{X}^{01} \#_t \hat{X}^{12}, \hat{\l}^{01} \#_t \hat{\l}^{12}, H^{01} \#_t H^{12})$ only depends on the deformation classes of $(\hat{X}^{01},\hat{\l}^{01},H^{01})$ and $(\hat{X}^{12},\hat{\l}^{12}, H^{12})$.
\end{proposition}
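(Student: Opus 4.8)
The plan is to copy the proof of \Cref{proposition:invariance-gluing-deformation} almost verbatim, carrying along the codimension $2$ symplectic submanifolds. Given one-parameter families $(\hat{W}^{01},\hat{\mu}^{01,s},K^{01,s})_{s\in[0,1]}$ and $(\hat{W}^{12},\hat{\mu}^{12,s},K^{12,s})_{s\in[0,1]}$ of exact relative symplectic cobordisms realizing the two given deformation equivalences, I would first invoke the ``fixed at infinity'' clause of \Cref{definition:family-relative-sympcobordisms}: there are neighborhoods $U^+\subset SY^1$ of $+\infty$ and $U^-\subset SY^1$ of $-\infty$ on which the negative end of $\hat{W}^{01}$ and the positive end of $\hat{W}^{12}$ are independent of $s$ (write $e^\pm$ for them), and --- after shrinking $U^\pm$ and using the cylindrical condition of \Cref{definition:relative-sympcobordism} --- on which $(e^-)^{-1}(K^{01,s}) = SV^1\cap U^-$ and $(e^+)^{-1}(K^{12,s}) = SV^1\cap U^+$ for every $s$.

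Next, for $t\gg 0$ with $V := \mu_t^{-1}(U^+)\cap U^-\neq\emptyset$, I would form $\hat{W}^{01}\#_V\hat{W}^{12}$ exactly as in the proof of \Cref{proposition:invariance-gluing-deformation}; this is canonically the smooth manifold $\hat{W}^{01}\#_t\hat{W}^{12}$, independent of $s$, and carries the Liouville forms $\hat{\mu}^{01,s}\#_t\hat{\mu}^{12,s}$. On the neck $e^-(V)\cong e^+(\mu_t(V))$ (identified via $\mu_t$), both $K^{01,s}$ and $K^{12,s}$ restrict to the image of $SV^1$, so they glue to a well-defined subset $K^{01,s}\#_t K^{12,s}\subset\hat{W}^{01}\#_V\hat{W}^{12}$. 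I would check that this is a smooth submanifold (it equals $K^{01,s}$, resp.\ $K^{12,s}$, away from the neck, and equals $SV^1$ on the neck), that it is symplectic for $\hat{\mu}^{01,s}\#_t\hat{\mu}^{12,s}$ (on the neck $SV^1$ is symplectic for a constant rescaling of $\l_{Y^1}$), that it varies smoothly in $s$, and that it is constant outside a compact set. This makes $(\hat{W}^{01}\#_V\hat{W}^{12},\ \hat{\mu}^{01,s}\#_t\hat{\mu}^{12,s},\ K^{01,s}\#_t K^{12,s})_{s\in[0,1]}$ a one-parameter family of exact relative symplectic cobordisms from $(Y^0,\xi^0,V^0)$ to $(Y^2,\xi^2,V^2)$ in the sense of \Cref{definition:family-relative-sympcobordisms}.

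Finally, this family witnesses deformation equivalence of the two glued relative cobordisms at $s=0$ and $s=1$; combined with the preceding example --- that the deformation class of the $t$-gluing of relative cobordisms is independent of $t\gg 0$ and of $s$ --- this yields the proposition. The only point requiring care, exactly as in the absolute case, is verifying that $K^{01,s}$ and $K^{12,s}$ genuinely match along the gluing region to produce a smooth symplectic submanifold which is fixed at infinity; this is forced by the cylindrical condition of \Cref{definition:relative-sympcobordism} together with the ``fixed at infinity'' hypothesis on one-parameter families, so I do not expect any genuinely new difficulty beyond the bookkeeping already present in \Cref{proposition:invariance-gluing-deformation}.
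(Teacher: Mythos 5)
Your proposal is correct and takes essentially the same approach as the paper, which simply asserts that the proof of \Cref{proposition:invariance-gluing-deformation} carries over to the relative case for $t > 0$ large enough; your write-up supplies the (straightforward) verification that the cylindrical and fixed-at-infinity conditions force $K^{01,s}$ and $K^{12,s}$ to match on the neck and produce a one-parameter family of relative cobordisms.
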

\begin{proof}
	The proof of \Cref{proposition:invariance-gluing-deformation} also works in the relative case as long as $t > 0$ is chosen large enough.
\end{proof}

\begin{corollary}
	There is a well-defined gluing operation on deformation classes of exact relative symplectic cobordisms given by
	\begin{equation}\label{equation:gluing-relative-deformation-classes}
		[\hat{X}^{01},\hat{\l}^{01},H^{01}] \# [\hat{X}^{12},\hat{\l}^{12},H^{12}] = [\hat{X}^{01} \#_t \hat{X}^{12}, \hat{\l}^{01} \#_{t,s} \hat{\l}^{12}, H^{01} \#_t H^{12}]
	\end{equation}
	for any $t \gg 0$ and $s \in \R$.
\end{corollary}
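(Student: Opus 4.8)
The plan is to deduce the statement directly from the two results immediately preceding it, since the Corollary is essentially a bookkeeping consequence once those are in hand. First I would check that the right-hand side of \eqref{equation:gluing-relative-deformation-classes} is a well-defined deformation class, i.e.\ independent of the auxiliary parameters. By the preceding Example, for $N \gg 0$ the glued cobordisms $(\hat{X}^{01} \#_t \hat{X}^{12}, \hat{\l}^{01} \#_{t,s} \hat{\l}^{12}, H^{01} \#_t H^{12})$ assemble into a one-parameter family as $t$ ranges over $[N,\infty)$, and into a one-parameter family as $s$ ranges over $\R$ for each fixed $t \gg 0$; since deformation-equivalent cobordisms have the same deformation class, the class $[\hat{X}^{01} \#_t \hat{X}^{12}, \hat{\l}^{01} \#_{t,s} \hat{\l}^{12}, H^{01} \#_t H^{12}]$ does not depend on the particular choice of $t \gg 0$ or $s \in \R$.

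Next I would invoke the preceding Proposition, which says precisely that this common deformation class depends only on the deformation classes $[\hat{X}^{01},\hat{\l}^{01},H^{01}]$ and $[\hat{X}^{12},\hat{\l}^{12},H^{12}]$ of the two inputs, not on the chosen representatives. Combining the two observations, the assignment \eqref{equation:gluing-relative-deformation-classes} is a well-defined binary operation on deformation classes of exact relative symplectic cobordisms, which is exactly the content of the Corollary.

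The only point requiring any care — and it is already folded into \Cref{definition:gluing-relative-cobordisms} — is that, unlike the non-relative situation, the submanifold gluing $H^{01} \#_t H^{12}$ is only defined once $t$ is large enough that the cylindrical ends of $H^{01}$ and $H^{12}$ over $V^1$ overlap throughout the gluing region; this is why the statement is restricted to $t \gg 0$, and no separate argument is needed to handle it. I do not expect any genuine obstacle here: all of the substance sits in the preceding Proposition and Example, and the Corollary follows formally.
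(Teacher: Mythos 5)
Your argument is correct and matches the paper's intent exactly: the paper states the Corollary with no proof, treating it as an immediate consequence of the preceding Example (independence of $t \gg 0$ and $s$) and the preceding Proposition (independence of the chosen representatives within each deformation class), which is precisely the two-step bookkeeping you carry out. Your final remark about $t \gg 0$ being required so that the submanifold gluing $H^{01} \#_t H^{12}$ is defined is also the right reason and is consistent with \Cref{definition:gluing-relative-cobordisms}.
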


\begin{proposition}
	The gluing operation \eqref{equation:gluing-relative-deformation-classes} is associative.
\end{proposition}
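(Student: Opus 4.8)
The plan is to reduce associativity of \eqref{equation:gluing-relative-deformation-classes} to a single canonical isomorphism of iterated $t$-gluings, exactly as in the absolute case (\Cref{remark:associativity}), the only new feature being that the relative $t$-gluing is defined only for $t$ sufficiently large. Let $(\hat X^{01},\hat\l^{01},H^{01})$, $(\hat X^{12},\hat\l^{12},H^{12})$, $(\hat X^{23},\hat\l^{23},H^{23})$ be exact relative symplectic cobordisms between $(Y^i,\xi^i,V^i)$ for $i=0,1,2,3$. By the corollary preceding this proposition, the deformation class of a relative $t$-gluing is independent of the parameter $t \gg 0$ and of $s \in \R$; hence it suffices to fix one real number $t$ large enough that every relative $t$-gluing occurring below is defined and to exhibit, for a suitable choice of the auxiliary $s$-parameters, an isomorphism of exact relative symplectic cobordisms
\[ \bigl( (\hat X^{01} \#_t \hat X^{12}) \#_t \hat X^{23},\; (H^{01}\#_t H^{12})\#_t H^{23} \bigr) \;\cong\; \bigl( \hat X^{01} \#_t (\hat X^{12} \#_t \hat X^{23}),\; H^{01}\#_t(H^{12}\#_t H^{23}) \bigr). \]

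To build this isomorphism, first fix contact forms $\l^i$ on $(Y^i,\xi^i)$ so that each $\hat X^{ij}$ is strict near its ends. Forming $\hat X^{01}\#_t\hat X^{12}$ alters $\hat X^{01}$ and $\hat X^{12}$ only inside fixed neighborhoods of $-\infty$ and $+\infty$ in $SY^1$, while forming $(\,\cdot\,)\#_t\hat X^{23}$ alters $\hat X^{12}$ and $\hat X^{23}$ only inside fixed neighborhoods of $-\infty$ and $+\infty$ in $SY^2$. By \Cref{definition:sympcobordism} the positive-end and negative-end neighborhoods of $\hat X^{12}$ are disjoint, so these two modifications occur in disjoint regions and therefore commute: both iterated gluings are canonically identified with the manifold obtained from $\hat X^{01}\sqcup\hat X^{12}\sqcup\hat X^{23}$ by inserting a neck of length $t$ along $Y^1$ and a neck of length $t$ along $Y^2$. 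Under this identification the glued Liouville forms agree once the $s$-parameters are chosen so that the multiplicative rescaling factors attached to the three pieces $\hat X^{01},\hat X^{12},\hat X^{23}$ match; since these factors simply multiply, such a choice exists and may be taken in the range permitted by \Cref{definition:gluing-cobordisms}. This is the relative analog of \Cref{remark:associativity}.

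Finally, the submanifolds glue compatibly: by the cylindrical condition in \Cref{definition:relative-sympcobordism}, near the $Y^1$-neck each of $H^{01}$ and $H^{12}$ coincides with $SV^1$, and near the $Y^2$-neck each of $H^{12}$ and $H^{23}$ coincides with $SV^2$, so in either parenthesization the glued submanifold is obtained from $H^{01}\sqcup H^{12}\sqcup H^{23}$ by inserting the corresponding pieces of $SV^1$ and $SV^2$ in the two necks. Hence the canonical diffeomorphism of the previous paragraph maps $(H^{01}\#_t H^{12})\#_t H^{23}$ diffeomorphically onto $H^{01}\#_t(H^{12}\#_t H^{23})$, so it is an isomorphism of exact relative symplectic cobordisms, yielding the asserted equality of deformation classes. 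The only points needing genuine (though routine) checking are the disjointness of the two gluing regions inside $\hat X^{12}$ — which is exactly where the hypothesis $t \gg 0$ enters — and the matching of the submanifold identifications across the necks; I expect the bookkeeping of the rescaling parameters together with this disjointness check to be the only mildly delicate step, everything else being formal.
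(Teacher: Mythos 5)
Your proof is correct and takes essentially the same approach as the paper's: both arguments identify the two iterated gluings as canonically the same pair of smooth manifolds (submanifold included), then adjust the $s$-parameters so that the glued Liouville forms literally coincide. The paper is slightly terser, simply asserting the canonical identification and then noting that $s_1 = 0$, $s_2 = -t_2$ makes the forms equal, whereas you spell out why the identification exists (disjointness of the two gluing regions inside $\hat X^{12}$) and observe qualitatively that the multiplicative rescaling factors can be matched; these amount to the same argument.
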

\begin{proof}
Let $(\hat{X}^{i,i+1},\hat{\l}^{i,i+1},H^{i,i+1})$ be a relative cobordism from $(Y^i,\xi^i,V^i)$ to $(Y^{i+1},\xi^{i+1},V^{i+1})$, $i \in \{0,1,2\}$, and fix $t_1, t_2 \gg 0$. Note that $\bigl( (\hat{X}^{01} \#_{t_1} \hat{X}^{12}) \#_{t_2} \hat{X}^{23}, (H^{01} \#_{t_1} H^{12}) \#_{t_2} H^{23}\bigr)$ and $\bigl( \hat{X}^{01} \#_{t_1} (\hat{X}^{12} \#_{t_2} \hat{X}^{23}), H^{01} \#_{t_1} (H^{12} \#_{t_2} H^{23}) \bigr)$ can be canonically identified as pairs of smooth manifolds. Hence, it suffices to show that there exist $s_1, s_2 \in \R$ such that
	\eq
		(\hat{\l}^{01} \#_{t_1,s_1} \hat{\l}^{12}) \#_{t_2,s_2} \hat{\l}^{23} = \hat{\l}^{01} \#_{t_1,s_1} (\hat{\l}^{12} \#_{t_2,s_2} \hat{\l}^{23}).
	\eeq
	One can easily see from \Cref{definition:gluing-cobordisms} that taking $s_1 = 0$ and $s_2 = -t_2$ works.
\end{proof}

\subsection{Homotopy classes of asymptotically cylindrical maps} \label{subsection:homotopy-classes-maps}

\defi[see Sec.\ 6.1 in \cite{wendlsft}] \label{def-asympt-cyl-map}
Suppose that $(\hat{X}, \hat{\l})$ is an exact symplectic cobordism from $(Y^+, \l^+)$ to $(Y^-, \l^-)$. Given a closed surface $\S$ and finite subsets $\bf{p}^+, \bf{p}^- \subset \S$ (corresponding respectively to positive and negative punctures), a smooth map $u: \S- (\bf{p}^+ \sqcup \bf{p}^-) \to \hat X$ is said to be \emph{asymptotically cylindrical} if it converges exponentially near each puncture $z \in \bf{p}^+ \sqcup \bf{p}^-$ to a trivial cylinder over a Reeb orbit. 
	
More precisely, given any choice of translation invariant metric on $\R \tms Y^{\pm}$, we require that there exists a choice of holomorphic cylindrical coordinates near each $z \in \bf{p}^{\pm}$ such that $u$ takes the form 
\eq\label{equation:asymptotic-formula-example} u(s,t)= \op{exp}_{(P s, \g_z(t))} h(s,t) \eeq 
for $|s|$ large, where $\g_z$ is a Reeb orbit of period $P$ and $h(s,t)$ is a vector field which decays to zero with all its derivatives as $|s| \to \infty$. (i.e.\ these properties hold for $s \gg 0$ if $z \in \bf{p}^+$ and for $s \ll 0$ if $z \in \bf{p}^-$). 
\edefi


\rmk
There is also a notion of an asymptotically cylindrical submanifold which will not be needed in this paper. 
\ermk

\begin{definition}[{cf. \cite[Sec.\ 1.2(I)]{pardon}}]
Let $(\hat{X},\hat{\l})$ be an exact symplectic cobordism from $(Y^+,\l^+)$ to $(Y^-,\l^-)$ and let $\G^\pm$ be a finite set of Reeb orbits in $(Y^\pm,\l^\pm)$.

By truncating the ends of $\hat{X}$, we obtain a compact submanifold $X_0 \subset \hat{X}$ with boundary $\partial X_0 = Y^+ \sqcup Y^-$.
We define the set of homotopy classes $\pi_2(\hat{X}, \G^+ \sqcup \G^-)$ by
\begin{equation} \label{equation:homotopy-classes}
	\pi_2(\hat{X}, \G^+ \sqcup \G^-) := [(S,\partial S),(X_0,\G^+ \sqcup \G^-)]/\Diff(S,\partial S),
\end{equation}
where $S$ is a compact connected oriented surface of genus $0$ equipped with a homeomorphism $\partial S \to \Gamma^+ \sqcup \Gamma^-$, and $\Diff(S,\partial S)$ is the group of diffeomorphisms of $S$ which fix $\partial S$ pointwise. (The notation $[-, -]$ here stands for \emph{homotopy classes} of maps of pairs.)
\end{definition}
\begin{remark}
	The right-hand side of \eqref{equation:homotopy-classes} is independent of the choice of truncation $X_0$ up to canonical bijection.
	In the case where $(\hat{X},\hat{\l}) = (\R \times Y, e^s \lambda)$ is the symplectization of a contact manifold $(Y,\lambda)$, we can take $X_0 = \{0\} \times Y$ and \eqref{equation:homotopy-classes} becomes identical to \cite[(1.2)]{pardon}.
\end{remark}

For any choice of truncation $X_0 \subset \hat{X}$, there is a canonical retraction $\pi : \hat{X} \to X_0$ induced by quotienting by the Liouville flow (more precisely, one should quotient by the backwards Liouville flow at the positive end and by the forwards Liouville flow at the negative end). If $u: \S- (\bf{p}^+ \sqcup \bf{p}^-) \to \hat X$ is an asymptotically cylindrical map, then the composition $\pi \circ u$ can be extended to a map
\begin{equation} \label{equation:compactified-map}
	\bar{u} : (\overline{\S}, \partial \overline{\S}) \to (X_0,\G^+ \sqcup \G^-),
\end{equation}
where $\overline{\S}$ is a compactification of $\S- (\bf{p}^+ \sqcup \bf{p}^-)$ obtained by adding one boundary circle for each puncture. The homotopy class $[u] \in \pi_2(\hat{X}, \G^+ \sqcup \G^-)$ of $u$ is defined to be the equivalence class of \eqref{equation:compactified-map}.

\begin{definition}
Let $(\hat X, \hat \l, L)$ be an exact Lagrangian cobordism from $(Y^+, \l^+, \L^+)$ to $(Y^-, \l^-, \L^-)$ (see \Cref{definition:lagrangian-cob}).

Given a surface with boundary $\S$ and finite subsets $\bf{p}^+, \bf{p}^- \sub \op{int}(\S)$ and $\bf{c}^+, \bf{c}^- \in \d \S$, a smooth map $u: \S- (\bf{p}^{\pm} \cup \bf{c}^{\pm})$ is said to be cylindrical if it converges asymptotically near each interior puncture to a trivial cylinder over a Reeb orbit, and it converges exponentially near each boundary puncture to a trivial strip over a Reeb chord.
\end{definition}

Let $\G^{\pm}$ be a finite set of Reeb orbits in $(Y^\pm, \l^\pm)$ and let $\G_{\L^{\pm}}$ be a finite \emph{ordered} set of Reeb chords of $\L^{\pm} \sub (Y^\pm, \l^\pm)$. We let $\bf{p}^{\pm}$ be a finite set equipped with bijections $\g^{\pm}: \bf{p}^{\pm} \to \G^{\pm}$ and we let $\bf{c}= \bf{c}^+ \sqcup \bf{c}^-$ be a finite ordered set equipped with order-preserving bijections $a^{\pm}: \bf{c}^{\pm} \to \G_{\L^{\pm}}$. Then we let 
\eq \pi_2(\hat X; \G_{\L^+}, \G_{\L^-}, \G^+, \G^- ) \eeq 
be the set of equivalence classes of maps from $\S- (\bf{p}^{\pm} \cup \bf{c}^{\pm})$ to $\hat X$ which are asymptotic to $\g_p^{\pm}$ at $p \in \bf{p}^\pm$ (resp.\ $a_c^{\pm}$ at $c \in \bf{c}^\pm$), where two such maps $u,v$ are equivalent if there exists a compactly supported diffeomorphism $\phi$ of $\S- (\bf{p}^{\pm} \cup \bf{c}^{\pm})$ such that $u$ and $v \circ \phi$ are homotopic (through cylindrical maps).

\section{Reeb dynamics near a codimension $2$ contact submanifold}

\subsection{The Conley-Zehnder index}  \label{subsection:cz-index}

A hermitian vector bundle $(E, J, \o)$ is a vector bundle $E$ together with an almost-complex structure $J$ and a symplectic structure $\o$ such that $J$ is compatible with $\o$. An \emph{asymptotic operator} on a hermitian vector bundle $(E, J, \o)$ over $S^1$ is a real-linear differential operator $\mathbf{A}: \G(E) \to \G(E)$ which, in some (and hence any) unitary trivialization, takes the the form 
\begin{align}
\bf{A}: C^\infty(S^1, \R^{2n}) &\to  C^\infty(S^1, \R^{2n})\\
\eta &\mapsto - J \d_t \eta - S(t) \eta,
\end{align}
where $t \in S^1$ and $S: S^1 \to \op{End}(\R^{2n})$ is a loop of symmetric matrices. The asymptotic operator $\mathbf{A}$ is said to be non-degenerate if $0$ is not an eigenvalue.

Fix a hermitian vector bundle $(E, J, \o)$ over $S^1$ and a unitary trivialization $\tau$. Given a non-degenerate asymptotic operator $\mathbf{A}$, we can obtain a non-degenerate path of symplectic matrices by solving the ordinary differential equation
\eq \label{equation:ode} (-J \d_t - S(t)) \Psi(t)= 0, \; \Psi(0)= \op{id}. \eeq
Conversely, given a non-degenerate path of symplectic matrices, we can recover a non-degenerate asymptotic operator by solving \eqref{equation:ode} for $S(t)$. 

The \emph{Conley-Zehnder index} $\CZ(\mathbf{A})= \CZ(\Psi) \in \Z$ is an integer-valued invariant which can be associated equivalently to a non-degenerate asymptotic operator equipped with a unitary trivialization $\tau$ or to a non-degenerate path of symplectic matrices. It only depends on $\tau$ up to homotopy through unitary trivializations. We refer the reader to \cite[Sec.\ 3.4]{wendlsft} or \cite{guttcz} for a detailed overview of the Conley-Zehnder index.

\defi
Let $(Y, \xi= \op{ker} \l)$ be a contact manifold and let $\g$ be a Reeb orbit of period $P > 0$, parametrized so that $\l(\g') = P$. Given a choice of $d\l$-compatible almost complex structure $J$ on $\xi$, we can define the asymptotic operator $\mathbf{A}_\g : \G(\g^*\xi) \to \G(\g^*\xi)$ by $\mathbf{A}_\g = -J(\nabla_t - P\nabla R_\l)$, where $\nabla$ is some symmetric connection on $Y$.
\edefi

The Conley-Zehnder index of a Reeb orbit $\g$ relative to a trivialization $\tau$ of $\g^*\xi$ will be denoted by $\CZ^\tau(\g):= \CZ^\tau(\mathbf{A}_\g)$. 

Let us now consider a contact manifold $(Y^{2n-1}, \xi= \op{ker} \l)$ and a strong contact submanifold $(V^{2n-3}, \l|_V)$. Observe that the contact distribution splits naturally along $V$ as
\eq
\label{equation:splitting} \xi|_V = \xi|_V^{\top} \oplus \xi|_V^{\perp},
\eeq
where $\xi|_V^{\top}= \xi|_V \cap TV$ and $\xi|_V^{\perp}$ is the symplectic orthogonal complement of $\xi|_V^{\top} \sub \xi|_V$ with respect to $d\l$. Suppose that $J$ is a $d\lambda$-compatible almost complex structure on $\xi$ which respects the splitting \eqref{equation:splitting}. 

Let $\g: S^1 \to V$ be a Reeb orbit. Since $J$ respects the above splitting, then so does the associated asymptotic operator, which we can therefore write as $\mathbf{A}_\g = \mathbf{A}_{\g}^{\top} \oplus \mathbf{A}_{\g}^{\perp}$. If we choose a unitary trivialization $\tau$ of $\xi|_{\g}$ which is also compatible with the splitting, we can define $\CZ^{\tau}_{T}(\g):= \CZ^{\tau}(\mathbf{A}_{\g}^{\top})$ and $\CZ^{\tau}_{N}(\g):= \CZ^{\tau}(\mathbf{A}_{\g}^{\perp})$. We call these respectively the tangential and normal Conley-Zehnder indices of $\g$ with respect to $\tau$. 

We define the integers
\eq
\a_N^{{\tau}; -}(\g) := \lfloor \CZ_N^{\tau}(\g)/2 \rfloor,  \qquad \a_N^{{\tau}; +}(\g) := \lceil \CZ_N^{\tau}(\g)/2 \rceil.
\eeq

Let $p_N(\g)= \a_N^{{\tau}; +}(\g) - \a_N^{{\tau}; -}(\g)\in \{0,1\}$ be the \emph{(normal) parity} of $\g$ and observe that it is independent of the choice of trivialization. We have
\eq \CZ_N^{\tau}(\g)= 2 \a_N^{{\tau}; -}(\g) +p_N(\g) = 2 \a_N^{{\tau}; +}(\g) - p_N(\g) \eeq
from which it also follows that $p_N(\g) \equiv \CZ_N^{\tau}(\g)$ mod $2$. 

There is a canonical isomorphism 
\eq \label{equation:normal-bundle-isom} \xi|_V^{\perp} \xrightarrow{\sim} N_{Y/V}, \eeq
where $N_{Y/V}$ denotes the normal bundle of $V \subset Y$.
If $(V, \xi_V) \sub (Y, \xi)$ are co-oriented and hence oriented, then $N_{Y/V}$ is also oriented and \eqref{equation:normal-bundle-isom} is orientation preserving. If we assume that $N_{Y/V}$ is trivial, then it follows that \eqref{equation:normal-bundle-isom} induces a bijection between homotopy classes of trivializations of $N_{Y/V}$ compatible with the orientation and homotopy classes of unitary trivializations on $\xi|_V^{\perp}$. Since the Conley-Zehnder index only depends on the homotopy class of unitary trivializations, we may define $\CZ^{\tau}_N(\g)$ with respect to any homotopy class of trivializations $\tau$ of $N_{Y/V}$ compatible with the orientation.

\subsection{Normal dynamics and adapted contact forms} \label{subsection:adapted-forms}

We now state some important definitions which will be used throughout this paper.

\defi \label{definition:contact-pair}
A \emph{trivial-normal contact pair} (or just \emph{TN contact pair}) is a datum $(Y, \xi, V)$ consisting of a closed co-oriented contact manifold $(Y, \xi)$ and a co-oriented codimension $2$ contact submanifold $V \sub (Y, \xi)$ with trivial normal bundle $N_{Y/V}$. 
\edefi

An important example of a TN contact pair is the binding of a contact open book decomposition (see \Cref{definition:openbook}).  A choice of (homotopy class of) trivialization $\tau$ on $N_{Y/V}$ is called a \emph{framing} and we say that $(V, \tau) \sub Y$ is a \emph{framed} codimension $2$ submanifold. Note that we do not assume in \Cref{definition:contact-pair} that $V$ and $Y$ are non-empty. For future reference, we let $\xi_{\emptyset}$ denote the unique contact structure on the empty set. 

\defi \label{definition:local-datum}
Given a TN contact pair $(Y, \xi, V)$ with $V$ non-empty, let $\fk{R}(Y, \xi, V)$ be the set of triples $\fk{r}=(\a_V, \tau, r)$ where: 
\begin{itemize}
\item $\a_V \in \O^1(V)$ is a non-degenerate contact form for $(V,\xi_V)$;
\item $\tau$ is a homotopy class of trivializations of $N_{Y/V}$ (i.e.\ a framing) which is compatible with the orientation;
\item $r>0$ is a strictly positive real number, and we have $(1/r) \Z \cap \mc{S}(\a_V)= \emptyset$, where $\mc{S}(\a_V)$ is the action spectrum of $\a_V$.
\end{itemize}

If $V=\emptyset$ (with $Y$ possibly also empty), we define $\fk{R}(Y, \xi, \emptyset)= \{ (\a_{\emptyset}, \tau_{\emptyset}, 0) \}$, where $\a_{\emptyset}, \tau_{\emptyset}$ are understood as a contact form and normal trivialization on the empty set.  
\edefi

\defi  \label{definition:adapted-contact-form}
Given a TN contact pair $(Y, \xi, V)$ and $\fk{r}=(\a_V, \tau, r) \in \fk{R}(Y, \xi, V)$, we say that a contact form $\op{ker} \l = \xi$ is \emph{adapted} to $\fk{r}$ if:
\begin{itemize}
\item $\l$ is non-degenerate; 
\item $\l|_V = \a_V$;
\item $V$ is a strong contact submanifold of $(Y, \l)$ (see \Cref{definition:strong-contact});
\item We have $\CZ_N^{\tau} (\g) = 1 + 2 \lfloor r P_\g \rfloor$  for all Reeb orbits $\g \sub V$, where $P_\g$ is the period of $\g$.
\end{itemize}

In case $V=\emptyset$, any contact form $\l$ is considered to be adapted to the unique element $(\a_{\emptyset}, \tau_{\emptyset}, 0) \in \fk{R}(Y, \xi, \emptyset)$. Given a contactomorphism $f: (Y, \xi, V) \to (Y',\xi', V')$, we write $f_*\fk{r}= (f_*\a_V, f_*\tau, r) \in \fk{R}(Y', \xi', V')$. If $\phi_t: V \to Y$ is an isotopy of contact embeddings where $\phi_0$ is the tautological embedding $V \xrightarrow{\op{id}} V \sub Y$ and $\phi_1(V)= V'$, then $\phi_t$ extends to a family of contactomorphisms $f_t$. We then write $(\phi_1)_*\fk{r}:= (f_1)_* \fk{r}$ (this is independent of the choice of extension). 

We say that $\l$ is \emph{positive elliptic} near $V \neq \emptyset$ if it is adapted to some $\fk{r}=(\a_V, \tau, r) \in \fk{R}(Y, \xi, V)$; we refer to $r > 0$ as the \emph{rotation parameter}.
\edefi

\rmk
Our insistence on allowing the case where $Y=\emptyset$ in the above definitions is explained by the need to treat Liouville manifolds as special cases of Liouville cobordisms in the arguments of \Cref{section:enriched-setups-twisted-counts}. 
\ermk

We will prove in \Cref{proposition:adapted-contact-forms-exist} that adapted contact forms always exist, i.e. for any TN contact pair $(Y, \xi, V)$ and $\fk{r}=(\a_V, \tau, r) \in \fk{R}(Y, \xi, V)$ there exists a contact form adapted to $\fk{r}$. The first step is to construct a suitable local model.

\begin{construction}\label{construction:contact-form}
For $0<\e \leq 1$, let $D^2 \sub \R^2$ be the standard disk of radius $\e$ (in the sequel, we will often denote this disk by $D^2_\e$). Let $(V,\a_V)$ be a contact manifold and let $\phi : D^2 \to \R_{> 0}$ be a smooth positive function which has a nondegenerate critical point at $0$ and satisfies $\phi(0) = 1$. We define
\eq \a_V^\phi = \frac{1}{\phi}(\a_V + \lambda_{D^2}) \eeq
where $\lambda_{D^2} = \frac{1}{2}(x\,dy - y\,dx)$ is the usual Liouville form on $D^2$. This is a contact form on $V \times D^2$ whose restriction to $V = V \times \{0\}$ coincides with $\a_V$. Its Reeb vector field is given by
\eq R_\phi = (\phi - Z_{D^2}\phi) R_V + X_\phi \eeq
where $Z_{D^2} = \frac{1}{2}(x \partial_x + y \partial_y)$ is the Liouville vector field of $\lambda_{D^2}$ and $X_\phi = -(\partial_y\phi)\partial_x + (\partial_x\phi)\partial_y$ is the Hamiltonian vector field of $\phi$ with respect to the symplectic form $\omega_{D^2} = d\lambda_{D^2}$. Our assumptions on $\phi$ imply that $R_\phi = R_V$ on $V \times \{0\}$, so that $(V,\a_V)$ is a strong contact submanifold of $(V \times D^2, \a_V^\phi)$. We will let
\eq S_\phi = \begin{pmatrix} \partial_{xx}\phi(0) & \partial_{yx}\phi(0) \\ \partial_{xy}\phi(0) & \partial_{yy}\phi(0) \end{pmatrix} \in \R^{2\times 2} \eeq 
denote the Hessian of $\phi$ at the origin. Since $S_\phi$ is symmetric and nondegenerate, its eigenvalues are real and nonzero, so its signature $\Sign(S_\phi)$ is one of $0,\pm 2$. We will say that $\phi$ is \emph{hyperbolic} if $\Sign(S_\phi) = 0$, \emph{positive elliptic} if $\Sign(S_\phi) = 2$ and \emph{negative elliptic} if $\Sign(S_\phi) = -2$. In the elliptic case, we define $c_\phi = \sqrt{\det(S_\phi)}/(2\pi)$; this is a positive real number since $\det(S_\phi) > 0$. Finally, we note that the splitting $(\xi_\phi)|_V = (\xi_\phi)|_V^{\top} \oplus (\xi_\phi)|_V^{\perp}$ (see \eqref{equation:splitting}) is given by $(\xi_\phi)|_V^{\top} = \xi_V$ and $(\xi_\phi)|_V^{\perp} = T_0 D^2$. We will let $\tau_\phi$ denote the trivialization of $(\xi_\phi)|_V^{\perp}$ by $\{\partial_x,\partial_y\}$.
\end{construction}

We say that $\a_V^\phi$ is \emph{non-degenerate on $V$} if every Reeb orbit of $\a_V$ is non-degenerate when viewed as a Reeb orbit of $\a_V^\phi$.
\prop \label{proposition:nondegeneracy-local-model}
Carrying over the notation of \Cref{construction:contact-form}, suppose that $\a_V$ is non-degenerate. If $\phi$ is elliptic, then $\a_V^\phi$ is non-degenerate on $V$ if and only if $(1/c_\phi)\Z \cap \mc{S}(\a_V) = \emptyset,$ where $\mc{S}(\a_V)$ denotes the action spectrum of $\a_V$.
\eprop

\pf
Let $\g$ be a Reeb orbit of period $P$ contained in $V$. Recall that $\g$ is non-degenerate if and only if its asymptotic operator is non-degenerate. Choose a trivialization $\tau$ and an almost complex structure $J$ on $(\xi_\phi)|_\g$ which preserve the splitting $(\xi_\phi)|_\g = (\xi_V)|_\g \oplus T_0 D^2$ and coincide with $\tau_\phi$ and $J_0$ respectively on $T_0 D^2$, where $J_0$ denotes the standard almost complex structure on $\R^2 = T_0 D^2$. The asymptotic operator $A_\g$ is compatible with this splitting and can therefore be written as $A_\g = A_{\g}^{\top} \oplus A_{\g}^{\perp}$. The tangential part $A_\g^\top$ is non-degenerate since it coincides with the asymptotic operator of $\g$ as a Reeb orbit in $V$. The normal part $A_\g^\perp$ is given explicitly by
\eq A_\g^\perp = -J_0 \partial_t - P \cdot S_\phi  \eeq
(this follows from a short computation using the formula for the Reeb vector field $R_\phi$ given in \Cref{construction:contact-form}). Define a path $\Psi$ of symplectic matrices by $\Psi(t) = \exp(t P \cdot J_0 S_\phi)$. Then $A_\g^\perp$ is non-degenerate if and only if $\Psi(1)$ doesn't have $1$ as an eigenvalue.
If $\phi$ is elliptic, then the eigenvalues of $\Psi(1)$ are $\exp(\pm i P\sqrt{\det(S_\phi)})$. Hence, $\g$ is non-degenerate if and only if $P\sqrt{\det(S_\phi)}$ is not an integer multiple of $2\pi$, i.e. $P \notin (1/c_\phi)\Z$. It follows that $\lambda_\phi$ is non-degenerate if and only if $(1/c_\phi)\Z \cap \mc{S}(\a_V) = \emptyset$, as claimed.
\epf
The important feature of \Cref{construction:contact-form} is that the normal Conley-Zehnder indices of the Reeb orbits in $V$ can be computed explicitly.

\prop \label{proposition:normal-CZ-indices}
Assume $\a_V^\phi$ is non-degenerate on $V$. If $\phi$ is elliptic, then
\eq \CZ_N^{\tau_\phi}(\g) = \pm(1 + 2\lfloor c_\phi P_\g \rfloor) \eeq
for every Reeb orbit $\g$ contained in $V$, where $P_\g > 0$ denotes the period of $\g$ and the sign is $+$ or $-$ depending on whether $\phi$ is positive elliptic or negative elliptic.
\eprop

\pf
We have $\CZ_N^{\tau_\phi}(\g) = \CZ(\Psi)$, where $\Psi(t) = \exp(t P \cdot J_0 S_\phi)$ is the path of symplectic matrices defined in the proof of \Cref{proposition:nondegeneracy-local-model} (see \cite[Sec.\ 3.4]{wendlsft}). Proposition~41 of \cite{guttcz} implies that
\eq  \CZ(\Psi) = \pm(1 + 2\lfloor c_\phi P \rfloor) \eeq
if $\Sign(S_\phi) = \pm 2$.
\epf

\prop \label{proposition:adapted-contact-forms-exist}
Fix a TN contact pair $(Y,\xi, V)$ and an element $\fk{r}=(\a_V, \tau, r) \in \fk{R}(Y, \xi, V)$. Then there exists a contact form $\l$ on $(Y, \xi)$ which is adapted to $\fk{r}$. 
\eprop

\pf
Let $\phi$ be as in \Cref{construction:contact-form}.  The standard neighborhood theorem for contact submanifolds (see \cite[Thm.\ 2.5.15]{geiges}) implies that the inclusion map $V \to Y$ extends to a contact embedding $\iota : (V \times D^2_\epsilon,\ker(\a_V^\phi)) \to (Y,\xi)$ such that $\iota^*\tau$ is homotopic to $\tau_\phi$, for some $\e>0$ sufficiently small. Hence there exists a contact form $\lambda$ for $\xi$ such that $\iota^*\lambda = \a^\phi_V$ near $V$. In addition to choosing $\phi$ so that $\lambda$ is adapted to $\fk{r}$, we also need to make sure that $\lambda$ can be modified away from $V$ so that it becomes non-degenerate. By \cite[Thm.\ 13]{abw}, this can be achieved by choosing a $\phi$ such that the following two conditions are satisfied:
\begin{itemize}
	\item $\a^{\phi}_V$ is non-degenerate on $V$;
	\item all the Reeb orbits of $\a^{\phi}_V$ in $V \times D^2_\epsilon$ are contained in $V$.
\end{itemize}

Let us set $\phi = 1 + \pi r(x^2 + y^2)$. \Cref{proposition:nondegeneracy-local-model} implies that $\a_V^\phi$ is non-degenerate on $V$ since $c_\phi = \sqrt{(2\pi r)^2}/(2\pi) = r$. Since $R_\phi = R_V + X_\phi$, every Reeb orbit $\g$ of $\a_V^\phi$ is of the form $\g = (\g_V,\g_\phi)$ where $\g_V$ is an orbit of $R_V$ and $\g_\phi$ is an orbit of $X_\phi$ with the same period $P > 0$. From the formula $X_\phi = -2\pi r y \partial_x + 2\pi r x \partial_y$, we see that if $\g_\phi$ were not constant, we would have $P \in (1/r)\Z$, contradicting our assumption on $r$ (see \Cref{definition:local-datum}). Thus $\g$ is contained in $V$.
\epf

\subsection{Open book decompositions} \label{subsection:open-book-decomp}

In this section, we consider normal Reeb dynamics for bindings of open book decompositions. We begin by recalling the definition of an open book decomposition (we refer to \cite[A.1]{ranicki} and \cite[Sec.\ 4.4.2]{geiges} for a historically informed survey of this theory). 

\defi  \label{definition:openbook}
An \emph{open book decomposition} $(Y,B, \pi)$ of a closed, oriented $n$-manifold $Y$ consists of the following data: 
\begin{itemize}
\item[(i)] An oriented, closed, codimension $2$ submanifold $B \sub Y$ with trivial normal bundle.  
\item[(ii)] A fibration $\pi: Y-B \to S^1$ which coincides with the angular coordinate in some neighborhood $B \tms \{0\} \sub B \tms D^2 = B \tms \{ (x,y) \mid x^2+y^2 <1 \}$. 
\end{itemize}
The submanifold $B \sub Y$ is called the \emph{binding} and the fibers of $\pi$ are called \emph{pages}. 
\edefi

Observe that the data of an open book decomposition induces a natural trivialization of the normal bundle to the binding. We also recall what it means for an open book decomposition to support a contact structure.

\defi[see \cite{girouxicm}]	\label{definition:girouxform}
Given an odd-dimensional manifold $Y^{2n-1}$, an open book decomposition $(Y,B, \pi)$ is said to \emph{support} a contact structure $\xi$ if there exists a contact form $\xi= \op{ker} \a$ such that the following properties hold:
\begin{itemize}
\item[(i)] The restriction of $\a$ to $B$ is a contact form.
\item[(ii)] The restriction of $d\a$ to any page $\pi^{-1}(\theta)$ is a symplectic form.
\item[(iii)] The orientation of $B$ induced by $\a$ coincides with the orientation of $B$ as the boundary of the symplectic manifold $(P_{\t}, d\a)$, where $P_{\theta}=\pi^{-1}(\theta)$ is any page. 
\end{itemize}
Such a contact form is called a \emph{Giroux form} (and is also said in the literature to be \emph{adapted} to the open book decomposition).
\edefi

\rmk \label{remark:transverse-reeb}
Condition (ii) in the above definition is equivalent to the Reeb vector field of $\a$ being transverse to the pages.
\ermk

For future convenience, we state the following definition. 
\defi \label{definition:obd-contact-pairs-G}
Let $\mc{G}$ be the set of TN contact pairs $(Y, \xi, V)$ having the property that $\xi$ is supported by an open book decomposition $\pi: Y-V \to S^1$ with binding $V$. 
\edefi

\lem	\label{lemma:g-form}
Let $(Y, B, \pi)$ be an open book decomposition supporting the contact structure $\xi$. Let $\alpha_B$ be a contact form for $(B,\xi_B)$ and let $f : [0,1) \to \R$ be a positive smooth function such that $f(0) = 1$ and $f'(r) < 0$ for $r > 0$. Then there exists a Giroux form $\a$ and an embedding $\phi: B \tms D^2_{\e} \to Y$ (for some small $\epsilon > 0$) with the following properties:
\begin{enumerate}
\item $\a|_B = \a_B$.
\item The projection $\pi \circ \phi$ is given by $(r, \t) \mapsto \t$ on $B \tms D^2_{\e} - B \tms \{0\}$.
\item $\phi^*\a = f(r)(\a|_B + \l_{D^2})$.
\end{enumerate}
\elem
\pf
The proof is in two steps. First, we will show that there exists a Giroux form $\tilde{\a}$ such that $\tilde{\a}|_B= \a_B$. Second, we will construct $\a$ by modifying $\tilde{\a}$ so that (1) -- (3) are satisfied. 

\emph{Step 1: } Let $\a'$ be an arbitrary Giroux form for $(Y, B, \pi)$. Since $\a'|_B$ and $\alpha_B$ define the same contact structure, we can write $\alpha_B = (1 + h)(\a'|_B)$ for some smooth function $h : B \to \R$. We may assume without loss of generality that $h \geq 0$ everywhere, since any positive constant multiple of $\a'$ is also a Giroux form. 

It is shown in the proof of \cite[Prop.\ 2]{dgz} that there exists a tubular neighborhood $B \tms D^2_{\e}$ of the binding on which $\pi = \theta$ and $\a' = g(\a'|_B + \l_{D^2})$, where $g : B \times D^2_\e \to \R$ is a positive smooth function satisfying $g \equiv 1$ on $B \tms \{0\}$,  $\l_{D^2} = \frac{1}{2}(x\,dy - y\,dx)$ and $\e>0$ is a suitably small constant. Note that $g(\a'|_B + \l_{D^2})$ is a Giroux form on $(B \times D^2_\e, B \times \{0\}, \pi = \theta)$ if and only if $\partial g/\partial r < 0$ for $r > 0$.  

Let $\sigma : [0,\epsilon] \to \R$ be a nonincreasing smooth function such that $\sigma(r) = 1$ for $r$ near $0$ and $\sigma(r) = 0$ for $r$ near $\epsilon$. Set $\tilde{g} := (1 + \sigma(r)h)g$. Then $\partial_r \tilde{g} = \partial_r \s   \cdot hg + (1+\sigma h) \cdot \partial_r g  <0$. Now we define $\tilde{\a}$ by replacing $g$ with $\tilde{g}$. 

\emph{Step 2: } By the previous step, we may fix a Giroux form $\tilde{\a}$ so that $\tilde{\a}|_B = \a_B$. Appealing again to the proof of \cite[Prop.\ 2]{dgz}, there exists a tubular neighborhood $B \tms D^2_{\e'}$ of the binding on which $\pi = \theta$ and $\tilde{\a} = \g(\tilde{\a}|_B + \l_{D^2})$, where $\g : B \times D^2_{\e'} \to \R$ is a positive smooth function satisfying $\g \equiv 1$ on $B \tms \{0\}$,  $\l_{D^2} = \frac{1}{2}(x\,dy - y\,dx)$ and $\e'>0$ is a suitably small constant. Again, we have that $\g(\tilde{\a}|_B + \l_{D^2})$ is a Giroux form on $(B \times D^2_{\e'}, B \times \{0\}, \pi = \theta)$ if and only if $\partial \g/\partial r < 0$ for $r > 0$.  

Let $\delta : B \times D^2_{\e'} \to \R$ be a positive smooth function such that $\delta = f$ near $B \times \{0\}$, $\delta = \g$ near $B \times \partial D^2_{\epsilon'}$, and $\partial_r \delta< 0$ for $r > 0$. Let $\a$ be the unique contact form on $Y$ which coincides with $\tilde{\a}$ outside the image of $\phi$ and satisfies $\phi^*\a = \delta(\tilde{\a}|_B + \l_{D^2})$. Then $\a$ is a Giroux form and satisfies conditions (1) -- (3).
\epf

\cor	\label{proposition:girouxstructure}
Consider an open book decomposition $(Y, B, \pi)$ which supports a contact structure $\xi$ and let $\tau$ denote the induced trivialization of the normal bundle of $B \sub Y$. Choose an element $\fk{r}=(\a_B, \tau, r)\in \fk{R}(Y, \xi, B)$. Then there exists a Giroux form $\a$ which is adapted to $\fk{r}$ (see \Cref{definition:adapted-contact-form}). 
\ecor

\pf
Let $\kappa= \pi r$ and define $f(s)= (1 + \kappa s^2)^{-1}$ for $s \in{} [0,1)$. Since $f(0) = 1$ and $f'(s) < 0$, it follows that there exists a Giroux form $\tilde{\a}$ satisfying the conditions stated in \Cref{lemma:g-form}. As we observed in the proof of \Cref{proposition:adapted-contact-forms-exist}, there exists a neighborhood $\mc{U}$ of $B$ with the following properties:
\begin{itemize}
\item $\tilde{\alpha}$ is nondegenerate on $\mc{U}$;
\item all the Reeb orbits in $\mc{U}$ are contained in $B$.
\end{itemize}
According to \cite[Thm.\ 13]{abw}, we can obtain a nondegenerate contact form by multiplying $\tilde{\a}$ by a smooth function $g: Y \to \R_+$ with $g \equiv 1$ near $B$. Moreover, we can assume that $g - 1$ is arbitrarily $C^1$-small and hence that $g\tilde{\a}$ is still a Giroux form.

Since $g\tilde{\a}=\a$ near $B$, it follows that $(g \tilde{\a})|_B= \a_B$ and that $B$ is a strong contact submanifold with respect to $g \tilde{\a}$. Finally, the last point in \Cref{definition:adapted-contact-form} can be verified just as in the proof of \Cref{proposition:normal-CZ-indices}.
\epf

\section{Standard setups and tree categories.}

Contact homology is defined in \cite{pardon} by counting pseudo-holomorphic curves (and more generally, pseudo-holomorphic buildings) in four setups. To keep track of the combinatorics of these curves, Pardon introduces certain categories of decorated trees. We briefly review this formalism here, referring the reader to \cite[Sec.\ 2.1]{pardon} for details. 

\subsection{Standard setups} \label{subsection:standard-setups} 

\begin{setup1}
A datum $\mc{D}$ for Setup I consists of a triple $(Y, \lambda, J)$, where $Y$ is a closed manifold, $\lambda$ is a non-degenerate contact form on $Y$ and $J$ is a $d\lambda$-compatible almost complex structure on $\xi = \ker \lambda$.
\end{setup1}

\begin{setup2}
A datum $\mc{D} = (\mc{D}^+,\mc{D}^-,\hat{X},\hat{\l},\hat{J})$ for Setup II consists of
\begin{itemize}
	\item data $\mc{D}^\pm = (Y^\pm,\l^\pm,J^\pm)$ as in setup I;
	\item an exact symplectic cobordism $(\hat{X},\hat{\l})$ with positive end $(Y^+,\l^+)$ and negative end $(Y^-,\l^-)$;
	\item a $d\hat{\l}$-tame almost complex structure $\hat{J}$ on $\hat{X}$ which agrees with $\hat{J}^\pm$ at infinity.
\end{itemize}
\end{setup2}

\begin{setup3}
A datum $\mc{D} = (\mc{D}^+,\mc{D}^-,(\hat{X},\hat{\l}^t,\hat{J}^t)_{t \in [0,1]})$ for this setting consists of
\begin{itemize}
	\item data $\mc{D}^\pm = (Y^\pm,\l^\pm,J^\pm)$ as in setup I;
	\item a family of exact symplectic cobordisms $(\hat{X},\hat{\l}^t)_{t \in [0,1]}$  with positive end $(Y^+,\l^+)$ and negative end $(Y^-,\l^-)$;
	\item a $d\hat{\l}^t$-tame almost complex structure $\hat{J}^t$ on $\hat{X}$ which agrees with $\hat{J}^\pm$ at infinity.
\end{itemize}
Note that for every $t_0 \in [0,1]$, there is a datum $\mc{D}^{t = t_0} = (\mc{D}^+,\mc{D}^-,\hat{X},\hat{\l}^{t_0},\hat{J}^{t_0})$ as in Setup II.
\end{setup3}

\begin{setup4}
A datum $\mc{D} = (\mc{D}^{01},\mc{D}^{12},(\hat{X}^{02,t},\hat{\l}^{02,t},\hat{J}^{02,t})_{t \in{} [0,\infty)})$ for this setting consists of
\begin{itemize}
	\item data
	\begin{align*}
		\mc{D}^{01} &= (\mc{D}^0,\mc{D}^1,\hat{X}^{01},\hat{\l}^{01},\hat{J}^{01}) \\
		\mc{D}^{12} &= (\mc{D}^1,\mc{D}^2,\hat{X}^{12}, \hat{\l}^{12},\hat{J}^{12})
	\end{align*} as in setup II, where $\mc{D}^i = (Y^i,\l^i,J^i)$, $i = 0,1,2$;
	\item a family of exact symplectic cobordisms $(\hat{X}^{02,t},\hat{\l}^{02,t})_{t \in{} [0,\infty)}$ with positive end $(Y^0,\l^0)$ and negative end $(Y^2,\l^2)$, which for $t$ large coincides with the $t$-gluing of $(\hat{X}^{01},\hat{\l}^{01})$ and $(\hat{X}^{12},\hat{\l}^{12})$;
	\item a $d\hat{\l}^{02,t}$-tame almost complex structure $\hat{J}^{02,t}$ on $\hat{X}^{02,t}$ which agrees with $\hat{J}^0$, $\hat{J}^2$ at infinity and is induced by $\hat{J}^{01}$, $\hat{J}^{02}$ for $t$ large.
\end{itemize}
\end{setup4}

\subsection{Trees}\label{subsection:trees}


For each setup, Pardon \cite[Sec.\ 2.1]{pardon} defines a category $\Sch_*$ ($* = \I, \II, \III, \IV$) which depends on some datum $\mc{D}$. Each object $T \in \Sch$ is a decorated tree (or forest) representing a certain class of pseudo-holomorphic curves (or more generally of buildings). Geometrically, the vertices correspond to curves, edges correspond to asymptotic orbits, and the decorations keep track of additional information (such as the homology classes of the components, and their ``level" in the SFT compactification). 

A morphism of trees in $\Sch_*$ consists of two pieces of data. First, a contraction of some edges, with the important caveat that only certain contractions are allowed which depend on the decorations on the tree. Second, one specifies some additional data on external edges which depends on the decorations of the external edges. Geometrically, a morphism of trees correspond to gluing holomorphic curves, and the data which one specifies on the external edges encores different ways of moving asymptotic markers. For $T \in \Sch_*$, we let $\op{Aut}(T)$ denote the group of automorphisms of $T$. Given a morphism $T' \to T$, we let $\op{Aut}(T'/T) \sub \op{Aut}(T')$ be the subgroup of automorphisms of $T'$ which are compatible with $T' \to T$. 

In each category $\Sch_*$, there is an operation called \emph{concatenation} whose input is a collection trees (satisfying certain conditions, and with additional matching data), and whose output is a single tree. Geometrically, concatenations of trees correspond to ``stacking" holomorphic buildings. The precise rules for concatenations are rather involved and depend on the individual setups. 

\rmk
A datum $\mc{D}$ for setups II, III, IV determines multiple categories of trees: this is because such a datum itself contains (by definition) data for multiple setups. \emph{We always follow the notation of \cite[Sec.\ 2.1]{pardon} to denote the resulting tree categories.} So, for example, if $\mc{D} = (\mc{D}^+,\mc{D}^-,\hat{X},\hat{\l},\hat{J})$ is a datum for Setup II, we write $\Sch_\II:= \Sch_\II(\mc{D}), \Sch_\I^+:= \Sch_\I(\mc{D}^+)$ and $\Sch_\I^-:= \Sch_\I(\mc{D}^-)$.  Similarly, a datum for Setup III determines categories $\Sch_\III, \Sch_\II^{t=0}, \Sch_\II^{t=1}, \Sch_\I^\pm$, and a datum for Setup IV determines categories $\Sch_\IV, \Sch_\II^{01}, \Sch_\II^{12}, \Sch_\I^0,\Sch_\I^1,\Sch_\I^2.$
\ermk

\subsection{Virtual moduli counts} 
To an object $T \in \Sch_*$, we can associate a moduli space $\M(T)$ \cite[Sec.\ 2.3]{pardon} which carries an action of $\op{Aut}(T)$ (this action corresponds geometrically to changing asymptotic markers). Note that $T \in \Sch_*$ has a well-defined notion of index and virtual dimension \cite[Def.\ 2.42]{pardon}. The compactified moduli space $\Mbar(T)$ is defined by (see \cite[Def.\ 2.13]{pardon})
\eq
	\Mbar(T):=\bigsqcup_{T'\to T}\M(T')/\!\Aut(T'/T).
\eeq

Theorem 1.1 in \cite{pardon} provides a perturbation datum $\theta \in \Theta_*(\mc{D})$ and associated virtual moduli counts $\#\Mbar(T)^{\vir}_{\theta} \in \Q$ (which are zero for $\vdim(T) \neq 0$) satisfying the \emph{master equations}
\begin{align} 
0 &= \sum_{\codim(T'/T)=1}\frac 1{\left|\Aut(T'/T)\right|}\#\Mbar(T')^{\vir}_{\theta}  \label{eq-boundary-zero}\\
\#\Mbar(\#_iT_i)^{\vir}_{\theta} &= \frac 1{\left|\Aut(\{T_i\}_i/\#T_i)\right|}\prod_i\#\Mbar(T_i)^{\vir}_{\theta} \label{eq-concatenation-additive}
\end{align}
By standard arguments, this can be used to define the various maps involved in the definition of contact homology (e.g.\ the differential $d$) and show that they satisfy the expected relations (e.g.\ $d^2 = 0$); see \cite[Sec.\ 1.7]{pardon}.

\section{Intersection theory for punctured holomorphic curves}

\subsection{Definition of the Siefring intersection number} \label{subsection:def-intersection-number}

We will make use in this paper of an intersection theory for asymptotically cylindrical maps and submanifolds. The four-dimensional theory was constructed by Siefring \cite{siefring} and assigns an integer to a pair of asymptotically cylindrical maps in a $4$-dimensional symplectic cobordism  (see also the book by Wendl \cite{wendlintersection}). The higher-dimensional theory, also due to Siefring, assigns an integer to the pairing of a codimension $2$ (asymptotically) cylindrical hypersurface with a (asymptotically) cylindrical map. A detailed overview can be found in \cite{moreno-siefring}. 

Let us consider a strong exact symplectic cobordism $(\hat X, \hat \l)$ from $(Y^+, \l^+)$ to $(Y^-, \l^-)$. Let $(V^{\pm},\l^{\pm}|_V) \sub (Y^{\pm}, \l^{\pm})$ be strong contact submanifolds and let $H \sub \hat X$ be a codimension $2$ submanifold with cylindrical ends $V^+ \sqcup V^-$. 

We let $\tau$ denote a choice of trivialization of $\xi^{\pm}|_{V^\pm}^{\perp}$ along every Reeb orbit in $V^\pm$. We require that the trivialization along a multiply covered orbit be pulled back from the chosen trivialization along the underlying simple orbit.
Let $u: \S - (\bf{p}_u^+ \sqcup \bf{p}_u^-) \to \hat X$ be a map which is positively/negatively 
asymptotic at $z \in \bf{p}_u^{\pm}$ to the Reeb orbit $\g_z$. Now set 
\eq\label{equation:perturb-depends} u \bullet_{\tau} H:= u^{\tau} \cdot H,\eeq
where $u^{\tau}$ is a perturbation of $u$ which is transverse to $H$ and constant with respect to $\tau$
at infinity, and $(- \cdot - )$ is the usual algebraic intersection number for transversely intersecting smooth maps. While \eqref{equation:perturb-depends} depends on the choice of trivialization $\tau$, Siefring showed that this count can be corrected so as to become independent of $\tau$. This leads to the following definition. 

\defi[see Sec.\ 2 in \cite{moreno-siefring}] \label{definition:generalized-intersection-number}
The \emph{generalized (or Siefring) intersection number} $u * H \in \Z$ of $u$ and $H$ is defined by
\eq 
u * H = u \bullet_{\tau} H + \sum_{ z \in \bf{p}_u^+} \a_N^{\tau; -}(\g_z) - 
		\sum_{ z \in \bf{p}_u^-} \a_N^{\tau; +}(\g_z)
\eeq
\edefi
\prop \label{proposition:invariance-homotopy}
The intersection number $u * H$ only depends on the equivalence classes of $u$ in $\pi_2(\hat{X}, \G^+ \sqcup \G^-)$ and $H$ in $\O_{2n-2}(\hat X, V^+ \sqcup V^-)$ (see \Cref{definition:homotopy-classes-submanifold}).
\eprop

\pf 
The intersection number $u^{\tau} \cdot H$ is clearly invariant under compactly supported isotopies of $H$.

Given a truncation $X_0 \subset \hat{X}$, we can proceed as in \Cref{subsection:homotopy-classes-maps} to associate to $u^\tau$ a map $\bar{u}^\tau : \overline{\S} \to X_0$. Let $H_0 = H \cap X_0$. If we choose $X_0$ sufficiently large (so that $H$ is cylindrical in its complement), then $H_0$ will be a submanifold with boundary $\partial H_0 = H_0 \cap \partial X_0 = V^+ \sqcup V^-$. Note that $\bar{u}^\tau \cdot H_0$ only depends on $[u] \in \pi_2(\hat{X}, \G^+ \sqcup \G^-)$. Moreover, we have $\bar{u}^\tau \cdot H_0 = u^\tau \cdot H$; indeed, if $X_0$ is sufficiently large, then the intersections of $\bar{u}^\tau$ with $H_0$ are exactly the same as those of $u^\tau$ with $H$.
\epf

\subsection{Positivity of intersection} \label{subsection:positivity-intersection}

We now discuss positivity of intersection for the Siefring intersection number. Given a contact manifold $(Y, \xi= \op{ker} \l)$ and an almost-complex structure $J$ on $\xi$, we adopt the usual convention of letting $\hat{J}$ denote the induced almost complex structure on the symplectization. An almost complex structure on a cobordism $(\hat{X},\hat{\l})$ between two contact manifolds $(Y^\pm, \l^\pm)$ is called \emph{cylindrical} if it agrees at infinity with $\hat{J}^\pm$ for some choice of $d\lambda^\pm$-compatible almost complex structures $J^\pm$ on $\ker(\lambda^\pm)$. 

\prop[see Cor.\ 2.3 and Thm.\ 2.5 in \cite{moreno-siefring}] \label{prop-positivity-intersections}
Let $(\hat{X},\hat{\l})$ be an exact symplectic cobordism from $(Y^+,\l^+)$ to $(Y^-,\l^-)$. Let $u$ and $H$ denote an asymptotically cylindrical map and a cylindrical submanifold of codimension $2$ in $\hat{X}$ respectively.
	
Suppose that $u$ and $H$ are $\hat J$-holomorphic for some cylindrical almost-complex structure $\hat J$ on $\hat X$ which is compatible with $d\hat{\l}$. If the image of $u$ is not contained in $H$, then $\op{Im}(u) \cap H$ is a finite set and 
\eq
u * H \ge u \cdot H.
\eeq
(Note that by ordinary positivity of intersection for two pseudo-holomorphic submanifolds, this implies that $u * H \ge 0$ and that $\op{Im}(u)$ and $H$ are disjoint if $u * H = 0$.)
\eprop

When the image of $u$ is contained in $H$, positivity of intersection does not hold. The following computation, which will be useful to us later, is one example of this. The notation $\widehat{\g}$ refers to the trivial cylinder $\R \times S^1 \to \hat{Y}$ over the Reeb orbit $\g$; similarly, $\hat{V} = \R \times V \subset \hat{Y}$ is the cylinder over the strong contact submanifold $V$.

\cor \label{corollary:intersection-cylinders}
Let $\g$ be a Reeb orbit in $Y$. If $\g$ is contained in $V$, then 
\eq \widehat{\g} * \hat{V} = -p_N(\g).\eeq
\ecor

\pf
By definition, 
\eq \widehat{\g} * \hat{V} = \widehat{\g}^\tau \cdot \hat{V} + \a_N^{\tau; -}(\g)  -\a_N^{\tau; +}(\g).\eeq
We can choose the perturbation $\widehat{\g}^\tau$ so that its image is disjoint from $\hat{V}$. The result follows since $\a_N^{\tau; +}(\g)  -\a_N^{\tau; -}(\g)= p_N(\g)$ by definition.
\epf

\rmk \label{remark:intersection-disjoint-cylinders}
If $\g$ is disjoint from $V$, then $\widehat{\g} * \hat{V} = 0$.
\ermk

\Cref{corollary:intersection-cylinders} shows that positivity of intersection fails for curves contained in $\hat{V}$. However, we still have a lower bound on the intersection number $u * \hat{V}$ when $u = \widehat{\g}$ is a trivial cylinder (namely, $\widehat{\g} * \hat{V} \ge -1$ since $p_N(\g) \in \{0,1\}$). In the remainder of this section, we show that if $(Y, \xi, V)$ is a TN contact pair and $\l$ is positive elliptic near $V$, then the intersection number $u * \hat{V}$ is bounded below for \emph{all} asymptotically cylindrical curves $u$ contained in $\hat{V}$. We also give an analoguous result for cylindrical submanifolds of symplectic cobordisms $H \subset (\hat{X},\omega)$ with trivial normal bundle.

\prop \label{proposition:positivity-symplectization}
Fix a TN contact pair $(Y, \xi, V)$ and a datum $\fk{r} = (\a_V, \tau, r) \in \fk{R}(Y, \xi, V)$. Consider a contact form $\l$ on $(Y, \xi)$ which is adapted to $\fk{r}$, and an almost-complex structure $J$ on $\xi$ which is compatible with $d\l$ and which preserves $\xi_V$. Suppose that $u$ is a $\hat{J}$-holomorphic curve whose image is entirely contained in $\hat{V}$. If $\lambda$ is positive elliptic near $V$, then $u * \hat{V} \ge 1 - p_u$, where $p_u$ denotes the number of punctures (positive and negative) of $u$.
\eprop

\pf
We have by definition that
\eq \a_N^{\tau; -}(\g_z) = \lfloor \CZ_N^\tau(\g_z)/2 \rfloor = \lfloor rP_z \rfloor \eeq
for $z \in \bf{p}_u^+$ and
\eq \a_N^{\tau; +}(\g_z) = \lceil \CZ_N^\tau(\g_z)/2 \rceil = 1 + \lfloor r P_z \rfloor \eeq
for $z \in \bf{p}_u^-$ (here $P_z$ denotes the period of the Reeb orbit $\g_z$). Using the trivial bounds $x - 1 < \lfloor x \rfloor \le x$ and the fact that $u^\tau \cdot \hat{V} = 0$, we obtain
\eq
	u * \hat{V} > \sum_{ z \in \bf{p}_u^+} (r P_z - 1) - \sum_{ z \in \bf{p}_u^-} (1 + r P_z)
	\ge -p_u + r \left(\sum_{ z \in \bf{p}_u^+} P_z - \sum_{ z \in \bf{p}_u^-} P_z \right).
\eeq
The fact that $u$ is $\hat{J}$-holomorphic implies that $\sum_{ z \in \bf{p}_u^+} P_z - \sum_{ z \in \bf{p}_u^-} P_z$ is nonnegative (see \cite[p.\ 60]{wendlsft}). Thus $u * \hat{V} \ge 1 - p_u$ as desired.
\epf

We will need an analogue of \Cref{proposition:positivity-symplectization} for cobordisms. Note that if $V \subset Y$ is a codimension $2$ contact submanifold, then the normal bundle of $\hat{V} = \R \times V \subset \R \times Y = \hat{Y}$ can be identified with the pullback of $\xi|_V^\perp$ under the projection $\hat{V} \to V$. Hence, any trivialization $\tau$ of $\xi|_V^\perp$ induces a trivialization of the normal bundle of $\hat{V}$, which we will denote by $\hat{\tau}$.

\prop \label{proposition:positivity-cobordism}
	Fix TN contact pairs $(Y^\pm, \xi^\pm, V^\pm)$ and elements $\fk{r}^\pm= (\a_V^{\pm}, \tau^\pm, r^\pm) \in \fk{R}(Y^\pm, \xi^\pm, V^\pm)$. Let $\l^{\pm}$ be contact forms on $(Y^\pm, \xi^\pm)$ which are adapted to $\fk{r}^\pm$, and let $(\hat{X},\hat{\l},H)_{\l^-}^{\l^+}$ be a strong relative symplectic cobordism from $(Y^+, \xi^+, V^+)$ to $(Y^-, \xi^-, V^-)$. We assume that there exists a global trivialization $\tau$ of the normal bundle of $H$ which coincides with $\hat{\tau}^\pm$ near $\pm \infty$.
	
Let $\hat{J}$ be an almost-complex structure on $\hat{X}$ which is cylindrical and compatible with $d\l^\pm$ outside a compact set, and such that $H$ is $\hat{J}$-holomorphic. Let $u$ be an asymptotically cylindrical map in $\hat{X}$ which is $\hat{J}$-holomorphic and whose image is entirely contained in $H$. Following the notation of \Cref{proposition:positivity-symplectization}: if $\lambda^+$ is positive elliptic near $V^+$, then $u*H >-p_u + r^+ \sum_{z \in \bf{p}_u^+} P_z - r^- \sum_{z \in \bf{p}_u^-} P_z$. In particular, $u* H \geq 0$ if $u$ has no negative puncture.
\eprop

\pf
We have
\eq
u * H = u^\tau \cdot H + \sum_{ z \in \bf{p}_u^+} \a_N^{\tau; -}(\g_z) - \sum_{ z \in \bf{p}_u^-} \a_N^{\tau; +}(\g_z).
\eeq
We can choose the perturbation $u^\tau$ so that it is disjoint from $H$, so $u^\tau \cdot H = 0$. We now argue as in \Cref{proposition:positivity-symplectization} to find that 
\eq u*H >  \sum_{ z \in \bf{p}_u^+} (r^+ P_z - 1) - \sum_{ z \in \bf{p}_u^-} (1 + r^- P_z) \geq -p_u +r^+ \sum_{ z \in \bf{p}_u^+} P_z - r^- \sum_{ z \in \bf{p}_u^-} P_z.\eeq
\epf

\subsection{The intersection number for buildings} \label{subsection:intersection-buildings}


In this section, we use Siefring's intersection theory to define an intersection number for buildings of asymptotically cylindrical maps and buildings of asymptotically cylindrical codimension $2$ submanifolds. Since the differences between $\Sch_\I$, $\Sch_\II$, $\Sch_\III$ and $\Sch_\IV$ don't matter for this purpose, we start by defining a category $\widehat{\Sch}$ of labeled trees which only keeps track of the information needed for intersection theory (in particular, there are obvious ``forgetful'' functors $\Sch_* \to \widehat{\Sch}$).

The category $\widehat{\Sch} = \widehat{\Sch}(\{\hat{X}^{ij}\}_{ij})$ depends on the following data:
\begin{itemize}
	\item[(i)] An integer $m \ge 0$ and a collection of $m + 1$ co-oriented contact manifolds $(Y^i,\xi^i)$, each equipped with a choice of contact form $\l^i$ ($0 \le i \le m$).
	\item[(ii)] For each pair of integers $0 \le i \le j \le m$, an exact symplectic cobordism $(\hat{X}^{ij},\hat{\l}^{ij})$ with positive end $(Y^i,\xi^i)$ and negative end $(Y^j,\xi^j)$. We require that $\hat{X}^{ii} = SY^i$ be the symplectization of $Y^i$ and that $\hat{X}^{ik} = \hat{X}^{ij} \# \hat{X}^{jk}$ for $i \le j \le k$ (this makes sense in light of \Cref{remark:associativity}).
\end{itemize}

An object $T \in \widehat{\Sch}$ is a finite directed forest (i.e. a finite collection of finite directed trees). We require that every vertex has a unique incoming edge. Edges which are adjacent to only one vertex are allowed; we will refer to them as input or output edges depending on whether they are missing a source or a sink. The other edges will be called interior edges. We also have the following decorations:
\begin{itemize}
	\item For each edge $e \in E(T)$, a symbol $*(e) \in \{0,\dots,m\}$ such that $*(e) = 0$ for input edges and $*(e) = m$ for output edges, together with a Reeb orbit $\g_e$ in $(Y^{*(e)},\l^{*(e)})$.
	\item For each vertex $v \in V(T)$, a pair $*(v) = (*^+(v),*^-(v)) \in \{0,\dots,m\}^2$ such that $*^+(v) \le *^-(v)$ and a homotopy class $\beta_v \in \pi_2(\hat{X}^{*(v)}, \g_{e^+(v)} \sqcup \{\g_{e^-}\}_{e^- \in E^-(v)})$, where $e^+(v)$ denotes the unique incoming edge of $v$ and $E^-(v)$ denotes the set of its outgoing edges. We require that $*(e^+(v)) = *^+(v)$ and $*(e^-) = *^-(v)$ for every $e^- \in E^-(v)$. 
\end{itemize}

\rmk
Geometrically, these decorations specify how different curves and orbits fit together to form a holomorphic building. For example, suppose $m=1$ and $T$ is a tree with one vertex $v$ and one input edge $e$. If $*(e)=0$ and $*(v)=00$, then $T$ describes a curve in $SY^0$ with one positive puncture and no negative punctures. If $*(e)=0$ and $*(v)=01$, then $T$ describes a curve in $\hat{X}^{01}$ with one positive puncture and no negative punctures. This labeling scheme of course follows \cite[Sec.\ 2.1]{pardon}.
\ermk

We will let $\G_T^+$ and $\G_T^-$ denote the collections of Reeb orbits associated to the input and output edges of an object $T \in \widehat{\Sch}$. In the case where $T$ is a tree, the unique element of $\G_T^+$ will be denoted by $\g_T^+$.

A morphism $\pi : T \to T'$ consists of a contraction of the underlying forests (meaning that $T'$ is identified with the forest obtained by contracting a certain subset of the interior edges of $T$) subject to the following conditions:
\begin{itemize}
	\item For every non-contracted edge $e \in E(T)$, we require that $*(\pi(e)) = *(e)$ and $\g_{\pi(e)} = \g_e$.
	\item For every vertex $v \in V(T)$, we have $*^+(\pi(v)) \le *^+(v)$ and $*^-(\pi(v)) \ge *^-(v)$.
	\item For every vertex $v' \in V(T')$, we require that $\beta_{v'} = \#_{\pi(v) = v'} \beta_v$.
\end{itemize}
Note that for any morphism $T \to T'$, we have $\G_T^+ = \G_{T'}^+$ and $\G_T^- = \G_{T'}^-$.
\rmk \label{rmk:maximal-contraction}
For every $T \in \widehat{\Sch}$, we get a morphism $T \to T_\mathrm{max}$ by contacting all of the interior edges of $T$. Each component of $T_\mathrm{max}$ is a tree with a unique vertex. In the case where $T$ is connected, we will write $\beta_T := \#_v \b_v \in \pi_2(\hat{X}^{0m},\g_T^+ \sqcup \G_T^-)$ for the homotopy class labeling the unique vertex of $T_\mathrm{max}$. Note that for every morphism $T \to T'$, we have $T_\mathrm{max} = T'_\mathrm{max}$. In particular, if $T$ and $T'$ are trees, then $\b_T = \b_{T'}$.
\ermk

\defi
Let $T \in \widehat{\Sch}$ and let $\{T_i\}_i$ denote its connected components. The \emph{intersection number} $T * H$ of $T$ with a codimension $2$ cylindrical submanifold $H \subset \hat{X}^{0m}$ is defined to be
\eq T * H = \sum_{i} \beta_{T_i} * H. \eeq
\edefi

By \Cref{proposition:invariance-homotopy}, this intersection number only depends on the class of $H$ in $\O_{2n-2}(\hat{X}^{0m}, V^0 \sqcup V^m)$.
By \Cref{rmk:maximal-contraction}, it is ``invariant under gluing'':
\begin{proposition} \label{proposition:invariance-gluing}
	Let $T, T' \in \widehat{\Sch}$. If there exists a morphism $T \to T'$, then $T * H = T' * H$.
\end{proposition}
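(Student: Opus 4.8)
The plan is to reduce the statement entirely to \Cref{rmk:maximal-contraction}. First I would unwind the definition of a morphism $\pi : T \to T'$ in $\widehat{\Sch}$: it is a contraction of a subset of the \emph{interior} edges of the forest $T$. Contracting an interior edge merges its two endpoints into a single vertex while keeping them in the same connected component, and it never merges two components or splits one; hence $\pi$ induces a bijection $\sigma$ between the connected components of $T$ and those of $T'$, and for each component $T_i$ the restriction $T_i \to T'_{\sigma(i)}$ is itself a morphism in $\widehat{\Sch}$ between (connected, hence tree-shaped) objects. Since any morphism satisfies $\G_T^+ = \G_{T'}^+$ and $\G_T^- = \G_{T'}^-$, the same holds component by component, so $\g^+_{T_i} = \g^+_{T'_{\sigma(i)}}$ and $\G^-_{T_i} = \G^-_{T'_{\sigma(i)}}$; in particular $\beta_{T_i}$ and $\beta_{T'_{\sigma(i)}}$ lie in the same set $\pi_2(\hat{X}^{0m}, \g^+_{T_i} \sqcup \G^-_{T_i})$ and it is meaningful to compare them.

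Next I would apply \Cref{rmk:maximal-contraction} to the tree morphism $T_i \to T'_{\sigma(i)}$: it gives $(T_i)_\mathrm{max} = (T'_{\sigma(i)})_\mathrm{max}$, and since both $T_i$ and $T'_{\sigma(i)}$ are trees, this yields $\beta_{T_i} = \beta_{T'_{\sigma(i)}}$ (both equal the homotopy class labeling the unique vertex of the common maximal contraction). Summing over components and reindexing along $\sigma$,
\[
	T * H = \sum_i \beta_{T_i} * H = \sum_i \beta_{T'_{\sigma(i)}} * H = \sum_j \beta_{T'_j} * H = T' * H,
\]
which is the desired equality. (Each summand $\beta_{T_i} * H$ is well-defined by \Cref{proposition:invariance-homotopy}, though here one does not even need invariance, since the homotopy classes agree on the nose.)

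The argument is essentially bookkeeping, and I do not anticipate a genuine obstacle: all of the geometric content — that contracting interior edges leaves the total homotopy class of a connected building unchanged — has already been packaged into \Cref{rmk:maximal-contraction}. The only point requiring a line of care is the claim that a morphism of forests restricts to morphisms on connected components with matching asymptotic data, which is immediate from the definition of morphisms in $\widehat{\Sch}$.
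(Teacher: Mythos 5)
Your proof is correct and follows exactly the route the paper intends: the paper simply states ``By \Cref{rmk:maximal-contraction}, it is `invariant under gluing','' and your argument spells out the bookkeeping behind that one-liner — the component-wise bijection induced by a contraction and the equality $\beta_{T_i} = \beta_{T'_{\sigma(i)}}$ from the shared maximal contraction. There is nothing to add.
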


Suppose now that $m = 0$, so that objects $T \in \widehat{\Sch}$ represent buildings of curves in the symplectization $\hat{Y}$ of a single contact manifold $(Y,\l) := (Y^0,\l^0)$, and that $H = \hat{V} := \R \times V$ is the trivial cylinder over some strong contact submanifold $V \subset Y$ of codimension $2$. In that case, the intersection number $T * H$ can be expressed more explicitly as follows.

\prop \label{proposition:additivity-symplectization}
	For any $T \in \widehat{\Sch}$, we have
	\eq \label{equation:additivity-symplectization}
	T * \hat{V} = \sum_{v \in V(T)} \beta_{v} * \hat{V} - \sum_{e \in E^\mathrm{int}(T)} \widehat{\g}_e * \hat{V}.
	\eeq
\eprop
\pf
	The proof will be by induction on the number of interior edges. If this number is zero, then \eqref{equation:additivity-symplectization} is true by definition. Otherwise, pick an edge $e \in E^\mathrm{int}(T)$ and contract it to obtain a morphism $\pi : T \to T'$ where $T'$ has one less interior edge than $T$. We can assume inductively that $T'$ satisfies \eqref{equation:additivity-symplectization}. Since $T * \hat{V} = T' * \hat{V}$, it suffices to show that
	\eq
		\beta_{v^+} * \hat{V} + \beta_{v^-} * \hat{V} - \widehat{\g}_e * \hat{V} = \beta_{v'} * \hat{V},
	\eeq
	where $v^+$ and $v^-$ are the source and sink of $e$ respectively and $v' = \pi(v^+) = \pi(v^-)$.

	To do this, start by picking curves $u_\pm : \Sigma^\pm \to \hat{Y}$ representing the classes $\beta_{v^\pm}$. Fix a choice of cylindrical coordinates near the positive puncture of $u^-$ and near the negative puncture of $u^+$ corresponding to $e$. We can assume that $u_\pm$ is cylindrical at infinity, so that there exists a constant $C > 0$ such that
	\eq
		u_\pm(s,t) = (P_e \cdot s, \g_e(t))
	\eeq
	for $\mp s \ge C$, where $P_e$ is the period of the orbit corresponding to $e$. Now let
	\begin{align}
		\overline{\Sigma}^+ &= \Sigma^+ \setminus ((-\infty,-3C) \times S^1), \\
		\overline{\Sigma}^- &= \Sigma^- \setminus ((3C,\infty) \times S^1),
	\end{align}
	and let $\Sigma = \Sigma^+ \# \Sigma^-$ be obtained by identifying $[-3C,-C] \times S^1 \subset \overline{\Sigma}^+$ with $[C,3C] \times S^1 \subset \Sigma^-$ via translation by $4C$. The curve $u_+ \# u_- : \Sigma \to \hat{Y}$ which is given by $\tau_{2CP_e} \circ u_+$ on $\overline{\Sigma}^+$ and $\tau_{-2CP_e} \circ u_-$ on $\overline{\Sigma}^-$ (where $\tau_s : \hat{Y} \to \hat{Y}$ denotes translation by $s$) then represents the homotopy class $\b_{v^+} \# \b_{v^-} = \b_{v'}$.

	Choose a trivialization $\tau$ of $\xi|_V^{\perp}$ along the relevant Reeb orbits and use it to produce perturbations $u_\pm^\tau$, $(u_+ \# u_-)^\tau$ as in section~\ref{subsection:def-intersection-number}. We can do this in such a way that $(u_+ \# u_-)^\tau$ is obtained by gluing $u_+^\tau$ and $u_-^\tau$. Then
	\eq (u_+ \# u_-)^\tau \cdot \hat{V} = u_+^\tau \cdot \hat{V} + u_-^\tau \cdot \hat{V}, \eeq
	so
	\begin{align*}
		(u_+ \# u_-) * \hat{V}
		&= u_+^\tau \cdot \hat{V} + u_-^\tau \cdot \hat{V} + \a_N^{\tau; -}(\g^+) - \sum_{ z \in \bf{p}_{u_+ \# u_-}^-} \a_N^{\tau; +}(\g_z) \\
		&= u_+^\tau \cdot \hat{V} + u_-^\tau \cdot \hat{V} + \a_N^{\tau; -}(\g^+) + \a_N^{\tau; +}(\g_e) - \sum_{ z \in \bf{p}_{u_+}^-} \a_N^{\tau; +}(\g_z) - \sum_{ z \in \bf{p}_{u_-}^-} \a_N^{\tau; +}(\g_z) \\
		&= u_+ * \hat{V} + u_- * \hat{V} + \a_N^{\tau; +}(\g_e) - \a_N^{\tau; -}(\g_e) \\
		&= u_+ * \hat{V} + u_- * \hat{V} + p_N(\g_e).
	\end{align*}
	We have $p(\g_e) = -\widehat{\g}_e * \hat{V}$ by \Cref{corollary:intersection-cylinders}, so this implies that
	\eq \beta_{v'} * \hat{V} = (u_+ \# u_-) * \hat{V} = u_+ * \hat{V} + u_- * \hat{V} - \widehat{\g}_e * \hat{V} = \beta_{v^+} * \hat{V} + \beta_{v^-} * \hat{V} - \widehat{\g}_e * \hat{V}, \eeq
	as desired.
\epf

\defi \label{defi:rep-buildings-symplectization}
	Given $T \in \widehat{\Sch}$, we say that $T$ is \emph{representable by a holomorphic building} if there exists a $d\l$-compatible almost complex structure $J$ on $\xi$ such that, for every vertex $v \in V(T)$, $\beta_v \in \pi_2(\hat{Y}, \g_{e^+(v)} \sqcup \{\g_{e^-}\}_{e^- \in E^-(v)})$ admits a $\hat{J}$-holomorphic representative. We say that $T$ is representable by a $\hat{J}$-holomorphic building if we wish to specify $\hat{J}$.
\edefi

\begin{corollary} \label{corollary:full-positivity}
Let $T \in \widehat{\Sch}$. Suppose that there exists a morphism $T' \to T$ and a $d\l$-compatible almost complex structure $J$ on $\xi$ such that $T'$ is representable by a $\hat{J}$-holomorphic building. Suppose also that $\hat{V}$ is $\hat{J}$-holomorphic. If $\l$ is positive elliptic near $V$, then $T * \hat{V} \ge -\Gamma^-(T,V)$, where $\Gamma^-(T,V)$ denotes the number of output edges $e$ of $T$ such that $\g_e$ is contained in $V$.
\end{corollary}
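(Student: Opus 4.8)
The plan is to reduce the statement to a vertex-by-vertex estimate using \Cref{proposition:additivity-symplectization} and the invariance under gluing (\Cref{proposition:invariance-gluing}), and then to apply the symplectization positivity result \Cref{proposition:positivity-symplectization} at each vertex, being careful about vertices whose homotopy class is \emph{not} realized inside $\hat V$. First I would invoke \Cref{proposition:invariance-gluing} to replace $T$ by $T'$: since a morphism $T' \to T$ exists, $T * \hat V = T' * \hat V$ and also $\Gamma^-(T,V) = \Gamma^-(T',V)$ because morphisms preserve input and output edges. So it suffices to prove the bound for a tree $T'$ which is representable by a $\hat J$-holomorphic building, with $\hat V$ also $\hat J$-holomorphic.

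Next I would apply \Cref{proposition:additivity-symplectization} to write
\eq
T' * \hat{V} = \sum_{v \in V(T')} \beta_v * \hat{V} - \sum_{e \in E^\mathrm{int}(T')} \widehat{\g}_e * \hat{V}.
\eeq
For the interior-edge terms, \Cref{corollary:intersection-cylinders} gives $\widehat{\g}_e * \hat V = -p_N(\g_e) \le 0$ when $\g_e \subset V$ and $\widehat{\g}_e * \hat V = 0$ otherwise (\Cref{remark:intersection-disjoint-cylinders}); in either case $-\widehat{\g}_e * \hat V \ge 0$, so the interior edges only help. Thus it remains to bound $\sum_v \beta_v * \hat V$ from below by $-\Gamma^-(T',V)$. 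For each vertex $v$, let $u_v$ be the $\hat J$-holomorphic representative of $\beta_v$. If $\op{Im}(u_v)$ is not contained in $\hat V$, then \Cref{prop-positivity-intersections} gives $\beta_v * \hat V = u_v * \hat V \ge 0$. If $\op{Im}(u_v) \subset \hat V$, then since $\l$ is positive elliptic near $V$, \Cref{proposition:positivity-symplectization} gives $\beta_v * \hat V \ge 1 - p_{u_v}$, where $p_{u_v}$ is the total number of punctures of $u_v$.

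The main obstacle — and the combinatorial heart of the argument — is to show that $\sum_v (\text{contribution}) \ge -\Gamma^-(T',V)$ after summing these per-vertex bounds over the tree. The key bookkeeping observation is that an edge $e$ of $T'$ carries an orbit $\g_e \subset V$ forces any vertex adjacent to $e$ whose curve is \emph{not} contained in $\hat V$ to still have a puncture asymptotic to an orbit in $V$; but more to the point, whenever a vertex $v$ has a curve contained in $\hat V$, each of its punctures is an edge $e$ with $\g_e \subset V$. I would argue that the negative contributions $-(p_{u_v} - 1)$ coming from $\hat V$-contained vertices telescope along the tree: each interior edge joining two $\hat V$-contained vertices is counted as a puncture of \emph{both} endpoints, but the edge-sum $-\sum_{e \in E^\mathrm{int}} \widehat\g_e * \hat V = \sum_{e\in E^\mathrm{int},\, \g_e \subset V} p_N(\g_e)$ is nonnegative and partially absorbs this; meanwhile the ``$+1$ per $\hat V$-contained vertex'' exactly cancels against the count of interior edges in the connected subforest spanned by $\hat V$-contained vertices (a tree on $k$ vertices has $k-1$ internal edges), leaving only the output edges $e$ with $\g_e \subset V$ uncompensated — and these number exactly $\Gamma^-(T',V)$. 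Carrying this out rigorously amounts to restricting attention to the maximal subforest $T'_V \subseteq T'$ whose vertices have $\hat V$-contained curves, applying an Euler-characteristic count $|V(T'_V)| - |E^\mathrm{int}(T'_V)| = (\text{number of components of } T'_V)$, and checking that each component of $T'_V$ that contains no output edge of $T'$ in fact cannot occur (its curve would be a closed building in $\hat V$ with no negative end disjoint from $V$, contradicting the structure), or else contributes a nonnegative amount; the components that do reach an output edge contribute at worst $-1$ per such output edge. I expect verifying this last dichotomy — precisely which components of $T'_V$ can fail to touch an output edge, and why those that do not are harmless — to require the most care, though it is essentially a finite combinatorial check once the setup is in place.
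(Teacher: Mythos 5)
Your overall strategy matches the paper's: reduce from $T$ to $T'$ via \Cref{proposition:invariance-gluing}, expand with \Cref{proposition:additivity-symplectization}, and then control each term using \Cref{prop-positivity-intersections} and \Cref{proposition:positivity-symplectization}. The ingredients are all correct, but the execution of the regrouping is more convoluted than it needs to be, and one intermediate claim is actually false.

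The false step is the sentence ``Thus it remains to bound $\sum_v \beta_v * \hat{V}$ from below by $-\Gamma^-(T',V)$.'' This inequality does not hold: take $T'$ to be a chain $v_1 \to \cdots \to v_k$ of vertices all represented by trivial cylinders over an orbit in $V$, with a single output edge. Then $\sum_v \beta_v * \hat V = -k$ by \Cref{corollary:intersection-cylinders}, whereas $\Gamma^-(T',V) = 1$. The interior-edge contributions cannot be dropped even though they are individually nonnegative; they are essential to absorbing the negative per-vertex contributions. You do eventually bring them back into the game in the following paragraph, so the argument is salvageable, but the framing masks where the real cancellation happens.

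The paper avoids the Euler-characteristic bookkeeping entirely by a cleaner regrouping of the edge sum. Every edge in $E^{\mathrm{int}}(T') \sqcup E^-(T')$ is the outgoing edge of exactly one vertex, so
\begin{equation*}
\sum_{e \in E^{\mathrm{int}}(T')} \widehat{\gamma}_e * \hat V = \sum_{v \in V(T')} \sum_{e \in E^-(v)} \widehat{\gamma}_e * \hat V - \sum_{e \in E^-(T')} \widehat{\gamma}_e * \hat V,
\end{equation*}
which rewrites the additivity formula as
\begin{equation*}
T' * \hat V = \sum_{v \in V(T')} \Bigl( \beta_v * \hat V - \sum_{e \in E^-(v)} \widehat{\gamma}_e * \hat V \Bigr) + \sum_{e \in E^-(T')} \widehat{\gamma}_e * \hat V.
\end{equation*}
The last sum equals $-\Gamma^-(T',V)$, and each vertex term is individually nonnegative: if $u_v$ is not contained in $\hat V$ this is \Cref{prop-positivity-intersections} plus $\widehat{\gamma}_e * \hat V \le 0$; if $u_v$ is contained in $\hat V$ then every outgoing edge has $\gamma_e \subset V$, so $\sum_{e \in E^-(v)} \widehat{\gamma}_e * \hat V = -\#E^-(v)$ and \Cref{proposition:positivity-symplectization} gives exactly $\beta_v * \hat V \ge -\#E^-(v)$. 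This vertex-by-vertex regrouping replaces your Euler-characteristic/component analysis, and there is no need to classify which components of the $\hat V$-contained subforest ``can fail to touch an output edge''; that dichotomy is a red herring. Your approach can be made to work (the key point is simply that every outgoing edge of a $\hat V$-contained vertex lies in $V$, hence is either an interior edge contributing $+1$ or an output edge counted in $\Gamma^-$), but it is strictly more work than the paper's reindexing.
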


\begin{proof}
By \Cref{proposition:invariance-gluing}, $T * \hat{V} = T' * \hat{V}$. By \Cref{proposition:additivity-symplectization},
	\eq
		T' * \hat{V} = \sum_{v \in V(T')} \beta_{v} * \hat{V} - \sum_{e \in E^\mathrm{int}(T')} \widehat{\g}_e * \hat{V}.
	\eeq

According to \Cref{prop-positivity-intersections}, we have $\b_v * \hat{V} \ge 0$, unless the holomorphic representative of $\b_v$ is entirely contained in $\hat{V}$, in which case \Cref{proposition:positivity-symplectization} tells us that $\b_v * \hat{V} \ge -\# E^-(v)$. By \Cref{corollary:intersection-cylinders}, we have
	\eq -\# E^-(v) = \sum_{e \in E^-(v)} \widehat{\g}_e * \hat{V}. \eeq
	
	Given $v \in V(T')$, let us denote by $\G^-(v, V)$ the number of output edges $e \in E^-(v)$ such that $\g_e \sub V$.  Appealing again to \Cref{proposition:additivity-symplectization}, we have:
\begin{align}
T'^* \hat{V} &= \sum_{ v \in V(T')} \beta_v * \hat{V} - \sum_{e \in E^{\op{int}}(T')} \widehat{\g}_e * \hat{V} \\
&= \sum_{v \in V(T')} \lt( \beta_v * \hat{V} - \sum_{e \in E^-(v)} \hat{\g}_e * \hat{V} \rt) + \sum_{e \in E^-(T)} \widehat{\g}_e * \hat{V} \\
&=  \sum_{v \in V(T')} ( \beta_v * \hat{V} + \G^-(v, V) ) - \Gamma^-(T', V)\\
&\geq - \Gamma^-(T, V),
\end{align}
where we have used the fact that $T, T'$ have the same exterior edges in the last line. This completes the proof. 
\end{proof}

More generally, suppose we are given the following data, where $m$ is now allowed to be any nonnegative integer:
\begin{itemize}
	\item For each $0 \le i \le m$, a strong contact submanifold $V^i \subset Y^i$ of codimension $2$.
	\item For each $0 \le i \le j \le m$, a homotopy class $\eta_{ij} \in \O_{2n-2}(\hat{X}^{ij}, V^i \sqcup V^j)$. We require that $\eta_{ii} := [\hat{V}_i]$ be the homotopy class of $\hat{V}^i = \R \times V^i$ and that $\eta_{ik} = \eta_{ij} \# \eta_{jk}$ for any $i \le j \le k$.
\end{itemize}
Let $\eta := \eta_{0m} \in \O_{2n - 2}(\hat{X}^{0m}, V^0 \sqcup V^m)$.

\prop \label{proposition:gluing-additivity}
Let $T \in \widehat{\Sch}$. Then
\eq \label{equation:additivity-cobordism}
T * \eta = \sum_{v \in V(T)} \b_v * \eta_{*(v)} - \sum_{e \in E^\mathrm{int}(T)} \widehat{\g}_e * \hat{V}_{*(e)}.
\eeq
\eprop
\pf
We will say a vertex $v \in V(T)$ is a \emph{symplectization vertex} if $*(v) = ii$ for some $i$ and a \emph{cobordism vertex} otherwise. This induces a partition $E^\mathrm{int}(T) = E^{ss}(T) \sqcup E^{sc}(T) \sqcup E^{cc}(T)$ of the set of interior edges according to the types of the vertices they are adjacent to (here the superscripts $s, c$ stand for ``symplectization" and ``cobordism" respectively). Similarly, the set of exterior edges admits a partition $E^\mathrm{ext}(T) = E^s(T) \sqcup E^c(T)$.

We can (and will) assume without loss of generality that $E^{ss}(T)$ is empty. Indeed, let $T \to T'$ be the morphism obtained by contracting all the edges in $E^{ss}(T)$. Replacing $T$ with $T'$ doesn't change the left-hand side of \eqref{equation:additivity-cobordism} by \Cref{proposition:invariance-gluing} and doesn't change the right-hand side by \Cref{proposition:additivity-symplectization}.

Let $I = V(T) \sqcup E^c(T) \sqcup E^{cc}(T)$ and choose a familty of curves $\{u_i\}_{i \in I}$ with the following properties:
\begin{itemize}
	\item For each $v \in V(T)$, $u_v$ is a curve in the homotopy class $\b_v$ which is cylindrical at infinity.
	\item For each $e \in E^c(T) \sqcup E^{cc}(T)$, $u_e = \widehat{\g}_e$ is the trivial cylinder over the Reeb orbit $\g_e$.
\end{itemize}
 For $t > 0$ sufficiently large, we can glue the $u_i$'s to obtain a curve $u$ in $X^{00} \#_t X^{01} \#_t X^{11} \#_t \cdots \#_t X^{mm} \cong X^{0m}$ representing $T$. We can also choose representatives $H_{ij}$ of $\eta_{ij}$ so that $H_{ii} = \hat{V_i}$ and $H := H_{0m}$ coincides with $H_{00} \#_t H_{01} \#_t \cdots \#_t H_{mm}$.

 As in the proof of \Cref{proposition:additivity-symplectization}, we can choose perturbation $u^\tau$, $\{u_i^\tau\}$ so that
 \eq u^\tau \cdot H = \sum_{i \in I} u_i^\tau \cdot H_i, \eeq
 where $H_i := H_{*(v)}$ for $i = v \in V(T)$ and $H_i := \hat{V}_{*(e)}$ for $i = e \in E^c(T) \sqcup E^{cc}(T)$. The difference $\sum_i u_i * H_i - u * H$ is therefore equal to
\begin{align}
	&\sum_{e \in E^{sc}(T) \sqcup E^c(T)} \a_N^{\tau; -}(\g_e) - \a_N^{\tau; +}(\g_e) + 2 \sum_{e \in E^{cc}(T)} \a_N^{\tau; -}(\g_e) - \a_N^{\tau; +}(\g_e) \\
	&= \sum_{e \in E^{sc}(T) \sqcup E^c(T)} \widehat{\g}_e * \hat{V}_{*(e)} + 2 \sum_{e \in E^{cc}(T)} \widehat{\g}_e * \hat{V}_{*(e)} \\
	&= \sum_{e \in E^\mathrm{int}(T)} \widehat{\g}_e * \hat{V}_{*(e)} + \sum_{e \in E^{c}(T) \sqcup E^{cc}(T)} \widehat{\g}_e * \hat{V}_{*(e)}
\end{align}
Since $u_i * H_i = \widehat{\g}_e * \hat{V}_{*(e)}$ for $i = e \in E^c(T) \sqcup E^{cc}(T)$, we conclude that
\eq
	u * H = \sum_{v \in V(T)} u_v * H_{*(v)} - \sum_{e \in E^\mathrm{int}(T)} \widehat{\g}_e * \hat{V}_{*(e)},
\eeq
which implies \eqref{equation:additivity-cobordism}.
\epf

\defi \label{defi:rep-buildings-full}
Given $T \in \widehat{\Sch}$, we say that $T$ is \emph{representable by a holomorphic building} if for every vertex $v \in V(T)$, there exists an adapted almost complex structure $\hat{J}^v$ on $\hat{X}^{*(v)}$ such that $\beta_v \in \pi_2(\hat{X}^{*(v)}, \g_{e^+(v)} \sqcup \{\g_{e^-}\}_{e^- \in E^-(v)})$ and $\eta_{*(v)} \in \O_{2n-2}(\hat{X}^{*(v)}, V^{*^+(v)} \sqcup V^{*^-(v)})$ admit $\hat{J}^v$-holomorphic representatives. 
\edefi

\prop \label{proposition:positivity-intersection-buildings}
Let $T \in \widehat{\Sch}$. Suppose that there exists a morphism $T' \to T$ where $T'$ is representable by a holomorphic building. Suppose that $\l^i$ is positive elliptic near $V^i$ for all $0 \le i \le m$, and let $r_i > 0$ be the rotation parameter (see \Cref{definition:adapted-contact-form}). If $\beta_v * \eta_{*(v)} \geq -  \# \{ E^-(v) \}$, then $T * \eta \ge  -  \Gamma^-(T, V^m)$ (recall that $\eta:= \eta_{0m}$). 
\eprop

\pf
By \Cref{proposition:invariance-gluing}, we have $T * \eta = T' * \eta$. Given $v \in V(T')$, let us denote by $\Gamma^-(v, V^{*^-(v)})$ the number of output edges $e \in E^-(v)$ such that $\g_e \sub V^{*^-(v)}$. 
	
Arguing as in the proof of \Cref{corollary:full-positivity}, we obtain from \Cref{proposition:gluing-additivity} that
\begin{align}
T' * \hat{V} &= \sum_{ v \in V(T')} \beta_v * \eta_{*(v)} - \sum_{e \in E^{\op{int}}(T')} \widehat{\g}_e * \hat{V}_{*(e)} \\
&= \sum_{v \in V(T')} \lt( \beta_v * \eta_{*(v)} -  \sum_{e \in E^-(v)} \widehat{\g}_e * \hat{V}_{*(e)} \rt) +   \sum_{e \in E^-(T')} \widehat{\g}_e * \hat{V}_{*(e)} \\
&=  \sum_{v \in V(T')} ( \beta_v * \hat{V} +  \# \Gamma^-(v, V^{*^-(v)})) - \Gamma^-(T', V)\\
&\geq -   \Gamma^-(T, V),
\end{align}
where we have used the fact that $T, T'$ have the same exterior edges in the last line. This completes the proof. 
\epf

\subsection{The intersection number for cycles}

For future reference, we collect some basic facts about intersection numbers for cycles in oriented manifolds. This subsection takes places entirely in the smooth category and does not involve any contact topology. 

\defi \label{definition:intersection-number} 
Let $M$ be an oriented, compact manifold of dimension $n$, possibly with boundary. Let $S_1, S_2 \sub M$ be disjoint closed embedded submanifolds of $M$. (We allow the $S_i$ to intersect $\d M$, in which case the $S_i$ are required to be embedded submanifolds after enlarging $M$ by a collar). Then we can define a pairing
\begin{align}
- \cdot - : H_{k}(M, S_1; \Z) \tms H_{n-k}(M, S_2; \Z)  &\to \Z, \\
(A, B) &\mapsto A \cdot B, 
\end{align}  
where $A \cdot B$ is a signed count of intersections between cycles representing $A$ and $B$. More precisely, we represent cycles by $C^\infty$ chains; by general position, these chains may be assumed to intersect transversally after an arbitrarily small perturbation which does not affect their homology class. It is a folklore result which is beyond the scope of this paper that the resulting count is well-defined and graded-symmetric (see e.g.\ \cite[Sec.\ 2.3]{griffiths-morgan}). 
\edefi

We note that the intersection number in \Cref{definition:intersection-number} could be defined under much milder hypotheses, but this is not necessary for our purposes. If $A, B$ are (the pushforward of the fundamental class of) oriented manifolds, then $A\cdot B$ coincides with the usual intersection number for submanifolds. By abuse of notation, we will view the intersection pairing as being defined on both homology classes and oriented submanifolds. 

\defi \label{definition:linking-number}
Fix a closed manifold $Y$ of dimension $m \geq 3$ and a closed codimension $2$ submanifold $V\sub Y$. Suppose that $H_1(Y;\Z)= H_2(Y; \Z)=0$. 
	
Let $\g: S^1 \to Y-V$ be a loop with embedded image. The \emph{linking number} of $\g$ with respect to $V$ is denoted $\op{link}_V(\g)$ and defined by
\eq \op{link}_V(\g):= V \cdot C_{\g}, \eeq
where $C_{\g}$ is a cycle bounding $\g$ (which exists since $H_1(Y;\Z)=0$). This is well-defined due to our assumption that $H_2(Y; \Z)=0$.
	
Suppose now that $\L \sub Y-V$ is a submanifold with $\pi_0(\L)=\pi_1(\L)=0$. Let $c: [0,1] \to Y-V$ be a path with embedded image having the property that $c(0), c(1) \in \L$. Let $\ov{c}: S^1 \to Y-V$ be a loop with embedded image obtained by connecting $c(1)$ to $c(0)$ by a path in $\L$. The \emph{(path) linking number} of $c$ with respect to $V$ is denoted $\op{link}_V(c;\L)$ and is defined by setting 
\eq \op{link}_V(c; \L):= V \cdot C_{\ov{c}}, \eeq
where $C_{\ov{c}}$ is a cycle bounding $\ov{c}$. This is independent of $\ov{c}$ since $\pi_1(\L)=0$ and independent of $C_{\ov{c}}$ since $H_2(Y; \Z)=0$. 
\edefi

\rmk \label{rmk:obd-linking-number}
Fix an open book decomposition $(Y, B, \pi)$ and let $\g: S^1 \to Y-B$ be a loop. Then it is not hard to show that we have $\op{link}_B(\g) = \op{deg}(\pi \circ \g).$
	
Similarly, suppose that $\L \sub Y$ is a submanifold which is contained in a page of $(Y, B, \pi)$. Let $c: [0,1] \to Y-B$ be a path with the property that $c(0), c(1) \in \L$. Then the composition $\pi \circ c: [0,1] \to S^1$ induces a map $\ov{c}: [0,1]/\{0,1\} \to S^1$. We then have $\op{link}_B(c; \L) = \op{deg} \ov{c}. $
\ermk

\lem \label{lemma:cobordism-linking}
Let $Y^{\pm}$ be oriented manifolds with $Y^+ \neq \emptyset$ and let $B^{\pm} \sub Y^{\pm}$ be oriented submanifolds.  Let $W$ be an oriented, smooth cobordism from $Y^+$ to $Y^-$ and let $H \sub W$ be an oriented sub-cobordism from $B^+$ to $B^-$ (i.e. $H$ is an embedded submanifold which admits a collar neighborhood near the boundary of $W$.) Suppose that $H_1(Y^{\pm}; \Z)= H_2(Y^{\pm}; \Z)= H_2(W, Y^+; \Z)=0$.       

Let $\S$ be a Riemann surface with $k+1$ boundary components labelled $\g^+, \g^-_1,\dots,\g^-_k$. Suppose that $u: (\S, \d \S) \to (W, \d W)$ is a smooth map sending $\g^+$ into $Y^+ - B^+$ and $\g^-_i$ into $Y^- -B^-$.

Then \eq \label{equation:cobordism-linking} \op{link}_{B^+}(\g^+)- \sum_{i=1}^k \op{link}_{B^-}(\g^-_i)= H \cdot u(\S), \eeq
where we have identified the boundary components of $\S$ with the restriction of $u$ to these components. 
\elem

\pf
Choose a $2$-chain $B^{-} \in C_2(Y^-; \Z)$ with $\partial B^- =\g^-_1 \cup \dots \cup \g^-_k$. Glue $B^-$ to $u(\S)$ along the $\g^-_i$ and call the resulting chain $C \in C_2(W; \Z)$.  We now have $H \cdot C=  H \cdot u(\S)  +  \sum_{i=1}^k \op{link}_{B^-}(\g^-_i).$

By the long exact sequence of the triple $(W, Y^+, \g^+)$ and our assumption that $H_2(W, Y^+; \Z)=0$, the natural map $H_2(Y, \g^+; \Z) \to H_2(W, \g^+; \Z)$ is surjective. Let $\tilde{C} \in H_2(Y^+, \g^+; \Z)$ be a lift of $C \in H_2(W, \g^+; \Z)$. Then $H \cdot C= H \cdot \tilde{C}= B^+ \cdot \tilde{C} = \op{link}_{B^+}(\g^+). $
\epf

\lem \label{lemma:cobordism-linking-legendrian}
We carry over the setup and notation from \Cref{lemma:cobordism-linking}. In addition to the data considered there, let $\L^{\pm} \sub Y^{\pm}- B^{\pm}$ be an oriented, smooth submanifold and let $\L \sub W$ be an oriented sub-cobordism from $\L^+$ to $\L^-$ which is disjoint from $H$. We suppose in addition that $\pi_0(\L^{\pm})= \pi_1(\L^{\pm})=0$. 

Let $\S$ be a closed, oriented surface of genus zero with $s+1$ boundary components labeled $\g^*, \g_1,\dots,\g_n$. For $\s \in \N_+$, we place $2 \s$ disjoint marked points on $\g^*$, this partitioning $\g^*$ into $2\s$ sub-intervals. Let us label these subintervals by the symbols $c^+, b_{01}, c_{1}^-, b_{12}, c_2^-,\dots,b_{(\s-1)\s}, c_\s^-, b_{\s0}$, in the order induced by the orientation.

Suppose now that $u: (\S, \d \S) \to (W, \d W \cup \L)$ is a smooth map sending $(c^+, \d c^+)$ into $(Y^+-B^+, \L^+)$, sending $(c_i^-, \d c_i^-)$ into $(Y^- - B^-, \L^-)$, sending $b_{i(i+1)}$ into $\L$, and sending the $\g_i^-$ into $Y^- - B^-$. 

Then \eq \op{link}_{B^+}(c^+; \L^+)- \sum_{i=1}^{\s} \op{link}_{B^-}(c^-_i; \L^-)- \sum_{i=1}^s \op{link}_{B^-}(\g^-_i)= H \cdot u_*[\S], \eeq 
where we have again identified the boundary components of $\S$ with the restriction of $u$ to these components. 
\qed
\elem

\section{Energy and twisting maps} \label{section:energy-twisting-maps}

\subsection{Twisting maps} \label{subsection:twistinghomologies}

In order to define invariants of codimension $2$ contact submanifolds, we will proceed as follows. First, we will use Siefring's intersection theory to define maps $\psi : \Sch \to \mc{R}$. Here $\mc{R}$ could be any $\Q$-algebra, though we will only use $\mc{R} = \Q[U]$ and $\mc{R} = \Q$. We will then use these maps to define ``twisted'' moduli counts
\eq
	\#_\psi\Mbar(T)^{\vir} := \#\Mbar(T)^{\vir} \cdot \psi(T) \in \mc{R}.
\eeq
The maps $\psi$ will have the property that
\begin{align}
\psi(T') &= \psi(T) \quad \text{for every morphism $T' \to T$} \\
\psi(\#_i T_i) &= \prod_i \psi(T_i)
\end{align}
which implies that the \emph{master equations} \eqref{eq-boundary-zero} and \eqref{eq-concatenation-additive} still hold if $\#\Mbar^{\vir}$ is replaced by $\#_\psi\Mbar^{\vir}$.

The properties which must be satisfied by the maps $\psi$ in order to obtain twisted counts which are suitable for defining our invariants can be conveniently axiomatized in the notion of a \emph{twisting map}. We now define precisely this notion in each of the four setups. 

\begin{setup1}
Fix a datum $\mc{D}$ for Setup I.  Let $\Sch_\I^{\neq \emptyset}$ denote the full subcategory of $\Sch_\I$ spanned by objects $T$ for which the moduli space $\Mbar(T)$ is nonempty.
\defi \label{definition:twistingI}
	Let $\mc{R}$ be a $\Q$-algebra. The set $\Psi_\I(\mc{D};\mc{R})$ of $\mc{R}$-valued \emph{twisting maps} consists of all maps $\psi : \Sch_\I^{\neq \emptyset}(\mc{D}) \to \mc{R}$ satisfying the following two properties:
	\begin{itemize}
		\item for any morphism $T' \to T$, $\psi(T') = \psi(T)$;
		\item for any concatenation $\{T_i\}_i$, $\psi(\#_i T_i) = \prod_i \psi(T_i)$.
	\end{itemize}
\edefi

Fix a twisting map $\psi \in \Psi_\I(\mc{D};\mc{R})$.
Let
\eq
	CC_\bullet(Y,\xi,\psi)_\lambda := \bigoplus_{n\geq 0}\Sym^n_{\mc{R}}\Bigl(\bigoplus_{\gamma\in\cP_\good}\oo_\gamma\Bigr)
\eeq
be the free supercommutative $\Z/2$-graded unital $\mc{R}$-algebra generated by the good Reeb orbits. The grading of a Reeb orbit is given by its \emph{parity}, which is defined as \eq \label{equation:parity} 
	|\g|= \op{sign} \op{det}(I- A_{\g}) \in \{ \pm 1\} = \Z/2,
\eeq
where $A_\g$ is the linearized Poincar\'{e} return map of $\xi$ along $\g$ (see \cite[Sec.\ 2.13]{pardon}). Recall that a Reeb orbit is good iff it is not bad; a Reeb orbit $\g$ is bad if is an even multiple of some simple Reeb orbit $\g_s$ such that $\g$ and $\g_s$ have different parity; \cite[Def.\ 2.49]{pardon}.

Theorem 1.1 of \cite{pardon} provides a set of perturbation data $\Theta_\I(\mc{D})$ and associated virtual moduli counts $\# \Mbar_\I(T)^{\vir}_{\theta} \in \bb{Q}$ satisfying \eqref{eq-boundary-zero} and \eqref{eq-concatenation-additive}. We define the twisted moduli counts 
\eq \#_{\psi} \Mbar_\I(T)^{\vir}_{\theta}:= \#\Mbar_\I(T)^{\vir}_{\theta} \cdot \psi(T) \in \mc{R}.\eeq 

It follows easily from \Cref{definition:twistingI} that the twisted moduli counts also satisfy \eqref{eq-boundary-zero} and \eqref{eq-concatenation-additive}. We may therefore endow $CC_\bullet(Y,\xi,\psi)_{\lambda}$ with a differential $d_{\psi, J, \t}$, which is given by
\eq \label{equation:twist-differential-I}
	d_{\psi, J, \t}(\oo_{\gamma^+})
	= \sum_{\mu(T)=1}\frac 1{\left|\Aut(T)\right|}\cdot\#_{\psi} \Mbar_\I(T)^\vir_{\t} \oo_{\Gamma^-},
\eeq
where the sum is over all trees $T \in \Sch_I(\mc{D})$ representing curves with positive orbit $\gamma^+$ and negative orbits $\Gamma^-\to\cP_\good$. 
 
The homology of $(CC_\bullet(Y,\xi,\psi)_{\lambda}, d_{\psi, J, \theta})$ is a supercommutative $\Z/2$-graded unital $\mc{R}$-algebra which is denoted 
\eq \label{equation:inv-1} CH_{\bullet}(Y, \xi, \psi)_{\l, J, \theta}.\eeq
\end{setup1}

\begin{setup2}

Fix a datum $\mc{D}$ for Setup II. Suppose now we are given a map of $\Q$-algebras $m : \mc{R}^+ \to \mc{R}^-$ and twisting maps $\psi^\pm \in \Psi_\I(\mc{D}^\pm;\mc{R}^\pm)$.
\defi \label{definition:twistingII}
	The set $\Psi_\II(\mc{D};\psi^+,\psi^-)$ consists of all maps $\psi : \Sch_\II^{\neq \emptyset}(\mc{D}) \to \mc{R}^-$ satisfying the following two properties:
	\begin{itemize}
		\item for any morphism $T' \to T$, $\psi(T') = \psi(T)$;
		\item for any concatenation $\{T_i\}_i$,
		\eq
			\psi(\#_i T_i) = \left( \prod_{T_i \in \Sch_\I^+} m(\psi^+(T_i)) \right)\left( \prod_{T_i \in \Sch_\II} \psi(T_i) \right)\left( \prod_{T_i \in \Sch_\I^-} \psi^-(T_i) \right).
		\eeq
	\end{itemize}
\edefi

Fix a twisting map $\psi \in \Psi_\II(\mc{D};\psi^+,\psi^-)$. Theorem 1.1 of \cite{pardon} provides a set of perturbation data $\Theta_\II(\mc{D})$ together with a forgetful map 
\eq\label{equation:type-2-forget} \T_\II(\mc{D}) \to \T_\I(\mc{D}^+) \times \T_\I(\mc{D}^-)\eeq 
and associated virtual moduli counts $\# \Mbar_\II(T)^{\vir}_{\theta} \in \bb{Q}$. We define the twisted moduli counts 
\eq \#_{\psi} \Mbar_\II(T)^{\vir}_{\theta}:= \#\Mbar_\II(T)^{\vir}_{\theta} \cdot \psi(T) \in \mc{R}^-.\eeq

For any $\t \in \T_\II(\mc{D})$ mapping to $(\t^+,\t^-) \in \T_\I(\mc{D}^+) \times \T_\I(\mc{D}^-)$, we obtain a unital $\mc{R}^+$-algebra map
\eq\label{equation:phi-setup-2} \Phi(\hat{X},\hat{\l},\psi)_{\hat{J},\t} : CC_\bullet(Y^+,\xi^+,\psi^+)_{\lambda^+, J^+, \theta^+} \to CC_\bullet(Y^-,\xi^-,\psi^-)_{\lambda^-, J^-, \theta^-}\eeq
which maps $\oo_{\g^+}$ to
\eq
	\sum_{\mu(T)=0}\frac 1{\left|\Aut(T)\right|}\cdot\#_{\psi} \Mbar_\II(T)^\vir_{\t} \oo_{\Gamma^-},
\eeq
with the sum over all trees $T \in \Sch_\II(\mc{D})$ representing curves with positive orbit $\g^+$ and negative orbits $\Gamma^-\to\cP_\good(Y^-)$. This is a chain map since it follows from \Cref{definition:twistingII} that the twisted moduli counts satisfy \eqref{eq-boundary-zero} and \eqref{eq-concatenation-additive}.
\end{setup2}

\begin{setup3}

Fix a datum $\mc{D}$ for Setup III. There are three types of concatenations $\{T_i\}_i$ in $\Sch_\III = \Sch_\III(\mc{D})$:
\begin{enumerate}
	\item $\{T_i\} \subset \Sch_\I^+ \sqcup \Sch_\II^{t=0} \sqcup \Sch_\I^-$, in which case $\mathfrak{s}(\#_i T_i) = \{0\}$;
	\item $\{T_i\} \subset \Sch_\I^+ \sqcup \Sch_\II^{t=1} \sqcup \Sch_\I^-$, in which case $\mathfrak{s}(\#_i T_i) = \{1\}$;
	\item $\{T_i\} \subset \Sch_\I^+ \sqcup \Sch_\III \sqcup \Sch_\I^-$ and $T_i \in \Sch_\III$ for a unique $i = i_0$, in which case $\mathfrak{s}(\#_i T_i) = \mathfrak{s}(T_{i_0})$.
\end{enumerate}

(Here $\Sch_\I^{\pm}, \Sch_\II^{t\in \{0,1\}}, \Sch_\III$ are tree categories determined by $\mc{D}$, following the notation of \cite[Sec.\ 2.1]{pardon}). Suppose now we are given a map of $\Q$-algebras $m : \mc{R}^+ \to \mc{R}^-$ and twisting maps $\psi^\pm \in \Psi_\I(\mc{D}^\pm;\mc{R}^\pm)$, $\psi^0 \in \Psi_\II(\mc{D}^{t=0};\psi^+,\psi^-)$ and $\psi^1 \in \Psi_\II(\mc{D}^{t=1};\psi^+,\psi^-)$.
\defi
	The set $\Psi_\III(\mc{D};\psi^0,\psi^1)$ consists of all maps $\psi : \Sch_\III^{\neq \emptyset}(\mc{D}) \to \mc{R}^-$ satisfying the following properties:
	\begin{itemize}
		\item for any morphism $T' \to T$, $\psi(T') = \psi(T)$;
		\item for any concatenation $\{T_i\}_i$ of the first type,
		\eq
			\psi(\#_i T_i) = \left( \prod_{T_i \in \Sch_\I^+} m(\psi^+(T_i)) \right)\left( \prod_{T_i \in \Sch_\II^{t=0}} \psi^0(T_i) \right)\left( \prod_{T_i \in \Sch_\I^-} \psi^-(T_i) \right)
		\eeq
		\item for any concatenation $\{T_i\}_i$ of the second type,
		\eq
			\psi(\#_i T_i) = \left( \prod_{T_i \in \Sch_\I^+} m(\psi^+(T_i)) \right)\left( \prod_{T_i \in \Sch_\II^{t=1}} \psi^1(T_i) \right)\left( \prod_{T_i \in \Sch_\I^-} \psi^-(T_i) \right)
		\eeq
		\item for any concatenation $\{T_i\}_i$ of the third type,
		\eq
			\psi(\#_i T_i) = \left( \prod_{T_i \in \Sch_\I^+} m(\psi^+(T_i)) \right) \psi(T_{i_0}) \left( \prod_{T_i \in \Sch_\I^-} \psi^-(T_i) \right).
		\eeq
	\end{itemize}
\edefi

Fix a twisting map $\psi \in \Psi_\III(\mc{D};\psi^0,\psi^1)$.
Theorem 1.1 of \cite{pardon} provides a set of perturbation data $\Theta_\III(\mc{D})$ together with a forgetful map $\T_\III(\mc{D}) \to \T_\II(\mc{D}^0) \times_{\T_\I(\mc{D}^+) \times \T_\I(\mc{D}^-)} \T_\II(\mc{D}^1)$ and associated virtual moduli counts $\# \Mbar_\III(T)^{\vir}_{\theta} \in \bb{Q}$ (note that the fiber product is defined with respect to \eqref{equation:type-2-forget}). We define the twisted moduli counts 
\eq \#_{\psi} \Mbar_\III(T)^{\vir}_{\theta}:= \#\Mbar_\III(T)^{\vir}_{\theta} \cdot \psi(T) \in \mc{R}^-.\eeq

If $(\hat{X},\hat{\l}^t)$ is a family of exact cobordisms, then for any $\t \in \T_\III(\mc{D})$, we obtain an $\mc{R}^+$-linear map
\eq\label{equation:phi-homotopy-3} K(\hat{X},\{\l^t\}_t,\psi)_{\hat{J}^t,\t } : CC_\bullet(Y^+,\xi^+,\psi^+)_{\lambda^+, J^+, \theta^+} \to CC_{\bullet + 1}(Y^-,\xi^-,\psi^-)_{\lambda^-, J^-, \theta^-} \eeq
which sends the monomial $\prod_{i \in I} \oo_{\g_i^+}$ to
\eq
	\sum_{\vdim(\{T_i\}_{i\in I})=0}\frac 1{\left|\Aut(\{T_i\}_{i \in I})\right|}\cdot\#_{\psi} \Mbar_\III(\{T_i\}_{i\in I})^\vir_{\t} \prod_{i \in I}\oo_{\Gamma_i^-},
\eeq
with the sum over trees $T_i \in \Sch_\III(\mc{D})$ with positive orbit $\gamma_i^+$ and negative orbits $\Gamma_i^-\to\cP_\good(Y^-)$.

Equations \eqref{eq-boundary-zero} and \eqref{eq-concatenation-additive} applied to the twisted moduli counts imply that this is a chain homotopy between $\Phi(\hat{X},\hat{\l}^0,\psi^0)_{\hat{J}^0,\t^0}$ and $\Phi(\hat{X},\hat{\l}^1,\psi^1)_{\hat{J}^1,\t^1}$ and hence that the induced maps on homology
\eq\label{equation:phi-setup-3}
\begin{tikzcd}[column sep = huge]
CH_\bullet(Y^+,\xi^+,\psi^+)_{\lambda^+,J^+,\theta^+}\ar[shift left=0.6ex]{r}{\Phi(\hat X,\hat\lambda^0,\psi^0)_{\hat J^0,\theta^0}}\ar[shift left=-0.6ex]{r}[swap]{\Phi(\hat X,\hat\lambda^1,\psi^1)_{\hat J^1,\theta^1}}&CH_\bullet(Y^-,\xi^-,\psi^-)_{\lambda^-,J^-,\theta^-}
\end{tikzcd}
\eeq
are equal.

\end{setup3}

\begin{setup4}
Fix a datum $\mc{D}$ for Setup IV.  There are three types of concatenations $\{T_i\}_i$ in $\Sch_\IV = \Sch_\IV(\mc{D})$:
\begin{enumerate}
	\item $\{T_i\} \subset \Sch_\I^0 \sqcup \Sch_\II^{02} \sqcup \Sch_\I^2$, in which case $\mathfrak{s}(\#_i T_i) = \{0\}$;
	\item $\{T_i\} \subset \Sch_\I^0 \sqcup \Sch_\II^{01} \sqcup \Sch_\I^1 \sqcup \Sch_\II^{12} \sqcup \Sch_\I^2$, in which case $\mathfrak{s}(\#_i T_i) = \{\infty\}$; \label{item:concatenationIV-infinity}
	\item $\{T_i\} \subset \Sch_\I^0 \sqcup \Sch_\IV \sqcup \Sch_\I^2$ and $T_i \in \Sch_\IV$ for a unique $i = i_0$, in which case $\mathfrak{s}(\#_i T_i) = \mathfrak{s}(T_{i_0})$.
\end{enumerate}

(We again follow the notation of \cite[Sec.\ 2.1]{pardon} for tree categories determined by $\mc{D}$). Suppose now we are given maps of $\Q$-algebras $m^{01} : \mc{R}^0 \to \mc{R}^1$, $m^{12} : \mc{R}^1 \to \mc{R}^2$ and twisting maps
\begin{align}
	\psi^i &\in \Psi_\I(\mc{D}^i;\mc{R}^i) \quad (i = 0,1,2) \\
	\psi^{ij} &\in \Psi_\II(\mc{D}^{ij};\psi^i,\psi^j) \quad (ij = 01,12,02)
\end{align}
Set $m^{02} = m^{12} \circ m^{01} : \mc{R}^0 \to \mc{R}^2$.
\defi
	The set $\Psi_\IV(\mc{D};\{\psi^{ij}\})$ consists of all maps $\psi : \Sch_\IV^{\neq \emptyset}(\mc{D}) \to \mc{R}^2$ satisfying the following properties:
	\begin{itemize}
		\item for any  morphism $T' \to T$, $\psi(T') = \psi(T)$;
		\item for any concatenation $\{T_i\}_i$ of the first type,
		\eq
			\psi(\#_i T_i) = \left( \prod_{T_i \in \Sch_\I^0} m^{02}(\psi^0(T_i)) \right)\left( \prod_{T_i \in \Sch_\II^{02}} \psi^{02}(T_i) \right)\left( \prod_{T_i \in \Sch_\I^2} \psi^2(T_i) \right)
		\eeq
		\item for any concatenation $\{T_i\}_i$ of the second type,
		\begin{multline}
			\psi(\#_i T_i) = \left( \prod_{T_i \in \Sch_\I^0} m^{02}(\psi^0(T_i)) \right)\left( \prod_{T_i \in \Sch_\II^{01}} m^{12}(\psi^{01}(T_i)) \right)\left( \prod_{T_i \in \Sch_\I^1} m^{12}(\psi^1(T_i)) \right)\\
			\cdot \left( \prod_{T_i \in \Sch_\II^{12}} \psi^{12}(T_i) \right)\left( \prod_{T_i \in \Sch_\I^2} \psi^2(T_i) \right)
		\end{multline}
		\item for any concatenation $\{T_i\}_i$ of the third type,
		\eq
			\psi(\#_i T_i) = \left( \prod_{T_i \in \Sch_\I^0} m^{02}(\psi^0(T_i)) \right) \psi(T_{i_0}) \left( \prod_{T_i \in \Sch_\I^2} \psi^2(T_i) \right).
		\eeq
	\end{itemize}
\edefi

Fix a twisting map $\psi \in \Psi_\IV(\mc{D};\{\psi^{ij}\})$.
Theorem 1.1 of \cite{pardon} provides a set of perturbation data $\Theta_\IV(\mc{D})$ together with a forgetful map $\T_\IV(\mc{D}) \to \T_\II(\mc{D}^{02}) \times_{\T_\I(\mc{D}^0) \times \T_\I(\mc{D}^2)} (\T_\II(\mc{D}^{01}) \times_{\T_\I(\mc{D}^1)} \T_\II(\mc{D}^{12}))$ and associated virtual moduli counts $\# \Mbar_\IV(T)^{\vir}_{\theta} \in \bb{Q}$ (here again, the fiber product is defined using \eqref{equation:type-2-forget}). We define the twisted moduli counts 
\eq \#_{\psi} \Mbar_\IV(T)^{\vir}_{\theta}:= \#\Mbar_\IV(T)^{\vir}_{\theta} \cdot \psi(T) \in \mc{R}^2.\eeq

As in the previous section, we obtain an $\mc{R}^0$-linear map
\eq CC_\bullet(Y^0,\xi^0,\psi^0)_{\lambda^0, J^0, \theta^0} \to CC_{\bullet + 1}(Y^2,\xi^2,\psi^2)_{\lambda^2, J^2, \theta^2} \eeq
which is a chain homotopy between the maps $\Phi(\hat{X}^{02},\hat{\l}^{02},\psi^{02})_{\hat{J}^{02},\t^{02}}$ and $\Phi(\hat{X}^{12},\hat{\l}^{12},\psi^{12})_{\hat{J}^{12},\t^{12}} \circ \Phi(\hat{X}^{01},\hat{\l}^{01},\psi^{01})_{\hat{J}^{01},\t^{01}}$, so that the diagram
\eq\label{equation:phi-commute-4}
\begin{tikzcd}[row sep = small]
&CH_\bullet(Y^1,\xi^1,\psi^1)_{\lambda^1,J^1,\theta^1}\ar{rd}{\Phi(\hat X^{12},\hat\lambda^{12},\psi^{12})_{\hat J^{12},\theta^{12}}}\\
CH_\bullet(Y^0,\xi^0,\psi^0)_{\lambda^0,J^0,\theta^0}\ar{rr}[swap]{\Phi(\hat X^{02},\hat\lambda^{02},\psi^{02})_{\hat J^{02},\theta^{02}}}\ar{ru}{\Phi(\hat X^{01},\hat\lambda^{01},\psi^{01})_{\hat J^{01},\theta^{01}}}&&CH_\bullet(Y^2,\xi^2,\psi^2)_{\lambda^2,J^2,\theta^2}
\end{tikzcd}
\eeq
commutes.

\end{setup4}

\subsection{The energy of a (strict) symplectic cobordism}

In this section, we introduce a notion of energy for (families of strict) exact symplectic cobordisms, and for certain classes of almost-complex structures. 

\begin{notation}
Recall that a strict exact symplectic cobordism from $(Y^+, \l^+)$ to $(Y^-, \l^-)$ is the data of an exact symplectic cobordism $(\hat X, \hat \l)$ and embeddings
\eq e^{\pm}: (\R \times Y^{\pm}, \hat{\l}^{\pm}) \to (\hat X,\hat\l) \eeq
which preserve the Liouville forms and satisfy certain additional properties stated in \Cref{definition:strict-sympcobordism}. When we consider strict exact symplectic cobordisms in this section, we will routinely abuse notation by identifying subsets of $\R \tms Y^{\pm}$ with their image under $e^\pm$. We hope that this abuse will make this section easier to read without introducing any substantial ambiguities.
\end{notation}

We begin with the following definition. 

\defi \label{definition:cobordism-decompII}
Let $(\hat{X}, \hat{\l})$ be a strict exact symplectic cobordism from $(Y^+, \l^+)$ to $(Y^-, \l^-)$. A \emph{Type A cobordism decomposition} is the data of a pair of hypersurfaces
\begin{align}
\mc{H}_-= \{-C_-\} \tms Y^-  &\text{ and } \mc{H}_+ = \{ C_+ \} \tms Y^+
\end{align}
for $C_\pm \in \R$, such that 
\eq \label{equation:cob-non-intersect} ( (-\infty, -C_-) \tms Y^-)  \cap  (( C_+, \infty) \tms Y^+) = \emptyset. \eeq The intersection in \eqref{equation:cob-non-intersect} takes place inside $\hat{X}$; if $Y^-= \emptyset$, we set $\mc{H}_- = \emptyset$, $C_-=0$ and we consider that \eqref{equation:cob-non-intersect} is tautologically satisfied. 
We let $\S(\hat{X}, \hat{\l})= \S(\hat{X}, \hat{\l}; \l^+, \l^-)$ be the set of all such Type A cobordism decompositions. 
\edefi

\rmk\label{remark:hypersurface-constant} 
Since we are working with \emph{strict} cobordisms, the real numbers $C_\pm \in \R$ are uniquely determined by the hypersurfaces $\mathcal{H}_\pm$. The data of the pair $(\mathcal{H}_-,\mathcal{H}_+)$ is therefore equivalent to the data of the pair $(C_-,C_+)$.
\ermk

\defi\label{definition:type-A-energy}
Let $(\hat{X}, \hat{\l})$ be as in \Cref{definition:cobordism-decompII} and let $\s \in \S(\hat{X}, \hat{\l})$ be a Type A cobordism decomposition. We let $\mc{E}(\s):= C_-+C_+ $ be the \emph{energy} of the decomposition $\s$. We define 
\eq \mc{E}(\hat{X}, \hat{\l}) = \mc{E}(\hat{X}, \hat{\l}; \l^+, \l^-):= \inf_{\s \in \S(\hat{X}, \hat{\l})} \mc{E}(\s) \in \R \cup \{-\infty\} \eeq 
to be the energy of $(\hat{X}, \hat{\l})$ (this is well-defined since a cobordism decomposition clearly always exists). We note that the energy may in general be negative. 

Given $C \in \R$, let $\S(\hat{X}, \hat{\l})_{<C} \sub  \S(\hat{X}, \hat{\l})$ (resp. $\leq C$) denote the subset of cobordism decompositions of energy strictly less than $C$ (resp. at most $C$). 
\edefi

\lem[Energy of a symplectization] \label{lemma:energy-marked-symplectization}
Suppose that $(\hat{X}, \hat{\l}) = (SY,\l_Y)$ is a symplectization which is endowed with the canonical structure of a (strict) exact symplectic cobordism from $(Y, \l^+)$ to $(Y, \l^-)$; see \Cref{example:symplectization}. Fix $f: Y \to \R$ so that $\l^+= e^f \l^-$. Then $\mc{E}(SY, \l_Y) \leq -\op{min} f$. 
\elem

\pf
Let $e^{\pm}: (\R \times Y, \hat{\l}^\pm) \to (SY,\l_Y)$ be the canonical identifications induced by $\l^{\pm}$. Let $\mc{H}_+ = \{0\} \tms Y$ in the coordinates induced by $e^+$. This means that $\mc{H}_+= \{(f(y), y) \mid y \in Y \} \sub \R \tms Y$ in the coordinates induced by $e^-$. Now given any $C_- > - \op{min} f$, we can let $\mc{H}_-= \{- C_-\} \tms Y$ in the coordinates induced by $e^-$. It follows that $\mc{E}(SY, \hat \l_Y) \leq C_-$. Since $C_- > - \op{min} f$ was arbitrary, the claim follows. 
\epf

\rmk \label{remark:energy-symplectization-same-marking}
If we assume in addition that $\l^+= \l^-$, then it is easy to verify that in fact $\mc{E}(SY, \l_Y) = 0$. 
\ermk

\lem \label{lemma:minus-infinity-filling}
We have $\mc{E}(\hat{X}, \hat{\l}) = - \infty$ if and only if $Y^- = \emptyset$. 
\elem

\pf
Suppose that $Y^-$ is non-empty and choose a cobordism decomposition $\s$ for $(\hat X, \hat \l)$ given by a pair of hypersurfaces $\mc{H}_-, \mc{H}_+ \sub \hat{X}$. Let $(X, \l)$ be the truncated Liouville cobordism with negative boundary $\mc{H}_-$ and positive boundary $\mc{H}_+$. Observe that the image of the negative boundary under the Liouville flow must touch the positive boundary in some finite time $T< \infty$ -- indeed, this follows from from the fact that $(X, \l)$ has finite volume. Given any other cobordism decomposition $\s'$, we now have $\mc{E}(\s') \geq \mc{E}(\s)- T$. 
	
Suppose now that $Y^-$ is empty. Then \eqref{equation:cob-non-intersect} is a vacuous condition. Since the backwards Liouville flow of any slice $\{C_+\} \tms Y^+$ is defined for all time, it follows that we can find a cobordism decomposition of arbitrarily negative energy. 
\epf

\lem \label{lemma:cob-decomp-connected-II}
Fix a strict exact symplectic cobordism $(\hat{X}, \hat{\l})$ from $(Y^+, \l^+)$ to $(Y^-, \l^-)$. Then $\S(\hat{X}, \hat{\l})_{<C} \sub  \S(\hat{X}, \hat{\l})$ is 
\begin{itemize}
\item[(a)] nonempty for $C> \mc{E}(\hat{X}, \hat{\l})$,
\item[(b)] path-connected for all $C \in \R$ (note that the empty set is path-connected). 
\end{itemize}
If moreover $(\hat{X}, \hat{\l})= (SY,\l_Y)$ is a symplectization and $\l^+= \l^-$, then $\S(\hat{X}, \hat{\l})_{\leq 0}$ is non-empty and path-connected.
\elem

\pf 
Note first that (a) is tautologically true. Next, note that (b) is obvious when $Y^-= \emptyset$. It therefore remains to prove (b) under the assumption that $Y^- \neq \emptyset$.

Let us consider a pair of cobordism decompositions $\s, \s' \in \S(\hat{X}, \hat{\l})_{<C}$. By definition, $\s, \s'$ are entirely determined by the constants $-C_-, C_+ \in \R$ (resp.\ $-C_-', C_+'$), where we are following the notation of \Cref{definition:cobordism-decompII}. 

Suppose first that $-C_-=-C_-'$. Up to relabelling $\s$ and $\s'$, we can assume that $C_+ \leq C_+'$. Now just translate $C_+'$ in the negative direction until $C_+'=C_+$. This translation defines a $1$-parameter family of cobordism decompositions taking $\s'$ to $\s$, whose energy is clearly bounded by $\op{max}(\mc{E}(\s), \mc{E}(\s'))= \mc{E}(\s')<C$. An analogous argument works if we now suppose $C_+=C_+'$ and $-C_- \neq -C_-'$. 

Suppose finally that $-C_- \neq -C_-'$ and $C_+ \neq C_+'$. Up to relabelling $\s$ and $\s'$, we can assume that $C_+< C_+'$. If $-C_-' < -C_-$, then we translate $-C_-'$ in the positive direction until $-C_-'=-C_-$. If instead $-C_-< -C_-'$, then we simultaneously translate $-C_-$ and $C_+$ in the positive direction until either $-C_-=-C_-'$ or $C_+=C_+'$. This takes us back to the case treated in the previous paragraph.

Finally, if $(\hat{X}, \hat{\l})= (SY,\l_Y)$ is a symplectization with $\l^+=\l^-$, then any Type A cobordism decomposition $\s \in \S(\hat{X}, \hat{\l})_{\leq 0}$ has vanishing energy (\Cref{remark:energy-symplectization-same-marking}) and is equivalent to a choice of hypersurface $\mc{H}= \mc{H}_-= \mc{H}_+ = \{\tilde{C} \tms Y\}$. The space of such choices is in natural bijection with $\R$, so it is in particular non-empty and connected.  
\epf

\defi 
Let $(\hat{X}, \hat{\l}_t)_{t \in [0,1]}$ be a one-parameter family of (strict) exact symplectic cobordisms (cf.\ \Cref{definition:family-sympcobordisms}). A one-parameter family of Type A cobordism decompositions is just the data of a family of hypersurfaces
\begin{align}
\mc{H}_-(t)= \{-C_-(t)\} \tms Y^-  &\text{ and } \mc{H}_+(t) = \{ C_+(t) \} \tms Y^+
\end{align}
such that 
\eq \label{equation:cob-non-intersect-family} ( (-\infty, -C_-(t)) \tms Y^-)  \cap  (( C_+(t), \infty) \tms Y^+) = \emptyset.\eeq (If $Y^-= \emptyset$, we again set $\mc{H}_-(t) = \emptyset$, $C_-(t)=0$ and we consider that \eqref{equation:cob-non-intersect-family} is tautologically satisfied). We let $\S(\hat{X}, \hat{\l}_t)_{t \in [0,1]}$ be the set of all such families of cobordism decompositions. (Note that $\S(\hat{X}, \hat{\lambda}_{t_0})$ is a Type A cobordism decomposition for each fixed choice of $t_0$).  
\edefi

\defi
With the notation as above, with define the energy of a family of Type A cobordism decompositions $\s \in \S(\hat{X}, \hat{\l}_t)_{t \in [0,1]}$ to be $\mc{E}(\s):= \op{sup}_t(C_-(t) + C_+(t))$. 
\edefi

Let $(\hat{X}^{01}, \hat{\l}^{01})$ (resp. $(\hat{X}^{12}, \hat{\l}^{12})$) be a strict exact symplectic cobordism from $(Y^0, \l^0)$ to $(Y^1, \l^1)$ (resp. from $(Y^1, \l^1)$ to $(Y^2, \l^2)$).  Let $(\hat{X}, \hat{\l}_t)_{t \in{} [0,\infty)}$ be a one-parameter family of strict exact symplectic cobordisms which agrees for $t \geq a$ large enough with the $t$-gluing $(\hat{X}^{01} \#_t \hat{X}^{12}, \hat{\l}^{01} \#_t \hat{\l}^{02})_{t \in{} [a, \infty)}$; see \Cref{definition:gluing-cobordisms}. For $t \geq a$, note that there are canonical Liouville embeddings 

\begin{center}
\begin{tikzcd}[row sep = small, column sep = large]
\iota_{0,t}: (\hat{X}^{01}, \hat{\l}^{01}) \ar{r}{\mu_{t/2}} & (\hat{X}^{01}, e^{t/2} \hat{\l}^{01}) \ar{r} &(\hat{X}^{01} \#_t \hat{X}^{12}, \hat{\l}^{01} \#_t \hat{\l}^{02}) \\
\iota_{2,t}: (\hat{X}^{12}, \hat{\l}^{12}) \ar{r}{\mu_{-t/2}}  &(\hat{X}^{12}, e^{-t/2} \hat{\l}^{12}) \ar{r} &(\hat{X}^{01} \#_t \hat{X}^{12}, \hat{\l}^{01} \#_t \hat{\l}^{02}).
\end{tikzcd}
\end{center}

\defi \label{definition:typeb-decomp}
A \emph{Type B cobordism decomposition} of $(\hat{X}, \hat{\l}_t)_{t \in{} [0, \infty)}$ is the data of a family of hypersurfaces
\begin{align}
\mc{H}_2(t)= \{-C_2(t)\} \tms Y^2  &\hspace{1cm} \mc{H}_0(t) = \{ C_0(t) \} \tms Y^0
\end{align}
and a Liouville embedding $( [-C_1(t), \tilde{C}_1(t)] \tms Y^1, e^s\l_1) \hookrightarrow (\hat{X}, \hat{\l}_t)$ such that 
\eq  \label{equation:disjoint-4} (-\infty, -C_2(t) ) \tms Y^2, \, (-C_1(t), \tilde{C}_1(t)) \tms Y^1, \, (C_0(t), \infty) \tms Y^0 \eeq are pairwise disjoint. (In case $Y^-=\emptyset$, we set $\mc{H}_2(t)=\emptyset$, $C_2(t)=0$ and replace \eqref{equation:disjoint-4} by the condition that $(-C_1(t), \tilde{C}_1(t)) \tms Y^1$ and  $(C_0(t), \infty) \tms Y^0$ are pairwise disjoint.)

We let
\begin{align}
\mc{H}_1(t)= \{-C_1(t)\} \tms Y^1,  &\hspace{1cm} \tilde{\mc{H}}_1(t) = \{ \tilde{C}_1(t) \} \tms Y^1.
\end{align}

This data is required to satisfy the following hypotheses:
\begin{itemize}
	\item[(1)] $\tilde{C}_1(0)=-C_1(0)$, 
	\item[(2)] for $t$ large enough, $\mc{H}_0, \tilde{\mc{H}}_1(t)$ (resp.\ $\mc{H}_1(t), \mc{H}_2(t)$) are in the image of the canonical embedding $\iota_{0, t}$ (resp.\ $\iota_{2,t}$). Moreover, their preimages define a Type A decomposition on $(\hat{X}^{01}, \hat{\l}^{01})$ (resp.\ on $(\hat{X}^{12}, \hat{\l}^{12})$) \emph{which is independent of $t$.}  
\end{itemize}

We let $\S_B((\hat{X}, \hat{\l}_t)_{t \in{} [0,\infty)})$ denote the set of all such cobordism decompositions. We will write $\S(-)$ instead of $\S_B(-)$ when the subscript is understood from the context. As in \Cref{remark:hypersurface-constant}, note that the data of the hypersurfaces $\mc{H}_2(t), \mc{H}_1(t), \tilde{\mc{H}}_1'(t), \mc{H}_0(t)$ is equivalent to the data of the constants $C_2(t), C_1(t), \tilde{C}_1(t), C_0(t)$. 
\edefi

\defi
It follows from property (1) of \Cref{definition:typeb-decomp} that a Type B cobordism decomposition $\s^{02} \in \S_B((\hat{X}, \hat{\l}_t)_{t \in{} [0,\infty)})$ induces a Type A cobordism decomposition $\s \in \S_A(\hat{X}, \hat{\l}_0)$ by taking $\mc{H}_-=\mc{H}_2(0)$ and $\mc{H}_+= \mc{H}_0(0)$. We say that $\s$ is \emph{induced at zero} by $\s^{02}$. 

Similarly, property (2) of \Cref{definition:typeb-decomp} states that a Type B cobordism decomposition $\s^{02} \in \S_B((\hat{X}, \hat{\l}_t)_{t \in{} [0,\infty)})$ induces a pair of Type A decompositions $\s^{01} \in \S_A(\hat{X}^{01}, \hat{\l}^{01})$ and $\s^{12} \in \S_A(\hat{X}^{02}, \hat{\l}^{02})$. We say that the pair $(\s^{01}, \s^{12})$ is \emph{induced at infinity} by $\s^{02}$. 
\edefi

\defi
With the notation as above, we define the energy of a Type B cobordism decomposition $\s \in \S_B(\hat{X}, \hat{\l}_t)$ to be $\mc{E}(\s):= \op{sup}_t (C_{2}(t) + C_{0}(t) - C_1(t) - \tilde{C}_1(t))$. We let 
\eq \mc{E}(\hat{X}, \hat{\l}_t):= \op{inf}_{\s \in \S_B(\hat{X}, \hat{\l}_t)}\mc{E}(\s) \in \R \cup \{-\infty\}. \eeq
Given $C \in \R$, let $\S(\hat{X}, \hat{\l}_t)_{<C} \sub  \S(\hat{X}, \hat{\l}_t)$ (resp. $\leq C$) denote the subset of Type B cobordism decompositions of energy strictly less than $C$ (resp. at most $C$). 
\edefi

The following lemma asserts that our notions of energy for Type A and Type B decompositions are compatible with the map which associates to a Type B decomposition the Type A decomposition induced at zero or infinity.  It will be used implicitly in the sequel. 
\lem \label{lemma:energy-types-compatible}
Let $\s^{02}$ be a Type B cobordism decomposition. Suppose that $\s$ is induced at zero by $\s^{02}$ and that $(\s^{01}, \s^{12})$ is induced at infinity. Then $\mc{E}(\s) \leq \mc{E}(\s^{02})$ and $\mc{E}(\s^{01})+ \mc{E}(\s^{12}) \leq \mc{E}(\s^{02})$.  
\elem
\pf
The first claim follows from (1) in \Cref{definition:typeb-decomp} and the definition of energy for Type A and Type B cobordism decomposition. The second claim follows similarly from (2) in \Cref{definition:typeb-decomp}. 
\epf

\cor \label{lemma:minus-infinity-filling-typeb}
We have $\mc{E}(\hat{X}, \hat{\l}_t)= -\infty$ if and only if $Y^2=\emptyset$.
\ecor

\pf
One direction follows from \Cref{lemma:minus-infinity-filling} and \Cref{lemma:energy-types-compatible}. The other one can be checked by inspection, using the backwards Liouville flow as in the proof of the corresponding statement in \Cref{lemma:minus-infinity-filling}. 
\epf

\defi
	Let $(\hat{X}, \hat{\l})$ and $(\hat{X}', \hat{\l}')$ be exact symplectic cobordisms. For $C \in \R$, let $\S_A((\hat{X}, \hat{\l}),(\hat{X}', \hat{\l}'))_{< C} \sub  \S_A(\hat{X}, \hat{\l}) \tms \S_A(\hat{X}, \hat{\l})$ (resp. $(-)_{\leq C}$) be the subspace of pairs $(\s, \s')$ of Type A cobordism decompositions such that $\mc{E}(\s)+ \mc{E}(\s') <C$ (resp. $\leq C$). 
\edefi 

\lem \label{lemma:gluing-surjective-energy-decomp}
Given $C \in \R$ such that $\S(X^{02,t}, \hat{\l}^{02,t})_{<C}$ is nonempty, the map which associates to a decomposition $\s^{02} \in \S(X^{02,t}, \hat{\l}^{02,t})_{<C}$ the pair $(\s^{01}, \s^{12}) \in \S_A((\hat{X}^{01}, \hat{\l}^{01}),(\hat{X}^{12}, \hat{\l}^{12}))_{<C}$ induced by $\s^{02}$ at infinity is surjective.  If $\l^0=\l^1=\l^2$ and $(\hat{X}^{01}, \hat{\l}^{01}),(\hat{X}^{12}, \hat{\l}^{12})$ are symplectizations, the same statement holds with $``\leq"$ in place of $``<"$.
\elem

\pf
Choose a Type B decomposition $\tilde{\s}^{02}$. Let $(\tilde{\s}^{01}, \tilde{\s}^{12})$ be the Type A decompositions induced by $\tilde{\s}^{02}$ at infinity. According to \Cref{definition:typeb-decomp}, this means that there exists a $T>0$ so that for $t \geq T$, we have that $\mc{H}_0(t), \tilde{\mc{H}}_1(t)$ are independent of $t$ after pulling back via the canonical embedding $\iota_{01}$ (and similarly $\mc{H}_1(t), \mc{H}_2(t)$ are independent of $t$ after pulling back by $\iota_{12}$).  By a routine modification of the arguments of \Cref{lemma:cob-decomp-connected-II}(b), one can now construct a Type B decomposition $\s^{02}$ so that $\s^{02}_t= \tilde{\s}^{02}_t$ for $t \in [0,T]$, $\mc{E}(\s^{02}) \leq \mc{E}(\tilde{\s}^{02})$ and $\s^{02}$ induces the pair $(\s^{01}, \s^{12})$.
\epf

\lem \label{lemma:energy-trivial-component}
Suppose that $(\hat{X}^{02,t}, \hat{\l}^{02,t})$ is the $(t+T)$-gluing of two exact symplectic cobordisms $(\hat{X}^{01}, \hat{\l}^{01})$ and $(\hat{X}^{12}, \hat{\l}^{12})$, for $T\geq 0$ an arbitrary fixed constant and $t \in{} [0, \infty)$ a parameter (see \Cref{definition:gluing-cobordisms} for the definition of this gluing and \Cref{lemma:t-gluing} for the parametric version). Suppose that either $(\hat{X}^{01}, \hat{\l}^{01})$ or $(\hat{X}^{12}, \hat{\l}^{12})$ is a symplectization (see \Cref{example:symplectization}). Then $\mc{E}(\hat{X}^{02,t}, \hat{\l}^{02,t})= \mc{E}(\hat{X}^{01}, \hat{\l}^{01}) + \mc{E}(\hat{X}^{12}, \hat{\l}^{12})$. 
\elem

\pf
By \Cref{lemma:minus-infinity-filling} and \Cref{lemma:minus-infinity-filling-typeb}, we may assume that $\hat X^{12}$ has a non-empty negative end.

We only treat the case where $(\hat{X}^{01}, \hat{\l}^{01})$ is a symplectization and $T=0$ since the other cases are analogous. 

Choose $\s^{01}$ so that $\mc{E}(\s^{01}) \leq \mc{E}(\hat{X}^{01}, \hat{\l}^{01})+ \e$ and choose $\s^{12}$ so that  $\mc{E}(\s^{12}) \leq \mc{E}(\hat{X}^{12}, \hat{\l}^{12})+ \e$.  Let $\tilde{X}^{01}\sub \hat{X}^{01}$ and $\tilde{X}^{12} \sub \hat{X}^{12}$ be the Liouville subdomains which determine the Type A decompositions $\s^{01}$ and $\s^{12}$ respectively.

Note that $\hat{X}^{02,t}$ comes equipped with tautological embeddings $\iota_{0,t}: \hat{X}^{01} \to \hat{X}^{02,t}$ and $\iota_{2,t}: \hat{X}^{12} \to \hat{X}^{02,t}$ (see \Cref{definition:gluing-cobordisms}). For $T'$ large enough and $t \geq T'$, note that $\iota_{0,t}(\mc{H}^0_-)$ is in the image of $\iota_{2,t}(\mc{H}^2_+)$ under the Liouville flow. These hypersurfaces therefore bound Liouville domains $( [-C_1(t), \tilde{C}_1(t)] \tms Y^1, e^s \l_1)$. 

Let $f: [0, \infty) \to \R$ be a function which equals $-(C_1(T')+\tilde{C}_1(T'))$ on $[0, T']$, is non-decreasing on $[T', T'+1]$ and is zero on $[T'+1, \infty)$. Let $\tau_f: \hat{X}^{01} \tms{} [0, \infty) \to  \hat{X}^{01}$ be defined by $\tau_f(x, t) = \phi^{01}_{f(t)+T'+1-t}(x)$, where $\phi^{01}_{t}$ is the time-$t$ Liouville flow on $\hat{X}^{01}$. Now define a map $\ov{\iota}_{0,t}: \hat{X}^{01} \to \hat{X}^{02,t}$ by letting $\ov{\iota}_{0,t} (x)= \iota_{0,t} \circ \tau_f(x,t)$. 

We now define the data of a Type B cobordism decomposition by letting 
\eq 
\mc{H}_2(t)= \iota_{2,t}(\mc{H}^2_-), \hspace{0.3cm} \mc{H}_1(t)= \iota_{2,t}(\mc{H}^2_+),  \hspace{0.3cm}  \tilde{\mc{H}}_1= \ov{\iota}_{0,t}( \mc{H}^0_-(t)), \hspace{0.3cm}  \mc{H}_0 = \ov{\iota}_{0,t}(\mc{H}^0_+(t)).
\eeq
One can check that this data indeed defines a Type B cobordism decomposition, which has energy precisely equal to $\mc{E}(\s^{01})+ \mc{E}(\s^{12}) \leq \mc{E}(\hat{X}^{01}, \hat{\l}^{01}) + \mc{E}(\hat{X}^{12}, \hat{\l}^{12})+ 2\e$. Since $\e$ was arbitrary, we conclude that $\mc{E}(\hat{X}^{02,t}, \hat{\l}^{02,t}) \leq \mc{E}(\hat{X}^{01}, \hat{\l}^{01}) + \mc{E}(\hat{X}^{12}, \hat{\l}^{12})$. 
\epf

We now discuss almost-complex structures for Setups II-IV.

\begin{setup2}

Fix a datum $\mc{D} = (\mc{D}^+,\mc{D}^-,\hat{X},\hat{\l},\hat{J})$ for Setup II, $\mc{D}^\pm = (Y^\pm,\l^\pm,J^\pm)$. Let $(V^{\pm}, \tau^{\pm}) \sub (Y^{\pm},\xi^{\pm})$ be framed codimension 2 contact submanifolds; let $\a^{\pm}:= \l^{\pm}|_{V^\pm}$ and assume that $V^\pm$ is a strong contact submanifold with respect to $\l^\pm$.  Let $J^{\pm}$ be $d\l^\pm$-compatible almost-complex structures on $\xi^\pm \sub TY^{\pm}$ which preserve $\xi^{\pm} \cap TV^{\pm}$. 

Let $H \sub \hat{X}$ be a codimension $2$ symplectic submanifold such that $(\hat{X},\hat{\l}, H)$ is an exact relative symplectic cobordism from $(Y^+,\xi^+,V^+)$ to $(Y^-,\xi^-,V^-)$. We will also consider (see \Cref{notation:choice-forms}) the strict symplectic cobordisms $(\hat{X},\hat{\l})^{\l^+}_{\l^-}$ and $(H, \hat{\l}|_H)^{\a^+}_{\a^-}$. 

\defi \label{definition:ac-adapted-II}
	Fix a Type A cobordism decomposition $\s \in \S(H, \hat{\l}|_H)$, which is specified by a pair of hypersurfaces $\mc{H}_-= \{ -C_- \} \tms V^-$ and $\mc{H}_+ = \{ C_+ \} \tms V^+$. We say that an almost-complex structure $\hat{J}$ on $\hat{X}$ is \emph{adapted} to $\s$ if the following properties hold:
	\begin{itemize}
		\item $\hat{J}$ is compatible with $d \hat{\l}$,
		\item $\hat{J}$ coincides with $\hat{J}^{\pm}$ near the ends (where $\hat{J}^{\pm}$ is the canonical cylindrical almost-complex structure induced on $(\hat{Y},\hat{\l}^\pm)$ by $J^{\pm}$),
		\item $H \sub \hat{X}$ is a $\hat{J}$-complex hypersurface,
		\item $\hat{J}$ preserves $\op{ker} \a^+ \sub TV^+$ on $[C_2, \infty) \tms V^+$ (resp. preserves $\op{ker}\a^- \sub TV^-$ on $(-\infty, -C_1] \tms V^-$) and the induced almost-complex structure is $d \a^+$-compatible (resp. $d\a^-$-compatible). 
	\end{itemize} 
In case $V^-=\emptyset$, the conditions involving $V^-$ are considered to be vacuously satisfied.
\edefi
	
\defi \label{definition:energy-ac-II}	
Given an almost-complex structure $\hat{J}$ on $(\hat{X}, \hat{\l})$ we define its energy 
\eq \mc{E}(\hat{J}):= \op{inf}\{\mc{E}(\s) \mid \s \in \S(H, \hat{\l}|_{H}), \; \hat{J} \; \text{is adapted to } \s \} \in \R \cup \{\pm \infty \}. \eeq We define $\mc{E}(\hat{J})= \infty$ if $\hat{J}$ is not adapted to any cobordism decomposition.  
\edefi
		
Let $\mc{J}(\hat{X}, \hat{\l}, H)_{< C}$ (resp. $\leq C$) be the set of almost-complex structures of energy less than $C$ (resp. at most $C$). Let $\mc{J}(\hat{X}, \hat{\l}, H):= \mc{J}(\hat{X}, \hat{\l}, H)_{<\infty}$ be the set of almost-complex structures adapted to some decomposition $\s \in \S(\hat{H}, \hat{\l}|_H)$. 
	
\lem \label{lemma:ac-connected-II}
	The set $\mc{J}(\hat{X}, \hat{\l}, H)_{<C}$ is 
	\begin{itemize}
		\item[(a)] nonempty for $C> \mc{E}(H,\hat{\l}|_H)$, 
		\item[(b)] path-connected for all $C \in \R$ (note that the empty set is path-connected).
	\end{itemize}
	If moreover $(H, \hat{\l}|_H) = (SV,\lambda_V)$ is a symplectization and $\a^+=\a^-$, then $\mc{J}(\hat{X}, \hat{\l}, H)_{ \leq 0}$ is non-empty and path-connected.
\elem

\pf
To prove (a), it is enough to show that given any cobordism decomposition $\s$, there exists an almost-complex structure adapted to it, i.e. meeting the conditions of \Cref{definition:ac-adapted-II}. To prove (b), it follows from \Cref{lemma:cob-decomp-connected-II} that it is enough to prove a similar statement in families: namely, if $\{\s_t\}_{t \in [a,b]}$ is a family of cobordism decompositions and $J_a, J_b$ are almost-complex structures adapted to $\s_a, \s_b$ respectively, then there is a family $\{J_t\}_{t \in [a,b]}$ adapted to $\s_t$.  All of these statements can be proved by standard arguments, using the fact that the space of almost-complex structures compatible with a given symplectic structure can be viewed as the space of sections of a bundle with contractible fibers (see e.g.\ \cite[Prop.\ 2.6.4]{mcduff-sal-intro}). 

If $\a^+= \a^-$ and $(H, \hat{\l}|_H)$ is a symplectization, then \Cref{lemma:cob-decomp-connected-II} implies that $\S(H, \hat{\l}|_H)_{\leq 0}$ is non-empty and path-connected. Hence the same arguments involving extensions of almost-complex structures imply that $\mc{J}(\hat{X}, \hat{\l}, H)_{\le 0}$ is non-empty and path-connected.
\epf

\end{setup2}

\begin{setup4}

Fix a datum $\mc{D}$ for Setup IV.  We write $\mc{D}= (\mc{D}^{01},\mc{D}^{12},(\hat{X}^{02,t},\hat{\l}^{02,t},\hat{J}^{02,t})_{t \in{} [0,\infty)})$, where
\begin{align*}
	\mc{D}^{01} &= (\mc{D}^0,\mc{D}^1,\hat{X}^{01},\hat{\l}^{01},\hat{J}^{01}) \\
	\mc{D}^{12} &= (\mc{D}^1,\mc{D}^2,\hat{X}^{12},\hat{\l}^{12},\hat{J}^{12}) \\
	\mc{D}^i &= (Y^i,\l^i,J^i) \quad (i = 0,1,2)
\end{align*}

Let $(V^i, \tau^i) \sub (Y^i, \xi)$ be framed codimension 2 contact submanifolds; set $\a_{V^i}= \l^i|_{V^i}$ and assume that $V^i$ are strong contact submanifolds with respect to $\l^i$. Let $H^{01} \subset \hat{X}^{01}$, $H^{12} \subset \hat{X}^{12}$, and $(H^{02,t} \subset \hat{X}^{02,t})_{t \in{} [0,\infty)}$ be cylindrical symplectic submanifolds such that $(\hat{X}^{02,t},\hat{\l}^{02,t},H^{02,t})_{t \in{} [0,\infty)}$ is a family of relative symplectic cobordisms that agrees for $t$ large with the $t$-gluing of the relative symplectic cobordisms $(\hat{X}^{01},\hat{\l}^{01},H^{01})$ and $(\hat{X}^{12},\hat{\l}^{12},H^{12})$. Note that $\{H^{02,t} \}$ forms a family of Liouville manifolds with respect to (the restriction of) $\hat{\l}^{02,t}$. 

\defi
Fix a Type B cobordism decomposition $\s^{02} \in \S_B(\hat{H}^{02,t}, \hat{\l}^{02,t}_{H^{02,t}})$. Recall that $\s^{02}$ consists in the data of hypersurfaces $\mc{H}_2(t)= \{ - C_2(t) \} \tms V^2, \mc{H}_1(t) = \{-C_1(t) \} \tms V^1, \tilde{\mc{H}}_1(t) = \{ \tilde{C}_1(t) \} \tms V^1, \mc{H}_0(t) = \{ C_0(t) \} \tms V^0$. We say that an almost-complex structure $\hat{J}^{02,t}$ is \emph{adapted} to $\s^{02}$ if the following properties hold:
\begin{itemize}
\item $\hat{J}^{02,t}$ is compatible with $d\hat{\l}^{02,t}$,
\item $\hat{J}^{02,t}$ coincides with $\hat{J}^0$ (resp.\ $\hat{J}^2$) near the positive (resp.\ negative) end,
\item $H^{02,t}$ is a $\hat{J}^{02,t}$-complex hypersurface, and $\hat{J}^{02,t}$ is compatible with the restriction of $d\hat{\l}^{02,t}$ to $H^{02,t}$,
\item $\hat{J}^{02,t}$ preserves $\op{ker} \a_0$ on $[C_0(t), \infty) \tms V^0$ (resp. $\op{ker} \a_1$ on $[-C_1(t), \tilde{C}_1(t)] \tms V_1$, resp. $\op{ker} \a_2$ on $(-\infty, -C_2(t)] \tms V_2)$. Moreover, the induced almost-complex structure is $d\a_0$-compatible (resp. $d\a_1$-compatible, resp. $d\a_2$-compatible). 
\end{itemize}
In case $V^2=\emptyset$, all conditions involving $V^2$ are considered to be vacuously satisfied.
\edefi

\defi
Given a family of almost-complex structures $\hat{J}_t$, we define its energy 
\eq \mc{E}(\hat{J}_t):= \op{inf} \{ \mc{E}(\s) \mid \s \in \S(\hat{X}^{02,t}, \hat{\l}^{02,t}), \hat{J}_t\; \text{is adapted to } \s \} \in \R \cup \{\pm \infty \}.\eeq 
If $\hat{J}_t$ is not adapted to any cobordism decomposition, we set $\mc{E}(\hat{J}_t)= \infty$. 
\edefi

Let $\mc{J}(\hat{X}^{02,t}, \hat{\l}^{02,t}, H^{02,t})$ be the set of almost-complex structures adapted to some Type B decomposition $\s \in \S_B(H^{02,t}, \hat{\l}^{02,t}|_{H^{02,t}})$. For $C \in \R$, let $\mc{J}(\hat{X}^{02,t}, \hat{\l}^{02,t}, H^{02,t})_{<C}$ (resp. $\leq c$) be the set of all such decompositions having energy less than $C$ (resp. at most $C$). 

Let $\mc{J}((\hat{X}^{01}, \hat{\l}^{01}), (\hat{X}^{12}, \hat{\l}^{12}))_{<C} \sub  \mc{J}(\hat{X}^{01}, \hat{\l}^{01}) \tms \mc{J}(\hat{X}^{12}, \hat{\l}^{12})$ (resp. $\leq C$) be the subspace of pairs $(J, J')$ with the property that $\mc{E}(J)+ \mc{E}(J') <C$ (resp. $\leq C$). 

The following lemma is an analog of \Cref{lemma:ac-connected-II} and can be proved by similar arguments. 

\lem \label{lemma:ac-connected-IV}
	The set $\mc{J}(\hat{X}^{02,t}, \hat{\l}^{02,t}, H^{02,t})_{<C}$ is	nonempty for $C> \mc{E}(\hat{X}^{02,t}, \hat{\l}^{02,t}, H^{02,t} )$. If moreover $\a_0=\a_1=\a_2$ and $(H^{01}, \hat{\l}^{01}|_{H^{01}}), (H^{12}, \hat{\l}^{12}|_{H^{12}})$ and $(H^{02,t}, \hat{\l}^{02,t}|_{H^{02,t}})$ are symplectizations, then $\mc{J}(\hat{X}^{02,t}, \hat{\l}^{02,t}, H^{02,t})_{\leq 0}$ is non-empty.  \qed
\elem

We will also need the following lemma, which follows from \Cref{lemma:gluing-surjective-energy-decomp} and standard arguments for extending compatible almost-complex structures. 

\lem \label{lemma:gluing-surjective-energy-ac}
Suppose that $\mc{J}(\hat{X}^{02,t}, \hat{\l}^{02,t}, H^{02,t})_{<C}$ is nonempty. Then the map which associates to an almost-complex structure $\hat{J}_t \in \mc{J}(\hat{X}^{02,t}, \hat{\l}^{02,t}, H^{02,t})_{<C}$ the pair $(\hat{J}^{01}, \hat{J}^{12}) \in \mc{J}((\hat{X}^{01}, \hat{\l}^{01}), (\hat{X}^{12}, \hat{\l}^{12}))_{<C}$ is surjective for all $C>0$. 

If moreover $\a_0=\a_1=\a_2$ and $(H^{01}, \hat{\l}^{01}|_{H^{01}}), (H^{12}, \hat{\l}^{12}|_{H^{12}})$ and $(H^{02,t}, \hat{\l}^{02,t}|_{H^{02,t}})$ are symplectizations, then the same statement holds for $C=0$ with $``\leq"$ in place of $``<"$.  \qed
\elem

\end{setup4}

\section{Enriched setups and twisted moduli counts}  \label{section:enriched-setups-twisted-counts}

\subsection{Enriched setups} \label{subsection:enriched-setups}

The construction of invariants of codimension $2$ contact submanifolds in this paper follows the same general scheme as Pardon's construction of contact homology. However, we work with a class of ``enriched" setups I*-IV*, which contain more information than the standard setups I-IV considered by Pardon and reviewed in \Cref{subsection:standard-setups}.

We will show in \Cref{subsection:twisting-maps-contact-subman} that the data associated to our enriched setups give rise to twisting maps. These twisting maps are constructed using Siefring's intersection theory, and will be used to define ``twisted" moduli counts, following the construction of \Cref{subsection:twistinghomologies}. 

Given a datum $\mc{D}$ for any of Setups I*-IV*, there is a ``forgetful functor" which allows one to view $\mc{D}$ as a datum of Setup I-IV.  However, it is not the case that every datum of Setup I-IV admits an enrichment. Nevertheless, we will show in \Cref{subsection:mainconstruction} that the class of enriched data is large enough for the purpose of defining invariants in the spirit of contact homology. 

\begin{setup1star}
A datum $\mc{D}=((Y, \xi, V), \fk{r}, \l, J)$ for Setup I* consists of:
\begin{itemize}
	\item A TN contact pair $(Y, \xi, V)$,
	\item an element $\fk{r}=(\a_V, \tau, r) \in \fk{R}(Y, \xi, V)$,
	\item a contact form $\op{ker} \l = \xi$ which is adapted to $\fk{r}$,
	\item an almost-complex structure $J$ which is compatible with $d\l$ and preserves $\xi_V$.
\end{itemize}
Observe that there is a ``forgetful functor" from Setup I* to Setup I which remembers $(Y, \l, J)$ but forgets $V$ and $\fk{r}$. One has analogous forgetful functors for the other setups.
\end{setup1star}

\begin{setup2star}
A datum $\mc{D}=(\mc{D}^+, \mc{D}^-, \hat X, \hat \l, H, \hat J)$ for Setup II* consists of:
\begin{itemize}
	\item data $\mc{D}^\pm = ((Y^\pm,\xi^{\pm}, V^{\pm}), \fk{r}^{\pm}, \l^\pm, J^\pm)$ for Setup I*, where we write $\fk{r}^\pm=(\a^\pm_V, \tau^\pm, r^\pm)$;
	\item an exact relative symplectic cobordism $(\hat{X},\hat{\l}, H)$ with positive end $(Y^+,\l^+, V^+)$ and negative end $(Y^-,\l^-, V^{\pm})$;
	\item an $d\hat \l $-tame almost complex structure $\hat{J}$ on $\hat{X}$ which agrees with $\hat{J}^\pm$ at infinity.
\end{itemize}
This datum is moreover subject to the following conditions: 
\begin{itemize}
	\item there exists a trivialization of the normal bundle of $H$ which restricts to $\tau^+$ (resp.\ $\tau^-$) on the positive (resp.\ negative) end; 
	\item $\mc{E}(\hat{J})<\infty$ and $r^+ \geq e^{\mc{E}(\hat{J})} r^-$.
\end{itemize}
\end{setup2star}
 
\begin{setup3star}
 A datum $\mc{D} = (\mc{D}^+,\mc{D}^-, \hat{X}, \hat{\l}^t,H^t, \hat{J}^t)_{t \in [0,1]}$ for Setup III* consists of:
\begin{itemize}
	\item data $\mc{D}^\pm = ((Y^\pm,\xi^{\pm}, V^{\pm}), \fk{r}^{\pm}, \l^\pm, J^\pm)$ for Setup I*;
	\item a family of exact relative symplectic cobordisms $(\hat{X}, \hat{\l}^t, \hat{H}^t)$ for $t \in [0,1]$, with positive end $(Y^+,\l^+, V^+)$ and negative end $(Y^-,\l^-, V^-)$; 
	\item a family $d\hat \l^t $-tame almost complex structures $\hat{J}^t$ on $\hat{X}$, which agree with $\hat{J}^\pm$ at infinity.
\end{itemize}
This datum is moreover subject to the following conditions: 
\begin{itemize}
	\item there exists a trivialization of the normal bundle of $H$ which restricts to $\tau^+$ (resp.\ $\tau^-$) on the positive (resp.\ negative) end; 
	\item $\mc{E}(\hat{J}^t)<\infty$ and $r^+ \geq e^{\mc{E}(\hat{J}^t)} r^-$. 
	\end{itemize}
\end{setup3star}

\begin{setup4star}
A datum $\mc{D}= (\mc{D}^{01},\mc{D}^{12},(\hat{X}^{02,t},\hat{\l}^{02,t},H^{02,t},\hat{J}^{02,t})_{t \in{} [0,\infty)})$ for Setup IV* consists of: 
\begin{itemize}
	\item data $\mc{D}^i = ((Y^i, \xi^i, V^i); \fk{r}^i,\l^i,J^i)$ for Setup I*, for $i = 0,1,2$;
	\item a datum $\mc{D}^{01} = (\mc{D}^0,\mc{D}^1,\hat{X}^{01},\hat{\l}^{01},H^{01}, \hat{J}^{01})$ for Setup II*;
	\item a datum $\mc{D}^{12} = (\mc{D}^1,\mc{D}^2,\hat{X}^{12},\hat{\l}^{12}, H^{12},\hat{J}^{12})$ for Setup II*;
	\item a family of cylindrical symplectic submanifolds $H^{02,t} \subset \hat{X}^{02,t}$, for $t \in{} [0,\infty)$, such that $(\hat{X}^{02,t},\hat{\l}^{02,t},H^{02,t})_{t \in{} [0,\infty)}$ is a family of exact relative symplectic cobordisms that agrees for $t$ large with the $t$-gluing of the relative symplectic cobordisms $(\hat{X}^{01},\hat{\l}^{01},H^{01})$ and $(\hat{X}^{12},\hat{\l}^{12},H^{12})$.
\end{itemize}
This datum is moreover subject to the following conditions: 
\begin{itemize}
	\item there exists a trivialization of the normal bundle of $H^{02,t}$ which restricts to $\tau^0$ (resp.\ $\tau^2$) on the positive (resp.\ negative) end; 
	\item there exists a trivialization of the normal bundle of $H^{01}$ which restricts to $\tau^0$ (resp.\ $\tau^1$) on the positive (resp.\ negative) end; 
	\item there exists a trivialization of the normal bundle of $H^{12}$ which restricts to $\tau^1$ (resp.\ $\tau^2$) on the positive (resp.\ negative) end; 
	\item $\mc{E}(\hat{J}^{02,t})<\infty, \mc{E}(\hat{J}^{01})<\infty$ and $\mc{E}(\hat{J}^{12}) <\infty$;
	\item $r^0 \geq e^{\mc{E}(\hat{J}^{02,t})} r^2$; $r^0 \geq e^{\mc{E}(\hat{J}^{01})} r^1$; and $r^1 \geq e^{\mc{E}(\hat{J}^{12})} r^2$.
\end{itemize}
\end{setup4star}

We note that the requirement in Setups II*--IV* that the almost-complex structures have finite energy is of course vacuous if $r^-, r_1, r_2$ are nonzero (or equivalently $V^-, V_1, V_2$ are non-empty). 

\subsection{Twisting maps associated to enriched setups} \label{subsection:twisting-maps-contact-subman} 

In this section, we construct twisting maps on the contact homology algebra. These maps depend on geometric data involving codimension $2$ contact submanifolds and relative symplectic cobordisms. 

\begin{setup1star}

Let $\mc{D}=((Y, \xi, V), \fk{r}, \l, J)$ be a datum for Setup I*, where $\fk{r}=(\a_V, \tau, r)$. There is an obvious functor from $\Sch_\I(\mc{D})$ to the category $\widehat{\Sch}(\hat{Y})$ defined in section~\ref{subsection:intersection-buildings}. We therefore have a well-defined intersection number $T * \hat{V}$ for $T \in \Sch_\I(\mc{D})$. We now introduce twisting maps associated to the above setup. 

\defi \label{definition:twist-basic-I}
We define a map $\psi_V(T) : \Sch_\I^{\neq \emptyset}(\mc{D}) \to \Q[U]$ by
\eq \psi_V(T) = U^{T * \hat{V} + \G^-(T,V)}, \eeq
where $\G^-(T,V)$ denotes the number of output edges $e$ of $T$ such that the corresponding Reeb orbit $\g_e$ is contained in $V$. \Cref{corollary:full-positivity} ensures that the exponents appearing in these definitions are nonnegative. 
\edefi

\rmk \label{remark:non-empty-tree-category}
\Cref{corollary:full-positivity} only applies to trees $T$ such that $\Mbar(T) \neq \emptyset$. This is why the definition of twisting maps only requires them to be defined on $\Sch^{\neq \emptyset}$ and not on the whole category $\Sch$. 
\ermk

\defi \label{definition:twist-ell-red-I} 
We define a map $\widetilde{\psi}_{V} : \Sch_\I^{\neq \emptyset}(\mc{D}) \to \Q$ by
\eq \widetilde{\psi}_V(T) = \begin{cases} 1 & \text{if $ T * \hat{V} = 0$ and $\left|\gamma_e\right| \cap V = \emptyset$ for every $e \in E(T)$} \\ 0 & \text{otherwise} \end{cases} \eeq
\edefi

We must now check that the maps in Definitions \ref{definition:twist-basic-I} and \ref{definition:twist-ell-red-I} satisfy the axioms of \Cref{definition:twistingI}.

\prop \label{proposition:basic-twisting-mapI}
The map $\psi_V$ introduced in \Cref{definition:twist-basic-I} is a twisting map.
\eprop
\pf
It follows from \Cref{proposition:invariance-gluing} that $\psi_V(T) = \psi_V(T')$ for any morphism $T \to T'$.

Let $\{T_i\}_{i}$ be a concatenation in $\Sch_\I^{\neq \emptyset}$. We need to show that $\psi_V(\#_i T_i) = \prod_i \psi_V(T_i)$, i.e.
\eq \label{equation:elliptic-additivityI}
	(\#_i T_i) * \hat{V} +  \G^-(\#_i T_i,V) = \sum_i T_i * \hat{V} +  \G^-(T_i,V).
\eeq

We assume $V \neq \emptyset$ (otherwise there is nothing to say). Since the contact form $\l$ is positive-elliptic near $V$, we have $p_N(\g) = 1$ for every Reeb orbit $\g$ contained in $V$ by \Cref{proposition:normal-CZ-indices}. \Cref{remark:intersection-disjoint-cylinders} and \Cref{corollary:intersection-cylinders} therefore imply that $\hat{\g} * \hat{V}$ is equal to $-1$ if $\g$ is contained in $V$ and $0$ otherwise. By \Cref{proposition:additivity-symplectization}, this means that
\eq \label{equation:elliptic-intersectionI}
T * \hat{V} = \sum_{v \in V(T)} \b_v * \hat{V} + \G^\mathrm{int}(T,V)
\eeq
for all $T \in \Sch_\I$, where $\G^\mathrm{int}(T,V)$ denotes the number of edges $e \in E^\mathrm{int}(T)$ such that $\g_e$ is contained in $V$. Equation~\eqref{equation:elliptic-additivityI} is therefore equivalent to
\eq
\G^\mathrm{int}(\#_i T_i,V) + \G^-(\#_i T_i,V) = \sum_i (\G^\mathrm{int}(T_i,V) + \G^-(T_i,V)).
\eeq
The result now follows from the observation that there is a (canonical) label preserving bijection between $E^\mathrm{int}(\#_i T_i) \cup E^-(\#_i T_i)$ and $\cup_i (E^\mathrm{int}(T_i) \cup E^-(T_i))$ (this is an immediate consequence of the definition: every interior edge of $T_j$ corresponds to an interior edge of $\#_i T_i$, and every output edge of $T_j$ corresponds either to an interior or an output edge of $\#_i T_i$ depending on whether it is identified with another edge in the concatenation or not).
\epf

It will be convenient to introduce the following definition. 

\defi
Given a tree $T\in \Sch_\I$, a vertex $v \in V(T)$ is \emph{mean} if it is an interior vertex and $\lvert \g_e \rvert \sub V$ for all $e \in e^+(v) \sqcup E^-(v)$.  All other vertices are said to be \emph{nice}. These sets are denoted $V_m(T) \sub V(T)$ and $V_n(T) \sub V(T)$ respectively. 
\edefi

\rmk
This notion of \emph{nice/mean} vertices is purely auxiliary (and has nothing to do with good/bad Reeb orbits!). Geometrically, mean vertices correspond to holomorphic buildings which have intermediate orbits intersecting $\hat{V}$. Nice orbits do not affect the intersection number of the building, but mean orbits do affect it and must therefore be treated carefully (hence the adjective). 
\ermk

\prop \label{proposition:reduced-elliptic-twisting-mapI}
The map $\widetilde{\psi}_{V}$ introduced in \Cref{definition:twist-ell-red-I} is a twisting map.
\eprop

\pf
Fix a tree $T \in \Sch^{\neq \emptyset}_\I$. We first show that $\widetilde{\psi}_{V}(T')= \widetilde{\psi}_{V}(T)$ for any tree $T' \in \Sch^{\neq \emptyset}_\I$ admitting a morphism $T' \to T$. Observe that we may assume without loss of generality that $T'$ is representable by a $\hat{J}$-holomorphic building (see \Cref{defi:rep-buildings-symplectization}). Indeed, since $T, T' \in \Sch^{\neq \emptyset}_\I$, there exists $T'' \to T' \to T$ such that $T''$ is representable by a $\hat{J}$-holomorphic building. So we may as well prove that $\widetilde{\psi}_{V}(T'') = \widetilde{\psi}_{V}(T')$ and $\widetilde{\psi}_{V}(T'') =\widetilde{\psi}_{V}(T)$.  

Let us therefore fix $T' \in \Sch^{\neq \emptyset}_\I$ such that $T'$ is representable by a $\hat{J}$-holomorphic building, and a morphism $T' \to T$. It follows from \Cref{proposition:invariance-gluing} that $T'* \hat{V}= T* \hat{V}$. Note that $T', T$ have the same exterior edges. If one of these edges is contained in $V$, then $\widetilde{\psi}_{V}(T')= \widetilde{\psi}_{V}(T)=0$. So we can assume that the exterior edges of $T', T$ are not contained in $V$. 
	
Suppose now that $T'$ has an interior edge contained in $V$.  For $i = 0,1,2$, let $X_i  \geq 0$ be the number of edges $e \in E(T')$ such that $|\g_e| \sub V$ and $e$ is adjacent to exactly $i$ mean vertices. By assumption, we have $X_2 + X_1 + X_0 \geq 1$. According to \Cref{proposition:additivity-symplectization}, we have 
\eq T' * \hat{V}= \sum_{v \in V_n(T')  } \beta_v * \hat{V} + \sum_{v \in V_m(T') } \beta_v * \hat{V}  +  X_2 + X_1 + X_0.\eeq 
According to \Cref{prop-positivity-intersections}, we also have that $\sum_{v \in V_n(T')} \beta_v * \hat{V}  \geq 0$ (here we use the fact that $T'$ is representable by a $\hat{J}$-holomorphic building). If there are no mean vertices, then we have that $\sum_{v \in V_m(T') } \beta_v * \hat{V}=0$, $X_1= X_2 = 0$ and $X_0 \geq 1$. So $T' * \hat{V} >0$. If there exists at least one mean vertex, observe that we have $ X_2 \leq \# V_m(T') -1$. Moreover, given $v \in V_m(T')$, \Cref{proposition:positivity-symplectization} together with the fact that $T'$ is representable by a $\hat{J}$-holomorphic building imply that $\beta_v * \hat{V} \geq 1- p_v$, where $p_v$ is the number of edges adjacent to $v$. It follows that $ \sum_{v \in V_m(T')} \beta_v * \hat{V} + X_2 + X_1 \geq (\#V_m(T')- X_1 - 2 X_2)  + X_2 + X_1 = \#V_m(T') - X_2 \geq 1$.  It thus follows again that $T' * \hat{V} >0$. We conclude that $\widetilde{\psi}_{H}(T')= \widetilde{\psi}_{H}(T)=0$ if $T'$ has an interior edge contained in $V$. 

We are left with the case where $T'$ and hence $T$ have no edges contained in $V$. It is then immediate that $\widetilde{\psi}_{V}(T')= \widetilde{\psi}_{V}(T)$. 
	
We now show that any concatenation $\{T_i\}_i$ satisfies $\widetilde{\psi}_{V}(\#_iT_i)= \prod_i \widetilde{\psi}_V(T_i)$. If one of the $T_i$ has an edge contained in $V$, then $\#_iT_i$ also has an edge contained in $V$ and we have $\widetilde{\psi}_{V}(\#_iT_i)= \prod_i \widetilde{\psi}_{V}(T_i)=0$. If none of the $T_i$ have an edge contained in $V$, then the same is true for $\#_iT_i$. Hence \Cref{proposition:additivity-symplectization} implies that $\#_iT_i * \hat{V}= \sum_i T_i*\hat{V}$.  By positivity of intersection (\Cref{prop-positivity-intersections}), $\sum_i T_i*\hat{V}=0$ if and only $T_i*\hat{V}=0$ for all $i$. It then follows that $\widetilde{\psi}_{V}(\#_iT_i)= \prod_i \widetilde{\psi}_V(T_i)$.
\epf

\end{setup1star}

\begin{setup2star}

Fix a datum $\mc{D}=(\mc{D}^+, \mc{D}^-, \hat X, H, \hat \l, \hat J)$ for Setup II*, where we write $\mc{D}^\pm = ((Y^\pm,\xi^{\pm}, V^{\pm}), \fk{r}^{\pm}, \l^\pm, J^\pm)$ and $\fk{r}^{\pm}= (\a^{\pm}, \tau^{\pm}, r^{\pm})$. 

We now introduce the following twisting maps.
 
\defi \label{definition:basic-twist-II}
We define a map $\psi_H : \Sch_\II^{\neq \emptyset}(\mc{D}) \to \Q[U]$ by
\eq \psi_H(T) = U^{T * H + \G^-(T,V^-)}. \eeq
\edefi

\defi \label{definition:ell-reduced-twist-II}
We define a map $\widetilde{\psi}_{H}:  \Sch_\II^{\neq \emptyset}(\mc{D}) \to \Q$ by
\eq \widetilde{\psi}_{H}(T) = \begin{cases} 1 & \text{if $ T * H= 0$ and $\left|\gamma_e\right| \cap V^\pm = \emptyset$ for every $e \in E(T)$} \\ 0 & \text{otherwise} \end{cases} \eeq
\edefi

We need to verify that the above definitions satisfy the axioms of twisting maps. The first step is to prove that the $\psi_H(T)$ are non-negative powers of $U$. This is the content of \Cref{corollary:positive-II}, whose proof requires some preparatory lemmas. (In the next two lemmas, $\dot{\S}$ always denotes an arbitrary punctured Riemann surface.)

\lem \label{lemma:stokes-II}
For $n \geq 1$, suppose that $\b \in \pi_2(\hat{X}, \g^+ \sqcup (\cup_{i=1}^n \g^-_i))$ is represented by a $\hat{J}$-holomorphic curve $u: \dot{\S} \to \hat{X}$ which is contained in $H$. Then  $P^+ - e^{-\mc{E}(\hat{J})} (\sum_{i=1}^n P^-_i) \geq 0$, where $P^+$ (resp. $P^-_i$) is the period of $\g^+ \sub (Y^+, \l^+)$ (resp. the period of $\g^-_i \sub (Y^-, \l^-)$ ). 
\elem
   
\pf	
The claim is trivial if $\mc{E}(\hat{J})=\infty$, so let us assume that $\hat{J} \in \mc{J}(\hat{X}, \hat{\l}, H)$. We may therefore fix a Type A decomposition $\s$ of $(H, \l_H)$, which is specified by a pair of hypersurfaces $\mc{H}_-= \{ -C_1 \} \tms V^-$ and $\mc{H}_+ = \{C_+ \} \tms V^+$. 

It will be convenient to define the regions $R^- := (-\infty, -C_1] \tms V^-$, $R^+:= [C_2 \tms \infty) \tms V^+$ and $\tilde{H} = H - (\op{int}(R^-) \cup \op{int}(R^+))$. Let us first assume that $u$ is transverse to the boundary of $\tilde{H}$. 
Consider now the sum 
\eq \int_{u^{-1}(R^-)} e^{-C_1} u^* d\a^- + \int_{u^{-1}(\tilde{H})} u^* d\hat{ \l} + e^{C_2} \int_{u^{-1}(R^+)} u^* d\a^+. \eeq 
Each summand is non-negative due to the fact that $u$ is $\hat{J}$-holomorphic and that $\hat{J}$ is adapted to $\s$. By Stokes' theorem, the sum of the integrals is $e^{C_2}P^+ - e^{-C_1} (\sum_{i=1}^n P^-_i) \geq 0$. This implies that $P^+ \geq e^{-\mc{E}(\s)} (\sum_{i=1}^n P^-_i)$. 

If $u$ is not transverse to the boundary of $\tilde{H}$, observe by Sard's theorem that transversality can be achieved for a sequence of domains $\tilde{H}^n := \tilde{H} \cup [-C_1^n, -C_1] \cup [C_2, C_2^n]$, where $\{C_i^n\}_{n=0}^{\infty}$ is monotonically decreasing and $C_i^n \to C_i$. It is easy to verify that $\hat{J}$ is still adapted to the Type A decompositions induced by the boundary of $\tilde{H}^n$, so the above argument goes through and passing to the limit gives $P^+ \geq e^{-\mc{E}(\s)} (\sum_{i=1}^n P^-_i)$.

The lemma now follows from the definition of $\mc{E}(\hat{J})$. 
\epf

\lem \label{lemma:additive-II}
For $n \geq 0$, suppose that $\b \in \pi_2(\hat{X}, \g^+ \sqcup (\cup_{i=1}^n \g^-_i))$ is represented by a $\hat{J}$-holomorphic curve $u: \dot{\S} \to \hat{X}$. (Note that unlike in \Cref{lemma:stokes-II}, we allow $n=0$ in which case the union is interpreted as being empty.) Then $\b * H \geq -n_u$ where $n_u$ is the total number of negative punctures of $u$ contained in $V^- \sub Y^-$.	
\elem

\pf 
According to \Cref{prop-positivity-intersections}, we only need to consider the case where the image of $u$ is contained in $H$. By definition of a datum for Setup II*, the trivializations $\tau^\pm$ extend to a global trivialization $\tau$ of the normal bundle of $H$, which implies that $u^\tau \cdot H = 0$. Using the fact that $u^\tau \cdot H = 0$, we have (see \Cref{definition:generalized-intersection-number} and the proof of \Cref{proposition:positivity-symplectization})
\begin{align}
u*H&= \a_N^{{\tau}; -}(\g^+) - \sum_{i=1}^n \a_N^{{\tau}; +}(\g_i^-)\\
&=  \lfloor \CZ_N^{\tau}(\g^+)/2 \rfloor - \sum_{i=1}^n   \lceil \CZ_N^{\tau}(\g_i^-)/2 \rceil \\
&=   \lfloor r^+ P^+ \rfloor -  \sum_{i=1}^n  \lfloor r^- P_i \rfloor,
\end{align}
where the sum is interpreted as zero if $u$ has no negative punctures. 

We may assume that $n \geq 1$ and $r^->0$ (otherwise the lemma is automatic). Let $p_u=n_u+1$ be the total number of punctures (positive and negative) of $u$ contained in $V^\pm \sub Y^\pm$. Using the trivial bounds $x - 1 < \lfloor x \rfloor \le x$, we obtain
\eq u * H >  (r^+ P^+ - 1) - \sum_{i=1}^n (1 + r^- P^-_i) = -p_u+ r^+ P^+  - r^- \sum_{i=1}^n  P^-_i. \eeq
Using now \Cref{lemma:stokes-II} and the fact that $r^+ \geq e^{\mc{E}(\hat{J})} r^-$, we have
\eq-p_u+ r^+  P^+  - r^- \sum_{i=1}^n  P^-_i \geq  -p_u + r^+  \e^{-\mc{E}(\hat{J})} \sum_{i=1}^n P^-_i - r^- \sum_{i=1}^n P^-_i \geq -p_u. \eeq
The claim follows.
\epf

\cor \label{corollary:positive-II}
We have $T*H \geq - \Gamma^-(T, V^-)$ for any $T \in \mc{S}^{\neq \emptyset}_\II(\mc{D})$. Hence $\psi_H(T) \in \Q[U]$. 
\ecor

\pf 
Since $T \in \mc{S}^{\neq \emptyset}_\II(\mc{D})$, there exists $T' \to T$ such that $T'$ is representable by a holomorphic building. Since the Siefring number is invariant under gluing (\Cref{proposition:invariance-gluing}), we may assume that $T$ is representable by a holomorphic building. We now apply \Cref{proposition:positivity-intersection-buildings}: it therefore suffices to check that for each $v \in T$, the intersection number $\beta_v * \eta_{*(v)}$ is bounded below by $-\#\{E^-(v)\}$. In case $*(v) =01$, this follows from \Cref{lemma:additive-II}. In case $*(v)=00$ or $*(v)=11$, this follows either from \Cref{lemma:additive-II} or (more directly) from \Cref{proposition:positivity-symplectization}.
\epf

\prop \label{proposition:basic-additivityII}
	Let $\{T_i\}_i$ be a concatenation in $\Sch_\II$. Then we have:
	\begin{multline*}
	(\#_i T_i) * H +  \G^-(\#_i T_i,V^-) = \sum_{T_i \in \Sch_\I^+} (T_i * \hat{V}^+ + \G^-(T_i,V^+)) \\
	+ \sum_{T_i \in \Sch_\II} (T_i * H + \G^-(T_i,V^-)) + \sum_{T_i \in \Sch_\I^-} (T_i * \hat{V}^- + \G^-(T_i,V^-)).
	\end{multline*}
\eprop


\pf
As in the proof of \Cref{proposition:basic-twisting-mapI}, our assumptions imply that $\hat{\g} * V^\pm = -1$ if $\g$ is contained in $V^\pm$ and $0$ otherwise. By \Cref{proposition:gluing-additivity}, we have
\eq \label{equation:pos-ellipticII}
	T * H
	= \sum_{\begin{smallmatrix} v \in V(T) \cr *(v) = 00 \end{smallmatrix}} \b_v * \hat{V}^+ + \sum_{\begin{smallmatrix} v \in V(T) \cr *(v) = 01 \end{smallmatrix}} \b_v * H + \sum_{\begin{smallmatrix} v \in V(T) \cr *(v) = 11 \end{smallmatrix}} \b_v * \hat{V}^- + \G^\mathrm{int}(T,V^+) +\G^\mathrm{int}(T,V^-)
\eeq
for all $T \in \Sch_\II$. By applying this formula to $T = \#_i T_i$ (and also using \eqref{equation:elliptic-intersectionI}), we see that it suffices to prove that
\begin{align}\label{equation:edge-bijection}
	& \G^\mathrm{int}(\#_i T_i,V^+) + \G^\mathrm{int}(\#_i T_i,V^-) +\G^-(\#_i T_i,V^-) \\
	&= \sum_{T_i \in \Sch_\I^+}  \G^\mathrm{int}(T_i,V^+) +  \G^-(T_i,V^+) \nonumber \\
	&\hphantom{= } + \sum_{T_i \in \Sch_\II}  \G^\mathrm{int}(T_i,V^+) + \G^\mathrm{int}(T_i,V^-) + \G^-(T_i,V^-) \nonumber \\
	&\hphantom{= } + \sum_{T_i \in \Sch_\I^-}  \G^\mathrm{int}(T_i,V^-) + \G^-(T_i,V^-). \nonumber
\end{align}
As in the proof of \Cref{proposition:basic-twisting-mapI}, this is just a matter of understanding how the edges of $\#_i T_i$ are obtained from the edges of the $T_i$'s, following the discussion in \cite[Sec.\ 2.2]{pardon}. More precisely, let us check that every edge counted on the right hand side of \eqref{equation:edge-bijection} is also counted on the left hand side. Note that under concatenation, interior edges remain interior edges. Output edges either remain output edges, or they become interior edges. The output edges corresponding to $\G^-(T_i, V^+)$ for $T_i \in  \Sch_\I^+$ must all become interior edges of $\#_iT_i$: indeed, any such output edge has label $*(e)=0$, but the output edges of $\#_iT_i$ have label $*(e)=1$. These output edges are thus counted in $\G^\mathrm{int}(\#_i T_i,V^+)$.

The output edges corresponding to $\G^-(T_i, V^-)$ for $T_i \in  \Sch_\II$ may either become interior edges of $\#_iT_i$ (in which case they are counted in $\G^\mathrm{int}(\#_i T_i,V^-)$), or remain output edges (in which case they are counted in $\G^-(\#_i T_i,V^-)$). Similarly, the output edges corresponding to $\G^-(T_i, V^-)$ for $T_i \in  \Sch_\I^-$ may either become interior edges of $\#_iT_i$ (counted in $\G^\mathrm{int}(\#_i T_i,V^-)$) or remain output edges (counted in $\G^-(\#_i T_i,V^-)$).
\epf

\cor
Under the assumptions of \Cref{proposition:basic-additivityII}, $\psi_H \in \Psi_\II(\mc{D};\psi_{V^+},\psi_{V^-})$. 
\ecor

\pf
\Cref{proposition:invariance-gluing} implies that $\psi_{H}(T) = \psi_{H}(T')$ for any morphism $T \to T'$. \Cref{proposition:basic-additivityII} implies that $\psi_{H}$ acts correctly on concatenations.
\epf

We now want to show that $\widetilde{\psi}_{H}$ is a twisting map.  We will need the following definition. 

\defi
Given a tree $T\in \Sch_\II$, a vertex $v \in V(T)$ is \emph{mean} if it is an interior vertex and $\lvert \g_e \rvert \sub V^{\pm}$ for all $e \in e^+(v) \sqcup E^-(v)$.  All other vertices are said to be \emph{nice}. These sets are denoted $V_m(T) \sub V(T)$ and $V_n(T) \sub V(T)$ respectively. 
\edefi

\prop \label{proposition:reduced-elliptic-twisting-mapII}
Under the assumptions of \Cref{definition:ell-reduced-twist-II}, $\widetilde{\psi}_{ H} \in \Psi_\II(\mc{D}; \widetilde{\psi}_{V^+},\widetilde{\psi}_{V^-})$.
\eprop

\pf
Consider a tree $T' \in \Sch^{\neq \emptyset}_\II$ with a morphism $T' \to T$. We wish to show that $\widetilde{\psi}_{H}(T')=\widetilde{\psi}_{H}(T)$. As in the proof of \Cref{proposition:reduced-elliptic-twisting-mapI}, we may assume that $T'$ is representable by a building (see \Cref{defi:rep-buildings-full}). 
	
It follows from \Cref{proposition:invariance-gluing} that $T'* H= T* H$. Note that $T', T$ have the same exterior edges. If one of these edges is contained in $V^{\pm}$, then $\widetilde{\psi}_{H}(T')= \widetilde{\psi}_{H}(T)=0$. So we can assume that the exterior edges of $T', T$ are not contained in $V^{\pm}$. 
	
Suppose now that $T'$ has an interior edge contained in $V^{\pm}$. Arguing as in the proof of \Cref{proposition:reduced-elliptic-twisting-mapI}, let $X_i \ge 0$ ($i = 0,1,2$) denote the number of edges $e \in E(T')$ such that $|\g_e| \sub V^{\pm}$ and $e$ is adjacent to exactly $i$ mean vertices. By assumption $X_0 + X_1 + X_2 \geq 1$. By \Cref{proposition:gluing-additivity}, we have 
\eq T' * H= \sum_{v \in V_n(T')  } \beta_v * H_v + \sum_{v \in V_m(T') } \beta_v * H_v  +  X_2 + X_1 + X_0,\eeq
where we write $H_v=\hat{V}^+$ if $*(v)=00$, $H_v=H$ if $*(v)=01$ and $H_v=\hat{V}^-$ if $*(v)=11$.  
According to \Cref{proposition:positivity-symplectization} and the fact that $T'$ is representable by a building, we have that $\sum_{v \in V_n(T')} \beta_v * H_v  \geq 0$. If there are no mean vertices, then $ \sum_{v \in V_m(T') } \beta_v * H_v = X_1 = X_2 =0$ and $X_0 \geq 1$. Hence $T' * H  \geq 1$. If there exists at least one mean vertex, observe that $ X_2 \leq \# V_m(T') -1$.  According to \Cref{lemma:additive-II} and the fact that $T'$ is representable by a building, we have that $ \sum_{v \in V_m(T)} \beta_v * H_v + X_2 + X_1 \geq \#V_m(T')- X_1 - 2 X_2  + X_2 + X_1 = \#V_m(T') - X_2 \geq 1$.  It thus follows again that $T' * \hat{V} \geq 1$. We conclude that $\widetilde{\psi}_{V}(T')= \widetilde{\psi}_{V}(T)=0$ if $T'$ has an interior edge contained in $V^{\pm}$.
	
We are left with the case where $T'$ and hence $T$ have no edges contained in $V^{\pm}$. It's then immediate that $\widetilde{\psi}_{H}(T')= \widetilde{\psi}_{H}(T)$. 
	
If $\{T_i\}_i$ is a concatenation, then the argument is the same as in the proof of \Cref{proposition:reduced-elliptic-twisting-mapI} (using \Cref{proposition:gluing-additivity} instead of \Cref{proposition:additivity-symplectization}).
\epf

\end{setup2star}

\begin{setup3star}

Fix a datum $\mc{D} = (\mc{D}^+,\mc{D}^-, \hat{X}, \hat{\l}^t,H^t, \hat{J}^t)_{t \in [0,1]}$ for Setup III*, where $\mc{D}^\pm = ((Y^\pm,\xi^{\pm}, V^{\pm}), \fk{r}^{\pm}, \l^\pm, J^\pm)$.

We now introduce the following twisting maps.

\defi \label{definition:ell-twist-III}
We define a map $\psi_{H^t} : \Sch_\III^{\neq\emptyset}(\mc{D}) \to \Q[U]$ by
\eq \psi_{H^t}(T) = U^{T * H^t + \Gamma^-(T, V^-)}. \eeq
\edefi

\defi \label{definition:ell-reduced-twist-III}
We define a map $\widetilde{\psi}_{H^t}:  \Sch_\III^{\neq \emptyset}(\mc{D}) \to \Q$ by
\eq\widetilde{\psi}_{H^t}(T) = \begin{cases} 1 & \text{if $ T * H^t= 0$ and $\left|\gamma_e\right| \cap V^\pm = \emptyset$ for every $e \in E(T)$} \\ 0 & \text{otherwise} \end{cases} \eeq
\edefi

There is no difference between $\Sch_\III$ and $\Sch_\II$ from the point of view of the intersection theory defined in \cref{subsection:intersection-buildings}. It can therefore be shown by essentially the same arguments as in the previous section that the above definitions do indeed satisfy the axioms for twisting maps. 

\cor \label{corollary:III-twisting-verification}
	We have $\psi_{H^t} \in \Psi_\III(\mc{D};\psi_{H^0},\psi_{H^1})$ and $\widetilde{\psi}_{H^t} \in \Psi_\III(\mc{D};\widetilde{\psi}_{H^0},\widetilde{\psi}_{H^1})$.
\ecor

\end{setup3star}

\begin{setup4star}

Fix datum $\mc{D}= (\mc{D}^{01},\mc{D}^{12},(\hat{X}^{02,t},\hat{\l}^{02,t},H^{02,t},\hat{J}^{02,t})_{t \in{} [0,\infty)})$ for Setup IV*.  Here, we have that:

\begin{itemize}
	\item $\mc{D}^{01} = (\mc{D}^0,\mc{D}^1,\hat{X}^{01},\hat{\l}^{01},H^{01}, \hat{J}^{01})$ is a datum for Setup II*;
	\item $\mc{D}^{12} = (\mc{D}^1,\mc{D}^2,\hat{X}^{12},\hat{\l}^{12}, H^{12},\hat{J}^{12})$ is a datum for Setup II*;
	\item $\mc{D}^i = ((Y^i, \xi^i, V^i); \fk{r}^i,\l^i,J^i)$ is a datum for Setup I*, for $i = 0,1,2$;
\end{itemize}

We introduce the following twisting maps.

\defi \label{definition:basic-twist-IV}
We define $\psi_{H^{02,t}} : \Sch_\IV^{\neq\emptyset}(\mc{D}) \to \Q[U]$ by
\eq \psi_{H^{02,t}}(T) = U^{T * \eta + \G^-(T,V^2)}. \eeq
\edefi

\defi \label{definition:ell-red-twist-IV}
We define $\widetilde{\psi}_{H^{02,t}} : \Sch_\IV^{\neq\emptyset}(\mc{D}) \to \Q$ by
\eq
\widetilde{\psi}_{H^{02,t}}(T) =
\begin{cases}
1 & \text{if $ T * \eta = 0$ and $\lvert\gamma_e\rvert \cap V^i = \emptyset$ for all $e \in E(T), i \in \{0,1,2\}$} \\
0 & \text{otherwise}
\end{cases}
\eeq
\edefi

We need to show that the powers of $U$ appearing in \Cref{definition:basic-twist-IV} are non-negative. This will be the content of \Cref{corollary:positive-IV}, which requires some preparatory lemmas.

\lem \label{lemma:stokes-IV}
For $n \geq 1$, suppose that $u: \dot{\S} \to \hat{X}^{02,t}$ is $\hat{J}^{02,t}$-holomorphic with positive orbit $\g^+$ and negative orbits $\cup_{i=1}^n \g^-_i$. Then we have $P^+ - e^{-\mc{E}(\hat{J}^{02,t})} \sum_i P^-_i \geq 0$, where $P^+$ (resp. $P^-_i$) is the period of $\g^+ \sub V^0$ (resp. $\g^-_i \sub V^2$).
\elem
\pf 
The proof is analogous to that of \Cref{lemma:stokes-II}. If $\mc{E}(\hat{J}^{02,t})= \infty$, the result is trivial. Hence we may assume that $\hat{J}^{02,t} \in \mc{J}(\hat{X}^{02,t}, \hat{\l}^{02,t}, H^{02,t})$ and fix a type B cobordism decomposition $\s^{02,t}$ of $(H^{02,t}, \hat{\l}^{02,t}|_{H^{02,t}})$ to which $\hat{J}^{02,t}$ is adapted. The decomposition $\s^{02,t}$ is specified by a family of hypersurfaces $\mc{H}_2(t)= \{ - C_2(t) \} \tms V^2, \mc{H}_1(t) = \{-C_1(t) \} \tms V^1, \tilde{\mc{H}}_1(t) = \{ \tilde{C}_1(t) \} \tms V^1, \mc{H}_0(t) = \{ C_0(t) \} \tms V^0$. 

It will be convenient to define the regions $R_2(t)= (-\infty, -C_{2}(t)] \tms V^2, R_0(t)= [C_0(t), \infty) \tms V^0$ and $R_1(t) = [-C_1(t), \tilde{C}_1(t)] \tms V^1$. 

Suppose first that $R_1(t)$ is empty. Then $\tilde{C}_1(t)+ C_1(t)=0$, and hence $\mc{E}(\s^{02,t})= C_0(t)+ C_2(t)$. Hence $\mc{E}(\s^{02,t})$ coincides with the energy of $\s^{02,t}$ if it is viewed as a Type A cobordism decomposition (\Cref{definition:type-A-energy}) by forgetting $C_1, \tilde{C}_1$. Hence, when $R_1(t)$ is empty, the claim reduces to \Cref{lemma:stokes-II}.

We now assume $R_1(t)$ is non-empty. We let $\tilde{H}^{02,t}_{12} \sqcup \tilde{H}^{02,t}_{01}$ be the connected components of $X^{02,t} - \op{int} (R_2(t) \cup R_1(t) \cup R_0(t))$. Let us first assume that the image of $u$ intersects the boundaries of $\tilde{H}^{02,t}_{01}$ and $\tilde{H}^{02,t}_{12}$ transversally. We then have the following computations:
\begin{itemize}
\item $\int_{u^{-1}(R_2(t))} u^* \a_2 = \int_{u^{-1}(\mc{H}_2(t))} u^* \a_2 - \sum_{i=1}^n P^-_i \geq 0$,
\item $\int_{u^{-1}(H^{02,t}_{21})} u^* d(e^s \a_2) = e^{-C_1(t)} \int_{u^{-1}(\mc{H}_1(t))} u^* \a_1 - e^{-C_{2}(t)} \int_{u^{-1}(\mc{H}_2(t))} u^* \a_2  \geq 0$,
\item $\int_{u^{-1}(R_1(t))} u^* \a_1 = \int_{u^{-1}(\tilde{\mc{H}}_1(t))} u^* \a_1 - \int_{u^{-1}(\mc{H}_1(t))} u^* \a_1 \geq 0$,
\item $\int_{u^{-1}(\tilde{H}^{02,t}_{01})} u^* d(e^s \a_1) = e^{C_0(t)} \int_{u^{-1}(\mc{H}_0(t))} u^* \a_0 - e^{\tilde{C}_1(t)} \int_{u^{-1}(\tilde{\mc{H}}_1(t))} u^* \a_1  \geq 0$,
\item $\int_{u^{-1}(R_0(t))} u^* \a_0 = P^+_i - \int_{u^{-1}(\mc{H}_0(t))} u^* \a_0 \geq 0$.
\end{itemize}

After appropriate rescalings, these terms form a telescoping sum. We find: $P^+ - e^{C_2(t) + C_0(t)- C_1(t)- \tilde{C}_1(t)} \sum_i P^-_i = P^+-  e^{-\mc{E}(\s^{02,t})} \sum_i P^-_i \geq 0$. 

Suppose now that the image of $u$ does not intersect the boundaries of $\tilde{H}^{02,t}_{01}$ and $\tilde{H}^{02,t}_{12}$ transversally. For $\e_n \downarrow 0$, set $R_2^{(n)}:= (-\infty, -C_2(t) - \e_n] \tms V^2, R_0^{(n)}= [C_0(t)+ \e_n, \infty) \tms V^0$ and $R_1^{(n)} = [-C_1(t)+\e_n, \tilde{C}_1(t)-\e_n] \tms V^1$. By Sard's theorem, we may assume by choosing $\e_n$ appropriately that $u$ intersects the boundary of the $R_i^{(n)}$ transversally. Now repeat the above argument with $R_i^{(n)}$ in place of $R_i(t)$. This yields the inequality $P^+ - e^{C_2(t) + C_0(t)- C_1(t)- \tilde{C}_1(t)+ 4 \e_n} \sum_i P^-_i \geq 0$. The claim now follows by passing to the limit.
\epf

\lem \label{lemma:additive-IV} 
For $n \geq 0$, suppose that $u: \dot{\S} \to \hat{X}^{02,t}$ is a $\hat{J}^{02,t}$-holomorphic curve in the class $\beta \in \pi_2(\hat{X}^{02,t}, \g^+ \sqcup (\cup_{i=1}^n \g^-_i))$ for $t<\infty$.  Then $\beta * [H^{02,t}] \geq - n_u$, where $n_u$ is the total number of negative punctures. (Note that unlike in \Cref{lemma:stokes-IV}, we allow $n=0$ here in which case the union is interpreted as being empty.)
\elem

\pf We argue as in the proof of \Cref{lemma:additive-II}. It is enough to consider the case where the image of $u$ is contained in $H^{02,t}$. The trivialization $\tau$ extends to a global trivialization along $H^{02,t}$, implying that $\b \cdot_{\tau} H^{02,t}=0$. 

We thus have
\begin{align*}
u*H^{02,t}&= \a_N^{{\tau}; -}(\g^+) - \sum_{i=1}^n \a_N^{{\tau}; +}(\g_i^-)\\
&=  \lfloor \CZ_N^{\tau}(\g^+)/2 \rfloor - \sum_{i=1}^n   \lceil \CZ_N^{\tau}(\g_i^-)/2 \rceil \\
&=   \lfloor r_0 P^+ \rfloor -  \sum_{i=1}^n +  \lfloor r P_i \rfloor,
\end{align*}
where the sum is interpreted as zero if $u$ has no negative punctures. Thus the lemma is verified if $n=0$ or $r_2=0$. It remains only to consider the case where $n\geq 1$ and $r_2>0$. 

Using the trivial bounds $x - 1 < \lfloor x \rfloor \le x$, we obtain
$$
	u * \hat{V} > \sum_{ z \in \bf{p}_u^+} (r_0 P_z - 1) - \sum_{ z \in \bf{p}_u^-} (1 + r_2 P_z)
	\ge -p_u + e^{\mc{E}(\s^{02})} r_2 (\sum_{ z \in \bf{p}_u^+} P_z) - r_2 \sum_{ z \in \bf{p}_u^-} P_z .
$$
It follows from \Cref{lemma:stokes-IV} that $e^{\hat{J}^{02,t}} r_2 (\sum_{ z \in \bf{p}_u^+} P_z) - r_2 \sum_{ z \in \bf{p}_u^-} P_z  \geq 0$. The claim follows.
\epf

\cor \label{corollary:positive-IV}
We have that $T* \eta + \G^-(T,V^2) \geq 0$. 
\ecor

\pf 
We need to consider two cases. If $\fk{s}(T) \in{} [0, \infty)$, then the claim follows by combining \Cref{proposition:positivity-intersection-buildings} and \Cref{lemma:additive-IV}. If $\fk{s}(T) = \{\infty\}$, then the argument is the same as in the proof of \Cref{corollary:positive-II}.
\epf

\prop \label{proposition:basic-additivityIV}
	Let $\{T_i\}_i$ be a concatenation in $\Sch_\IV$ of type~\ref{item:concatenationIV-infinity} (see page~\pageref{item:concatenationIV-infinity}). Then we have
	\begin{align*}
	&(\#_i T_i) * \eta + \G^-(T,V^2) \\
	&= \sum_{T_i \in \Sch_\I^0} (T_i * \hat{V}^0 + \G^-(T_i,V^0)) + \sum_{T_i \in \Sch_\II^{01}} (T_i * H^{01} +  \G^-(T_i,V^1)) + \sum_{T_i \in \Sch_\I^1} (T_i * \hat{V}^1 +  \G^-(T_i,V^1)) \\
	&\hphantom{= } + \sum_{T_i \in \Sch_\II^{12}} (T_i * H^{12} +  \G^-(T_i,V^2)) + \sum_{T_i \in \Sch_\I^2} (T_i * \hat{V}^2 + \G^-(T_i,V^2)).
	\end{align*}
\eprop
\pf
As in the proof of \Cref{proposition:basic-twisting-mapI}, our assumptions imply that $\hat{\g} * V^j = -1$ if $\g$ is contained in $V^j$ and $0$ otherwise. \Cref{proposition:gluing-additivity} implies that
\begin{align*}
	&(\#_i T_i) * \eta \\
	&= \sum_{\begin{smallmatrix} v \in V(\#_i T_i) \cr *(v) = 00 \end{smallmatrix}} \b_v * \hat{V}^0 +  \sum_{\begin{smallmatrix} v \in V(\#_i T_i) \cr *(v) = 01 \end{smallmatrix}} \b_v * H^{01} + \sum_{\begin{smallmatrix} v \in V(\#_i T_i) \cr *(v) = 11 \end{smallmatrix}} \b_v * \hat{V}^1 + \sum_{\begin{smallmatrix} v \in V(\#_i T_i) \cr *(v) = 12 \end{smallmatrix}} \b_v * H^{12} + \sum_{\begin{smallmatrix} v \in V(\#_i T_i) \cr *(v) = 22 \end{smallmatrix}} \b_v * \hat{V}^2 \\
	&\hphantom{= } + \G^\mathrm{int}(\#_i T_i, V^0) +  \G^\mathrm{int}(\#_i T_i, V^1) + \G^\mathrm{int}(\#_i T_i, V^2).
\end{align*}
As in the proof of \Cref{proposition:basic-additivityII}, it follows that the result is equivalent to
\begin{align*}
	& \G^\mathrm{int}(\#_i T_i, V^0) + \G^\mathrm{int}(\#_i T_i, V^1) + \G^\mathrm{int}(\#_i T_i, V^2) +  \G^-(T,V^2) \\
	&=  \sum_{T_i \in \Sch_\I^0}  \G^\mathrm{int}(T_i,V^0) +   \G^-(T_i,V^0) \\
	&\hphantom{= } +  \sum_{T_i \in \Sch_\II^{01}}  \G^\mathrm{int}(T_i,V^0) +  \G^\mathrm{int}(T_i,V^1) +  \G^-(T_i,V^1) \\
	&\hphantom{= } + \sum_{T_i \in \Sch_\I^1} \G^\mathrm{int}(T_i,V^1) + \G^-(T_i,V^1) \\
	&\hphantom{= } + \sum_{T_i \in \Sch_\II^{12}}   \G^\mathrm{int}(T_i,V^1) +  \G^\mathrm{int}(T_i,V^2) +  \G^-(T_i,V^2) \\
	&\hphantom{= } + \sum_{T_i \in \Sch_\I^2} \G^\mathrm{int}(T_i,V^2) +  \G^-(T_i,V^2),
\end{align*}
which is a consequence of the way the edges of $\#_i T_i$ are obtained from the edges of the $T_i$'s.
\epf

\cor
We have $\psi_{H^{02,t}} \in \Psi_\IV(\mc{D};\psi_{H^{01}},\psi_{H^{12}},\psi_{H^{02,0}})$ and we have that $\widetilde{\psi}_{H^{02,t}} \in \Psi_\IV(\mc{D};\widetilde{\psi}_{H^{01}},\widetilde{\psi}_{H^{12}},\widetilde{\psi}_{H^{02,0}})$.
\ecor
\pf
\Cref{proposition:basic-additivityIV} shows that $\psi_{H^{02,t}}$ acts correctly on concatenations of type~\ref{item:concatenationIV-infinity}; the proof that $\psi_{H^{02,t}}$ behaves well with respect to the other two types of concatenation is virtually identical. \Cref{proposition:invariance-gluing} implies that $\psi_{H^{02,t}}(T) = \psi_{H^{02,t}}(T')$ for any morphism $T \to T'$. The argument that $\widetilde{\psi}_{H^{02,t}}(T) = \widetilde{\psi}_{H^{02,t}}(T')$ is essentially the same as the proof of \Cref{proposition:reduced-elliptic-twisting-mapII}, except that we appeal to \Cref{lemma:additive-IV} instead of \Cref{lemma:additive-II}.
\epf

\end{setup4star}

The results from the previous sections can be conveniently packaged into the following theorem. 

\thm[cf.\ Thm.\ 1.1 in \cite{pardon}] \label{theorem:twisted-counts}
Let $\mc{D}$ be a datum for any one of Setups I*-IV*. Then there exists a set of perturbation data $\Theta(\mc{D})$ and twisted moduli counts $\#_{\psi}\ov{\mc{M}}(T)^\vir_\theta \in \Q[U], \#_{\tilde{\psi}} \ov{\mc{M}}(T)^\vir_\theta \in \Q$ (for $\t \in \Theta(\mc{D})$ and $T \in \Sch_*(\mc{D})$, $* = \I, \II, \III, \IV$) satisfying the obvious analogs of (i)--(v) in Thm.\ 1.1 in \cite{pardon}. 
\ethm
\pf
There is a forgetful functor taking a datum for the enriched setups I*-IV* (\Cref{subsection:enriched-setups}) to a datum for the standard setups I-IV (\Cref{subsection:standard-setups}). So the set of perturbation data is furnished by Thm.\ 1.1 in \cite{pardon}. We showed in \Cref{subsection:twisting-maps-contact-subman} a datum for setups I*-IV* gives rise to twisting maps, from which we may define our twisted moduli counts as in \Cref{subsection:twistinghomologies}. The properties (i)-(iv) are tautological and (v) is a consequence of the axioms of twisting maps, as explained in \Cref{subsection:twistinghomologies}.
\epf

\section{Construction of the main invariants} \label{section:invariant}

In this section, we construct the invariants which are the central objects of this paper. To the data of a TN contact pair $(Y, \xi, V)$ and an element $\fk{r} \in \fk{R}(Y, \xi, V)$, we associate a unital, $\Z/2$-graded $\Q[U]$-algebra 
\eq \label{equation:main-invariant-repeat} CH_\bullet(Y, \xi, V; \fk{r}).\eeq 
There is a natural map to ordinary contact homology $CH_\bullet(Y, \xi, V; \fk{r}) \to CH_\bullet(Y, \xi)$ given by setting $U=1$. 

A contactomorphism $f:(Y, \xi, V) \to (Y', \xi', V')$ induces an identification $CH_\bullet(Y, \xi, V; \fk{r})= CH_\bullet(Y', \xi', V'; f_*\fk{r})$. An exact relative symplectic cobordism $(\hat X, \hat \l, H)$ from $(Y^+, \xi^+, V^+)$ to $(Y^-, \xi^-, V^-)$ satisfying an energy condition induces a map $CH_\bullet(Y^+, \xi^+, V^+; \fk{r}^+) \to CH_\bullet(Y^-, \xi^-, V^-; \fk{r}^-)$. Unfortunately, our notions of energy are not well behaved under compositions of arbitrary relative symplectic cobordisms, so the composition of maps is not always defined. 

We also define a reduced version of \eqref{equation:main-invariant-repeat} which only counts Reeb orbits in the complement of a codimension $2$ submanifold, and certain ``asymptotic invariants" which have good functoriality properties. 

\subsection{Construction and basic properties of the invariants} \label{subsection:mainconstruction}

The following susection is entirely parallel to \cite[Sec.\ 1.7]{pardon}. More precisely, Pardon constructs (ordinary) contact homology by applying \cite[Thm.\ 1.1.]{pardon} to data from Setups I-IV. We construct our new invariants by applying \Cref{theorem:twisted-counts} to data from Setups I*-IV*. 

\begin{setup1star}

Fix a TN contact pair $(Y, \xi, V)$ and an element $\fk{r} \in \fk{R}(Y, \xi, V)$. According to \Cref{proposition:adapted-contact-forms-exist}, we may choose a contact form $\xi= \op{ker} \l$ which is adapted to $\fk{r}$. Let $J: \xi \to \xi$ be a $d\lambda$-compatible almost complex structure which preserves $\xi_V$. We therefore obtain a datum $\mc{D}$ for Setup I*.  \Cref{theorem:twisted-counts} applied to $\mc{D}$ furnishes a $\Z/2$-graded, unital $\Q[U]$-algebra 
\eq CH_\bullet(Y, \xi, V; \fk{r})_{\l, J, \theta} \eeq 
for any choice of perturbation datum $\theta \in \T_\I(\mc{D})$; cf.\ \eqref{equation:twist-differential-I} and \eqref{equation:inv-1}. 
\end{setup1star}

\begin{setup2star}
Fix pairs $\mc{D}^\pm = ((Y^\pm,\xi^{\pm}, V^{\pm}), \fk{r}^{\pm}, \l^\pm, J^\pm)$ of data for Setup I*, where we write $\fk{r}^{\pm}=(\a^{\pm}, \tau^{\pm}, r^{\pm})$.  Let $(\hat X, \hat \l, H)$ be an exact relative symplectic cobordism with positive end $(Y^+, \l^+, V^+)$ and negative end $(Y^-, \l^-, V^-)$, and suppose that there exists a trivialization of the normal bundle of $H$ which restricts to $\tau^{\pm}$ on the positive/negative end. 

\prop  \label{proposition:map-II}
	Suppose that $ r^+> e^{\mc{E}(H, \hat{\l}|_H)} r^-$. Then there is an induced map on homology
\eq \label{equation:cob-map-depends} \Phi(\hat{X},\hat{\l},H)_{\hat{J}, \theta} : {CH}_\bullet(Y^+, \xi^+, V^+; \fk{r}^+)_{\lambda^+, J^+, \theta^+} \to {CH}_\bullet(Y^-,\xi^-, V^-; \fk{r}^-)_{\lambda^-, J^-, \theta^-}. \eeq

If $\a^+ = \a^-$ and $(H, \hat{\l}|_H)$ is a symplectization, then the same conclusion holds provided that $r^+ \geq r^-$. 
\eprop

\pf
According to \Cref{lemma:ac-connected-II}, we can choose an almost-complex structure $\hat{J}$ on $\hat{X}$ which is $d\hat{\l}$-compatible and agrees with $\hat{J}^{\pm}$ at infinity, and such that $r^+ \geq e^{\mc{E}(\hat{J})} r^-$.  We thus obtain a datum $\mc{D}=(\mc{D}^+, \mc{D}^-, \hat X, H, \hat \l, \hat J)$ for Setup II*. 

Given $(\theta^+, \theta^-) \in \T_\I(\mc{D}^+)  \tms \T_\I(\mc{D}^-)$, \Cref{theorem:twisted-counts} thus provides a perturbation datum $\theta \in \T_\II(\mc{D})$ with $\t \mapsto (\t^+, \t^-)$, and twisted moduli counts which give rise to the map \eqref{equation:cob-map-depends}; cf.\ \eqref{equation:phi-setup-2}.
\epf

\end{setup2star}

\begin{setup3star} We have the following proposition.

\prop  \label{proposition:map-III}
Under the assumptions of \Cref{proposition:map-II}, the map \eqref{equation:cob-map-depends} is independent of the pair $(\hat{J}, \theta)$. 
\eprop

\pf
Let $(\hat{J}_0, \theta_0)$ and $(\hat{J}_1, \theta_1)$ be two possible choices of such pairs. Let us first treat the caes where $(H, \hat{\l}|_H)$ is not a symplectization. For any $\e>0$, \Cref{lemma:ac-connected-II} provides an interpolating family of almost-complex structure $\{\hat{J}_t\}_{t\in [0,1]}$ such that $\mc{E}(\hat{J}_t) \leq \op{max}(\mc{E}(\hat{J}_0), \mc{E}(\hat{J}_1)) + \e$. Choosing $\e$ small enough so that $r^+> e^{\mc{E}(\hat{J}_t )} r^-$, we thus get a datum $\mc{D}$ for Setup III*.

\Cref{theorem:twisted-counts} now provides perturbation data $\theta \in \T_\III(\mc{D})$ mapping to $(\theta_0, \theta_1)$, and a chain homotopy between the maps $\Phi(\hat{X},\hat{\l},H)_{\hat{J}_0, \theta_0}$ and $\Phi(\hat{X},\hat{\l},H)_{\hat{J}_1, \theta_1}$; cf.\ \eqref{equation:phi-homotopy-3} and \eqref{equation:phi-setup-3}.

If $\a^+ = \a^-$ and $(H, \hat{\l}|_H)$ is a symplectization, then \Cref{lemma:ac-connected-II} implies that we may repeat the above argument for a family of almost-complex structures $\hat{J}_t$ which have vanishing energy. Tracing through the proof, it is straightforward to check that desired conclusion goes through provided that $r^+ \geq r^-$. 
\epf

\end{setup3star}

\begin{setup4star}
Let us consider data $\tilde{\mc{D}} = (\tilde{\mc{D}}^{01},\tilde{\mc{D}}^{12},(\hat{X}^{02,t},\hat{\l}^{02,t})_{t \in{} [0,\infty)})$, where
\begin{align*}
	\tilde{\mc{D}}^{01} &= (\mc{D}^0,\mc{D}^1,\hat{X}^{01},\hat{\l}^{01}, H^{01}) \\
	\tilde{\mc{D}}^{12} &= (\mc{D}^1,\mc{D}^2,\hat{X}^{12},\hat{\l}^{12}, H^{12}) \\
	\mc{D}^i &= ((Y^i,\xi^i,V^i), \fk{r}^i, \l^i, J^i) \quad (i = 0,1,2)
\end{align*}

Here $\tilde{\mc{D}}^{01}, \tilde{\mc{D}}^{12}, \tilde{\mc{D}}$ are ``partial data" for Setups II* and IV*, since they do not contain any information about almost-complex structures. These ``partial data" are assumed to obey all the axioms stated in \Cref{subsection:enriched-setups} which do not involve complex structures. 

The $\mc{D}^i$ are (ordinary) data for Setup I*. 

\prop  \label{proposition:map-IV}
	Suppose that the following conditions hold:
	\begin{itemize}
		\item $r_0> e^{\mc{E}(H^{02,t}, \hat{\l}^{02,t}|_{H^{02,t}})} r_2$,
		\item $r_0> e^{\mc{E}(H^{01}, \hat \l^{01}|_{H^{01}})} r_1$,
		\item $r_1> e^{\mc{E}(H^{12}, \hat \l^{12}|_{H^{12}})} r_2$. 
	\end{itemize}
Then the following diagram commutes:
\begin{center}
\begin{tikzcd}[row sep = small]
&CH_\bullet(Y^1,\xi^1, V^1; \fk{r}^1)_{\lambda^1,J^1,\theta^1}\ar{rd}{\Phi(\hat X^{12},\hat\lambda^{12},H^{12})}\\
CH_\bullet(Y^0,\xi^0,V^0; \fk{r}^0)_{\lambda^0,J^0,\theta^0}\ar{rr}[swap]{\Phi(\hat X^{02},\hat\lambda^{02},H^{02})}\ar{ru}{\Phi(\hat X^{01},\hat\lambda^{01},H^{01})}&&CH_\bullet(Y^2,\xi^2,V^2; \fk{r}^2)_{\lambda^2,J^2,\theta^2}
\end{tikzcd}
\end{center}

If $\a_0= \a_1= \a_2$ and if $(H^{01}, \hat{\l}^{01}|_{H^{01}})$, $(H^{12}, \hat{\l}^{12}|_{H^{12}})$, $(H^{02,t}, \hat{\l}^{02,t}|_{H^{02,t}})$ are symplectizations, then the conclusion still holds if we only assume that $r_i \geq r_j$ for $i \geq j$. 
\eprop

\pf
According to \Cref{lemma:ac-connected-IV}, one can choose a family of almost complex structures $\hat{J}^{02,t}$ so that $r_0 > e^{\mc{E}(\hat{J}^{02,t})} r_2$. Moreover, by \Cref{lemma:gluing-surjective-energy-ac}, one may also assume that $r_0 > e^{\mc{E}(\hat{J}^{01})} r_1$ and $r_1 > e^{\mc{E}(\hat{J}^{12})} r_2$, where $J^{01}$ and $J^{12}$ are the almost-complex structures induced at infinity by $J^{02,t}$.  We therefore obtain a datum for Setup IV* by considering $\mc{D}^{01}=(\tilde{\mc{D}}^{01}, \hat{J}^{01}), \mc{D}^{12} = (\tilde{\mc{D}}^{12}, \hat{J}^{12})$ and $\mc{D}=(\tilde{\mc{D}}, \hat{J}^{02,t})$. \Cref{theorem:twisted-counts} applied to $\mc{D}$ now implies the commutativity of the above diagram; cf.\ \eqref{equation:phi-commute-4}. 

Under the additional hypotheses that $\a_0=\a_1=\a_2$ and that the relevant cobordisms are symplectizations, \Cref{lemma:ac-connected-IV} and \Cref{lemma:gluing-surjective-energy-ac} allow us to work with (families of) almost-complex structures with vanishing energy. Retracing through the above argument, we find that the desired conclusion follows if $r_i \geq r_j$ for $i \geq j$. 
\epf

\prop \label{proposition:commutativity}
	Let $(\hat{X}, \hat{\l}, H)$ be a relative symplectic cobordism from $(Y^{+}, \xi^{+}, V^+)$ to $(Y^-, \xi^-, V^-)$.  Let $(\hat{V}^{\pm}, \hat{\l}_{\hat{V}}^{\pm})$ be the Liouville structure induced on $\hat{V}^{\pm}$ from the canonical Liouville structure of the symplectization $(\hat{Y}^{\pm}, \lambda_{Y^{\pm}})$.  For $i\in \{1,2\},$ consider elements $\fk{r}_i^{\pm} \in \fk{R}(Y^{\pm}, \xi^{\pm}, V^{\pm})$ and let $\l_i^{\pm}$ be a contact form on $Y^{\pm}$ which is adapted to $\fk{r}_i^{\pm}$. Suppose finally that we have:
\begin{itemize}
	\item[(1)] $r_i^+ > e^{\mc{E}(H, \hat{\l}|_H)} r_i^-$;
	\item[(2)] $r_1^+ > e^{\mc{E}(\hat{V}^+, \hat{\l}_{\hat{V}}^+)} r_2^+$ and $r_1^- > e^{\mc{E}(\hat{V}^-, \hat{\l}_{\hat{V}}^-)} r_2^-$. 
\end{itemize} 

Then the following diagram commutes: 
	\begin{center} \label{diagram:commutativity}
	\begin{tikzcd} [row sep = large, column sep = huge]
		{CH}_\bullet(Y^+,\xi^+,V^+; \fk{r}_1^+)_{\l_1^+, J^+, \theta^+} \ar{d}{\Phi(\hat{X},\hat{\l},H)} \ar{r}{\Phi(\hat{Y}^+,\hat{V}^+)} & {CH}_\bullet(Y^+,\xi^+,V^+; \fk{r}_2^+)_{\l_2^+, J^+, \theta^+} \ar{d}{\Phi(\hat{X},\hat{\l},H)} \\
		{CH}_\bullet(Y^-,\xi^-,V^-; \fk{r}_1^-)_{\l_1^-, J^-, \theta^-} \ar{r}{\Phi(\hat{Y}^-,\hat{V}^-)} & {CH}_\bullet(Y^-,\xi^-,V^-; \fk{r}_2^-)_{\l_2^-, J^-, \theta^-}
	\end{tikzcd}
	\end{center}

As usual, if $\a^+_1=\a^-_1 = \a^+_2 = \a^-_2$ and $(H, \hat{\l}|_H)$ is a symplectization, then it is enough to assume that $r_i^+ \geq r_i^-, r^+_1 \geq r^+_2$ and $r^-_1 \geq r^-_2$. 
\eprop

\pf
Observe first that the conditions (1-2) along with \Cref{proposition:map-II} ensure that the maps appearing in the commutative diagram are well-defined. Let us now consider the strict exact symplectic cobordisms $(\hat{X}, \hat{\l}, H)^{\l_1^+}_{\l_1^-}$ and $(\hat{Y}^-, \l_2^-, \hat{V}^-)^{\l_1^-}_{\l_2^-}$.  For $t \in{} [0, \infty)$ and $T_0>0$ large enough, we can consider their $(t+T_0)$-gluing $(\hat{X}^t, \hat{\l}^t, H^t)$; cf.\ \Cref{definition:gluing-relative-cobordisms}. According to \Cref{lemma:energy-trivial-component}, we have that (cf.\ \Cref{notation:emphasis-relative})
\eq \mc{E}((H^t, {\hat{\l}^t}|_{H^t})^{\a_1^+}_{\a_2^-}) = \mc{E}((H, \hat{\l}|_H)^{\a_1^+}_{\a_1^-}) + \mc{E}((\hat{V}^-, \hat{\l}^-_{\hat{V}})^{\a_1^-}_{\a_2^-}).\eeq
It then follows from (1-2) that $r_1^+ > e^{\mc{E}((H^t, \hat{\l}^t|_{H^t})^{\a_1^+}_{\a_2^-})} r_2^-$.

We can now appeal to \Cref{proposition:map-IV}, which implies that the composition $\Phi(\hat{Y}, \hat{V}^-) \circ \Phi(\hat{X}, \hat{\l}, H)$ agrees with the map induced by $(\hat{X}^0, \hat{\l}^0, H^0)= (\hat{X}, \hat{\l}, H)^{\l_1^+}_{\l_2^-}$; see \Cref{example:gluing-trivial-end}. The same argument shows that composition along the upper right hand side of the diagram agrees with the map induced by  $(\hat{X}, \hat{\l}, H)^{\l_1^+}_{\l_2^-}$. This proves the claim.  
\epf

\end{setup4star}

We obtain the following corollary by putting together the results of the previous section. 

\cor \label{corollary:change-contact-form}
	Consider a TN contact pair $(Y, \xi, V)$ and fix an element $\fk{r} \in \fk{R}(Y, \xi, V)$. Let $\mc{D}^{\pm}=(Y, \xi, V), \fk{r}, \l^{\pm}, J^{\pm})$ be a pair of data for Setup I* and fix $\theta^{\pm} \in  \T_\I(Y^{\pm},\lambda^{\pm},J^{\pm})$.  

	The map
	\eq \label{equation:iso-change-contact-forms}
	\Phi(\hat{Y},\hat{\l},\hat{V}) : {CH}_\bullet(Y,\xi,V; \fk{r})_{\lambda^+, J^+, \theta^+} \to {CH}_\bullet(Y,\xi,V; \fk{r})_{\lambda^-, J^-, \theta^-}
	\eeq
	defined in \Cref{proposition:map-II} is an isomorphism.
\ecor

\pf 
In light of \Cref{proposition:commutativity} and \Cref{lemma:energy-marked-symplectization}, it's enough to consider the case $\l^+ = \l^- = \l$ and $J^+ = J^- = J$. Let $\theta \in \Theta_\II(\hat{Y},\hat{\l},\hat{J})$ be a lift of $(\theta^+, \theta^-)$ under the forgetful map $\Theta_\II(\hat{Y},\hat{\l},\hat{J}) \to \Theta_\I(Y,\l,J) \tms \Theta_\I(Y,\l,J)$. The proof of \cite[Lem.\ 1.2]{pardon} can be adapted to show that the map
\eq \Phi(\hat{Y},\hat{\l},\hat{V})_{\hat{J},\t} : {CC}_\bullet(Y,\xi, V; \fk{r})_{\lambda, J, \theta^+} \to {CC}_\bullet(Y,\xi, V; \fk{r})_{\lambda, J, \theta^-} \eeq
is an isomorphism of chain complexes: one simply needs to observe that the twisted counts of trivial cylinders coincide with the usual counts.
\epf

We now arrive at the definition of our main invariants.

\defi[Full invariant] \label{definition:invariant-choice-depend} 
Consider a TN contact pair $(Y, \xi, V)$ and choose an element $\fk{r} \in \fk{R}(Y, \xi, V)$.  Let 
\eq \label{equation:main-invariant} {CH}_{\bullet}(Y, \xi, V; \fk{r})\eeq
 be the limit (or equivalently the colimit) of $\{ {CH}_{\bullet}(Y, \xi, V; \fk{r})_{\l, J, \t} \}_{\l, J, \t}$ along the maps \eqref{equation:iso-change-contact-forms}. \Cref{proposition:commutativity} and \Cref{corollary:change-contact-form} imply that ${CH}_{\bullet}(Y, \xi, V; \fk{r})$ is canonically isomorphic to ${CH}_{\bullet}(Y, \xi, V; \fk{r})_{\l, J, \t}$ for any admissible choice of $(\l, J, \t)$. 
\edefi

Given $s \in \Q$, define 
\eq CH_{\bullet}^{U=s}(Y, \xi, V; \fk{r}):= CH_{\bullet}(Y, \xi, V; \fk{r}) \otimes_{\Q[U]} \Q,\eeq 
where the map $\Q[U] \to \Q$ sends $U\mapsto s$. There is a natural evaluation morphism of $\Q[U]$-algebras 
\eq \label{equation:evaluation} \op{ev}_{U=s}: CH_{\bullet}(Y, \xi, V; \fk{r})  \to CH_{\bullet}^{U=s}(Y, \xi, V;\fk{r}). \eeq

It follows tautologically from the construction that $CH_{\bullet}^{U=1}(Y, \xi, V; \fk{r})= CH_\bullet(Y, \xi)$. The invariant $CH_{\bullet}(Y, \xi, V; \fk{r})$ therefore admits a $\Q[U]$ algebra morphism to ordinary contact homology (which is viewed as a $\Q[U]$-algebra by letting $U$ act by the identity).

We can also define a ``reduced" variant of the invariants \eqref{equation:main-invariant} which are based on the twisting map $\tilde{\psi}$. These invariants are naturally $\Q$-algebras (as opposed to $\Q[U]$-algebras) and only take into account Reeb orbits in the complement of the codimension $2$ submanifold. 

More precisely, given a datum $((Y, \xi, V), \fk{r}, \l, J)$ for Setup I*, we may proceed as in \Cref{subsection:mainconstruction}(I*) and let 
\eq (\widetilde{CC}_{\bullet}(Y, \xi, V; \fk{r})_{\l}, d_{\widetilde{\psi}, J, \theta}) \eeq 
be the complex generated by the (good) Reeb orbits \emph{not contained in $V \sub Y$}, for some perturbation datum $\theta \in \T_\I(\mc{D})$. By repeating the above arguments with the twisting maps $\widetilde{\psi}_{-}$ in place of the twisting maps $\psi_{-}$, one can establish the obvious analog of \Cref{proposition:commutativity} and \Cref{corollary:change-contact-form}. In particular, given choices of data $(\l^+, J^+, \theta^+), (\l^-, J^-, \theta^-)$ as in \Cref{corollary:change-contact-form}, there is an isomorphism
\eq \label{equation:reduced-isoms} \Phi(\hat{Y}, \hat{\l}, \hat{V}): \widetilde{CH}_{\bullet}(Y, \xi, V; \fk{r})_{\l^+, J^+, \theta^+} \to \widetilde{CH}_{\bullet}(Y, \xi, V; \fk{r})_{\l^+, J^+, \theta^+}. \eeq 

\defi[Reduced invariant] \label{definition:reduced-invariant}
Consider a TN contact pair $(Y, \xi, V)$ and fix an element $\fk{r} \in \fk{R}(Y, \xi, V)$.  Let \eq \widetilde{CH}_{\bullet}(Y, \xi, V; \fk{r}) \eeq be the limit (or equivalently the colimit) of the algebras $\{ \widetilde{CH}_{\bullet}(Y, \xi, V; \fk{r})_{\l, J, \t} \}_{\l, J, \t}$ along the maps \eqref{equation:reduced-isoms}. 
\edefi

For future reference, we record the following corollary of the above discussion.

\cor \label{corollary:cobordism-map}
Let $(Y^{\pm}, \xi^{\pm}, V^{\pm})$ be TN contact pairs and choose elements $\fk{r}^{\pm}= (\a^{\pm}, \tau^{\pm}, r^{\pm}) \in \fk{R}(Y^{\pm}, \xi^{\pm}, V^{\pm})$. Consider an exact relative symplectic cobordism $(\hat{X}, \hat \l, H)$ with positive end $(Y^{+}, \xi^{+}, V^{+})$ and negative end $(Y^{-}, \xi^{-}, V^{-})$, and suppose that $\tau^+, \tau^-$ extend to a global trivialization of the normal bundle of $H$.  If $r^+ \geq e^{\mc{E}((H, \hat{\l}|_H)^{\a^+}_{\a^-})} r^-$, then there is an induced map 
\eq \Phi(\hat{X}, \hat \l, H): CH_{\bullet}(Y^+, \xi^+, V^+; \fk{r}^+) \to CH_{\bullet}(Y^-, \xi^-, V^-; \fk{r}^-).\eeq

Similarly, suppose that $(\hat X, \hat \l^t, H^t)_{t \in [0,1]}$ is a family of exact relative symplectic cobordisms with ends $(V^{\pm}, \xi^{\pm}, V^{\pm})$ and such that $\tau^{\pm}$ extends to a global trivialization of the normal bundle of $H^t$. If $r^+ \geq e^{\mc{E}(H^t, (\hat \l^t)|_{H^t})} r^-$, then 
\eq \Phi(\hat{X}, \hat \l^0, H^0)= \Phi(\hat X, \hat \l^1, H^1).\eeq

The analogous statement holds for the reduced invariants $\widetilde{CH}_{\bullet}(-)$. 
\ecor

\subsection{(Bi)gradings}  \label{subsection:bigradings-obd}

The $\Z/2$-grading by parity on the deformed invariants $CH_{\bu}(-), \widetilde{CH}_\bu(-)$ shall be referred to as the \emph{homological grading}. As in the case of (ordinary) contact homology, the homological grading can be lifted to a $\Z$-grading under certain topological assumptions. We will also to refer to this $\Z$-grading as the homological grading when it exists. 

\defi[see Sec.\ 1.8 in \cite{pardon}] \label{definition:homological-grading}
Let $(Y^{2n-1}, \xi, V)$ be a TN contact pair and choose $\fk{r} \in \fk{R}(Y, \xi, V)$.  Suppose that $H_1(Y; \Z)=0$ and $c_1(\xi)=0$. Then the homological $\Z/2$-grading lifts to a canonical $\Z$-grading defined on generators by 
\eq \label{equation:orbits-grading} |\g| = \op{CZ}^{\tau}(\g)+ n-3, \eeq
where $\tau$ is any trivialization of the contact distribution along $\g$ (this is independent of $\tau$ due to our assumption that $c_1(\xi)=0$). 
\edefi

\rmk
In \Cref{definition:homological-grading}, our assumption that $c_1(\xi)=0$ is equivalent to the statement that the canonical bundle $\L^{n-1}_{\C} \xi$ is trivial. The grading in general depends on a trivialization of the canonical bundle; however, our assumption that $H_1(Y; \Z)=0$ along with the universal coefficients theorem implies that $H^1(Y; \Z)=0$. Hence the canonical bundle admits a unique trivialization.
\ermk

\lem[see (2.50) in \cite{pardon}] \label{lemma:homological-grading-cobordism}
With the notation of \Cref{corollary:cobordism-map}, let us suppose that $H_1(Y^{\pm}; \Z)= 0$ and that $c_1(\xi^{\pm})=c_1(TX)=0$. Then the cobordism maps described in \Cref{corollary:cobordism-map} preserve the homological $\Z$-grading.
\qed 
\elem

Under certain topological assumptions, the reduced invariant $\widetilde{CH}_{\bu}(-;-)$ admits an additional $\Z$-grading which we will refer to as the \emph{linking number grading}. 

\defi \label{definition:bigrading}
Let $(Y, \xi, V)$ be a TN contact pair and choose $\fk{r} \in \fk{R}(Y, \xi, V)$. Suppose that $H_1(Y; \Z)= H_2(Y; \Z)=0$. Then the \emph{linking number grading} $| \cdot |_{\op{link}}$ on $\widetilde{CH}_\bu(Y, \xi, V; \fk{r})$ is given on generators by (see \Cref{definition:linking-number})
\eq \label{equation:linking-orbits} |\g|_{\op{link}} = \op{link}_V(\g). \eeq
The linking number grading of a word of generators is then defined to be the sum of the linking number grading of each letter. One can verify using \Cref{lemma:cobordism-linking} that this grading is well-defined.
		
We let 
\eq \label{equation:bigrading} \widetilde{CH}_{\bu, \bu}(Y, \xi, V;\fk{r}) \eeq  be the (super)-commutative bigraded $\Q$-algebra, where 
\begin{itemize}
\item the first bullet refers to the homological $\Z$-grading (which exists in view of our topological assumption and the universal coefficients theorem, see \Cref{definition:homological-grading});
\item the second bullet refers to the linking number $\Z$-grading. 
\end{itemize}
	
We sometimes drop the second grading in our notation, so the reader should keep in mind that the notation $\widetilde{CH}_{\bu}(-;-)$ always refers to the homological grading. 
\edefi	

We have the following lemma as a consequence of \Cref{lemma:cobordism-linking} and \Cref{lemma:homological-grading-cobordism}. 

\lem \label{lemma:linking-grading-cobordism}
With the notation of \Cref{corollary:cobordism-map}, suppose that $H_1(Y^{\pm}; \Z)=H_2(X, Y^+; \Z)=0$. Then the cobordism maps described in \Cref{corollary:cobordism-map} preserve the linking number $\Z$-grading. In case we also have that $c_1(\xi^{\pm})=c_1(TX)=0$, then the cobordism maps preserve the $(\Z \tms \Z)$-bigrading \eqref{equation:bigrading}. 
\qed
\elem

\subsection{Asymptotic invariants}  \label{subsection:asymptotic-invariants}

Given a TN contact pair $(Y, \xi, V)$ and a trivialization $\tau$ of the normal bundle $N_{Y/V}$, let 
\eq \fk{R}^{\tau}(Y, \xi, V)= \{\fk{r}= (\a, \tau', r') \in \fk{R}(Y, \xi, V) \mid \tau'=\tau \} \sub \fk{R}(Y, \xi, V). \eeq
We equip $\fk{R}^{\tau}(Y, \xi, V)$ with a preorder $\preceq$ defined by setting $(\a^-_V, \tau, r^-) \preceq (\a^+_V, \tau, r^+) $ if $r^+ \geq e^{-\op{min} f} r^-$, where $\a^+_V= e^f \a^-_V$.\footnote{A preorder on a set is a binary relation which is reflexive and transitive. Equivalently, a preordered set is a category with at most one morphism from any object $x$ to any other object $y$.} We let $\preceq^{\op{op}}$ denote the opposite preorder.

We now define a functor $\mc{F}(Y, \xi, V)$ from the preordered set $(\fk{R}^{\tau}(Y, \xi, V), \preceq^{\op{op}})$ to the category of $\Q[U]$-algebras. On objects, the functor takes $\fk{r}$ to ${CH}_{\bullet}(Y, \xi, V; \fk{r})$. It remains to define the functor on morphisms.  

Given elements $\fk{r}^{\pm}=(\a^{\pm}_V, \tau, r^{\pm}) \in \fk{R}^{\tau}(Y, \xi, V)$, let $\l^{\pm}$ be a contact form on $Y$ which is adapted to $\fk{r}^{\pm}$. Consider the symplectization $(\hat{Y}, \hat{\l}, \hat{V})^{\l^+}_{\l^-}$.  If $\fk{r}^- \preceq \fk{r}^+$, then \Cref{lemma:energy-marked-symplectization}, \Cref{proposition:map-II} and \Cref{proposition:commutativity} imply that there is a map 
\eq \label{equation:cobordism-asymptotic-invariants} \Phi(\hat{Y}, \hat{\l}, \hat{V}): CH_{\bullet}(Y, \xi, V; \fk{r}^+) \to CH_{\bullet}(Y, \xi, V; \fk{r}^-). \eeq 
This defines $\mc{F}(Y, \xi, V)$ on morphisms. One can check using \Cref{proposition:commutativity} that $\mc{F}(Y, \xi, V)$ is indeed a functor. 

We can similarly define a functor $\mc{F}_+(Y, \xi, V)$ from $(\fk{R}^{\tau}(Y, \xi, V), \preceq^{\op{op}})$ to the category of $\Q$-algebras using $\widetilde{CH}_\bu(-)$.

\defi[Asymptotic invariants] \label{definition:asymptotic-invariants}
Noting that the category of $\Q[U]$-algebras is complete and co-complete, we denote by 
\eq \underleftarrow{CH}_{\bullet}(Y, \xi, V; \tau) \text{ and } \underrightarrow{CH}_{\bullet}(Y, \xi,V; \tau) \eeq 
the limit (resp.\ the colimit) of the $\Q[U]$-algebras $\{ CH_{\bullet}(Y, \xi, V; \fk{r}) \}$ over the preordered set $(\fk{R}^{\tau}(Y, \xi, V), \preceq^{\op{op}})$. We let $\underleftarrow{\widetilde{CH}}_{\bullet}(Y, \xi, V; \tau)$ and $\underrightarrow{\widetilde{CH}}_{\bullet}(Y, \xi, V; \tau)$ be defined similarly over the category of $\Q$-algebras. 
\edefi

It's easy to check that $(\fk{R}^{\tau}(Y, \xi, V), \preceq^{\op{op}})$ is a filtered preordered set. In particular, (co)limits can be computed by restricting to (co)final subsets. In contrast to the invariants defined in \Cref{subsection:mainconstruction}, the asymptotic invariants are fully functorial under compositions of arbitrary relative symplectic cobordisms which respect normal trivializations. (A verification of this is tedious and essentially consists of repeating the arguments of \Cref{subsection:mainconstruction} -- the key is that the energy conditions can always be satisfied by sending the rotation parameter to zero or infinity.)

\subsection{Mixed morphisms} 
Consider a TN contact pair $(Y, \xi, V)$ and elements $\fk{r}^{\pm}= (\a^{\pm}, \tau^{\pm}, r^{\pm}) \in \fk{R}(Y, \xi, V)$. In this section, we exhibit a $\Q$-algebra map
\eq CH_\bu^{U=0}(Y,\xi, V; \fk{r}^+) \to \widetilde{CH}_\bu(Y, \xi, V; \fk{r}^-) \eeq 
under certain assumptions on $r^+, r^-$. Precomposing with \eqref{equation:evaluation} gives a $\Q$-algebra map
\eq CH_\bu(Y, \xi, V; \fk{r}^+) \to \widetilde{CH}_\bu(Y, \xi, V; \fk{r}^-). \eeq

Let us begin by considering a datum $\mc{D}=(\mc{D}^+, \mc{D}^-, \hat X, H, \hat \l, \hat J)$ for Setup II*, where we let $\mc{D}^\pm = ((Y,\xi, V), \fk{r}^{\pm}, \l^\pm, J^\pm)$.

\defi \label{definition:twisting-mixed}
We define a map $\psi_{\op{mix}}: \Sch_\II^{\neq \emptyset}(\mc{D}) \to \Q$ by 
\eq  \psi_{\op{mix}}(T) = \begin{cases} 1 & \text{if $ T * \hat{V} = 0$ and $\left|\gamma_e\right| \cap V^- = \emptyset$ for every $e \in E(T)$}; \\ 0 & \text{otherwise.} \end{cases}
\eeq
\edefi

\prop \label{proposition:twist-full-to-red}
The map $\psi_{\op{mix}}(-)$ is a twisting map provided that the following properties hold:
\begin{itemize}
\item[(i)] $r^+ \geq 2 e^{\mc{E}(\hat{J})} r^-$, 
\item[(ii)] $r^+ > 2 / R^{\op{min}}_{\a}$, where $R^{\op{min}}_{\a}$ denotes the smallest action of all Reeb orbits of $\a$. 
\end{itemize}
\eprop

We need the following lemma for proving \Cref{proposition:twist-full-to-red}. 

\lem \label{lemma:pos-full-to-red}
Suppose that $\b \in \pi_2(\hat{Y}, \g^+ \sqcup (\cup_{i=1}^n \g^-_i))$ is represented by a $\hat{J}$-holomorphic curve $u: \dot{\S} \to \hat{X}$ contained in $\hat{V}$. Suppose also that $r^+, r^-$ satisfy the assumptions (i) and (ii) in \Cref{definition:twisting-mixed}. Then $\beta*\hat{V} \geq 2-p_u$, where $p_u$ is the total number of punctures (positive and negative) of $u$ contained in $V^{\pm}$. 
\elem

\pf 
By \Cref{lemma:stokes-II}, we have that $r^+ P^+ - r^- \sum_{i=1}^n P^-_i \geq r^+ P^+ - r^- P^+ e^{\mc{E}(\hat{J})} \geq P^+(r^+ - r^- e^{\mc{E}(\hat{J})}) \geq R^{\op{min}}_{\a}  (( r^+/2 - r^- e^{\mc{E}(\hat{J})})+ r^+/2) \geq R^{\op{min}}_{\a} r^+/2 \geq 1$. The lemma now follows from \Cref{proposition:positivity-cobordism}. 
\epf

\defi
Given a tree $T \in \Sch^{\neq \emptyset}_\II$, we say that a vertex $v \in V(T)$ is \emph{mean} if all adjacent edges are contained in $V^{\pm}$. Otherwise, we say that $v \in V(T)$ is \emph{nice}. We denote by $V_b(T)$ (resp.\ $V_n(T)$) the set of mean (resp. nice) vertices of $T$.
\edefi

\pf[Proof of \Cref{proposition:twist-full-to-red}]
Choose a tree $T' \in \Sch^{\neq \emptyset}_\II$.  Let $T' \to T$ be a morphism. It follows from \Cref{proposition:invariance-gluing} that $T'* \hat{V} = T* \hat{V}$. If $T'$ has no edges contained in $V^-$, then neither does $T'$ and we see that ${\psi}_{\op{mix}}(T')= \psi_{\op{mix}}(T)$. 

Let us now suppose that $T'$ has an edge contained in $V^-$. Note that $T', T$ have the same exterior edges. If one of these edges is contained in $V^{-}$, then ${\psi}_{\op{mix}}(T')= \psi_{\op{mix}}(T)=0$. Let us therefore assume that the exterior edges of $T', T$ are not contained in $V^{-}$. 
	
We are left with the case where $T'$ has at least one interior edge contained in $V^{-}$. If $T'$ had no mean vertices, then it would follow from \Cref{proposition:gluing-additivity} that there are no interior edges contained in $V^{\pm}$, which is a contradiction. It follows that $T'$ has at least one mean vertex. Let $E^{\op{int}}_b(T') \sub E^{\op{int}}(T')$ be the set of interior edges which occur as an outgoing edge of some mean vertex.  According to \Cref{proposition:gluing-additivity}, we have $T' * \hat{V} = \sum_{v \in V_n(T')} \beta_v * \hat{V} + \sum_{v \in V_m(T')} \beta_v * \hat{V} + | E^{\op{int}} (T') - E^{\op{int}}_b(T')  | + | E^{\op{int}}_b(T') | \geq \sum_{v \in V_m(T')} \beta_v * \hat{V} + | E^{\op{int}}_b(T') |  = \sum_{v \in V_m(T')} (\beta_v* \hat{V} + p^-_v)$, where $p^-_v$ denotes the number of outgoing edges of $v$.  (Here, we have used the fact that the outgoing edges of a mean vertex are all interior edges, which follows from our assumption that the exterior edges of $T', T$ are not contained in $V^{-}$.) It now follows from \Cref{lemma:pos-full-to-red} and the fact that $T'$ has at least one mean vertex that $\sum_{v \in V_m(T')} (\beta_v* \hat{V} + p_v) \geq \sum_{v \in V_m(T')} (2 -p_v + p^-_v) \geq \sum_{v \in V_m(T')} 1 \geq 1$. Hence ${\psi}_{\op{mix}}(T')= \psi_{\op{mix}}(T)=0$.  This completes the proof that $\psi_{\op{mix}}(T')= \psi_{\op{mix}}(T)$. 

If $\{T_i\}_i$ is a concatenation, then the argument is the same as in the proof of \Cref{proposition:basic-additivityII} since every edge in $\#_iT _i$ appears in at least one of the $T_i$. 
\epf

\prop \label{proposition:mixed-map}
Consider a TN contact pair $(Y, \xi, V)$ and elements $\fk{r}^{\pm}= (\a^{\pm}, \tau^{\pm}, r^{\pm}) \in \fk{R}(Y, \xi, V)$. If $r' > 2 e^{\mc{E}(\hat{V}, \hat \l |_{\hat V} )} r^-$ and $r^+ > 2 / R^{\op{min}}_{\a}$, then there is a map of $\Q$-algebras 
\eq \label{equation:mixed} CH_{\bullet}^{U=0}(Y, \xi, V; \fk{r}^+) \to \widetilde{CH}_{\bullet}(Y, \xi, V; \fk{r}^-).\eeq 
\eprop

\pf
The argument is essentially the same as the proof of \Cref{proposition:map-II}. Choose data of Type I* $\mc{D}^\pm = ((Y,\xi, V), \fk{r}^{\pm}, \l^\pm, J^\pm)$. Now consider the symplectization $(\hat Y, \hat \l, \hat V)$. \Cref{lemma:ac-connected-II} furnishes an almost-complex structure $\hat{J}$ on $\hat{Y}$ which is $d\hat{\l}$-compatible and agrees with $\hat{J}^{\pm}$ at infinity, and such that $r^+ \geq e^{\mc{E}(\hat{J})} r^-$.  
It now follows as in \Cref{subsection:twistinghomologies}(II) that we have a $\Q$-algebra chain map
\eq \Phi(\hat{Y}, \hat{\l}, \psi_{\op{mix}})_{\hat{J}, \T}:  CC_{\bullet}^{U=0}(Y, \xi, V; \fk{r}')_{J^+, \t^+} \to \widetilde{CC}_{\bullet}(Y, \xi, V; \fk{r})_{J^-, \t^-}, \eeq	
for perturbation data $\T \mapsto (\t^+, \t^-)$. 
\epf

\rmk
The proof of \Cref{proposition:mixed-map} does not show that \eqref{equation:mixed} is independent of auxiliary choices (i.e. $\hat{J}, J^{\pm}, \T, \t^{\pm}$). To show this, one needs to extend the definition of the twisting map $\psi_{\op{mix}}$ to Setups III* and IV*. One can then prove analogs of \Cref{theorem:twisted-counts} and \Cref{corollary:cobordism-map} for $\psi_{\op{mix}}$. All of the ingredients for this are already in place, but we omit the details since \Cref{proposition:mixed-map} is sufficient for our applications. 
\ermk

\cor \label{corollary:non-zero-non-reduced}
Suppose that $\widetilde{CH}_{\bullet}(Y, \xi, V; \fk{r}) \neq 0$ for some $\fk{r}=(\a_V, \tau, r) \in \fk{R}(Y, \xi, V)$. Letting $\fk{r}'= (\a_V', \tau', r')$, we have $CH_{\bullet}(Y, \xi, V; \fk{r}') \neq 0$ if $r'$ is large enough. In particular, $\underleftarrow{CH}_{\bullet}(Y, \xi, V; \tau) \neq 0$. 
\qed
\ecor

\section{Augmentations and linearized invariants}

\subsection{Differential graded algebras}\label{subsection:dgas}
Let $R$ be a commutative ring containing $\Q$. The following two categories occur naturally in contact topology: the category \bf{cdga} of unital $\Z$-graded (super-)commutative dg $R$-algebras; the category \bf{dga} of unital $\Z$-graded associative dg $R$-algebras. When we speak of a \emph{dg algebra}, we mean an object of either one of these categories. 



Let us say that a dg algebra is \emph{action-filtered} if the underlying graded algebra is the free algebra on a free graded module $U$ having the following property: $U$ admits a basis $\{ x_{\a} \mid \a \in \mc{A}\}$ for some well-ordered set $\mc{A}$ such that $dx_{\a}$ is a sum of words in the letters $x_\b$ for $\b < \a$.  \emph{Note that the dg algebras which arise in Symplectic Field Theory are naturally action filtered by Reeb length.}

The categories \bf{cdga} and \bf{dga} each carry a model structure described by Hinich (see \cite{hinich} or \cite[Sec.\ 1.11]{vallette}) with the following properties: (i) the weak equivalences are the quasi-isomorphisms; (ii) all objects are fibrant and the set of cofibrant objects includes all objects which are \emph{action-filtered}.\footnote{Both \cite{hinich} and \cite{loday-vallette} work in the setting of dg algebras over operads. \bf{cdga} and \bf{dga} are respectively the category of dg algebras over the operad uComm and uAss ($u$ stands for \emph{unital}), so they can be treated on equal footing.  A good reference for the material in this section which mostly avoids operadic language (but only treats the case of $\bf{cdga}$) is \cite{lazarev-markl}.} Note that in any model category, there is a notion of two maps being left (resp.\ right) homotopic, defined in terms of cylinder (resp.\ path) objects \cite[Sec.\ 7.3.1]{hirschhorn}. These notions coincide on objects which are fibrant and cofibrant. 

We let $\bf{hcdga}$ and $\bf{hdga}$ be the associated homotopy categories: the objects are those objects of $\bf{cdga}$ (resp.\ $\bf{dga}$) which are fibrant and cofibrant, and the morphisms are the homotopy classes of morphisms in $\bf{cdga}$ (resp.\ $\bf{dga}$). \emph{In particular, $\bf{hcdga}$ and $\bf{hdga}$ contain all action-filtered dg algebras.}

An \emph{augmentation} of a dg $R$-algebra $A$ is a morphism of dg algebras $\e: A \to R$, where $R$ is viewed as a dg algebra concentrated in degree $0$.  A dg algebra equipped with an augmentation is said to be augmented. We let $\bf{cdga}_{/R}$ and $\bf{dga}_{/R}$ be the overcategory of augmented objects, which naturally inherit model structures. We let $\bf{hcdga}_{/R}$ (resp.\ $\bf{hdga}_{/ R}$) be the associated homotopy categories. 

\defi \label{definition:linearization}
Given an augmented dg algebra $\e: (A, d) \to R$, we consider the graded $R$-module $A_{\e}:= \op{ker} \e/ (\op{ker} \e)^2$. The differential $d$ descends to a differential $d_{\e}$ on $A_{\e}$. The resulting differential graded module $(A_{\e}, d_{\e})$ is called the \emph{linearization} of $(A,d)$ at the augmentation $\e$.\footnote{The linearization is sometimes also called the ``indecomposable quotient" in the rational homotopy theory literature. In the contact topology literature, one also often encounters an equivalent construction of the linearization in which one twists the differential by the augmentation; see \cite[Rmk.\ 2.8]{ng-intro}.} 
\edefi
It follows from the definition that linearization defines a functor from $\bf{cdga}_{/R}$ (resp.\ $\bf{dga}_{/R}$) to the category of chain complexes of $R$-modules. It is an important fact that this functor is left Quillen \cite[12.1.7]{loday-vallette}, and therefore induces a functor of homotopy categories. We state this as a corollary:

\cor\label{corollary:linearization-functor}
Linearization defines a functor from $\bf{hcdga}_{/R}$ (resp.\ $\bf{hdga}_{/ R}$) to the homotopy category of chain complexes of $R$-modules. 
\qed
\ecor

In particular, \Cref{corollary:linearization-functor} implies that homotopic augmentations of action-filtered dg algebras induce isomorphic linearizations. Special cases of this statement have already appeared in the Legendrian contact homology literature; see e.g.\ \cite[Sec.\ 5.3.2]{aug-sheaves}). 

\rmk \label{remark:zero-augmentation}
Let $(A,d)$ be a dg algebra, where $A$ is the free $R$-algebra generated by the set $\{x_{\a} \mid \a \in \mc{A}\}$. Let $\e: A \to R$ be the unique $R$-algebra map sending the generators $x_{\a}$ to zero. Then $\e$ is an augmentation if and only if $d x_{\a}$ is contained in the ideal $(x_\b \mid \b \in \mc{A} )$ for all $\a \in \mc{A}$ (or equivalently, iff the differential has no constant term). If $\e$ is an augmentation, it is called the \emph{zero augmentation}. 

Suppose now that $(A, d)$ is the (possibly deformed) contact algebra of some contact manifold, i.e. $(A,d)$ is the commutative $R$-algebra generated by good Reeb orbits (for $R=\Q, \Q[U]$) and the differential is defined as in \Cref{subsection:twistinghomologies}. Suppose that $\e: A \to R$ is the zero augmentation. Then $\op{ker} \e$ is the free $R$-module generated by (good) Reeb orbits and $d_{\e}$ counts curve with one input and one output (i.e. $d_\e$ is defined as in \eqref{equation:twist-differential-I}, where the sum is restricted to curves with $|\G^-|=1$). It follows that the homology of the complex $(A_{\e}, d_{\e})$ can be interpreted as the (possibly deformed) \emph{cylindrical} contact homology. This latter invariant admits a rigorous definition for contact structures under certain assumptions, such as the existence of a contact form with no contractible Reeb orbits; see \cites{bao-honda, hutchings-nelson} in dimension $3$ and \cite[Sec.\ 1.8]{pardon} in general. 
\ermk

\subsection{Cyclic homology} 

\defi[cf.\ \cite{connes}] \label{definition:reduced-cyclic}
Let $S$ be a countable, well-ordered set equipped with a map $| \cdot |: S \to \Z$. Let $A= R \langle S \rangle$ be the free $\Z$-graded $R$-algebra generated by $S$, where the $\Z$-grading is induced by extending $| \cdot |$ multiplicatively. 
	
Let $d: A \to A$ be a differential of degree $-1$. Let $\ov{A} := A/R$, and consider the cyclic permutation map $\tau: \ov{A} \to \ov{A}$ which is defined on monomials by $\tau(\g_1\dots \g_l)= (-1)^{|\g_1|(|\g_2|+\dots+|\g_l |) } \g_2\dots \g_l \g_1$ and extended $R$-linearly. Let $\ov{A}^{\tau}:= \ov{A}/(1-\tau)$ be the $\Z$-graded $R$-module of coinvariants. Observe that $d$ passes to the quotient. We denote the induced differential by $d^{\tau}$.
	
We now define 
\eq \ov{HC}_{\bullet}(A):= H_{\bullet}(\ov{A}^{\tau}, d^{\tau}) \eeq 
and refer to this invariant as the \emph{reduced cyclic homology} of the dg algebra $(A, d)$.
\edefi

\rmk	
\Cref{definition:reduced-cyclic} agrees with other definitions of reduced cyclic homology of dg algebras (such as \cite[Sec.\ 5.3]{loday}) which may be more familiar to the reader, when both are defined. We adopt the present definition for consistency with \cite{bee}. 

In the special case where $A$ is the Chekanov-Eliashberg dg algebra of a Legendrian knot in a contact manifold satisfying the assumptions of \cite[Sec.\ 4.1]{bee}, the algebraic invariants considered in \cite[Sec.\ 4]{bee} can be translated as follows: $L\bb{H}^{\op{cyc}}(A) = \overline{HC}_\bullet(A)$, $L\bb{H}^{\op{Ho}+}(A)=\overline{HH}_\bullet(A)$ and $L\bb{H}^{\op{Ho}}(A)=HH_\bullet(A)$. Here $HH_\bullet(-)$ and $\overline{HH}_\bullet(-)$ denote respectively Hochschild homology and reduced Hochschild homology. 
\ermk

We record the following computation which will be useful to us later on.

\lem \label{lemma:reduced-cyclic-acyclic}
Under the assumptions of \Cref{definition:reduced-cyclic}, if $(A, d)$ is acyclic, then
\eq
	\ov{HC}_k(A) = \begin{cases} R & \text{if }  k\; \text{is odd and positive,}\\
	0 & \text{otherwise.} \end{cases}
\eeq
\elem

\pf
Let us first prove the lemma under the assumption that $S$ is a finite set. Note first that the Hochschild homology of an acyclic finitely-generated dg algebra vanishes identically. Moreover, we have an exact triangle 
\eq \label{equation:reduced-hochschild} R[0] \to HH_\bullet(A) \to \ov{HH}_\bullet(A) \xrightarrow{[-1]}, \eeq which implies that $\ov{HH}_\bullet(A)$ is just a copy of $R$ concentrated in degree $1$. 
	
We now consider the following Gysin-type exact triangle (see \cite[Prop.\ 4.9]{bee}):  
\eq \label{equation:hochschild-cyclic} \ov{HC}_\bullet(A) \xrightarrow{[-2]} \ov{HC}_\bullet(A) \xrightarrow{[+1]} \ov{HH}_\bullet(A) \xrightarrow{[0]}\eeq

The desired result now follows immediately by induction, using \eqref{equation:hochschild-cyclic} and the fact that $\ov{HC}_\bullet(A)$ vanishes in sufficiently large positive and negative degrees due to our finiteness hypotheses. We remark that \eqref{equation:hochschild-cyclic} is constructed from a spectral sequence, whose convergence can only be verified under finiteness assumptions.  

Let us now drop our assumption that $S$ is a finite set.  We instead consider an exhaustion of $S$ by finite subsets $S^{(1)} \sub S^{(2)} \sub \dots \sub S$. Let $(A^{(k)}, d) \sub (A, d)$ be the dg sub-algebra generated by $S^{(k)}$. One can readily verify that 
\eq \varinjlim \ov{HC}_\bullet(A^{(k)}) = \ov{HC}_\bullet( \varinjlim A^{(k)}) = \ov{HC}_\bullet(A). \eeq
	
Observe that $(A^{(k)}, d)$ is acyclic for $k$ large enough and satisfies the assumption of \Cref{definition:reduced-cyclic}. Since we have already proved the lemma under the assumption that $S$ is finite, it is enough to prove that the natural maps $\ov{HC}_\bullet(A^{(k)}) \to \ov{HC}_\bullet(A^{(k+1)})$ are isomorphisms for $k$ large enough. 
	
To this end, note that the exact triangles \eqref{equation:reduced-hochschild} and \eqref{equation:hochschild-cyclic} can be shown to be functorial under morphisms of bounded dg algebras. Since quasi-isomorphisms induce isomorphisms on Hochschild homology, it follows from \eqref{equation:reduced-hochschild} that the natural map $\ov{HH}_\bullet(A^{(k)}) \to \ov{HH}_\bullet(A^{(k+1)})$ is an isomorphism. Since $\ov{HH}_\bullet(A^{(k)})=\ov{HH}_\bullet(A^{(k+1)})$ is concentrated in degree $1$, and since $\ov{HC}_i(A^{(k)})$ and $\ov{HC}_i(A^{(k+1)})$ vanish for $|i|$ sufficiently large, the desired claim can be checked by inductively applying the five-lemma (cf.\ \cite[Sec.\ 2.2.3]{loday}).
\epf

\subsection{Augmentations from relative fillings} \label{subsection:augmentations-fillings}
According to the philosophy of \cite{sft-egh}, contact homology is supposed to be well-defined as a differential graded $\Q$-algebra up to strict isomorphism. However, \cite{pardon} only proves that contact homology is well-defined as a graded $\Q$-algebra, i.e.\ after passing to homology. Similarly, the invariants introduced in \Cref{section:invariant} are merely graded $R$-algebras for $R=\Q, \Q[U]$. For the purpose of linearizing the invariants of \Cref{section:invariant}, we saw in \Cref{subsection:dgas} that the following intermediate assumption (weaker than what is conjectured in \cite{sft-egh} but stronger than what is proved in \cite{pardon}) is sufficient:

\begin{assumption} \label{assumption:dga-homotopy}
The constructions of \Cref{section:invariant} can be lifted to the category \bf{hcdga}. (i.e.\ cobordisms induce a unique morphism of commutative dg algebras up to homotopy, and composition of cobordism is functorial up to homotopy.)
\end{assumption}


The rest of this section depends on the unproved \Cref{assumption:dga-homotopy};  all statements which depend on this assumption are therefore starred, in accordance with the convention stated in the introduction.

\begin{definition-star} \label{definition:contact-algebra}
Fix a TN contact pair $(Y, \xi, V)$ and an element $\fk{r} \in \fk{R}(Y, \xi, V)$. Let 
\eq \mc{A}(Y, \xi, V; \fk{r}) \in \Q[U]-\bf{hcdga} \eeq 
be the limit (or equivalently the colimit) of the dg algebras $\{ (CC_{\bullet}(Y, \xi, V; \fk{r})_{\l}, d_{\psi, J, \theta}) \}_{\l, J, \t} $ under the lifts of the maps \eqref{equation:iso-change-contact-forms} which are furnished by \Cref{assumption:dga-homotopy}. We also analogously define 
\eq \widetilde{\mc{A}}(Y, \xi, V; \fk{r}) \in \Q-\bf{hcdga}. \eeq 
\end{definition-star}

\rmk[Bigradings] \label{remark:bigradings-contact-algebra}
With the notation of \Cref{definition:contact-algebra}, suppose that $H_1(Y; \Z)=H_2(Y;\Z)=0$. Combining \Cref{assumption:dga-homotopy} with the discussion of \Cref{subsection:bigradings-obd}, it then follows that $\widetilde{\mc{A}}(Y, \xi, V; \fk{r})$ is a $(\Z \tms \Z)$-bigraded differential algebra, where the differential has bidegree $(-1,0)$. 
\ermk

\begin{definition-star} \label{definition:bigrading-lift}
Given an augmentation $\e: \mc{A}(Y, \xi, V; \fk{r}) \to \Q[U]$, we let $\mc{A}^{\e}(Y, \xi, V; \fk{r}) $ be the linearized chain complex (in the sense of \Cref{definition:linearization}) with respect to $\e$ and let $CH_\bullet^{\e}(Y, \xi, V; \fk{r})$ be the resulting homology. 
	
We have analogous invariants in the reduced case, which are denoted by $\widetilde{\mc{A}}^{\tilde{\e}}(Y, \xi, V; \fk{r})$ and $\widetilde{CH}_{\bullet}^{\tilde{\e}}(Y, \xi, V; \fk{r})$ for an augmentation $\tilde{\e}: \widetilde{\mc{A}}(Y, \xi, V; \fk{r}) \to \Q]$.
\end{definition-star}

\begin{definition} \label{definition:relative-filling}
Given a contact manifold $(Y, \xi)$ and a codimension 2 contact submanifold $V$, a \emph{relative filling} $(\hat X, \hat \l, H)$ is a relative symplectic cobordism from $(Y, \xi, V)$ to the empty set. 
\end{definition}

Let $(\hat X, \hat \l, H)$ be a relative filling of $(Y, \xi, V)$ and fix $\fk{r} \in \fk{R}(Y, \xi, V)$. Suppose that $\tau$ extends to a normal trivialization of $H$. Then  \Cref{lemma:homological-grading-cobordism} and
\Cref{assumption:dga-homotopy} furnish an augmentation $\e(\hat X, \hat{\l}, H): \mc{A}(Y, \xi, V; \fk{r}) \to \Q[U]$. Similarly, we have an augmentation $\tilde{\e}(\hat X, \hat \l, H): \widetilde{\mc{A}}(Y, \xi, V; \fk{r}) \to \Q$.

If we suppose that $H_1(Y;\Z)=H_2(Y; \Z)= H_2(X, Y; \Z)=0$ and $c_1(TX)=0$, then \Cref{lemma:linking-grading-cobordism} and \Cref{assumption:dga-homotopy} imply that $\tilde{\e}(\hat X, \hat \l, H)$ preserves the $(\Z \tms \Z)$-bigrading defined in \Cref{remark:bigradings-contact-algebra}. It follows that the linearized complex 
\eq \widetilde{\mc{A}}^{\tilde{\e}}(Y, \xi, V; \fk{r}) \eeq
 inherits a $(\Z \tms \Z)$-bigrading with differential of bidegree $(-1,0)$. Hence 
\eq \widetilde{CH}_{\bullet}^{\tilde{\e}}(Y, \xi, V; \fk{r}) \eeq 
is a $(\Z \tms \Z)$-bigraded $\Q$-vector space. 

We end this section by collecting some lemmas which will be useful later. The reader is referred to \Cref{subsection:open-book-decomp} for a review of open book decompositions.

\begin{lemma-star} \label{lemma:indep-relative-filling}
Suppose that $(\hat W, \hat \l, H)$ is a relative filling of $(Y,\xi, V)$. Suppose that $V$ is the binding of an open book decomposition $(Y, V, \pi)$ which supports $\xi$. Fix an element $\fk{r}=(\a_V, \tau, r) \in \fk{R}(Y, \xi, V)$ where $\tau$ is the canonical trivialization induced by the open book. 
	
Suppose that $H$ admits a normal trivialization which restricts to $\tau$. Suppose also that $H_1(Y; \Z)= H_2(Y; \Z)=H_2(W, Y; \Z)=0$ and that $c_1(TW)=0$. Then the augmentation 
\eq \tilde{\e}: \widetilde{\mc{A}}(Y, \xi, V; \fk{r}) \to \Q \eeq 
is the zero augmentation. In particular, it depends only on $(\hat W, \hat \l)$ and not on $H$.
\end{lemma-star}

\pf It is shown in \Cref{proposition:girouxstructure} that there exists a non-degenerate contact form $\a$ for $(Y, \xi)$ which is adapted to $\fk{r}$ and has the property that all Reeb orbits are transverse to the pages of the open book decomposition. Given auxiliary choices of almost-complex structures and perturbation data, the augmentation $\tilde{\e}$ counts (possibly broken) holomorphic planes $u$ which are asymptotic to a Reeb orbit $\g$ disjoint from $H$, and such that $[u] * H=0$. However, our topological assumptions and \Cref{lemma:cobordism-linking} implies that $[u]*H$ is precisely the linking number of $\g$ with the binding $V$, which is strictly positive by assumption (see \Cref{rmk:obd-linking-number}). 
\epf

\begin{lemma-star} \label{lemma:fillings-commute}
Let $(\hat X, \hat \l, H)$ and $(\hat X', \hat \l', H')$ be relative symplectic fillings of $(Y, \xi, V)$ and $(Y', \xi', V')$. Let $f: (Y,\xi ,V) \to (Y', \xi', V')$ be a contactomorphism. Suppose that there exist a symplectomorphism $\phi: (\hat X, \hat \l) \to (\hat X', \hat \l')$ which coincides near infinity with the induced map $\tilde{f}: SY \to SY'$. 

Given any $\fk{r} \in \fk{R}(Y, \xi, V)$, the following diagram commutes:

\eq \label{equation:fillings}
	\begin{tikzcd}
		CH_\bullet(Y, \xi, V; \fk{r}) \ar{rrr}{\Phi(\hat X, \hat \l, H)} \ar{d}{=} &&& \Q[U] \ar{d} \\
		CH_\bullet(Y', \xi', V'; f_*\fk{r}) \ar{rrr}{\Phi(\hat X', \phi_*\hat \l, \phi(H))}  \ar{d}{=}  &&& \Q[U] \ar{d} \\
		CH_\bullet(Y', \xi', V'; f_*\fk{r}) \ar{rrr}{\Phi(\hat X', \hat \l', \phi(H))} &&& \Q[U]
	\end{tikzcd}
\eeq

The analogous statement holds for $\widetilde{CH}_{\bullet}(-)$ (with $\Q$ in place of $\Q[U]$).  In addition, if $H_1(Y; \Z)=H_2(Y; \Z)= 0$ and $c_1(TX)=0$, then all arrows can be assumed to preserve the bigrading in \Cref{definition:bigrading}. 

\end{lemma-star}

\pf
The commutativity of the top square is essentially tautological; more precisely, it follows from the functoriality of the moduli counts in \Cref{theorem:twisted-counts}. The commutativity of the bottom square follows from the observation that $\phi$ preserves the Liouville form outside a compact set. Hence $(\hat X', \phi_* \hat \l)$ and $(\hat X', \hat \l')$ are deformation equivalent. It follows by \Cref{corollary:cobordism-map} that they induce the same morphism on homology.  
The fact that the maps preserve the bigradings on $\widetilde{CH}(-;-)$ (under the above topological assumptions) is a consequence on \Cref{lemma:linking-grading-cobordism}. 
\epf

\begin{corollary-star} \label{corollary:commuting-fillings}
Let $(\hat X, \hat \l, H)$ (resp.\ $(\hat X, \hat \l, H')$) be relative fillings for $(Y, \xi, V)$ (resp.\ $(Y, \xi, V')$). Suppose that $V$ is the binding of an open book decomposition of $(Y, \xi)$ and fix $\fk{r}=(\a, \tau, r) \in \fk{R}(Y, \xi, V)$, where $\tau$ is induced by the open book. Let $f: (Y, \xi, V) \to (Y, \xi, V')$ be a contactomorphism.
	
Suppose that $H_1(Y;\Z)=H_2(Y;\Z)=H_2(X, Y; \Z)=0$ and that $c_1(TX)=0$. Then 
\eq \widetilde{CH}_{\bu,\bu}^{\tilde{\e}}(Y, \xi, V; \fk{r}) = \widetilde{CH}_{\bu,\bu}^{\tilde{\e'}}(Y, \xi, V'; f_*\fk{r}), \eeq 
where both augmentations are induced by the relative fillings and the $(\Z \tms \Z)$-bigrading is defined in \Cref{definition:bigrading}. 
\end{corollary-star}

\pf  
	Indeed, since the lift of a contactomorphism to the symplectization is a Hamiltonian symplectomorphism, it is easy to construct a symplectic automorphism of $(X, d \hat \l)$ satisfying the conditions of \Cref{lemma:fillings-commute} (see e.g.\ \cite[Sec.\ 3.2]{chantraine}). The claim now follows from \Cref{lemma:indep-relative-filling}. 
\epf

\section{Invariants of Legendrian submanifolds} \label{section:legendrian-invariants}

\subsection{Invariants of contact pushoffs} \label{subsection:contact-pushoff}

\defi[see Def.\ 3.1 in \cite{casals-etnyre}] \label{definition:contact-pushoff} Let $(Y, \xi)$ be a contact manifold and let $\Lambda \hookrightarrow Y$ be a Legendrian embedding.  By the Weinstein neighborhood theorem, the map extends to an embedding $\op{Op}(\L) \sub (J^1\L, \xi_{\mathrm{std}}) \to (Y, \xi)$, where $\op{Op}(\L) \sub (J^1\L, \xi_{\mathrm{std}})$ denotes an open neighborhood of the zero section. 

Let $\tau(\L)$ be the induced codimension $2$ contact embedding 
\eq \tau(\L):  \d (D_{\e, g}^* \L)= \d (D_{\e, g}^* \L) \tms 0 \sub T^*\L \tms \R= J^1 \L \hookrightarrow (Y, \xi).\eeq 
Here $D_{\e, g}^* \L$ is the sphere bundle of covectors of length $\e$ with respect to some metric $g$, which is a contact manifold with respect to the restriction of the canonical $1$-form on $T^*\L$. We refer to $\tau(\L)$ as the \emph{contact pushoff} of $\L \hookrightarrow Y$.
\edefi

Standard arguments establish that the contact pushoff is canonical up to isotopy through codimension $2$ contact embeddings. By abuse of notation, we will routinely identify $\tau(\L)$ with its image. Observe that it follows that $CH_\bullet(Y, \xi, \tau(\L); \fk{r})$ and $\widetilde{CH}_\bullet(Y, \xi, \tau(\L); \fk{r}) $ can be viewed as invariants of $\L$. 

\subsection{Deformations of the Chekanov-Eliashberg dg algebra} \label{subsection:deform-ce-dga}

In the spirit of the previous sections, we now consider deformations of the Chekanov-Eliashberg dg algebra of a Legendrian induced by a codimension $2$ contact submanifold.  We begin with some preliminary definitions. 

\defi \label{definition:non-deg-chord}
Let $\L \sub (Y, \xi)$ be a Legendrian submanifold. Given a contact form $\op{ker} \a= \xi$, consider a Reeb chord $c: [0,R] \to Y$. The linearized Reeb flow defines a path of symplectomorphisms $\mc{P}_r: \xi|_{c(0)} \to \xi|_{c(r)}$. We say that the Reeb chord $c$ is non-degenerate if $\mc{P}_R(T_{c(0)}\L) \cap T_{c(R)} \L= \{0\}$. 
\edefi

\defi[cf.\ Sec.\ 2.1 in \cite{bee}] \label{definition:reeb-chord-index}
With the notation of \Cref{definition:non-deg-chord}, let $\bigwedge^{n-1}_{\C}(\xi, d\a)$ be the canonical bundle of $\xi$ and suppose that it admits a trivialization $\s$. Let $\L_1,\dots, \L_k$ be an enumeration of the components of $\L$. Suppose that each $\L_i$ has vanishing Maslov class.
	
Suppose first of all that $k=1$ (i.e. $\L$ is connected). Given a non-degenerate Reeb chord $c$, pick a path $c_-$ in $\L$ connecting $c(R)$ to $c(0)$. Observe that $\bigwedge^{n-1} T_{c_-} \L_i \sub \bigwedge^{n-1}_{\C}(\xi, d\a)$ is a path of Lagrangian subspaces along $c_-$. We call this path $L_{c_-}$. The map $\mc{P}_r$ also defines a path of Lagrangian subspaces $\bigwedge^{n-1} \mc{P}_r(T_{c(0)}\L_i) \sub \bigwedge^{n-1}_{\C}(\xi, d\a)$ along $c$. We call this path $L_c$. 
			
Let $\tilde{c}=c_- * c$ be obtained by concatenating $c_-$ and $c$ (the concatenation is from left to right). Now consider the path of Lagrangian subspaces $L_{\tilde{c}}= L_{c^-} * L_c * \bf{P}^+$, where $\bf{P}^+$ is a positive rotation from $\mc{P}_R(T_{c(0)}\L)$ to $T_{c(R)} \L$ (this is well-defined by our assumption that $c$ is non-degenerate). 
	
The \emph{Conley-Zehnder index for chords} of $c$ with respect to $\s$ is denoted $\CZ^{+, \s}(c)$ and defined by 
\eq \CZ^{+, \s}(c) = \mu^{\s}(L_{\tilde{c}}), \eeq 
where $\mu^{\s}(-)$ is the Maslov index with respect to $\s$ \cite[Thm.\ 2.3.7]{mcduff-sal-intro}. This definition is independent of the choice of $c_-$ due to our assumption that $\L$ has vanishing Maslov class. Note also that the resulting index depends on $\s$, but its parity does not. 
	
In case $k>1$, the definition of the Conley-Zehnder index for chords is more complicated, and depends on additional choices. We refer the reader to \cite[Sec.\ 2.1]{bee} (we warn the reader that there is a typo in the formula stated there: the correct formula for the Conley-Zehnder index for chords should read $\CZ^{+, \s}(c)= |c|-1= (\phi_- - \phi_{\L}(x_1))/ \pi + (n-1)/2$.)
\edefi

\rmk
It may happen that a Reeb orbit can also be viewed as a Reeb chord with same starting and end point. In this case, we have in general that $\op{CZ}^+(c) \neq \op{CZ}(c)$. 
\ermk

Let us now consider a TN contact pair $(Y, \xi, V)$ and a Legendrian submanifold $\L \sub (Y-V, \xi)$. We let $\L_1,\dots,\L_k$ be an enumeration of the connected components of $\L$. As in \Cref{definition:reeb-chord-index}, we assume that the $\L_i$ have vanishing Maslov class.

\begin{definition-assumption-star}[cf.\ Prop. 2.8.2 in \cite{sft-egh}]  \label{definition:legendrian-invariance}

Fix $\fk{r} \in \fk{R}(Y, \xi, V)$. Let us also choose the following additional data:
\begin{itemize}
\item a contact form $\l \in \O^1(Y)$ which is adapted to $\fk{r}$ and has the property that all Reeb orbits and $\L$-Reeb chords are non-degenerate
\item a $d\l$-compatible almost-complex structure $J$ on $\xi$ which preserves $TV$.
\end{itemize}
	
Given a class $\beta \in \pi_2(\hat Y; c^+, \G^-_\L, \G^-)$, we let 
\eq \mc{M}(c^+, \G^-_{\L}, \G^-; \beta)_J\eeq 
be the moduli space of connected $\hat{J}$-holomorphic curves, modulo $\R$-translation representing the class $\beta$. (Here we follow the notation of \Cref{subsection:homotopy-classes-maps}, where $c^+$ is a Reeb chord of $\L \sub (Y, \l)$, $\G^-_\L= \{c^-_1,\dots, c^-_\s\}$ is an ordered collections of (not necessarily distinct) Reeb chords, and $\G^-$ is a collection of Reeb orbits.) Since $V \sub (Y-\L, \l)$ is a strong contact submanifold, a straightforward extension of Siefring's intersection theory defines an intersection number $\hat{V}* \beta \in \Z$. 
	
Let us now consider the semi-simple ring 
\eq \mc{R}= \oplus_{i=1}^k \Q[U],\eeq 
and let $e_1,\dots,e_k$ be the idempotents corresponding to the unit in each summand. 
	
Let $CL_\bullet(Y, \xi, V, \L; \fk{r})_\l$ be the free $\mc{R}$ algebra generated by (good) Reeb orbits of $(Y, \a)$ and Reeb chords of $\L$, subject to the following relations:
\begin{itemize}
\item $\g_1\g_2= (-1)^{|\g_1||\g_2|} \g_2\g_1$ for Reeb orbits $a,b$,
\item If $c_{ij}$ is a Reeb chord from $\L_i$ to $\L_j$, then $e_k c_{ij} e_l= \delta_{jk} c_{ij} \delta_{il}$. 
\end{itemize}

We assume that there exists a suitable virtual perturbation framework compatible with \cite{pardon}, so that we can define a differential $d_J$ (squaring to zero) on generators as follows: 
\begin{itemize}
\item for a Reeb chord $c^+$, we let 
\eq d_J(c^+)=  \sum_{} \frac{1}{|\op{Aut}|} \# \ov{\mc{M}}(c^+, \G^-_{\L}, \G^-; \beta)_J U^{\hat{V} * \beta} c^-_1\dots c^-_{\s} \g_1\dots \g_s, \eeq where the sum is over choices of $\b \in \pi_2(\hat Y; c^+, \G^-_\L, \G^-)$, for all possible choices of $\G^-_{\L}, \G^-$;	
\item for a Reeb orbit $\g$, we let $d_J(\g)$ be the usual deformed contact homology differential, as described in \Cref{subsection:mainconstruction}.
\end{itemize}

We assume that $(CL_\bullet(Y, \xi, V, \L; \fk{r})_{\l}, d_J)$ is independent of $\l, J$ up to canonical isomorphism in $\Q[U]$-\bf{hdga}. We denote the resulting object by 
\eq \mc{L}(Y, \xi, V, \L; \fk{r}) \eeq 
and we let $CH_\bullet(Y, \xi, V, \L; \fk{r})$ be its homology. We assume that $\mc{L}(Y, \xi, V, \L; \fk{r})$ satisfies the limited functoriality described in \Cref{corollary:cobordism-map-legendrian}. 
\end{definition-assumption-star}

\begin{definition-assumption-star}[cf.\ Prop. 2.8.2 in \cite{sft-egh}] \label{definition:legendrian-invariance-reduced}

Carrying over the hypotheses and notation from \Cref{definition:legendrian-invariance}, let us consider the semi-simple ring 
\eq \tilde{\mc{R}}= \oplus_{i=1}^k \Q,\eeq 
where we again let $e_1,\dots,e_k$ be the idempotents corresponding to the unit in each summand. 

We let $\widetilde{CL}_\bullet(Y, \xi, V, \L; \fk{r})_{\l}$ be the free $\tilde{\mc{R}}$ algebra generated by (good) Reeb orbits of $(Y, \a)$ \emph{which are not contained in $V$} and $\L$ Reeb chords, subject to the following relations:
\begin{itemize}
	\item $\g_1\g_2= (-1)^{|\g_1||\g_2|} \g_2\g_1$ for Reeb orbits $a,b$,
	\item If $c_{ij}$ is a Reeb chord from $\L_i$ to $\L_j$, then $e_k c_{ij} e_l= \delta_{jk} c_{ij} \delta_{il}$. 
\end{itemize}
	
This algebra is again $\Z/2$-graded in general, and $\Z$-graded when the canonical bundle is trivialized. 	We assume again that there exists a suitable virtual perturbation framework so that one can define a differential $\tilde{d}_L$ (squaring to zero) on generators as follows:

\begin{itemize}
\item for a Reeb chord $c^+$, we let 
\eq d_J(c^+)=  \sum_{} \frac{1}{|\op{Aut}|} \# \ov{\mc{M}}(c^+, \G^-_{\L}, \G^-; \beta)_J \delta(\hat{V} * \beta) c^-_1\dots c^-_{\s} \g_1\dots \g_s, \eeq 
where $\delta: \R \to \{0, 1\}$ satisfies $\delta(0)=1$ and $\delta(s)=0$ for $s \neq 0$ and the sum is over all possible choices of homotopy classes as in \Cref{definition:legendrian-invariance}.
\item for a Reeb orbit $\g$, we let $\tilde{d}_J(\g)$ be the reduced contact homology differential associated to the twisted moduli counts $\#_{\tilde{\psi}}\mc{M}$, which only counts curves disjoint from $\hat{V}$
\end{itemize}

We assume that $(\widetilde{CL}_\bullet(Y, \xi, V, \L; \fk{r})_{\l}, \tilde{d}_J)$ is independent of $\l, J$ up to canonical isomorphism in $\Q$-\bf{hdga}. We denote the resulting object by 
\eq \widetilde{\mc{L}}(Y, \xi, V, \L; \fk{r}) \eeq 
and we let $CH_\bullet(Y, \xi, V, \L; \fk{r})$ be its homology.  
We also assume that $\widetilde{\mc{L}}(Y, \xi, V, \L; \fk{r})$  satisfies the limited functoriality described in \Cref{corollary:cobordism-map-legendrian}. 
\end{definition-assumption-star}

\begin{proposition-star}[cf.\ \Cref{corollary:cobordism-map}] \label{corollary:cobordism-map-legendrian}
Let $(Y^{\pm}, \xi^{\pm}, V^{\pm})$ be TN contact pairs and choose elements $\fk{r}^{\pm}= (\a^{\pm}, \tau^{\pm}, r^{\pm}) \in \fk{R}(Y^{\pm}, \xi^{\pm}, V^{\pm})$. Consider an exact relative symplectic cobordism $(\hat{X}, \hat \l, H)$ with positive end $(Y^{+}, \xi^{+}, V^{+})$ and negative end $(Y^{-}, \xi^{-}, V^{-})$, and suppose that $\tau^+, \tau^-$ extend to a global trivialization of the normal bundle of $H$. 

Suppose that $L \sub (\hat{X}, \hat \l, H)$ is a cylindrical Lagrangian submanifold which is disjoint from $H$, with ends $\L^{\pm} \sub (Y^{\pm}- V^\pm, \xi^{\pm})$. 

If $r^+ \geq e^{\mc{E}((H, \hat{\l}_H)^{\a^+}_{\a^-})} r^-$, then there is an induced map 
\eq \Phi(\hat{X}, \hat \l, H, L): \mc{L}(Y^+, \xi^+, V^+; \fk{r}^+) \to \mc{L}(Y^-, \xi^-, V^-; \fk{r}^-). \eeq

The analogous statement holds for the reduced invariants $\widetilde{\mc{L}}(-)$. 
\end{proposition-star}

\begin{definition-star} \label{definition:augmentated-cedga}
Let $\e: \mc{A}(Y, \xi, V; \fk{r}) \to \Q[U]$ be an augmentation. Then we let 
\eq \label{equation:tensor-augmented-cedga} \mc{L}^{\e}(Y, \xi, V, \L; \fk{r}):= \mc{L}(Y, \xi, V, \L; \fk{r}) \otimes_{\mc{A}(Y, \xi, V; \fk{r})} \Q[U], \eeq 
with differential $d_L \otimes 1$. The structure maps implicit in forming the tensor product \eqref{equation:tensor-augmented-cedga} are, respectively, furnished by the inclusion $\mc{A}(Y, \xi, V; \fk{r}) \sub \mc{L}(Y, \xi, V, \L; \fk{r})$ and the augmentation $\e: \mc{A}(Y, \xi, V; \fk{r}) \to \Q[U]$. The resulting tensor product is naturally also a differential graded $\Q[U]$ algebra.
	
We similarly define 
\eq \widetilde{\mc{L}}^{\tilde{\e}}(Y, \xi, V, \L; \fk{r}):= \widetilde{\mc{L}}(Y, \xi, V, \L; \fk{r}) \otimes_{\widetilde{\mc{A}}(Y, \xi, V; \fk{r})} \Q,\eeq 
(using the maps $\widetilde{\mc{A}}(Y, \xi, V; \fk{r}) \sub \widetilde{\mc{L}}(Y, \xi, V, \L; \fk{r})$ and $\tilde{\e}: \widetilde{\mc{A}}(Y, \xi, V; \fk{r}) \to \Q$) which is naturally a differential graded $\Q$-algebra.
\end{definition-star}

\rmk
The algebra $\mc{L}^{\e}(Y, \xi, V, \L; \fk{r})$ is the twisted analog of the Legendrian homology dg algebra (or Chekanov-Eliashberg dg algebra) described in \cite[Sec.\ 4.1]{bee}. 
\ermk
	
We now discuss gradings on the above Legendrian invariants. 

\begin{definition-star} \label{definition:legendrian-homological-grading}

Let $(Y, \xi, V)$ be a TN contact pair and choose $\fk{r} \in \fk{R}(Y, \xi, V)$. Let $\L \sub (Y-V, \xi)$ be a Legendrian submanifold. Suppose that $H_1(Y; \Z)=0$ and that $c_1(\xi)=0$. Then the Legendrian homological $\Z/2$-grading of $\mc{L}(Y, \xi, V, \L; \fk{r})$ (resp. $\widetilde{\mc{L}}(Y, \xi, V, \L; \fk{r})$) lifts to a canonical $\Z$-grading given on orbits by \eqref{equation:orbits-grading} and given on chords by
\eq \label{equation:chords-grading} |c| = \op{CZ}^{+, \tau} (c)-1, \eeq 
which is well-defined due to our topological assumptions. 
	
The invariants $\mc{L}^{\e}(Y, \xi, V, \L; \fk{r}), \ov{HC}(\mc{L}^{\e}(Y, \xi, V, \L; \fk{r})), \widetilde{\mc{L}}^{\e}(Y, \xi, V, \L; \fk{r})$ and $\ov{HC}(\widetilde{\mc{L}}^{\e}(Y, \xi, V, \L; \fk{r}))$ inherit a $\Z$-grading which we also refer to as the homological grading. 
\end{definition-star}

\begin{lemma-star} \label{lemma:homological-grading-cobordism-legendrian}
With the notation of \Cref{corollary:cobordism-map-legendrian}, suppose that $H_1(Y^{\pm}; \Z)= 0$ and that $w_2(L)=c_1(\xi^{\pm})=c_1(TX)=0$. Then the cobordism maps described in \Cref{corollary:cobordism-map-legendrian} preserve the Legendrian homological $\Z$-grading.
\qed
\end{lemma-star} 

As in \Cref{subsection:bigradings-obd}, there is also a linking number $\Z$-grading on the reduced Legendrian invariants under certain topological assumptions.

\begin{definition-star} \label{definition:bigradings-legendrian}
Let $(Y, \xi, V)$ be a TN contact pair and choose $\fk{r} \in \fk{R}(Y, \xi, V)$ Let $\L \sub (Y, \xi)$ be a Legendrian submanifold. Suppose that $H_1(Y;\Z)=H_2(Y; \Z)= \pi_0(\L)=\pi_1(\L) = 0$.  Then the \emph{linking number grading} $| \cdot |_{\op{link}}$ on $\widetilde{\mc{L}}(Y, \xi, V, \L; \fk{r})$ is given on Reeb chords by
\eq |c|_{\op{link}} = \op{link}_V(c; \L).\eeq
It is given on Reeb orbits by \eqref{equation:linking-orbits}. The grading is extended to arbitrary words by defining the grading of word to be the sum of the gradings of its letters. One can verify using \Cref{lemma:cobordism-linking-legendrian} that this grading is well-defined.
	
We let 
\eq \label{equation:bigrading-legendrian} \widetilde{\mc{L}}_{\bu,\bu}(Y, \xi, V, \L; \fk{r}) \eeq be the bigraded differential $\Q$-algebra of bidegree $(-1,0)$, where
\begin{itemize}
	\item the first bullet refers to the (Legendrian) homological $\Z$-grading (which is well-defined in view of our topological assumptions and the universal coefficients theorem, see \Cref{definition:legendrian-homological-grading});
	\item the second bullet refers to the (Legendrian) linking number grading. 
\end{itemize}
\end{definition-star}

We also have the following lemma which follows from \Cref{lemma:homological-grading-cobordism-legendrian} and \Cref{lemma:cobordism-linking-legendrian}.

\begin{lemma-star} \label{lemma:linkin-grading-cobordisms-leg}
With the notation of \Cref{corollary:cobordism-map-legendrian}, suppose that $H_1(Y^{\pm}; \Z)=H_2(Y^{\pm}; \Z)=H_2(X, Y^+; \Z)=0$ and that $\pi_0(\L^\pm)=\pi_1(\L^\pm)=0$. Then the cobordism maps described in \Cref{corollary:cobordism-map-legendrian} preserve the linking number $\Z$-grading. In case we also have that $w_2(\L^{\pm})=c_1(\xi^{\pm})=c_1(TX)=0$, then the cobordism maps preserve the $(\Z \tms \Z)$-bigrading \eqref{equation:bigrading-legendrian}. 
\qed
\end{lemma-star}

\begin{corollary-star} \label{corollary:bigradings-legendrian-cyclic}
Consider a TN contact pair $(Y, \xi, V)$ and an element $\fk{r} \in \fk{R}(Y, \xi, V)$. Let $\L \sub (Y, \xi)$ be a Legendrian submanifold. Let $(W, \l, H)$ be a relative filling for $(Y, \xi, V)$ and let $\tilde{\e}: \tilde{\mc{A}}(Y, \xi, V; \fk{r}) \to \Q$ be the induced augmentation. Suppose that $H_1(Y; \Z)=H_2(Y; \Z)= H_2(W,Y; \Z)=0$, that $\pi_0(\L)=\pi_1(\L)=0$ and that $w_2(\L)=c_1(\xi)=c_2(TW)=0$. 
	
Then 
\eq \widetilde{\mc{L}}_{\bu,\bu}^{\tilde{\e}}(Y, \xi, V; \fk{r}) \eeq
inherits the structure of a $(\Z \tms \Z)$-bigraded $\Q$-algebra with differential of bidegree $(-1,0)$.  Moreover,  
\eq \ov{HC}_{\bu,\bu}(\widetilde{\mc{L}}^{\tilde{\e}}(Y, \xi, V; \fk{r})) \eeq 
inherits the structures of a $(\Z \tms \Z)$-bigraded $\Q$-vector space.
\end{corollary-star}

\pf
According to \Cref{lemma:cobordism-linking} and our topological hypotheses, the augmentation $\tilde{\e}$ preserves the linking number. The first claim follows. For the second claim, note that both the homological grading and linking number grading are preserved by the cyclic permutation operator $\tau$, and hence pass to reduced cyclic homology (see \Cref{definition:reduced-cyclic}). 
\epf

\subsection{The effect of Legendrian surgery}

The familiar procedure of attaching a handle in differential topology can be performed in the symplectic category. There are various essentially equivalent approaches to doing this in the literature. For concreteness, we exclusively follow in this paper the construction described in \cite[Sec.\ 3.1]{van-koert} which we now summarize. 

\begin{construction}[Attaching a handle] \label{construction:handle-attaching}
Let $(X_0^{2n}, \l_0)$ be a Liouville cobordism with positive boundary $(Y_0^{2n-1}, \xi_0= \ker(\l_0|_{Y_0}))$. Let $\Lambda \sub (Y_0-V, \xi_0)$ be an isotropic sphere with trivialized conformal symplectic normal bundle (the latter condition is vacuous if $\L$ is a Legendrian). Choose an arbitrary open neighborhood $\mc{U}$ of $\L$ which we refer to as the \emph{attaching region}.

We may now glue a model handle $H$ along $Y_0$ inside $\mc{U}$, following the detailed construction given in \cite[Sec.\ 3.1]{van-koert}. The gluing is carried out by identifying the Liouville flow near $\L$ with the flow on $H$. We note that this gluing procedure involves some auxiliary choices which we do not state here. 

The outcome of the procedure (for any of the above auxiliary choices) is a Liouville cobordism $(X, \l)$ with positive boundary $(Y, \xi= \ker(\l|_Y))$. We say that this domain is obtained from $(X_0, \l_0)$ by \emph{attaching a handle along $\L$}, or \emph{Legendrian surgery on $\L$}.  As it well-known from differential topology, $Y$ differs from $Y_0$ by surgery along $\L$. 
\end{construction}

In \cite{bee}, Bourgeois, Ekholm and Eliashberg study the effect of handle attachment on various flavors of symplectic and contact homology. In particular, they describe conjectural exact sequences which govern the change in these invariants and describe the moduli spaces of holomorphic curves which should underly the existence of these exact sequences. In Theorem* \ref{theorem-star:reduced-attaching-triangles} below (see also \Cref{remark:other-extend-bee}), we state an analog of the surgery exact sequence for linearized contact homology in \cite[Thm.\ 5.1]{bee}. The proofs sketched in \cite[Sec.\ 6]{bee} also apply \emph{mutatis mutandis} in the present setting, so will not be repeated. As discussed in \Cref{subsection:intro-surgery}, we expect that if \cite[Thm.\ 5.1]{bee} can be made rigorous in the setting of Pardon's VFC package, it should pose no substantial additional difficulties to also establish Theorem* \ref{theorem-star:reduced-attaching-triangles}. 

To set the stage for Theorem* \ref{theorem-star:reduced-attaching-triangles}, let $(Y_0^{2n-1}, \xi_0)$ be a contact manifold and let $V \sub (Y_0, \xi_0)$ be a codimension 2 contact submanifold with trivial normal bundle. Let $(X_0, \l_0)$ be a Liouville domain with positive boundary $(Y_0, \xi= \op{ker} \l_0)$ and let $H_0 \sub (X, \l_0)$ be a symplectic submanifold which is preserved set-wise by the Liouville flow near $\d X_0=Y_0$ and such that $\d H=V$. 

Let $\Lambda \sub (Y_0-V, \xi_0)$ be an isotropic sphere with trivialized conformal symplectic normal bundle (the latter condition is vacuous if $\L$ is a Legendrian). Let $(X, \l)$ be the Liouville domain obtained by attaching a Weinstein handle along $\L$ according to \Cref{construction:handle-attaching} and let $(Y, \xi= \op{ker} \l)$ be the positive boundary. We may assume that the attaching region is disjoint from $V \sub Y_0$. By abuse of notation, we therefore view $V$ as a codimension $2$ contact submanifold of $(Y_0, \xi_0)$ and $(Y, \xi)$ and view $H_0$ as a submanifold of $X_0$ and $X$. We also identify $\fk{R}(Y_0, \xi_0, V)= \fk{R}(Y, \xi, V)$. 

We let $(\hat{X}_0, \hat \l_0, H)$ be the completion of $(X_0, \l_0, H_0)$ and let $(\hat{X}, \hat{\l}, H)$ be the completion of $(X, \l, H_0)$ There are a natural (strict) markings $ e_0: \R \tms Y_0 \to \hat{X}_0$ taking $(t, y_0) \mapsto \psi^0_t(y_0)$ and $e: \R \tms Y \to \hat{X}$ taking $(t, y) \mapsto \psi_t(y_0)$, where $\psi^0$ (resp.\ $\psi$) is the Liouville flow in $\hat X_0$ (resp.\ in $\hat X$).  

Finally, in order to have well-defined homological $\Z$-gradings, we assume that $H_1(Y_0;\Z)=H_1(Y;\Z)=0$ and that $c_1(TX_0)=c_1(TX)=0$.



\begin{theorem-assumption-star}[cf.\ Thm.\ 5.1 in \cite{bee}]  \label{theorem-star:reduced-attaching-triangles}
With the above setup and $\fk{r} \in \fk{R}(Y, \xi, V)$, consider the augmentations $\tilde{\e}((\hat X_0, \hat \l_0, H), e_0): \tilde{\mc{A}}(Y, \xi, V) \to \Q$ and $\tilde{\e}((\hat X, \hat \l, H), e): \tilde{\mc{A}}(Y, \xi, V) \to \Q$. If $\L$ is a Legendrian sphere, we have the following exact triangle, where the top horizontal arrow is the natural map induced by an exact relative symplectic cobordism. 
\eq
 \begin{tikzcd}
 \widetilde{CH}^{\tilde{\e}}_{\bullet -(n-3)}(Y,\xi, V; \fk{r}) \arrow{rr} &&  \widetilde{CH}^{\tilde{\e}_0}_{\bullet -(n-3)}(Y_0, \xi_0, V; \fk{r}) \arrow{dl}{[-1]} \\
& \ov{HC}_\bullet(\widetilde{\mc{L}}^{\tilde{\e}_0}(Y_0, \xi_0, V, \L; \fk{r})) \arrow{ul} 
\end{tikzcd}
\eeq

If $\L$ is an isotropic sphere of dimension $k \leq n-2$, then we have that	
\eq
H_* (\op{Cone} (\widetilde{CH}^{\tilde{\e}}_{\bullet-(n-3)}(Y,\xi, V; \fk{r}) \to \widetilde{CH}^{\tilde{\e}_0}_{\bullet-(n-3)}(Y_0, \xi_0, V; \fk{r}) )  = 
\begin{cases} \Q  & \text{if } *= n-k + 2 \N \\
0 &\text{otherwise.}
\end{cases}
\eeq
\end{theorem-assumption-star}

\rmk\label{remark:other-extend-bee}
There is a natural analog of Theorem* \ref{theorem-star:reduced-attaching-triangles} involving the invariants $CH^{\e_0}_{\bullet-(n-3)}(Y_0, \xi_0, V; \fk{r})$, $CH^{\e}_{\bullet-(n-3)}(Y,\xi, V; \fk{r}) $ and $\ov{HC}_\bullet(\mc{L}^{\e_0}(Y_0, \xi_0, V, \L; \fk{r}))$ which we also expect to hold. We do not state it here since we do not use it in this paper.
\ermk

\rmk \label{remark:exact-sequence-bigrading}
With the setup of Theorem* \ref{theorem-star:reduced-attaching-triangles}, let us in addition assume that $H_2(Y_0;\Z)=H_2(X_0, Y_0)=H_2(Y;\Z)=H_2(X, Y; \Z)=0$. Then \Cref{lemma:linking-grading-cobordism} and \Cref{corollary:bigradings-legendrian-cyclic} provide an additional linking number $\Z$-grading on the invariants appearing in the surgery exact sequences. 

The resulting $(\Z \tms \Z)$-bigrading is preserved by the maps in the surgery exact sequence.  Indeed, \Cref{lemma:linking-grading-cobordism} ensures the top horizontal map preserves the linking number $\Z$-grading.  The bottom right map counts holomorphic disks with one positive interior puncture, $k$-negative boundary punctures, and with boundary mapping to $S \L$ (the relevant moduli space is described in \cite[Sec.\ 2.6]{bee}). Hence one can readily verify (cf.\ \Cref{lemma:cobordism-linking-legendrian}) that this map also preserves the linking number grading. Finally, the bottom left map is defined algebraically as the connecting map in the long exact sequence. Since the internal differentials of the relevant chain complexes preserve the linking number grading, this connecting map does too. 
\ermk

\section{Some computations}

\subsection{Vanishing results}

Recall that a contact manifold $(Y^{2n-1}, \xi)$ is said to be \emph{overtwisted} if it contains an overtwisted disk; see \cite[Sec.\ 1]{bem}.  In general, if $(Y, \xi)$ is overtwisted and $C \sub Y$ is a closed subset, then $(Y-C, \xi)$ may not be overtwisted. 

\thm \label{theorem:loosevanishing} 
Suppose that $(Y, \xi, V)$ is a TN contact pair such that $(Y-V, \xi)$ is overtwisted. Given any element $\fk{r} \in \fk{R}(Y, \xi, V)$, we have 
\eq CH_{\bullet}(Y, \xi, V; \fk{r})= \widetilde{CH}_\bullet(Y, \xi, V; \fk{r})= 0.\eeq
\ethm

We collect some definitions which will be useful in proving \Cref{theorem:loosevanishing}.  Let $\alm_{\mathrm{U}(n-1)}(S^{2n-1})$ be the set of almost-contact structures on $S^{2n-1}$ (see \Cref{definition:almost-contact-structure}). It follows by the main theorem of \cite{bem} that $\alm_{\mathrm{U}(n-1)}(S^{2n-1})$ is in canonical correspondence with the set of overtwisted contact structures on the sphere, a fact which will be used implicitly in the proof of \Cref{theorem:loosevanishing}. 

A folklore result in contact topology (see e.g.\ \cite[Sec.\ 6]{c-m-p}) states that for any fixed element $\beta \in \alm_{\mathrm{U}(n-1)}(S^{2n-1})$, the operation of connected sum endows $\alm_{\mathrm{U}(n-1)}(S^{2n-1})$ with a group structure with identity element $\beta$. The isomorphism class of the resulting group is moreover independent of $\beta$. Since these facts are not to our knowledge available in the literature, we have provided careful proofs in the appendix. 

For the remainder of this section, we fix $\beta \in \alm_{\mathrm{U}(n-1)}(S^{2n-1})$ to be the almost-contact structure induced by the standard contact structure on the sphere. Given a pair of contact manifolds $(M_1, \a_1), (M_2, \a_2)$, one can also consider their connected sum $(M_1 \#M_2, \a_1 \# \a_2)$, which is obtained by gluing-in a neck along Darboux balls in $M_1, M_2$. This operation is discussed in \Cref{remark:connected-sum-genuine}. As noted there, the two a priori different notions of a connected sum of (almost-)contact manifolds commute with the forgetful map from contact manifolds to almost-contact manifolds. 

\pf[Proof of \Cref{theorem:loosevanishing}] 
It is enough to prove that the invariants vanish for a particular choice of non-degenerate contact form $\tilde{\a}$ on $Y$ which is adapted to $\fk{r}$. To construct such a form, we follow arguments of Bourgeois and Van Koert in \cite[Sec.\ 6.2]{bvk}. 
	
Using \Cref{construction:contact-form}, we define an auxiliary contact form $\a$ in a small neighborhood $\mc{N}$ of $V$ with the property that $V$ is a strong contact submanifold and that $\a$ is adapted to $\fk{r}$. After possibly shrinking $\mc{N}$, we can assume that $(Y-\mc{N}, \xi)$ is overtwisted. We now extend $\a$ arbitrarily to a globally-defined, non-degenerate contact form on $(Y, \xi)$. (Since $\a_V$ is non-degenerate, \Cref{construction:contact-form} produces a non-degenerate contact form on $\mc{N}$, so it extends unproblematically to a global non-degenerate contact form). 

Choose a Darboux ball $B \sub Y$ whose closure is disjoint from $\mc{N}$. Let $B' \sub B$ be a smaller Darboux ball and let $A=B- \ov{B'}$. Let $\beta_0$ denote the almost-contact structure on $B$ obtained by restricting $\xi$. Let $\alm_{\mathrm{U}(n-1)}(B, A; \beta_0)$ be the set of almost-contact structures on $B$ agreeing with $\beta_0$ near $A$. The group $\alm_{\mathrm{U}(n-1)}(S^{2n-1})$ acts on $\alm_{\mathrm{U}(n-1)}(B, A; \beta_0)$ by connect-summing with an almost-contact sphere along a disk whose closure is disjoint from $A$.

We now appeal to work of Bourgeois and van Koert: in \cite[Sec.\ 2.2]{bvk}, they construct a special contact form $\a_L$ on the sphere $S^{2n-1}$ (this form turns out to be overtwisted by \cite{c-m-p}, although we won't need this). They prove \cite[Sec.\ 2--5]{bvk} that $\a_L$ admits a Reeb orbit $\g$ which bounds a single, transversally cut-out $J$-holomorphic plane, for some suitable $J$ on the symplectization.  

We now form the connected sum of $(S^{2n-1}, \a_L)$ with $(Y, \a)$, where we assume that the gluing happens entirely inside of $B'$. Bourgeois and van Koert (following earlier work of Ustilovsky \cite{ustilovsky}) explain how to perform this connected sum so that the orbit $\g$ survives, and still has the property that it bounds a single transversally cut-out plane (basically, one can suitably adjust the neck to ensure that the plane cannot cross the neck; see \cite[Sec.\ 6.2]{bvk}). 

Finally, we further connect sum with another overtwisted contact sphere $(S^{2n-1}, \a'_L)$ so that $(B, \a \# \a_L \# \a'_L|_B)$ is formally contact isotopic to $(B, \a|_B)$, through a contact isotopy fixed near $A$. Note that we may freely assume that the two connected sums happen in disjoint regions of $B'$, so they do not interfere with each other. Unwinding the definitions, this means that there exists a diffeomorphism $\psi: B \to  B \# S^{2n-1} \# S^{2n-1}$ fixed near the boundary and a formal contact isotopy from $\op{ker} \psi^*( \a \# \a_L \# \a_L')|_B$ to $\xi_B= \op{ker}\a|_B$, which is fixed near $A$.

%
If we extend $\psi$ to a diffeomorphism $Y \to Y \#S^{2n-1} \# S^{2n-1}$ by letting it be the identity outside of $B$, we observe that $\op{ker} \psi^*( \a \# \a_L \# \a_L')$ is formally isotopic to $\xi= \op{ker}\a$. Moreover, these contact structures agree on $Y-B \supset \mc{N}$. Since $(Y- \mc{N}, \xi)$ is overtwisted, it follows from the relative h-principle for overtwisted contact structures (see \cite[Thm.\ 1.2]{bem}) that there is a smooth isotopy $\phi_t$ fixed on $\mc{N}$ so that $\tilde{\a}:= \phi_1^* \psi^* (\a \#\a_L \# \a_L') $ is a contact form for $(Y, \xi)$. By construction, $\tilde{\a}= \a$ on $\mc{N}$, so $\tilde{\a}$ is adapted to $\fk{r}$. Finally, it follows from the above discussion that $CH_{\bullet}(Y, \xi, V; \fk{r})$ vanishes when we compute it using the form $\tilde{\a}$ (since $\g$ bounds a rigid plane). An analogous argument shows that $\widetilde{CH}_{\bullet}(Y, \xi, V; \fk{r})$ vanishes as well.  
\epf

We also state a vanishing result for the deformed Chekanov-Eliashberg dg algebra of certain loose Legendrians. To set the notation, let us now assume that $(Y, \xi,V)$ is an arbitrary TN contact pair and fix $\fk{r} \in \fk{R}(Y, \xi, V)$.

\begin{proposition-star} \label{proposition:loose-acyclic}
Suppose that $\L \sub (Y-V, \xi)$ is a loose Legendrian submanifold. Then $\mc{L}(Y, \xi, V, \L; \fk{r})$ and $\widetilde{\mc{L}}(Y, \xi, V, \L; \fk{r})$ are acyclic. Given augmentations $\e: \mc{A}(Y, \xi, V; \fk{r}) \to \Q[U]$ and $\tilde{\e}: \tilde{\mc{A}}(Y, \xi, V; \fk{r}) \to \Q$, the invariants $\mc{L}^{\e}(Y, \xi, V, \L; \fk{r})$ and $\widetilde{\mc{L}}^{\e}(Y, \xi, V, \L; \fk{r})$ are also acyclic. 
\end{proposition-star}

\pf
The argument is the same as that which shows that the (undeformed) Chekanov-Eliashberg dg algebra of a loose Legendrian is acyclic (see e.g.\ \cite[Sec.\ 5]{murphy}): up to Legendrian isotopy in $Y-V$, we can find a chord $c$ of arbitrarily small action which bounds a single half-disk. This disk can be assumed to stay in a small ball disjoint from $V$ for action reasons. Hence we have $d(c)=1$.  
\epf

\subsection{Nonvanishing results: bindings of open books} \label{subsection:nonvanishing-results}

The following theorem is the main result of this section.

\thm \label{theorem:openbook} 
Consider a TN contact pair $(Y, \xi, V)$. Suppose that $Y$ admits an open book decomposition $(Y, B, \pi)$ which supports the contact structure $\xi$ and realizes $V=B$ as its binding. Viewing $(B, \tau)$ as a framed contact submanifold, where $\tau$ denote the trivialization of $B \sub Y$ induced by the open book decomposition, we have 
\eq \widetilde{CH}_{\bullet}(Y, \xi, B; \fk{r}) \neq 0 \eeq 
for any $\fk{r}=(\a_B, \tau, r) \in \fk{R}(Y, \xi, B)$. 
\ethm 

By combining \Cref{theorem:openbook} with \Cref{corollary:non-zero-non-reduced}, we obtain the following result. 
\cor \label{corollary:openbookvanish}
Under the hypotheses of \Cref{theorem:openbook}, if $r'$ is large enough and we write $\fk{r}'=(\a_B, \tau, r')$, then 
\eq CH_{\bullet}(Y, \xi, B; \fk{r}') \neq 0. \eeq
\ecor

\pf[Proof of \Cref{theorem:openbook}] 
According to \Cref{proposition:girouxstructure}, the open book decomposition $(Y,B, \pi)$ supports a non-degenerate Giroux form $\a$ which is adapted to $\fk{r}$, for any $\fk{r}=(\a_B, \tau, r) \in \fk{R}(Y, \xi, B).$
	
Consider the algebra $\widetilde{CC}_{\bullet}(Y,\xi, B; \fk{r})$ generated by (good) Reeb orbits of $\a$ not contained in $B$. After fixing an almost complex structure $J: \xi \to \xi$ which is compatible with $d \a$ and preserves $\xi|_B$, and a choice of perturbation data $\theta \in \Theta_\I((Y, \xi, B), \a, J)$, we get a differential $d_J= d(\tilde{\psi}_B,J, \theta)$ and the homology of the resulting chain complex is (canonically isomorphic to) $\widetilde{CH}_{\bullet}(Y, \xi, B; \fk{r})$. 

Let us suppose for contradiction that $\widetilde{CH}_{\bullet}(Y, \xi, B; \fk{r})=0$. This means that $1$ is in the image of the differential. By the Leibnitz rule, this implies that there exists some good Reeb orbit $\g : S^1 \to Y$ and a relative homotopy class $\beta \in \pi_2(Y,\g)$ such that the twisted moduli count of planes positively asymptotic to $\g$ in the homotopy class $\beta$ is non-zero. To state this more formally in the language of \Cref{subsection:intersection-buildings}, let $T \in \mc{S}_\I((Y, \xi, B), \a, J)$ be the tree with a single input edge $e$ and a single vertex $v$, where $e$ is decorated with the Reeb orbit $\g$ and $v$ is decorated with the $\beta \in \pi_2(Y,\g)$. Then we have that $\widetilde{\psi}_{ B}(T) \neq 0$. 

In particular, this implies that $\ov{\mc{M}}(T) \neq \emptyset$. Hence there exists $T' \to T$ such that $T'$ is representable by a $J$-holomorphic building. The proof of \Cref{proposition:reduced-elliptic-twisting-mapI} shows that we may assume that $T'$ does not have any edges contained in $B$ (since otherwise we would have $\widetilde{\psi}_{B}(T') = \widetilde{\psi}_{B}(T) =0$). 

It follows by \Cref{proposition:additivity-symplectization} that $T' * \hat{B}= \sum_{v \in V(T')} \beta_v * \hat{B}$, and \Cref{corollary:full-positivity} implies that all the terms on the right-hand side are non-negative. Since $\widetilde{\psi}_{B}(T') = \widetilde{\psi}_{ B}(T) \neq 0$, it follows by definition of the reduced twisting maps that $T' * \hat{V}=0$. Hence $\beta_v * \hat{B}=0$ for all $v \in V(T')$. 

For topological reasons, there exists $\tilde{v} \in V(T')$ such that $\tilde{v}$ has a single incoming edge and no outgoing edges. Hence $\tilde{v}$ is represented by a $J$-holomorphic plane $u$ which is asymptotic to some Reeb orbit $\tilde{\g}$. By positivity of intersection (see \Cref{prop-positivity-intersections}) and the fact that $\beta_{\tilde{v}} * \hat{B}=0$, the image of $u$ is contained in $\R \times (Y \setminus B)$. Thus $\tilde{\g}$ is contractible in $Y \setminus B$, which implies that the composition $\pi \circ \tilde{\g} : S^1 \to Y \setminus B \to S^1$ has degree $0$. This is a contradiction: since $\a$ is a Giroux form, $\pi \circ \tilde{\g}$ must be an immersion (by \Cref{remark:transverse-reeb}) and hence have nonzero degree. 
\epf

We also state a vanishing result for the (reduced) Chekanov-Eliashberg dg algebra introduced in \Cref{subsection:deform-ce-dga}. 

\begin{theorem-star} \label{theorem-star:legendrian-nonvanishing}
Let $(Y, \xi, V)$ be a TN contact pair and let $\L \sub (Y-V, \xi)$ be a Legendrian submanifold. Suppose that $\xi$ supports an open book decomposition $\pi$ with binding $B=V$, such that $\L$ is contained in a single page. Let $\tau$ be the trivialization of $N_{Y/V}$ induced by the open book. Then we have 
\eq \widetilde{\mc{L}}(Y, \xi, V, \L; \fk{r}) \neq 0.\eeq
\end{theorem-star}
\pf
The proof is identical to that of \Cref{theorem:openbook}; namely, one argues that the image of any Reeb orbit or chord under the differential cannot contain a term of degree zero, which immediately implies the claim.
\epf

We note that is was proved by Honda and Huang \cite[Cor.\ 1.3.3]{honda-huang} that any Legendrian $\L$ in a contact manifold $(Y, \xi)$ is contained in the page of some compatible open book decomposition. Hence it follows from \Cref{theorem-star:legendrian-nonvanishing} that every Legendrian is tight in the complement of some codimension $2$ contact submanifold.

\subsection{Explicit computations in open books} \label{subsection:linearized-open-books}

We now perform certain explicit computations in open book decompositions which will be used in applications in the next sections.  We assume throughout this section that $n \geq 4$. This assumption is needed for the purpose of obtaining a $(\Z \tms \Z)$-bigrading on $\widetilde{CH}(-)$ (see \Cref{lemma:first-top-facts} and \Cref{corollary:topological-facts}).

Let us endow $S^{n-1}$ with a Riemannian metric $h$ having the property that all geodesics are non-degenerate. Such metrics, which are typically referred to as ``bumpy" in the literature, are generic in the space of Riemannian metrics (see \cite{abraham} or \cite[3.3.9]{klingenberg}). It can be shown \cite{abraham} that any manifold endowed with a bumpy metric admits a closed geodesic of minimal length. We let $\rho>0$\label{length-shortest-geodesic} be the length of the shortest geodesic of $(S^{n-1}, h)$. 

To set the stage for this section, it will be useful to recall some general facts about coordinate systems. Given a system of local coordinates $(q_1,\dots,q_m)$ on some manifold $M$, the \emph{dual coordinates} $(p_1,\dots,p_m)$ in the fibers of the $T^*M$ are characterized by the property that 
\eq T^*M \ni (\bf{q}, \bf{p})= (q_1,\dots,q_m, p_1,\dots,p_m)= \sum_{i=1}^m p_i dq_i. \eeq 
Unless otherwise indicated, a pair $(\bf{q}, \bf{p})$ refers to a system of local coordinates in the cotangent bundle of a manifold, where $\bf{p}$ is dual to $\bf{q}$.

It will sometimes also be useful to work with Riemannian normal coordinates. Recall that on a Riemannian manifold $(M, g)$, a system of normal coordinates $(x_1,\dots, x_m) $ has the property that for any vector $\bf{a} \in T_xM$, the path $\t \mapsto \bf{a} t$ is a geodesic. If $(q_1,\dots,q_n)$ is a system of Riemannian normal coordinates, then the path $t \mapsto (\g(t), \dot{\g}^{\flat})$ can be written in coordinates $(\bf{q}, \bf{p})$ as 
\eq \label{equation:normal-geodesic} t \mapsto (\bf{a} t, \bf{a}) \in T^*M.\eeq

We now introduce a Liouville manifold which will be studied throughout the remainder of this section. For $a>0$, define
\eq \label{equation:a-filling} (\hat{W}_0, \hat{\l}^a)= (D^2 \tms T^*S^{n-1}, \hat{\l}^a:= \frac{1}{a} s^2 d\t + \l_{\op{std}}), \eeq where we have chosen local coordinates $(s, \t, \bf{q},\bf{p})$. We emphasize that the Liouville structure depends on the parameter $a>0$.

Let $\phi: \hat{W}_0 \to \R$ be the function 
\eq \phi(s, \t, \bf{q}, \bf{p})= s^2+ \|\bf{p}\|^2. \eeq
We consider the Liouville domain 
\eq \label{equation:liouville-domain-y0} (W_0, \l^a)= ( \{ \phi  \leq 1\}, \l^a:= \hat{\l}^a|_{W_0}), \eeq
and its contact-type boundary 
\eq \label{equation:y0} (Y_0, \xi_0) = (\{\phi=1\}, \xi= \op{ker} \l_0),\eeq
 where $\l_0={(\hat{\l}^a)}|_{Y_0}$ is the induced contact form. Consider also the codimension $2$ contact submanifold 
\eq V= \{ \phi=1, s=0 \} \sub (Y_0, \xi_0) \eeq 
and the Legendrian
\eq \label{equation:Lambda} \L:= \{ \phi=1, \t=\op{constant}, s=1, \|p\|=0\}. \eeq
We define $\a:={(\l_0)}|_V$ and let $\tau$ be the trivialization of $N_{Y_0/V}$ which is unique by \Cref{lemma:first-top-facts}. We set 
\eq\label{equation:r-a-dependent} \fk{r}= (\a, \tau, a) \in \fk{R}(Y_0, \xi_0, V).\eeq 
Finally, we let $H= \{0\} \tms T^*S^{n-1} \sub \hat{W}_0$. 

\emph{Observe that $\fk{r}$ depends on our choice of $a>0$.} More generally, the contact form $\l_0={(\hat{\l}^a)}|_{Y_0}$ on $(Y_0, \xi_0)$ obviously depends on $a>0$. The plan is now to study the Reeb dynamics on $(Y_0, \xi_0)$ with respect to this contact form. By taking $a \gg 0$ large enough, we will be able to obtain a sufficiently good understanding of the Reeb dynamics to compute the invariant $\widetilde{CH}(Y_0, \xi_0, V ; \fk{r})$ in low degrees; see \Cref{proposition:linearized-obd}. 

\lem \label{lemma:first-top-facts}
The manifolds $W_0, Y_0$ have vanishing first and second homology and cohomology with $\Z$ coefficients. In addition, we have $H^1(V;\Z)=0$ and $w_2(\L)=0$. 
\elem

\pf
The first claim is proved in \Cref{corollary:topological-facts}. To compute $H^1(V;\Z)$, note that $V$ is the sphere bundle associated to $T^*S^{n-1}$. Hence, we have a fibration $S^{n-2} \hookrightarrow V \to S^{n-1}$ giving rise to a Gysin sequence
\eq \dots  \to H^k(S^{n-1}; \Z) \to H^k(V;\Z) \to H^{k-(n-2)}(S^{n-1};\Z) \to \dots \eeq
Taking $k=1$ immediately gives the desired result since $n \geq 4$. Finally, note that $\L= S^{n-1}$, which has vanishing homology (with any coefficients) in degrees $1 \leq i \leq n-2$. Hence $w_2(\L)=0$ for $n \geq 4$. 
\epf

Observe that there is a natural marking 
\begin{align}
e_0: \R \tms Y_0 &\to (\hat W_0, \hat \l^a, H)\\
(t, y) &\mapsto \psi_t(y),
\end{align}
where $\psi_{(-)}$ is the Liouville flow associated to $\hat \l^a$. 

This endows $(\hat W_0, \hat \l^a, H)$ with the structure of a (strict) relative exact symplectic cobordism. We thus obtain an augmentation
\eq \tilde{\e}_0: \tilde{\mc{A}}(Y_0, \xi_0, V; \fk{r}) \to \Q. \eeq

It follows from \Cref{lemma:first-top-facts} and the discussion following \Cref{definition:bigrading-lift} that $\tilde{\mc{A}}(Y_0, \xi_0, V; \fk{r})$ and $\tilde{\mc{A}}^{\tilde{\e}_0}(Y_0, \xi_0, V; \fk{r})$ admit a $(\Z \tms \Z)$-bigrading.

We now analyze the structure of $(Y_0,\lambda_0)$ in more detail. First, observe that $(Y_0 - V, \lambda_0|_{Y_0 - V})$ is strictly contactomorphic to
\eq \label{morse-bott-neighborhood}
		(S^1 \times D^*S^{n-1},\alpha_V := \frac{1}{a}(1 - \lVert \bf{p} \rVert^2) d\theta + \lambda_{\mathrm{std}})
\eeq
via the map
\begin{align}
	S^1 \times D^*S^{n-1} &\to Y_0 - V \\
	(\theta, \bf{q}, \bf{p}) &\mapsto (\sqrt{1 - \lVert \bf{p} \rVert^2}, \theta, \bf{q}, \bf{p})
\end{align}
where $D^*S^{n-1} = \{ (\bf{q},\bf{p}) \in T^*S^{n-1} \mid \lVert \bf{p} \rVert < 1 \}$. We let $\mc{N} \subset Y_0$ denote the image of $S^1 \times S^{n-1}$ under this map; equivalently, $\mc{N} = \{ \lVert \bf{p} \rVert = 0 \}$. The complement $(Y_0 - \mc{N}, \lambda_0|_{Y_0 - \mc{N}})$ is strictly contactomorphic to $(B \times U, \alpha_{\mc{N}} := \frac{1}{a}(x\,dy - y\,dx) + \sqrt{1 - x^2 - y^2}\alpha_U)$, where $B \subset \R^2$ denotes the open unit disk and $(U,\alpha_U)$ denotes the unit cotangent bundle of $(S^{n-1},h)$, equipped with the contact form $\alpha_U := \lambda_{\mathrm{std}}$ induced by the canonical Liouville form on $T^*S^{n-1}$. A contactomorphism is given by
\begin{align}
	B \times U &\to Y_0 - \mc{N} \\
	(x,y,\bf{q},\bf{p}) &\mapsto (x,y,\bf{q},\sqrt{1 - x^2 - y^2} \bf{p}).
\end{align}
Observe that the size of the tubular neighborhood $B \tms U$ depends on our choice of $a>0$. 

Our first task is to study the Reeb orbits of $\lambda_0$ which are in the complement of $\mc{N}$. In particular, we wish to show that they are nondegenerate for a generic choice of $a$, and moreover that their Conley-Zehnder indices depend linearly on $a$. This is the content of \Cref{proposition:interior-orbits-cz} and \Cref{corollary:orbits-away-mb} below.

\begin{proposition} \label{proposition:interior-orbits-cz}
	Let $\gamma_U : \R/\Z \to U$ be a Reeb orbit of $\alpha_U$ of period $P_U$. Then
	\begin{enumerate}
		\item the map
			\begin{align}
				\gamma_1 : \R/\Z &\to B \times U \\
				t &\mapsto (0,0,\gamma_U(t))
			\end{align}
			is a Reeb orbit of $\alpha_{\mc{N}}$ of period $P_1 := P_U$;
		\item given any $r_0 \in (0,1)$ and integers $m,n > 0$ such that 
\eq\label{equation:r0-m-n} \frac{a P_U}{4\pi\sqrt{1 - r_0^2}} = \frac{m}{n}, \eeq the map
			\begin{align}
				\gamma_2 : \R/\Z &\to B \times U \\
				t &\mapsto (r_0\cos(2\pi m t),r_0\sin(2\pi m t),\gamma_U(nt))
			\end{align}
			is a Reeb orbit of $\alpha_{\mc{N}}$ of period \[ P_2 := (2 - r_0^2)\frac{2\pi m}{a} = (2 - r_0^2) \frac{n P_U}{2\sqrt{1 - r_0^2}}. \]
	\end{enumerate}
	Every Reeb orbit of $\alpha_{\mc{N}}$ is of the form (1) or (2) for some choice of $\gamma_U$, $r_0$, $m$, $n$.

	If $\alpha_U$ is nondegenerate and $a$ satisfies
	\begin{equation} \label{eq:generic-contact-form}
		a^{-1} \notin \bigcup_{q \in \Q_{> 0}} \frac{1}{4\pi\sqrt{q}} \mathcal{S}(\alpha_U),
	\end{equation}
	where $\mathcal{S}(\alpha_U) \subset \R$ is the action spectrum of $\alpha_U$, then $\alpha_{\mc{N}}$ is nondegenerate. Moreover, given any trivialization $\tau_0$ of $\gamma_U^*\ker(\alpha_U)$, there exist trivializations $\tau_i$ of $\gamma_i^*\ker(\alpha_{\mc{N}})$ ($i = 1,2$) such that
	\eq \CZ^{\tau_1}(\g_1) = 1 + 2\left\lfloor \frac{P_1a}{4\pi} \right\rfloor + \CZ^{\tau_0}(\gamma_U) \eeq
	and
	\eq \CZ^{\tau_2}(\g_2) = 1 + 2\left\lfloor \frac{P_2a}{\pi(2 - r_0^2)^2} \right\rfloor + \CZ^{\tau_0}(\gamma_U^n). \eeq
	If $\tau_0$ extends to a disk spanning $\gamma_U$, then $\tau_i$ extends to a disk spanning $\gamma_i$.
\end{proposition}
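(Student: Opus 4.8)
\textbf{Proof strategy for \Cref{proposition:interior-orbits-cz}.}

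The plan is to work directly with the explicit contact form $\alpha_{\mc{N}} = \frac{1}{a}(x\,dy - y\,dx) + \sqrt{1 - x^2 - y^2}\,\alpha_U$ on $B \times U$ and compute its Reeb vector field. First I would write down the Reeb vector field $R_{\mc{N}}$ in the coordinates $(x,y,\bf{q},\bf{p})$: since $\alpha_{\mc{N}}$ is a ``twisted product'' of the Liouville form on $B$ and the contact form $\alpha_U$, a direct computation (analogous to \Cref{construction:contact-form}, with $\phi = 1/\sqrt{1-x^2-y^2}$ playing the role of the defining function, but now on the base $U$ rather than a manifold $V$) shows that $R_{\mc{N}}$ is a combination of the rotational vector field $\frac{2\pi m}{a}(\text{rotation in } B)$-type term in the $(x,y)$-plane and a multiple of $R_{\alpha_U}$ lifted to $B \times U$. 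The key point is that on the locus $\{x^2 + y^2 = r_0^2\}$ the $(x,y)$-motion is uniform rotation with some frequency depending on $r_0$, while the $U$-motion is a reparametrized Reeb flow of $\alpha_U$; closing up the orbit forces the resonance condition \eqref{equation:r0-m-n}. Solving the ODE and matching periods gives the two families (1) and (2) and the stated formulas for $P_1, P_2$; the claim that every Reeb orbit has this form follows because the $(x,y)$ and $U$ components of the flow decouple up to reparametrization, so any closed orbit projects to a circle (or point) in $B$ and a closed Reeb orbit in $U$.

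Next I would establish nondegeneracy under \eqref{eq:generic-contact-form}. The linearized return map splits (along any orbit $\gamma_i$) into a ``base'' block coming from the $(x,y)$-rotation and a ``fiber'' block coming from $\gamma_U$; this is because $\ker \alpha_{\mc{N}}$ along $\gamma_i$ splits $d\alpha_{\mc{N}}$-orthogonally into $T B \oplus \ker\alpha_U$ (pulled back), and the Reeb flow respects this splitting. For orbits of type (1) the base block is an elliptic rotation by an angle proportional to $P_1 a$, which has $1$ as an eigenvalue exactly when $P_1 a/(4\pi) \in \Z$, i.e. when $a^{-1} = P_U/(4\pi \cdot \text{integer})$ — this is excluded by \eqref{eq:generic-contact-form} with $q$ an integer. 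For orbits of type (2) the base block is again a rotation whose angle one computes from $P_2$ and $r_0$; degeneracy forces a rational resonance which again lands in the excluded set (here $q$ is a genuine positive rational coming from \eqref{equation:r0-m-n}). The fiber block is nondegenerate precisely because $\alpha_U$ is assumed nondegenerate. Combining the two blocks, nondegeneracy of $\gamma_i$ follows.

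Finally, for the Conley–Zehnder indices I would use the splitting of the linearized flow together with additivity of the Conley–Zehnder index under direct sums of symplectic paths (see \cite[Sec.\ 3.4]{wendlsft} or \cite{guttcz}). The fiber block contributes $\CZ^{\tau_0}(\gamma_U)$ (resp. $\CZ^{\tau_0}(\gamma_U^n)$ for type (2), since there $\gamma_i$ covers $\gamma_U$ $n$ times), once $\tau_i$ is chosen to restrict to $\tau_0$ on the fiber factor and to the obvious trivialization $\{\partial_x, \partial_y\}$ on the base factor. The base block is an elliptic path $\Psi(t) = \exp(t \cdot c \cdot J_0)$ for an appropriate constant $c$ read off from $P_i$, whose Conley–Zehnder index is $1 + 2\lfloor c/(2\pi)\rfloor$ by \cite[Prop.\ 41]{guttcz} (exactly as in the proof of \Cref{proposition:normal-CZ-indices}); substituting the value of $c$ in terms of $P_1 a$ (resp. $P_2 a$ and $r_0$) yields the two displayed formulas. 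The statement about $\tau_0$ extending over a spanning disk forcing $\tau_i$ to extend is immediate, since the base trivialization $\{\partial_x, \partial_y\}$ visibly extends over a disk in $B$, and a disk spanning $\gamma_U$ together with a disk in $B$ gives a disk spanning $\gamma_i$.

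\textbf{Main obstacle.} I expect the principal difficulty to be bookkeeping in the type-(2) computation: carefully identifying the constant $c$ in the base block of the linearized return map (which involves both the rotation number $m$ and the reparametrization of $\gamma_U$ by $n$, mediated by the resonance \eqref{equation:r0-m-n}), and verifying that the resulting degeneracy locus is exactly captured by the rational numbers $q = P_U^2 n^2/(\cdots)$ appearing in \eqref{eq:generic-contact-form}. The conceptual content — the splitting of the flow and additivity of $\CZ$ — is routine; making the constants match the stated formulas is where care is required.
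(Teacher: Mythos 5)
Your overall strategy matches the paper's: compute $R_{\alpha_{\mc{N}}}$ explicitly, observe that the $(x,y)$-motion decouples from the $U$-motion (yielding the two families and the period formulas), split $\ker\alpha_{\mc{N}}$ along each orbit into a $2$-dimensional ``base'' block and the pulled-back $\ker\alpha_U$, and compute the Conley--Zehnder index as a sum of contributions from the two blocks using additivity. This is exactly what the paper does, with the base splitting realized by the explicit frame $e_1 = \partial_x + \tfrac{y}{a\sqrt{1-x^2-y^2}}R_U$, $e_2 = \partial_y - \tfrac{x}{a\sqrt{1-x^2-y^2}}R_U$.

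However, there is a genuine gap in your treatment of the type-(2) orbits. You assert that for both $\gamma_1$ and $\gamma_2$ the base block of the linearized flow is a constant-speed rotation $\exp(t\,c\,J_0)$ and invoke \cite[Prop.\ 41]{guttcz} (the formula for constant generators), pointing to the proof of \Cref{proposition:normal-CZ-indices} as a template. This is correct for $\gamma_1$, where the infinitesimal generator $S_1(t)$ of the base block is literally the constant matrix $\tfrac{aP_U}{2}J_0$. But for $\gamma_2$ the base generator is genuinely time-dependent: the paper computes
$$
S_2^{\mathrm{base}}(t) = \frac{2\pi m}{2 - r_0^2} \begin{pmatrix} -r_0^2 \sin(4\pi m t) & -2 + r_0^2 \cos(4\pi m t) \\ 2 + r_0^2 \cos(4\pi m t) & r_0^2 \sin(4\pi m t) \end{pmatrix},
$$
which does not commute with itself at different times, so the resulting path of symplectic matrices is not of the form $\exp(t\,c\,J_0)$ and Prop.\ 41 does not apply. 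The paper instead writes $\psi_2(t) = \exp(P_2(t))$ with $P_2(t) = \int_0^t S_2^{\mathrm{base}}(s)\,ds$, observes that $-J_0 P_2'(t)$ is positive definite, and then applies the monotone-crossing formula \cite[Prop.\ 52]{guttcz} to count integer crossings of the function $\lambda_2(t) = \frac{1}{2-r_0^2}\sqrt{4m^2t^2 - \tfrac{r_0^4}{8\pi^2}(1-\cos(4\pi mt))}$. That crossing count happens to equal $\lfloor 2m/(2-r_0^2)\rfloor$, which is what your constant-rotation guess predicts, but this agreement has to be established, not assumed; and the intermediate crossing function $\lambda_2$ is visibly not linear in $t$. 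An alternative route, which would bring you closer to your constant-generator picture, is to pass to a uniformly rotating trivialization at rate $2\pi m$ (i.e.\ conjugate by $R(2\pi m t)$), which does render the generator constant --- but then you must add back the contribution $2m$ of the loop of rotations to the index (the ``loop property''), and this compensation is precisely the bookkeeping you flagged as the main obstacle but did not resolve. As stated, your argument for the $\gamma_2$ index formula is not justified.

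Two smaller remarks. First, the citation of Prop.\ 41 should be Prop.\ 52 for the $\gamma_2$ case. Second, your nondegeneracy discussion phrases the excluded set in terms of integer $q$ for type (1) and rational $q$ for type (2); this is the right idea, though you should be explicit that the $\gamma_2$ endpoint $\psi_2(1) = \exp\bigl(\tfrac{4\pi m}{2-r_0^2}J_0\bigr)$ is an honest rotation (all the time-dependent terms integrate to zero over a full period), so the eigenvalue condition reduces to $\tfrac{2m}{2-r_0^2} \notin \Z$, which one must then unwind via \eqref{equation:r0-m-n} into the form \eqref{eq:generic-contact-form}; the paper carries out this algebra explicitly and it is not entirely immediate.
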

\begin{proof}
	The Reeb vector field of $\alpha_{\mc{N}}$ is given by
	\eq
		R_{\alpha_{\mc{N}}} = \frac{1}{2 - x^2 - y^2}\left( a(x \partial_y - y \partial_x) + 2\sqrt{1 - x^2 - y^2} R_U \right)
	\eeq
	where $R_U$ denotes the Reeb vector field of $\alpha_U$ (recall that $a>0$ is a constant fixed above). A simple computation shows that $\g_1$ and $\g_2$ are Reeb orbits with periods as claimed, and that there are no other orbits.

	Note that the contact structure $\xi = \ker(\alpha_{\mc{N}})$ splits as
	\eq
		\xi = \langle e_1, e_2 \rangle \oplus \ker(\alpha_U)
	\eeq
	where $e_1$ and $e_2$ are the vector fields on $B \times U$ defined by
	\begin{align}
		e_1 &= \partial_x + \frac{y}{a\sqrt{1 - x^2 - y^2}} R_U \\
		e_2 &= \partial_y - \frac{x}{a\sqrt{1 - x^2 - y^2}} R_U
	\end{align}
	In particular, given a trivialization $\tau_0$ of $\gamma_U^*\ker(\alpha_U)$, we get trivializations $\tau_1 = \langle \gamma_1^* e_1, \gamma_1^* e_2 \rangle \oplus \tau_0$ and $\tau_2 = \langle \gamma_2^* e_1, \gamma_2^* e_2 \rangle \oplus \tau_0^n$ of $\gamma_1^*\xi$ and $\gamma_2^*\xi$, where $\tau_0^n$ denotes the trivialization of $(\gamma_U^n)^*\ker(\alpha_U)$ induced by $\tau_0$.

	We have
	\begin{align}
		\mathcal{L}_{e_1}R_{\alpha_{\mc{N}}} &= -\partial_x\left( \frac{ay}{2 - x^2 - y^2} \right) e_1 + \partial_x\left( \frac{ax}{2 - x^2 - y^2} \right) e_2 \\
		&= \frac{a}{(2 - x^2 - y^2)^2} \left( -2xy e_1 + (2 + x^2 - y^2) e_2 \right) \\
		\mathcal{L}_{e_2}R_{\alpha_{\mc{N}}} &= \partial_y\left( \frac{ay}{2 - x^2 - y^2} \right) e_1 - \partial_y\left( \frac{ax}{2 - x^2 - y^2} \right) e_2 \\
		&= \frac{a}{(2 - x^2 - y^2)^2} \left( -(2 - x^2 + y^2) e_1 + 2xy e_2 \right)
	\end{align}
	Moreover, for any vector field $X$ on $U$ such that $X \in \ker(\alpha_U)$, we have
	\eq
		\mathcal{L}_{X} R_{\alpha_{\mc{N}}} = \frac{2\sqrt{1 - x^2 - y^2}}{2 - x^2 - y^2} \mathcal{L}_X R_U
	\eeq
	Hence, if $\Psi_i(t) : \xi_{\gamma_i(0)} \to \xi_{\gamma_i(t)}$ denotes the linearized Reeb flow along $\gamma_i$ (viewed as a matrix via the trivialization $\tau_i$), $i = 1,2$, then $\Psi_i'(t) = S_i(t)\Psi_i(t)$ with
	\begin{align}
		S_1(t) &= \frac{aP_U}{2} \begin{pmatrix} 0 & -1 \\ 1 & 0 \end{pmatrix} \oplus P_U S_U(t) \\
		S_2(t) &= \frac{2\pi m}{2 - r_0^2} \begin{pmatrix} -r_0^2 \sin(4\pi m t) & -2 + r_0^2 \cos(4\pi m t) \\ 2 + r_0^2 \cos(4\pi m t) & r_0^2 \sin(4\pi m t) \end{pmatrix} \oplus n P_U S_U(nt)
	\end{align}
	where $S_U(t)$ is the matrix such that the linearized Reeb flow $\Psi_U : (\xi_U)_{\gamma_U(0)} \to (\xi_U)_{\gamma_U(t)}$ of $R_U$ along $\gamma_U$ satisfies $\Psi_U'(t) = P_U S_U(t) \Psi_U(t)$.

	It follows that $\CZ^{\tau_1}(\g_1) = \CZ(\psi_1) + \CZ^{\tau_0}(\gamma_U)$ and $\CZ^{\tau_2}(\g_2) = \CZ(\psi_2) + \CZ^{\tau_0}(\gamma_U^n)$, where $\psi_1$ and $\psi_2$ are paths of $2\times 2$ matrices given by $\psi_i(t) = \exp(P_i(t))$ with
	\begin{align}
		P_1(t) &= t\frac{aP_U}{2} \begin{pmatrix} 0 & -1 \\ 1 & 0 \end{pmatrix} \\
		P_2(t) &= \int_0^t \frac{2\pi m}{2 - r_0^2} \begin{pmatrix} -r_0^2 \sin(4\pi m s) & -2 + r_0^2 \cos(4\pi m s) \\ 2 + r_0^2 \cos(4\pi m s) & r_0^2 \sin(4\pi m s) \end{pmatrix} ds \\
		&= \frac{2\pi m}{2 - r_0^2} \begin{pmatrix} \frac{r_0^2}{4\pi m} (\cos(4\pi m t) - 1) & -2t + \frac{r_0^2}{4\pi m} \sin(4\pi m t) \\ 2t + \frac{r_0^2}{4\pi m} \sin(4\pi m t) & -\frac{r_0^2}{4\pi m} (\cos(4\pi m t) - 1) \end{pmatrix}
	\end{align}
	Note that $P_1(t)$ and $P_2(t)$ are diagonalizable with eigenvalues $\pm 2\pi i \lambda_1(t)$ and $\pm 2\pi i \lambda_2(t)$ respectively, where
	\begin{align}
		\lambda_1(t) &= t\frac{a P_U}{4\pi} \\
		\lambda_2(t) &= \frac{1}{2 - r_0^2}\sqrt{4m^2 t^2 - \frac{r_0^4}{8\pi^2} (1 - \cos(4\pi m t))}
	\end{align}
	It follows that $\ker(\psi_i(t) - \mathrm{Id})$ is either $\R^2$ or $0$ depending on whether $\lambda_i(t)$ is an integer or not. Assumption \eqref{eq:generic-contact-form} implies that $\lambda_i(1)$ is not an integer and hence that $\psi_i(1)$ doesn't have $1$ as an eigenvalue, i.e. $\psi_i$ is nondegenerate. (This is clear for $\l_1(1)$; to check this for $\l_2(1)$, note that it follows from \eqref{equation:r0-m-n} that
	\eq \l_2(1)= \frac{2m}{2-r_0^2} =\frac{2m}{1+  \frac{n^2 a^2 P_U^2}{(4\pi)^2 m^2}}  = \frac{2m^3 (4\pi)^2 }{ (4\pi)^2 m^2 + n^2 a^2 P_U^2}.\eeq
If this expression were integral, then the reciprocal would be rational; hence $\frac{ n^2 a^2 P_U^2}{2m^3 (4\pi)^2}$ would be rational, contradicting Assumption \eqref{eq:generic-contact-form}.)
	
	Since $-J_0 P_i'(t)$ is positive-definite for all $t$, it follows from \cite[Prop.\ 52]{guttcz} that
	\eq
		\CZ(\psi_i) = 1 + 2\#\{ t \in (0,1) \mid \lambda_i(t) \in \Z \}.
	\eeq
	Since $\lambda_i$ is strictly increasing with $\lambda_i(0) = 0$ and $\lambda_i(1) \notin \Z$, the right-hand side is equal to $1 + 2\lfloor \lambda_i(1) \rfloor$. Thus
	\begin{align}
		\CZ(\psi_1) &= 1 + 2\left\lfloor \frac{a P_U}{4\pi} \right\rfloor = 1 + 2\left\lfloor \frac{a P_1}{4\pi} \right\rfloor \\
		\CZ(\psi_2) &= 1 + 2\left\lfloor \frac{2m}{2 - r_0^2} \right\rfloor = 1 + 2 \left\lfloor \frac{a P_2}{\pi(2 - r_0^2)^2} \right\rfloor
	\end{align}
	as desired.
\end{proof}

\cor \label{corollary:orbits-away-mb}
Suppose that $\g$ is a closed Reeb orbits of $(Y_0, \xi= \op{ker} \l_0)$ which is contained in the complement of $\mc{N} \sub Y_0$. Then 
\eq \op{CZ}^{\tau}(\g) > \left\lfloor \frac{a \rho}{\pi} \right\rfloor, \eeq 
where $\tau$ is a trivialization which extends to a spanning disk and $\rho > 0$ is as on page~\pageref{length-shortest-geodesic}.
\ecor

\pf
It is well-known that the Reeb orbits on $U$ correspond bijectively to geodesics on $(S^{n-1},h)$; with our normalization, the action of a closed Reeb orbit equals twice the length of the corresponding unit speed geodesic (see e.g.\ \cite[Sec.\ 1.5]{geiges}). Moreover, according to \cite[Prop.\ 1.7.3]{sft-egh}, given a Reeb orbit $\tilde{\g}$ which corresponds to a geodesic $\g$, we have 
\eq \mu_{M}(\g)= \op{CZ}^{\tau}(\tilde{\g}), \eeq 
where $\mu_{M}$ is the Morse index of the geodesic and $\tau$ extends to a spanning disk (see \Cref{remark:trivialization}). Since the Morse index of a geodesic is non-negative by definition, the corollary follows from \Cref{proposition:interior-orbits-cz}. 
\epf

\rmk \label{remark:trivialization} 
The trivialization considered in \cite[Prop.\ 1.7.3]{sft-egh} is in fact constructed as follows. Choose a spanning disk $\tilde{v}: D^2 \to U \sub T^*S^{n-1}$ for $\tilde{\g}$ and let $v:= \pi \circ \tilde{v}$, where $\pi: T^*S^{n-1} \to S^{n-1}$ is the projection. Let $\{\s^1,\dots, \s^{n-1} \}$ be a trivialization of $v^*TS^{n-1}$. For points $\pi: \tilde{x} \mapsto x$, let $Q_{x; \tilde{x}}: \pi^{-1}(x) \to T_{\tilde{x}}(\pi^{-1}(x))$ be the canonical identification. Now define $\tilde{\s}^i_p= Q_{v(p); \tilde{v}(p)}\s_p$ for $p \in D^2$. Then $\{\tilde{\s}^1,\dots,\tilde{\s}^{n-1} \}$ defines a Lagrangian subbundle of the symplectic vector bundle $(\tilde{v}^*(\xi), d\l_0)$. Hence it induces a unique trivialization of $\tilde{v}^*\xi$, which restricts on the boundary to a trivialization of $\tilde{\g}^*\xi$. 
\ermk

We now turn our attention to the Reeb dynamics near $\mc{N}$. Recall from page~\pageref{morse-bott-neighborhood} that $\mc{N}$ is contained in $(Y_0 - V,\lambda_0)$, which is strictly contactomorphic to $(S^1 \times D^*S^{n-1},\alpha_V)$, where $\alpha_V = \frac{1}{a}(1 - \lVert \bf{p} \rVert^2) d\theta + \lambda_{\mathrm{std}}$.

\lem \label{lemma:y0-reeb}
Let $\bf{q}=(q_1,\dots,q_{n-1})$ be Riemannian normal coordinates in some open set $\mc{U} \sub (S^{n-1},h)$ and let $\bf{p}=(p_1,\dots,p_n)$ be the dual coordinates. The Reeb vector field of $\alpha_V$ is given by
\eq
R_{\alpha_V} = \frac{1}{1 + \lVert p \rVert^2} \left( a\partial_\theta + 2\sum_{i,j} h^{ij} p_i \partial_{q_j} - \sum_{i,j,k} p_i p_j \partial_k h^{ij} \partial_{p_k} \right)
\eeq
on $S^1 \times D^*\mc{U}$. (Here we follow the convention of using superscripts $(h^{ij}) = (h_{ij})^{-1}$ to denote the coefficients of the metric induced by $h$ on $T^*S^{n-1}$.) 
\elem

\pf
A direct computation using the formulas
\begin{align}
\alpha_V &= \frac{1}{a}\left(1 - \sum_{i,j} p_i p_j h^{ij} \right) d\theta + \sum_i p_i\,dq_i \\
d\alpha_V &= -\frac{2}{a} \sum_{i,j} h^{ij} p_i\,dp_j \wedge d\theta - \frac{1}{a} \sum_{i,j,k} p_i p_j \partial_k h^{ij} \,dq_k \wedge d\theta + \sum_i dp_i \wedge dq_i
\end{align}
shows that $\alpha_V(R_{\alpha_V}) = 1$ and $d\alpha_V(R_{\alpha_V},-) = 0$.
\epf

\lem \label{lemma:y0-obd}
Consider the map $\pi: Y_0- V \to S^1$ given by $\pi(s, \t, q, p) = \t$. Then the pair $(V, \pi)$ defines an open book decomposition of $Y$. Moreover, $\l_0$ is a Giroux form for the contact structure $\xi_0= \op{ker} \l_0$. 
\elem
\pf
It is clear that $(V, \pi)$ defines an open book decomposition of $Y$. To verify that $\l_0$ is a Giroux form, observe by \Cref{lemma:y0-reeb} that the Reeb vector field is transverse to the pages of $\pi$. The claim then follows by combining \Cref{definition:girouxform} and \Cref{remark:transverse-reeb}. 
\epf
	 
By \Cref{lemma:y0-reeb}, the map $\g_0: \R/\Z \to S^1 \times D^*\mc{U}$ given by the formula $\g_0(t)= (2\pi t, 0, 0)$ defines a simple Reeb orbit in $Y_0$. Let $\g_0^k$ denote its $k$-fold cover. There is an obvious trivialization $\tau_0$ of $\xi|_{\g_0^k}$ given by 
\eq \label{trivialization-tau0}
	\tau_0= \{\d_{p_1},\dots,\d_{p_{n-1}},\d_{q_1},\dots, \d_{q_{n-1}} \}.
\eeq
Let $\tau$ be the trivialization of $\xi|_{\g_0^k}$ defined as follows: 
\eq \label{trivialization-tau}
\tau = \{\sin(2\pi k t) \d_{q_1} + \cos(2\pi k t) \d_{p_1},\d_{p_2}, \dots \d_{p_n}, \cos(2\pi k t) \d_{q_1} - \sin(2\pi k t) \d_{p_1}, \d_{q_2},\dots, \d_{q_n} \}.
\eeq 
Observe that $\tau$ extends to a disk spanning $\g_0$ in $Y_0$.  

\lem \label{lemma:linearized-flow}
With respect to the trivialization $\tau_0$, the linearized Reeb flow along $\g_0^k$ is given by the matrix
\eq \label{equation:matrix-reeb}
	\begin{pmatrix}
	1 & 0 \\
	2t & 1 
	\end{pmatrix},
\eeq
	where each entry of this matrix should be viewed as an $(n-1) \tms (n-1)$ diagonal matrix.
\elem
\pf
	Note that $\tau_0$ can be extended to a trivialization $\tilde{\tau}_0$ of $\ker(\alpha_V)$ over $S^1 \times D^*\mc{U}$, where
	\eq
		\tilde{\tau}_0 = \left\{ \partial_{p_1}, \dots, \partial_{p_{n-1}}, \partial_{q_1} - \frac{ap_1}{1 - \lVert \bf{p} \rVert^2} \partial_\theta, \dots, \partial_{q_{n-1}} - \frac{ap_{n-1}}{1 - \lVert \bf{p} \rVert^2} \partial_\theta \right\}.
	\eeq
	Using the formula for $R_{\alpha_V}$ given in \Cref{lemma:y0-reeb}, one can easily compute
	\begin{align}
		\mc{L}_{\partial_{p_i}} R_{\alpha_V}\big|_{\bf{p}=0,\bf{q}=0} &= 2\partial_{q_i} \\
		\mc{L}_{\partial_{q_i} - \frac{ap_i}{1 - \lVert \bf{p} \rVert^2} \partial_\theta} R_{\alpha_V}\Big|_{\bf{p}=0,\bf{q}=0} &= 0
	\end{align}
	Hence, the matrix $A(t)$ representing the linearized Reeb flow $\xi_{\g_0^k(0)} \to \xi_{\g_0^k(t)}$ with respect to the trivialization $\tau_0$ is given by
	\eq
		A(t) = \exp\left( t\begin{pmatrix} 0 & 0 \\ 2 & 0 \end{pmatrix}\right) = \begin{pmatrix}
			1 & 0 \\
			2t & 1
		\end{pmatrix}
	\eeq
	where each entry should be interpreted as a multiple of the $(n-1)\times(n-1)$ identity matrix.
\epf

\cor
The Robbin-Salamon index satisfies:  
\eq \mu^{\tau_0}_{RS}(\g_0^k)=(n-1)/2.\eeq  
Hence, 
\eq \label{equation:robbin-salamon-k} \mu_{RS}^{\tau}(\g_0^k)= (n-1)/2 + 2k.\eeq
\ecor

\pf
	The first computation follows from \cite[Prop.\ 54]{guttcz} (there is a sign change due to the fact that the matrix we are considering is the transpose of that considered in \cite[Prop.\ 54]{guttcz}, but the proof is entirely analogous). The second computation follows from the fact (see the proof of Lemma 57 in \cite{guttcz}) that the Robbin-Salamon index satisfies the so-called ``loop property", i.e. given a path of symplectic matrices $\phi: [0,1] \to \op{Sp}(2n, \R)$ with $\phi(0)=\phi(1)=\op{id}$, and given a path $\psi:[0,1] \to  \op{Sp}(2n, \R)$, we have 
	\eq \mu_{RS}(\phi \psi)= \mu_{RS}(\psi) + 2 \mu(\phi), \eeq where $\phi$ is the Maslov index of the path.  
\epf
	
By \Cref{lemma:y0-reeb}, $\mc{N} = \{ \| p \| =0 \}$ is preserved by the Reeb flow and is foliated by Reeb orbits in a Morse-Bott family.

Given $\e>0$ which will be fixed later, let $\mc{U}_\e= \{\|p \| < \e \} \cap Y_0$. This is a neighborhood of $\mc{N}$, which we identify with $S^1 \times D_\epsilon^*S^{n-1}$ via the contactomorphism defined on page~\pageref{morse-bott-neighborhood}. Let $f: \mc{U}_\e \to \R$ be the function corresponding to \label{morse-bott-perturbation}
\begin{align}
	S^1 \times D_\epsilon^*S^{n-1} &\to \R \\
	(\theta,\bf{q},\bf{p}) &\mapsto \rho(\lVert \bf{p} \rVert) g(\bf{q})
\end{align}
under this identification, where $g$ is a perfect Morse function on $S^{n-1}$ and $\rho : \R \to [0,1]$ is a smooth bump function with $\rho(x) = 1$ for $x$ near $0$ and $\rho(x) = 0$ for $x > \epsilon/2$ (cf.\ \cite[Sec.\ 2.2]{bourgeois-thesis}).

\lem \label{lemma:big-action} Fix $P>0$. If $\e$ is small enough, all closed Reeb orbits of $(Y_0, \l_0)$ which are contained in $\mc{U}_\e - \mc{N}$ have action at least $P$. 
\qed
\elem

We now consider a perturbed contact form $\l_{\delta}:= (1+\delta f) \l_0$. Since $f$ is compactly-supported in $\mc{U}_\e$, the form $\l_{\delta}$ can be viewed as a contact form both on $\mc{U}_\e$ and on $Y_0$. 

\lem \label{lemma:two-orbits}
Fix $P>0$. If $\e, \delta$ are small enough, then there are exactly two simple Reeb orbits in $\mc{U}_\e$ with action $<P$. We label them $\g_a$ and $\g_b$, and they correspond respectively to the minimum and maximum of $f$. 
\elem

\pf
 Combine \Cref{lemma:big-action} with the argument of \cite[Lem.\ 2.3]{bourgeois-thesis}. 
\epf

\lem \label{lemma:two-orbits-index}
	Let $P>0$ be as in \Cref{lemma:two-orbits}. After possibly further shrinking $\e, \delta$, we may assume that any Reeb orbit of $(Y_0, \l_{\delta})$ contained in $\mc{U}_\e$ and having Conley-Zehnder index (measured with respect to a trivialization which extends to a spanning disk) less than $P/a$ is a multiple of $\g_a$ or $\g_b$. In addition, we have
	
\eq \label{equation:index-gamma-a}
	\CZ^{\tau}(\g_a^k)= \mu_{RS}^{\tau}(\g_0^k)- (n-1)/2 + \op{ind}_a(\delta f) =  2k,
\eeq
and 
\eq \label{equation:index-gamma-b}
	\CZ^{\tau}(\g_b^k)= \mu_{RS}^{\tau}(\g_0^k)- (n-1)/2 + \op{ind}_b(\delta f) = (n-1) + 2k. 
\eeq
\elem

\pf
First of all, observe by \Cref{lemma:y0-reeb} that the boundary of $\mc{U}_\e$ is preserved by the Reeb flow of $\l_{0}$. It follows that the Reeb flow of $\l_0$ has ``bounded return time", in the terminology of \cite[Def.\ 2.5]{bourgeois-thesis}. 

Next, it follows from \eqref{equation:robbin-salamon-k} that the Robbin-Salamon index of any Reeb orbit $\g$ contained in the Morse-Bott submanifold $\mc{N}=\{\|p\|=0\} \sub Y_0$ satisfies 
\eq \mu_{\op{RS}}(\g)= (n-1)/2 + 2\op{wind}(\g) = (n-1)/2 + 2T_{\g} a,\eeq 
where $P_\g$ is the length of $\g$. It follows that these orbits satisfy ``index positivity" (with constant $2/a$), in the terminology of \cite[Def.\ 2.6]{bourgeois-thesis}. 

The first claim now follows from \cite[Lem.\ 2.7]{bourgeois-thesis}. The index computations follow by combining \eqref{equation:robbin-salamon-k} with \cite[Lem.\ 2.4]{bourgeois-thesis}.
\epf

We now put together the above results. For any integer $N>0$, let us define 
\eq \S^1_N= \{k \in \Z \mid 0< k < N, k\, \op{even} \} \eeq and let 
\eq \S^2_N= \{k \in \Z \mid k< N, k= n-1+2j, j \geq 1 \}. \eeq

\begin{proposition-star} \label{proposition:linearized-obd} Given any $N>0$, there exists $A>0$ so that 
\eq CH_{k-(n-3)}^{U=0}(Y_0, \xi_0, V; \fk{r}) = \widetilde{CH}_{k-(n-3)}(Y_0, \xi_0, V; \fk{r})= \begin{cases} \Q \oplus \Q &\text{if }  k \in \S^1_N \cap \S^2_N, \\
\Q &\text{if }  k \in \S^1_N \cup \S^2_N - (\S^1_N \cap \S^2_N),  \\
0 &\text{if } k \notin \S^1_N \cup \S^2_N, k<N
\end{cases}
\eeq
whenever $a>A$. (Recall from \eqref{equation:r-a-dependent} that $\fk{r}$ depends on $a>0$ and hence on $N>0$.) 
\end{proposition-star}

\pf
According to \Cref{corollary:orbits-away-mb}, we may fix $A>0$ large enough so that all Reeb orbits for $(Y_0, \l_0)$ in the complement of $\mc{N} \sub Y_0$ have index at least $N$. We now choose $\e, \delta$ small enough so that the conclusions of \Cref{lemma:two-orbits-index} hold with $P=N$. Since $f_{\delta}$ is compactly-supported in $\mc{U}_\e$, we find that the only Reeb orbits of $(Y_0, \l_{\delta})$ having index less than $N$ are multiples of $\g_a, \g_b$. 
	
According to \eqref{equation:index-gamma-a} and \eqref{equation:index-gamma-b}, it is now enough to check that the differential vanishes on the set $$\O_N= \{ \g_a^{k_a}, \g_b^{k_b}  \mid \CZ^{\tau}(\g_a^{k_a})<N, \CZ^{\tau}(\g_b^{k_b})<N \}.$$ To see this, observe that for $\g \in \O_N$ we have 
\eq \label{equation:index-winding}\op{CZ}^{\tau}(\g)= 2 \op{wind}(\g), \; \op{mod} (n-1) \eeq
	
Suppose that there exists a homotopy class $\beta$ of curves of index $1$ with $\hat{V} * \beta=0$. Then the linking number of the positive puncture equals the sum of the linking numbers of the negative punctures. Hence, by \eqref{equation:index-winding}, the index of the positive puncture equals the sum of the indices of the negative punctures, $\op{mod} (n-1)$. Since $\beta$ has index $1$, this means that $1=0, \op{mod} (n-1)$. This is a contradiction since $n>2$. 
\epf

\begin{corollary-star}  \label{corollary:linearized-obd} 
Let $N>0$ be as in \Cref{proposition:linearized-obd}. Then for all integers $k<N$ we have 
\eq \widetilde{CH}^{\tilde{\e}_0}_{k-(n-3)}(Y_0, \xi_0, V; \fk{r}) = \widetilde{CH}_{k-(n-3)}(Y_0, \xi_0, V; \fk{r}), \eeq 
where the right-hand side was computed in \Cref{proposition:linearized-obd}.
\qed
\end{corollary-star}

We now turn out attention to computing certain Legendrian invariants. Let $\L \subset (Y_0, \xi_0)$ be defined as above (see \eqref{equation:Lambda}). Recall that the relative symplectic filling $(\hat{W}_0, \hat{\l}_0, H)$ gives an augmentation $\tilde{\e}_0: \tilde{\mc{A}}(Y_0, \xi_0, V; \fk{r}) \to \Q$.

It follows from \Cref{corollary:bigradings-legendrian-cyclic} and \Cref{lemma:first-top-facts} that $\widetilde{\mc{L}}^{\tilde{\e}_0}(Y_0, \xi_0, V, \L; \fk{r})$  is a $(\Z\tms \Z)$-bigraded algebra with differential of bidegree $(-1,0)$, and that $\ov{HC}_{\bu,\bu}(\widetilde{\mc{L}}^{\tilde{\e}}(Y, \xi, V; \fk{r})) $ is a $(\Z \tms \Z)$-bigraded $\Q$-vector space.

We now have the following computation. 

\begin{proposition-star} \label{proposition:legendrian-dga}
Given a positive integer $N  \gg 1$, let 
\eq \oplus_{j \leq N} \widetilde{\mc{L}}^{\tilde{\e}_0}_{\bu, j}(Y_0, \xi_0, V, \L; \fk{r}) \sub \widetilde{\mc{L}}^{\tilde{\e}_0}(Y_0, \xi_0, V, \L; \fk{r}) \eeq 
be the bigraded sub-module of elements of winding number at most $N$. Then this sub-module can be generated by products of total winding number $ \leq N$ of Reeb chords $\{ a_k\}_{k \in \N_+}$ and $\{b_k\}_{k \in \N_+}$, where $|a_k|= 2k-1$ and $|b_k|=n-2+ 2k$. (Note that we do not say anything about the differential).
\end{proposition-star}

\pf	
Since $(Y_0 - V, \lambda_0)$ is strictly contactomorphic to $(S^1 \times D^*S^{n-1},\alpha_V)$, we have that $(Y_0 - V, \lambda_\delta)$ is strictly contactomorphic to $(S^1 \times D^*S^{n-1},\alpha_\delta := (1 + \delta f)\alpha_V)$.
	
Recall that $f$ depends on a positive real parameter $\e >0$ which can be taken to be arbitrarily small. Moreover, the restriction of $f$ to the Legendrian $\Lambda = \{0\} \times S^{n-1}$ is equal to $g$, a Morse function with exactly two critical points: one minimum $a$ and one maximum $b$.
	
Let $c$ denote either $a$ or $b$. As in \cref{lemma:two-orbits-index}, we let $\g_c$ denote the simple Reeb chord (with is also a Reeb orbit) passing through $c$ and let $\g^k_c$ denote its $k$-fold cover. Observe first of all that all Reeb chords for $\L$ are contained in $\mc{U}_{\e}$ -- this follows from the fact that $f$ is compactly supported in $\mc{U}_{\e}$ (see page~\pageref{morse-bott-perturbation}). Hence, as in \Cref{lemma:big-action}, if we assume that $\e>0$ is small enough, then there exists $P>0$ large enough so that all Reeb chords of action greater than $P$ have winding number greater than $N$.  By a routine adaptation of \Cref{lemma:two-orbits} (or rather the proof of \cite[Lem.\ 2.3]{bourgeois-thesis}), one concludes that the only Reeb chords of winding number less than or equal to $N$ are the $\g_a^k$ and $\g_b^k$.
	
	We can assume without loss of generality that there are normal coordinates $\bf{q} = (q_1,\dots,q_{n-1})$ defined in a neighborhood $U_c \subset \Lambda$ of $c$ in which $g$ is given by
	\eq
		g = g(c) + \epsilon \sum_{i = 1}^{n-1} q_i^2,
	\eeq
	where $\epsilon = 1$ if $c = a$ and $\epsilon = -1$ if $c = b$. The Reeb vector field of $\alpha_\delta$ is given by
	\eq
		R_{\alpha_\delta} =
		\frac{1}{1 + \delta f} R_\alpha + \frac{2\epsilon\delta}{(1 + \delta f)^2} \sum_i \left( q_i - \frac{2p_i}{1 + \lVert \bf{p} \rVert^2} \sum_{j,k} h^{jk} p_k q_j \right) \partial_{p_i}
	\eeq
	on $S^1 \times D^*U_c$ for $\lVert \bf{p} \rVert$ sufficiently small (i.e. satisfying $\rho(\lVert \bf{p} \rVert) = 1$).  We will now show that for every $k \ge 1$, the indices of $\g_a^k$ and $\g_b^k$ as Reeb chords are given by
	\begin{align}
		\CZ^+(\g_a^k) &= 2k \\
		\CZ^+(\g_b^k) &= 2k + n - 1
	\end{align}
	Hence, setting $a_k = \g_a^k$ and $b_k = \g_b^k$, we have $\lvert a_k \rvert = \CZ^+(a_k) - 1 = 2k - 1$ and $\lvert b_k \rvert = \CZ^+(b_k) - 1 = 2k + n - 2$, as desired. 

	To compute $\CZ^+(\g_c^k)$, we start by computing the linearized Reeb flow along $\g_c^k$ with respect to the trivialization $\tau_0$ (see \eqref{trivialization-tau0}). We proceed as in \Cref{lemma:linearized-flow}: we have
	\begin{align}
		\mc{L}_{\partial_{p_i}} R_{\alpha_\delta}\big|_{\bf{p}=0,\bf{q}=0} &= \frac{2}{1 + \delta g(c)} \partial_{q_i} \\
		\mc{L}_{\partial_{q_i} - \frac{ap_i}{1 - \lVert \bf{p} \rVert^2} \partial_\theta} R_{\alpha_\delta}\Big|_{\bf{p}=0,\bf{q}=0} &= \epsilon \frac{2\delta}{(1+\delta g(c))^2} \partial_{p_i}
	\end{align}
	Hence, the matrix $A(t)$ representing the linearized Reeb flow $\xi_{\g_c^k(0)} \to \xi_{\g_c^k(t)}$ with respect to the trivialization $\tau_0$ satisfies $A'(t) = S A(t)$ with
	\eq
		S = \begin{pmatrix} 0 & \epsilon \frac{2\delta}{(1 + \delta g(c))^2} \\ \frac{2}{1 + \delta g(c)} & 0 \end{pmatrix},
	\eeq
	where each entry should be interpreted as a multiple of the $(n-1)\times(n-1)$ identity matrix. Setting $\mu = \frac{2\delta}{(1 + \delta g(c))^2}$ and $\nu = \frac{2}{1 + \delta g(c)}$ for ease of notation, it follows that
	\eq
		A(t) = \exp(tS) =
		\begin{cases}
			\begin{pmatrix} \cosh(t\sqrt{\mu\nu}) & \sqrt{\mu/\nu} \sinh(t\sqrt{\mu\nu}) \\ \sqrt{\nu/\mu} \sinh(t\sqrt{\mu\nu}) & \cosh(t\sqrt{\mu\nu}) \end{pmatrix} & \text{if $\epsilon = 1$} \\
			\begin{pmatrix} \cos(t\sqrt{\mu\nu}) & -\mu/\nu \sin(t\sqrt{\mu\nu}) \\ \nu/\mu \sin(t\sqrt{\mu\nu}) & \cos(t\sqrt{\mu\nu}) \end{pmatrix} & \text{if $\epsilon = -1$}
		\end{cases}
	\eeq
	(note that $\mu, \nu > 0$ if $\delta$ is sufficiently small).

	Let $L(t) \subset \xi_{\g_c^k(t)}$ be the path of Lagrangian subspaces obtained by applying the linearized Reeb flow to the tangent space $T_c\Lambda \subset \xi_c$ and let $\tilde{L}(t)$ be the loop obtained by closing up $L(t)$ by a positive rotation. Since $T_c\Lambda$ is represented by $\begin{pmatrix} 0 \\ I_{n-1} \end{pmatrix}$ in the trivialization $\tau_0$, $L(t)$ is represented by $A(t)\begin{pmatrix} 0 \\ I_{n-1} \end{pmatrix}$. In the two-dimensional case (i.e. $n - 1 = 1$), one can easily deduce (e.g.\ using the standard properties of the Maslov index stated in \cite[Thm.\ 2.3.7]{mcduff-sal-intro}) that
	\eq
		\mu^{\tau_0}(\tilde{L}(t)) =
		\begin{cases}
			0 & \text{if $\epsilon = 1$} \\
			1 & \text{if $\epsilon = -1$}
		\end{cases}
	\eeq
	In general, $L(t)$ splits as a direct sum of $n - 1$ copies of the two-dimensional case, so the additivity property of the Maslov index \cite[Thm.\ 2.3.7]{mcduff-sal-intro} implies that
	\eq
		\mu^{\tau_0}(\tilde{L}(t)) =
		\begin{cases}
			0 & \text{if $\epsilon = 1$} \\
			n - 1 & \text{if $\epsilon = -1$}
		\end{cases}
	\eeq
	
		According to \Cref{definition:reeb-chord-index} and the definition of the Maslov index \cite[Thm.\ 2.3.7]{mcduff-sal-intro}, we have $\CZ^+(\g_c^k) =\mu^{\tau}(\L^{n-1}_{\C} \tilde{\L})=\mu^{\tau}(\tilde{L}(t))$, where $\tau$ is a trivialization of the contact structure along $\g_c^k$ which extends to a spanning disk. For example, we can take $\tau$ to be the trivialization defined in equation \eqref{trivialization-tau}. The difference $\mu^{\tau}(\tilde{L}(t)) - \mu^{\tau_0}(\tilde{L}(t))$ is equal to twice the Maslov index of the loop of symplectic matrices relating $\tau$ and $\tau_0$, i.e.
	\eq
		\mu^{\tau}(\tilde{L}(t)) - \mu^{\tau_0}(\tilde{L}(t)) = 2\mu\begin{pmatrix} \cos(2\pi k t) & -\sin(2\pi k t) \\ \sin(2\pi k t) & \cos(2\pi k t) \end{pmatrix} = 2k.
	\eeq
	It follows that
	\begin{align}
		\CZ^+(\g_a^k) &= 2k \\
		\CZ^+(\g_b^k) &= 2k + n - 1
	\end{align}
	as desired.
\end{proof}
	
It will be useful to record the following consequence of the above computation.

\begin{corollary-star} \label{corollary:leg-wind-computation}
Suppose that $n \geq 4$ is \emph{even}. Then we have
\eq \op{rk} \ov{HC}_{2n, 2}(\widetilde{\mc{L}}^{\tilde{\e}_0}(Y_0, \xi_0, V, \L; \fk{r}))=1. \eeq
\end{corollary-star}
\pf
Indeed, note that the generators described in \Cref{proposition:legendrian-dga} satisfy $\op{link}(a_k)= \op{link}(b_k)=k$.  It thus follows that 
\eq \ov{CC}_{2n-1,2}(\widetilde{\mc{L}}^{\e}(Y_0, \xi_0, V; \fk{r}))=\ov{CC}_{2n+1,2}(\widetilde{\mc{L}}^{\e}(Y_0, \xi_0, V; \fk{r}))=0. \eeq 
On the other hand, $\ov{CC}_{2n,2}(\widetilde{\mc{L}}^{\e}(Y_0, \xi_0, V; \fk{r}))$ is generated by the word $b_1b_1$. 
\epf

\section{Applications to contact topology}

\subsection{Contact and Legendrian embeddings}

We begin by introducing some standard definitions in the theory of contact and Legendrian embeddings. 

\defi \label{definition:almost-contact-structure}
Given a smooth manifold $Y^{2n-1}$, a \emph{formal contact structure} (or \emph{almost-contact structure}) is the data of a pair $(\eta, \o)$, where $\eta \sub TY$ is a codimension $1$ distribution and $\o \in \O^2(Y)$ is a $2$-form whose restriction to $\eta$ is non-degenerate. A formal contact structure is said to be \emph{genuine} if it is induced by a contact structure. 
	
If $Y^{2n-1}$ is orientable, then a formal contact structure is the same thing as a lift of the classifying map $Y \to BSO(2n+1)$ to a map $Y \to B(U(n) \tms \op{id}) = BU(n)$.
\edefi

\defi[see Def.\ 2.2 in \cite{casals-etnyre}]
Let $(Y^{2n-1}, \xi= \op{ker} \a)$ be a contact manifold. Given a formal contact manifold $(V^{2m-1}, \eta, \o)$ where $1 \leq m \leq n-1$, a formal (iso)contact embedding is a pair $(f, F_s)$ where

\begin{itemize}
\item $F_s$ is a fiberwise injective bundle map $TV \to TY$ defined for $s \in [0,1]$,
\item $f: V \to Y$ is a smooth map and $df= F_0$,
\item $F_1$ defines a fiberwise conformally symplectic map $(\eta, \o) \to (\xi, d\a)$. 
\end{itemize}
	
Observe that the above properties are independent of the choice of contact form $\a$. 
\edefi

Two formal contact embeddings $i_0, i_1: (V, \zeta, \o) \to (Y, \xi)$ are said to be formally isotopic if they can be connected by a family 
$\{i_t\}_{t \in [0,1]}$ of formal contact embeddings. 

A (genuine) contact embedding $(V, \zeta) \to (Y, \xi)$ is simply a smooth embedding $\phi: V \to Y$ such that $\phi_*(\zeta)= \xi_{\phi(V)}$. In particular, 
every contact embedding induces a formal contact embedding by taking $F_s=F_0=df$.

\defi[see Def.\ 2.1 in \cite{casals-etnyre}]
Let $(Y^{2n-1}, \xi)$ be a contact manifold. Given a smooth $n$-dimensional manifold $\L$, a formal Legendrian embedding is a pair $(f, F_s)$ where

\begin{itemize}
\item $F_s$ is a fiberwise injective bundle map $TV \to TY$ defined for $s \in [0,1]$,
\item $df= F_0$
\item $\op{im} (F_1) \sub \xi$. 
\end{itemize}
\edefi

Two formal Legendrian embeddings are said to be formally isotopic if they can be connected by a family of Legendrian embeddings. A (genuine) Legendrian embedding $\L \to (Y, \xi)$ is a smooth embedding $\phi: \L \to Y$ such that $d\phi(T\L) \sub \xi \sub TY$. In particular, a Legendrian embedding canonically induces a formal Legendrian embedding. 

We now review some foundational facts about loose Legendrians.  Recall that a Legendrian $\L$ in a (possibly non-compact) contact manifold $(Y, \xi)$ of dimension at last five is defined to be loose if it admits a loose chart. For concreteness, we adopt as our definition of a loose chart the one given in \cite[Sec.\ 7.7]{cieliebak-eliashberg}. 

Loose Legendrians satisfy the following h-principle due to Murphy \cite[Thm.\ 1.2]{murphy}: given a pair of loose Legendrian embeddings $f_0, f_1: \L \to (Y, \xi)$ which are formally isotopic, then $f_0, f_1$ are genuinely isotopic, i.e. isotopic through Legendrian embeddings.

Given an arbitrary Legendrian submanifold $\L_0$ in a contact manifold $(Y_0, \xi_0)$ of dimension at least five, one can perform a local modification called \emph{stabilization} which makes $\L_0$ loose without changing the formal isotopy class of the tautological embedding $\L_0 \xrightarrow{\op{id}} \L_0$. This modification can be realized in multiple essentially equivalent ways. In this paper, we will take as our definition of stabilization any construction which satisfies the properties stated in the following lemma. 

\lem \label{lemma:legendrian-stabilization}
Given a Legendrian submanifold $\L \sub (Y_0, \xi_0)$ and an open set $U \sub Y_0$ such that $U \cap \L_0$ is nonempty, there exists a Lagrangian embedding $f_1: \L_0 \to Y$ which is formally isotopic to the tautological embedding $\L_0 \xrightarrow{\op{id}} \L_0$ via a family of formal Legendrian embeddings $\{(f_t, F^t_s)\}_{t \in [0,1]}$ which are independent of $t$ on $\L_0 \cap (Y- U)$. We put $\L:= f_1(\L_0)$ and say that $\L$ is the \emph{stabilization} of $\L_0$ inside $U$. 
\elem	

\pf
To construct $\L$, we follow the procedure described in \cite[Sec.\ 7.4]{cieliebak-eliashberg}. As the reader may verify, this construction can be assumed to happen entirely inside a suitably chosen Darboux chart $\mc{U} \sub U$.  In addition, the construction depends on the choice of a function $f$; using that $Y$ has dimension at least five, we may (and do) assume that $\chi(\{f \geq 1 \} ) =0$. To construct the formal isotopy, we simply follow the proof of \cite[Prop.\ 7.23]{cieliebak-eliashberg} (using the assumption that $\chi(\{f \geq 1 \} ) =0$). The argument there is entirely local, so that the isotopy can be assumed to be fixed outside of $\mc{U}$ (and in particular outside of $U$).  
\epf

\subsection{Embeddings into overtwisted contact manifolds}

We begin with the following proposition.

\prop \label{proposition:hploosecontact}
Suppose that $(Y, \xi)$ is an overtwisted contact manifold and let 
\eq i: (V, \z) \to (Y, \xi) \eeq 
be a formal contact embedding. Then there exists an open subset $\O \sub Y$ such that $Y- \ov{\O}$ is overtwisted and a \emph{genuine} contact embedding 
\eq j: (V,\z) \to \O \sub (Y, \xi) \eeq such that $i$ and $j$ are formally contact isotopic in $(Y, \xi)$.
\eprop

\pf 
We will assume for simplicity that $V$ is connected but the proof can easily be generalized. Let $D_{\op{ot}} \sub (Y, \xi)$ be an overtwisted disk. Let $f_t$ be a family of formal contact embeddings such that $f_0$ is the underlying smooth map induced by $i$, and $\op{Im}(f_1) \cap D_{\op{ot}} = \emptyset.$  Let $\O \sub Y$ be a connected open subset such that $\op{Im} (f_1) \sub \O \sub \ov{\O} \sub Y-D_{\op{ot}}$. According to \cite[Prop.\ 3.8]{bem}, we can assume by choosing $\O$ large enough that $(\O, \xi)$ is overtwised. 

For purely algebro-topological reasons, there exists a family $\xi_t$ of formal contact structures on $Y$ with the following properties:
	
\begin{itemize}
\item $\xi_0=\xi$,
\item $\xi_t$ is constant in the complement of $\ov{\O}$,
\item $\xi_1$ is a genuine contact structure in a neighborhood $\mc{V} \sub \O$ of $\op{Im}(f_1)$ and $f_1$ is a genuine contact embedding with respect to $\xi_1$. 
\end{itemize}
	
Since $\xi_1$ is genuine on $\mc{V} \cup (Y- \ov{\O})$, it follows from the relative h-principle for overtwisted contact structures \cite[Thm.\ 1.2]{bem} that $\xi_1$ is homotopic to a genuine overtwisted contact structure through a homotopy fixed on $\mc{V} \cup (Y-\ov{\O})$. Thus we may as well assume in the third bullet above that $\xi_1$ is genuine everywhere. 

Since $(\O, \xi)$ is overtwisted, it follows from the \cite[Thm.\ 1.2]{bem} that there exists a homotopy $\tilde{\xi}_t$ of genuine contact structures such that $\tilde{\xi}_0=\xi_0=\xi$, $\tilde{\xi}_1=\xi_1$, and $\tilde{\xi}_t$ is independent of $t$ on $Y - \ov{\O}$. 
	
By Gray's theorem, there is an ambient isotopy $\psi_t: Y \to Y$ which is fixed on $Y- \ov{\O}$ and has the property that $\psi_t^*\tilde{\xi}_t= \tilde{\xi}_0= \xi_0$. The composition $\psi_1^{-1} \circ f_1$ is in the same class of formal contact embeddings as $f_1$ and gives the desired genuine embedding. 
\epf

We now describe a procedure for constructing pairs of codimension $2$ contact embeddings in overtwisted manifolds which are formally isotopic but fail to be isotopic as contact embeddings.  

\begin{construction} \label{construction:ot-infinite-topology} 
Let $(Y^{2n-1}_0, \xi_0)$ be a closed, overtwisted contact manifold and let $(Y_0, B, \pi)$ be an open book decomposition which supports $\xi_0$. Let $i_0: (B, ({\xi_0})_B) \to (Y_0, \xi_0)$ be the tautological embedding of the binding and let $j_0: (B, ({\xi_0})_B) \to (Y_0, \xi_0)$ be a contact embedding with overtwisted complement which is formally isotopic to $i_0$ (the existence of such an embedding follows from \Cref{proposition:hploosecontact}).  Let $D_{\op{ot}} \sub Y_0$ be an overtwisted disk which is disjoint from $j_0(B)$. 

Choose an open subset $\mc{U} \sub Y_0$ whose closure is disjoint from $i_0(B) \cup j_0(B) \cup D_{\op{ot}}$, and such that $i_0, j_0$ are formally isotopic in the complement of $\ov{\mc{U}}$. Now let $(Y, \xi)$ be obtained by attaching handles of arbitrary index along isotropic submanifolds contained inside $\mc{U}$ (see \Cref{construction:handle-attaching}). We let $(\hat{X}, \hat{\l})$ denote the resulting Weinstein cobordism with positive end $(Y, \xi)$ and negative end $(Y_0, \xi)$.

Observe that $(Y, \xi)$ is still overtwisted and that $i_0, j_0$ can also be viewed as codimension $2$ contact embeddings into $(Y,\xi)$. We denote these latter embeddings by $i, j: (B, ({\xi_0})_B) \to (Y, \xi)$. By construction, the embeddings $i, j$ are formally isotopic. 
\end{construction}

\thm \label{theorem:distinguish-infinite-ot}
The embeddings $i, j$ which arise from \Cref{construction:ot-infinite-topology} are not genuinely isotopic. In fact, $i$ is not genuinely isotopic to any reparametrization of $j$ in the source, meaning that the codimension $2$ submanifolds $(i(B), \xi_{i(B)}), (j(B), \xi_{j(B)})$ are not contact isotopic.
\ethm

\pf
According to \Cref{corollary:cobordism-map}, the cobordism $(\hat{X}, \hat{\l})$ induces a map of unital $\Q$-algebras $\widetilde{CH}_{\bullet}(Y, \xi, B; \fk{r}) \to \widetilde{CH}_{\bullet}(Y_0, \xi_0, B; \fk{r})$ for any element $\fk{r} \in \fk{R}(Y_0, \xi_0, B) \equiv \fk{R}(Y, \xi, B)$. Moreover, \Cref{theorem:openbook} implies that $\widetilde{CH}_\bullet(Y_0, \xi_0, B; \fk{r}) \neq 0$ for appropriate $\fk{r} \in \fk{R}(Y, \xi, B)$. It follows that $\widetilde{CH}_{\bullet}(Y, \xi, B; \fk{r}) \neq 0$.

If we assume that $(i(B), \xi_{i(B)}), (j(B), \xi_{j(B)})$ are isotopic as codimension $2$ contact submanifolds, then $\widetilde{CH}_{\bullet}(Y, \xi, B; \fk{r}) = \widetilde{CH}_{\bullet}(Y, \xi, j(B); \fk{r}')$ for some datum $\fk{r}' \in \fk{R}(Y, \xi, j(B))$. On the other hand, observe that $(Y-j(B), \xi)$ is overtwisted by construction. Hence \Cref{theorem:loosevanishing} implies that $\widetilde{CH}_{\bullet}(Y, \xi, j(B); \fk{r}') = 0$.
\epf

\ex
By a well-known theorem of Giroux and Mohsen \cite[Thm.\ 7.3.5]{geiges}, any contact manifold $(Y, \xi)$ admits an open book decomposition $(Y, B, \pi)$ which supports $\xi$. Hence \Cref{construction:ot-infinite-topology} and \Cref{theorem:distinguish-infinite-ot} can be applied to any overtwisted contact manifold.
\eex

We also consider the following modification of \Cref{construction:ot-infinite-topology}.

\begin{construction} \label{construction:ot-legendrian}
Let $(Y^{2n-1}_0, \xi_0)$ be a closed, overtwisted contact manifold and let $(Y_0, B, \pi)$ be an open book decomposition which supports $\xi_0$. Suppose that there exists a Legendrian submanifold $\L \sub Y_0$ such that $B = \tau(\L)$ is a contact pushoff of $\L$. Let $D_{\op{ot}} \sub Y_0$ be an overtwisted disk.

Let $\mc{U}_1 \sub Y_0-B - D_{\op{ot}}$ be an open ball which intersects $\L$. Let $\L' \sub (Y_0, \xi_0)$ be obtained by stabilizing $\L$ inside $\mc{U}_1$ (see \Cref{lemma:legendrian-stabilization}). Let $\mc{U}_2$ be the union of $\mc{U}_1$ with a tubular neighborhood of $\L$. Let $V'=\tau(\L') \sub \mc{U}_2$ be a choice of contact pushoff for $\L'$. 

Let $i_0: (B, ({\xi_0})_B) \to (Y_0, \xi_0)$ be the tautological embedding. \cite[Lem.\ 3.4]{casals-etnyre} implies that $i_0$ is formally isotopic to some codimension $2$ contact embedding $j_0: (B, (\xi_0)_B) \to (Y_0, \xi_0)$ where $j_0(B)= B'$.  Choose such a formal isotopy and let $\mc{T} \sub Y_0$ be its trace.

Let $\mc{U} \sub Y_0$ be an open set whose closure is disjoint from $\mc{T} \cup \mc{U}_2 \cup D_{\op{ot}}$. As in \Cref{construction:ot-infinite-topology}, let $(Y, \xi)$ be obtained by attaching handles of arbitrary index along some collection of isotropics inside $\mc{U}$. Let $(\hat{X}, \hat{\l})$ denote the resulting Weinstein cobordism with positive end $(Y, \xi)$ and negative end $(Y_0, \xi)$.

It follows from our choice of $\mc{U}$ that $(Y, \xi)$ is overtwisted and that $\L, \L', V, V'$ can be viewed as submanifolds of $(Y, \xi)$. It also follows that $\L'$ is the stabilization of $\L$ as submanifolds of $(Y, \xi)$, and that $V$ (resp. $V'$) is the contact pushoff of $\L$ (resp.\ $\L'$).  
\end{construction}

\cor \label{corollary:leg-distinguish-infinite-ot}
The submanifolds $(V, \xi_V)$ and $(V', \xi_{V'})$ are not isotopic through codimension $2$ contact submanifolds. Hence the Legendrian submanifolds $\L, \L' \sub (Y, \xi)$ are not isotopic through Legendrian submanifolds. 
\ecor

\pf
The proof of the first statement is identical to that of \Cref{theorem:distinguish-infinite-ot}.  The second statement follows from the fact that $V, V'$ are respectively the contact pushoff of $\L, \L'$. 
\epf

\ex
Let $(Y_0, \xi_0)= \op{obd}(T^*S^{n-1}, \tau^{-1})$, where $\tau^{-1}$ is a left-handed Dehn twist. Note by \cite[Thm.\ 1.1]{c-m-p} that $(Y_0, \xi_0)$ is overtwisted (in fact, $(Y, \xi)$ is contactomorphic to $(S^{2n-1}, \xi_{\op{ot}})$). Let $P=T^*S^{n-1} \sub Y_0$ be a page of the open book and let $\L \sub (Y_0, \xi_0)$ be the Legendrian which corresponds to the zero section of $P$. Then the binding of the open book decomposition is also a contact pushoff of $\L$. We may therefore apply \Cref{construction:ot-legendrian} to this data. 
\eex

\rmk
Consider the special case of \Cref{construction:ot-infinite-topology} and \Cref{construction:ot-legendrian} where $\mc{U}$ is empty, i.e. one does not attach any handles. In this case, \Cref{theorem:distinguish-infinite-ot} and \Cref{corollary:leg-distinguish-infinite-ot} are essentially equivalent to the statement that the binding of an open book decomposition is tight (i.e. must intersect any overtwisted disk). This statement was proved in dimension $3$ by Etnyre and Vela-Vick \cite[Thm.\ 1.2]{etnyre-velavick}; the higher dimensional case follows from work of Klukas \cite[Cor.\ 3]{klukas}, who proved (following an outline of Wendl \cite[Rmk.\ 4.1]{wendllocalfilling}) the stronger fact that a local filling obstruction (such as an overtwisted disk) in a closed contact manifold must intersect the binding of any supporting open book. 
\ermk

\subsection{Contact embeddings into the standard contact sphere} \label{subsection:embeddings-tight-manifold}

In this section, we exhibit examples of pairs of codimension $2$ contact embeddings into tight contact manifolds which are formally isotopic but are not isotopic through genuine contact embeddings. We begin with the following construction.

\begin{construction} \label{construction:tight-surgery-example}
Let $(Y^{2n-1}_0, \xi_0)$ be a contact manifold for $n \geq 3$. Let $V \subset (Y_0, \xi_0)$ be a codimension $2$ contact submanifold and let $\L \sub (Y_0, \xi_0)$ be a loose Legendrian such that $\L \cap V = \emptyset$.
	
Choose an open ball $\mc{U} \sub Y_0$ such that $(\mc{U}, \mc{U} \cap \L)$ is a loose chart for $\L$.  Next, choose an open ball $\mc{O} \sub Y_0-V- \mc{U}$ (By definition of a loose chart, $\mc{U} \cap \L$ is a proper subset of $\L$, so it is clear that such choices exist).
	
Let $\L'$ be obtained by stabilizing $\L$ inside $\mc{O}$. It follows from \Cref{lemma:legendrian-stabilization} that $\L$ and $\L'$ are formally isotopic via a formal isotopy fixed outside of $\mc{O}$. 

According to \Cref{lemma:formal-contact} below, we can (and do) fix a contactomorphism $f: (Y_0, \xi_0) \to (Y_0, \xi_0)$ with the following properties:
\begin{enumerate}
\item \label{enumerate:isotopic-identity}  $f$ is isotopic to the identity,
\item \label{enumerate:loose-to-loose}  $f(\L)=\L'$,
\item \label{enumerate:formal-isotopy} the tautological contact embedding $i'_0: (V, (\xi_0)_V) \to (Y_0-\L', \xi_0)$ is formally isotopic to the embedding $i'_1:= f \circ i'_0: (V, (\xi_0)_V) \to (Y_0-\L', \xi_0)$ (we emphasize here that the formal isotopy is contained in the open contact manifold $(Y_0-\L', \xi_0)$). 
\end{enumerate}

Finally, let $(Y, \xi)$ be obtained by attaching a Weinstein $n$-handle along $\L' \sub (Y_0, \xi_0)$ as described in \Cref{construction:handle-attaching}. We assume without loss of generality that the attaching region $\L' \sub \mc{V}$ disjoint from $V$ and $f(V)$, and that $i'_0$ and $i'_1$ are formally isotopic in $Y_0-\mc{V}$. We let $\iota: Y_0-\mc{V} \hookrightarrow Y$ be the canonical inclusion. 
	
Let 
\eq i_0= \iota \circ i'_0: (V, \xi_V) \to (Y, \xi) \eeq 
be the tautological contact embedding and define 
\eq i_1:= \iota \circ i'_1: (V, \xi_V) \to (Y, \xi). \eeq 
It is an immediate consequence of (\ref{enumerate:formal-isotopy}) and our choice of $\mc{V}$ that $i_0$ and $i_1$ are formally isotopic. 
\end{construction}

\lem \label{lemma:formal-contact}
With the notation of \Cref{construction:tight-surgery-example}, there exists a contactomorphism $f: (Y_0, \xi_0) \to (Y_0, \xi_0)$ satisfying the properties (\ref{enumerate:isotopic-identity}-\ref{enumerate:formal-isotopy}) stated in \Cref{construction:tight-surgery-example}.
\elem

\pf 
Recall that $\mc{U}$ is disjoint from $\mc{O}$. Recall also that $(\mc{U}, \mc{U} \cap \L)$ is a loose chart for $\L$, which means in particular that $\mc{U}$ deformation retracts onto $\mc{U} \cap \L$. Using these two facts, it is not hard to verify that there exists a family of \emph{formal} contact embeddings $j_t: (V, (\xi_0)_V) \to (Y_0, \xi_0)$, for $t \in [0,1]$, with the following properties:
\begin{itemize}
\item $j_0= i_0'$, 
\item $j_t(V)$ is disjoint from $\mc{O} \cup \L$ for all $t \in [0,1]$,
\item $j_1(V)$ is disjoint from $\mc{U} \cup \mc{O} \cup \L$.
\end{itemize}

By the $h$-principle for loose Legendrian embeddings \cite[Thm.\ 1.2]{murphy}, there exists a global contact isotopy $\phi_t$, for $t \in [0,1]$, such that $\phi_0=\op{Id}$ and $\phi_1(\L)=\L'$. By the Legendrian isotopy extension theorem \cite[Thm.\ 2.6.2]{geiges}, this isotopy can be assumed to be compactly-supported and constant in a neighborhood $\mc{W}$ of $j_1(V)$, where $\mc{W}$ is disjoint from $\mc{U} \cup \mc{O} \cup \L$. 

Let $f:= \phi_1$ and observe that $f$ satisfies (\ref{enumerate:isotopic-identity}-\ref{enumerate:loose-to-loose}). Observe that $f\circ j_t$ defines a formal contact isotopy from $f\circ i_0'=i_1'$ to $j_1$ in the complement of $\L'=f(\L)$. Since $i_0'$ is formally isotopic to $j_1$ in the complement of $\L' \sub \L \cup \mc{O}$, we find that $f$ satisfies (\ref{enumerate:formal-isotopy}).
\epf

It will be useful to record the following basic observation, which is a consequence of the fact stated in \Cref{definition:symplectic-lift} that an isotopy of contactomorphisms induces a Hamiltonian isotopy of symplectizations. 

\lem[cf.\ \Cref{definition:symplectic-lift}] \label{lemma:lift-end-liouville}
Let $(\hat X, \hat \l)$ be a relative cobordism from $(Y^+, \l^+)$ to $(Y^-, \l^-)$. Given contactomorphisms $f^{\pm}: (Y^{\pm}, \l^{\pm}) \to (Y^{\pm}, \l^{\pm})$ contact isotopic to the identity, there is a symplectomorphism $F: (\hat X, \hat \l) \to (\hat X, \hat \l)$ which agrees near infinity with the lifts $\tilde{f}^{\pm}: (SY^{\pm}, \l_{Y^{\pm}}) \to (SY^{\pm}, \l_{Y^{\pm}})$. 
\qed
\elem

Let us now return to the geometric setup considered in \Cref{subsection:linearized-open-books}.  In particular, we let 
\eq (\hat W_0, \hat{\l}^a):= (D^2 \tms T^*S^{n-1}, \frac{1}{a} r^2 d\t + \l_{\op{std}}), \eeq 
where $a>0$ is a constant which will be fixed later (see \eqref{equation:a-filling}). 

We let $(W_0, \l^a), (Y_0, \xi_0= \op{ker} \l_0)$, $V \sub Y_0$, $\L \sub Y_0$, and $H= \{0\} \tms T^*S^{n-1}$ be defined as in \Cref{subsection:linearized-open-books}.  Note that $\L$ is a loose Legendrian according to \cite[Prop.\ 2.9]{casals-murphy}. 

\Cref{construction:tight-surgery-example} applied to the above data produces a contactomorphism $f: (Y_0, \xi_0) \to (Y_0, \xi_0)$, a Liouville domain $(X, \l)$ with positive contact boundary $(Y, \xi= \op{ker} \l)$, and a pair of formally isotopic contact embeddings $i_0, i_1: (V, \xi_V) \to (Y, \xi)$. 

We let $\fk{r} =(\a, \tau, a) \in \fk{R}(Y_0, \xi_0, V)$, where $\a:= (\l_0)|_V$ and the trivialization $\tau$ is unique since $H^1(V; \Z)=0$ (see \Cref{corollary:topological-facts}).   We let $\fk{r}' = ((i_1')_*\a, \tau, a) \in \fk{R}(Y_0, \xi_0, V)$, where $i_1'$ is defined as in \Cref{construction:tight-surgery-example} and $\tau$ is again unique.  Since the surgery resulting from \Cref{construction:tight-surgery-example} away from $V$ and $i_1'(V)$, we may identify $\fk{R}(Y, \xi, V)=\fk{R}(Y_0, \xi_0, V)$ and $\fk{R}(Y, \xi, i_1(V))= \fk{R}(Y_0, \xi_0, i_1'(V))$. 

As in \Cref{subsection:linearized-open-books}, let $e_0: \R \tms Y_0 \to (\hat W_0, \hat \l_0, H)$ be the canonical marking furnished by the Liouville flow and let $\tilde{\e}_0: \widetilde{\mc{A}}(Y_0, \xi_0, V; \fk{r}) \to \Q$ be the associated augmentation. 

By \Cref{lemma:lift-end-liouville}, there is a symplectomorphism $\psi: (\hat W_0, \hat \l_a) \to (\hat W_0, \hat \l_a)$ which coincides near infinity with the lift $\tilde{f}: SY_0 \to SY_0$. Let $H' \sub \hat W_0$ be a symplectic submanifold which is cylindrical at infinity and coincides with the symplectization of $f(V)=i_1'(V)$ on $[0,\infty) \tms Y_0$.  Such a surface can be constructed by taking the backwards Liouville flow of $\psi(H)$. 
		
Let $\tilde{\e}'_0:  \widetilde{\mc{A}}(Y_0, \xi_0, i'_1(V); \fk{r}') \to \Q$ be the augmentation induced by the relative symplectic cobordism $((\hat W_0, \hat{\l}_a, H'), \tilde{\e}_0)$. 

Observe that $(Y_0, \xi_0, V) \in \mc{G}$ and hence also $(Y_0, f_*\xi_0, f(V))=(Y_0, \xi_0, i_1'(V)) \in \mc{G}$ (see \Cref{definition:obd-contact-pairs-G}). The following lemma shows that we also have $(Y, \xi, i_1(V)) \in \mc{G}$. 

\lem \label{lemma:i1-obd}
	Up to contactomorphism, $(Y, \xi)= \op{ob}(T^*S^{n-1}, \tau_S)=(S^{2n-1}, \xi_{\op{std}}) $, where $\tau_S$ denotes a right-handed Dehn twist. Moreover, the first contactomorphism can be assumed to take $i_1(V)$ to the binding of the open book decomposition $\op{ob}(T^*S^{n-1}, \tau_S)$. 
\elem
	
\pf
By construction, there is an open book decomposition of $(Y_0, \xi_0)$ agreeing (up to contactomorphism) with $\op{ob}(T^*S^{n-1}, \op{id})$, such that $i_1(V)$ is the binding and $\L'$ is the zero section of a page. Note now that attaching a handle to the zero section of a page of $(Y_0, \xi_0)=\op{ob}(T^*S^{n-1}, \op{id})$ simply changes the monodromy of the open book by a positive Dehn twist \cite[Thm.\ 4.6]{van-koert}. Hence, $i_1(V)$ is the binding of $\op{ob}(T^*S^{n-1}, \tau_S)=(S^{2n-1}, \xi_{\op{std}})$.
\epf

\cor \label{corollary:topological-facts}
The manifolds $Y_0, W_0, Y, W$ have vanishing first and second homology and cohomology with $\Z$-coefficients. Hence the same is also true for the pairs $(W_0, Y_0)$ and $(W,Y)$. Finally, we have $H^1(V; \Z)=0$. 
\ecor

\pf
By construction, $W$ is obtained by attaching a handle of index $n$ to $W_0$. The union of the core and co-core of this handle has codimension $n$. Hence, for $i \leq n-2$, we have $H_i(W_0; \Z)=H_i(W;\Z)$ and $H_i(Y_0;\Z)=H_i(Y;\Z)$. Now, $W_0$ is homotopy equivalent to $S^{n-1}$ by definition, while $Y$ is homeomorphic to $S^{2n-1}$ by \Cref{lemma:i1-obd}. Since $n \geq 4$, it follows that $Y_0, W_0, Y, W$ have vanishing first and second homology. The vanishing of cohomology in the same degrees follows by the universal coefficients theorem for cohomology.  The vanishing of $H^1(V;\Z)$ was proved in \Cref{lemma:first-top-facts}.
\epf

As a result of \Cref{corollary:topological-facts}, \Cref{definition:bigrading}, \Cref{lemma:linking-grading-cobordism} and \Cref{definition:bigradings-legendrian}, the invariants considered in the proof of \Cref{theorem:tight-examples} below, as well as the maps between these invariants, are all canonically $(\Z \tms \Z)$-bigraded.

\begin{theorem-star} \label{theorem:tight-examples}
For $n \geq 4$ \emph{even} and $a \gg 0$ large enough, the contact embeddings $i_0, i_1: (V, \xi_V) \to (Y, \xi)= (S^{2n-1}, \xi_{\op{std}})$ are not isotopic through contact embeddings.
\end{theorem-star}

\pf
We suppose for contradiction that $i_0$ and $i_1$ are isotopic through contact embeddings. This means that there exists a contactomorphism $g: (Y,\xi, V) \to (Y, \xi, i_1(V))$. It follows by \Cref{lemma:i1-obd} that $(Y, \xi, V) \in \mc{G}$.
	
According to \Cref{corollary:linearized-obd} (and the description of the generators in \Cref{proposition:linearized-obd}), we may (and do) fix $a \gg 0$ large enough so that
\eq \widetilde{CH}^{\tilde{\e}_0}_{k-(n-3), 2}(Y_0, \xi_0, V; \fk{r})= 
\begin{cases} 
\Z & \text{ if } k=4, 4+(n-1),\\
0 & \text{ otherwise.}
\end{cases}
\eeq
In particular, since $n \geq 4$, we have
\eq \label{equation:vanish-part} \widetilde{CH}^{\tilde{\e}_0}_{2n-(n-3),2}(Y_0, \xi_0, V; \fk{r})=\widetilde{CH}^{\tilde{\e}_0}_{2n+1-(n-3),2}(Y_0, \xi_0, V; \fk{r})=0.\eeq
Since $(Y_0, \xi_0, V) \in \mc{G}$, it follows by \Cref{corollary:commuting-fillings} that 
\eq \label{equation:inner-equal} \widetilde{CH}^{\tilde{\e}_0}_{\bullet, \bullet}(Y_0, \xi_0, V; \fk{r})=\widetilde{CH}^{\tilde{\e}_0'}_{\bullet, \bullet}(Y_0, \xi_0, i_1(V); \fk{r}'). \eeq  
Similarly, it follows by \Cref{definition:legendrian-invariance-reduced} that 
\eq \label{equation:inner-equal-legendrian} \widetilde{\mc{L}}^{\tilde{\e}_0}_{\bu, \bu} (Y_0, \xi_0, V, \L; \fk{r})=\widetilde{\mc{L}}^{\tilde{\e}_0'}_{\bu,\bu}(Y_0, \xi_0, i_1(V), \L'; \fk{r}'). \eeq  

Let $e: \R \tms Y \to \hat W$ be the canonical marking and consider the resulting relative filling $((\hat{W}, \hat \l, H),e)$. Let $\tilde{\e}: \widetilde{\mc{A}}(Y, \xi, V; \fk{r}) \to \Q$ be the induced augmentation. Let $\phi: (\hat W, \hat \l, H) \to (\hat W, \hat \l, H)$ be a symplectomorphism which agrees with the lift of $g$ near infinity. Let $\tilde{\e}': \widetilde{\mc{A}}(Y, \xi, i_1(V); \fk{r}') \to \Q$ be the augmentation induced by $((\hat W, \hat \l, H'), e)$. Then according to \Cref{lemma:fillings-commute} and \Cref{corollary:commuting-fillings}, we have
\eq \widetilde{CH}^{\tilde{\e}}_{\bu,\bu}(Y, \xi, V; \fk{r}) = \widetilde{CH}^{\tilde{\e}'}_{\bu,\bu} (Y, \xi, i_1(V); \fk{r}'). \eeq
It then follows by \Cref{lemma:non-zero} that $\widetilde{CH}^{\tilde{\e}}_{2n-(n-3),2}(Y, \xi, V; \fk{r}) \neq 0$. Hence \Cref{lemma:injective} implies that 
\eq \widetilde{CH}^{\tilde{\e}_0}_{2n-(n-3),2}(Y_0, \xi_0, V; \fk{r}) \neq 0. \eeq 
This contradicts \eqref{equation:vanish-part}.
\epf

\begin{lemma-star} \label{lemma:non-zero}
We have 
\eq \widetilde{CH}^{\tilde{\e}'}_{2n-(n-3),2} (Y, \xi, i_1(V); \fk{r}') \neq 0. \eeq
\end{lemma-star}
\pf
On the one hand, \Cref{corollary:leg-wind-computation} and \eqref{equation:inner-equal-legendrian} imply that 
\eq \op{rk} \ov{HC}_{2n,2}(\widetilde{\mc{L}}^{\tilde{\e}_0'}(Y_0, \xi_0, i_1(V), \L'; \fk{r}') ) = 1. \eeq
On the other hand, by \eqref{equation:vanish-part} and \eqref{equation:inner-equal}, we have that
\eq \label{equation:vanish-part-prime} \widetilde{CH}^{\tilde{\e}'_0}_{2n-(n-3),2}(Y_0, \xi_0, i_1(V); \fk{r}')=\widetilde{CH}^{\tilde{\e}'_0}_{2n+1-(n-3),2}(Y_0, \xi_0, i_1(V); \fk{r}')=0.\eeq
It then follows by Theorem* \ref{theorem-star:reduced-attaching-triangles} and \Cref{remark:exact-sequence-bigrading} that 
\eq \widetilde{CH}^{\tilde{\e}'}_{2n-(n-3),2} (Y, \xi, i_1(V); \fk{r}')  \simeq  \ov{HC}_{2n,2}(\widetilde{\mc{L}}^{\tilde{\e}'_0}(Y_0, \xi_0, i_1(V), \L'; \fk{r}') ). \eeq
This proves the claim.
\epf

\begin{lemma-star} \label{lemma:injective}
The natural map 
\eq \widetilde{CH}^{\tilde{\e}}_{2n-(n-3),2} (Y, \xi, V; \fk{r})  \to \widetilde{CH}^{\tilde{\e}_0}_{2n-(n-3),2}(Y_0, \xi_0, V; \fk{r}) \eeq is injective.
\end{lemma-star}

\pf
Since $\L'$ is loose in $Y_0-V$, it follows by \Cref{proposition:loose-acyclic} and \Cref{lemma:reduced-cyclic-acyclic} that 
\eq \ov{HC}_{2k}(\mc{L}^{\tilde{\e}_0}(Y_0, \xi_0, V, \L'; \fk{r}))=0 \eeq 
for all $k \in \Z$. The lemma thus follows from Theorem* \ref{theorem-star:reduced-attaching-triangles} and \Cref{remark:exact-sequence-bigrading}. 
\epf

\rmk \label{remark:casals-etnyre-coincide}
One can slightly tweak \Cref{construction:tight-surgery-example} so that $\L, \L'$ are disjoint and $\L \cup \L'$ is a loose Legendrian link. One can then upgrade \Cref{lemma:formal-contact} to require that $f(\L)=\L'$ and $f(\L')=\L$ in  \eqref{enumerate:loose-to-loose} of \Cref{construction:tight-surgery-example}.  In particular, this means that $\L'$ is a stabilization of $\L$ and $\L$ is a stabilization of $\L'$. 

Let us apply this tweaked construction to the setup considered in \Cref{construction:tight-surgery-example}, where $(Y_0, \xi_0)= \op{ob}(T^*S^{n-1}, \op{id})$, for $n \geq 4$, $V \sub (Y_0, \xi_0)$ is the binding and $\L \sub (Y_0, \xi_0)$ be the zero section of a page. It is well known that the zero section of a page in $\op{ob}(T^*S^{n-1}, \op{id})$ is the standard Legendrian unknot. Hence \Cref{lemma:i1-obd} implies that $i_1(V)$ is the pushoff of the standard unknot. By construction, it now also follows that $i_0(V)$ is the contact pushoff of a stabilization of the unknot. \Cref{theorem:tight-examples} thus provides an alternative way to distinguish (for $n \geq 4$ even) the basic example considered by Casals and Etnyre in \cite[Sec.\ 5]{casals-etnyre}. 
\ermk

\subsection{Relative symplectic and Lagrangian cobordisms}

In this final section, we exhibit some constraints on relative symplectic and Lagrangian cobordisms.  In particular, we prove the results which were advertised in \Cref{subsection:intro-cob-applications} of the introduction. 

\pf[Proof of \Cref{theorem:intro-symplectic-cobordism}] 
Suppose for contradiction that such a relative symplectic cobordism exists. According to \Cref{theorem:openbook}, we have $\widetilde{CH}_\bu(Y, \xi, V; \fk{r}) \neq 0$ for some $\fk{r}=(\a, \tau, r) \in \fk{R}(Y, \xi, V)$ which we now view as fixed. According to \Cref{theorem:loosevanishing}, we also have $\widetilde{CH}_\bu(Y, \xi, V'; \fk{r}') = 0$ for all $\fk{r}'= (\a', \tau', r') \in \fk{R}(Y, \xi, V')$. Choose $\fk{r}'$ depending on our previous choice of $\fk{r}$ so that $r' \geq e^{\mc{E}((H, \l_H)^{\a'}_{\a})} r$. Then \Cref{corollary:cobordism-map} along with our topological assumptions on $H$ furnishes a unital $\Q$-algebra map 
\eq \widetilde{CH}_\bu(Y, \xi, V'; \fk{r}') \to \widetilde{CH}_\bu(Y, \xi, V; \fk{r}). \eeq This gives the desired contradiction. 
\epf

In contrast to \Cref{theorem:intro-symplectic-cobordism}, we expect that one could prove that $V$ is concordant to $V'$ by adapting work of Eliashberg and Murphy \cite{eliashberg-murphy-caps}, but we do not pursue this here. Note that we could also have proved \Cref{theorem:intro-symplectic-cobordism} using the full invariant $CH_\bu(-;-)$ instead of its reduced counterpart. 

For Lagrangian cobordisms, we have the following result.

\prop \label{proposition:lag-concordance}
Let $(Y, \xi)$ be a contact manifold. Let $\L, \L'$ be Legendrian knots such that $H^1(\tau(\L'); \Z)= H^2(\tau(\L'); \Z)=0$. Suppose that $(SY, \l_Y, L)$ is a Lagrangian concordance from $\L'$ to $\L$. Given $\fk{r}= (\a, \tau, r) \in \fk{R}(Y, \xi, \tau(\L))$, there is a map of $\Q$-algebras
\eq  \label{equation:leg-map} \widetilde{CH}_{\bullet}(Y, \xi, \tau(\L'); \fk{r}') \to \widetilde{CH}_{\bullet}(Y, \xi, \tau(\L); \fk{r}) \eeq 
for some $\fk{r}'= (\a', \tau', r') \in \fk{R}(Y, \xi, \tau(\L'))$.  (A similar statement holds for the non-reduced invariants $CH_{\bullet}(-)$). 
\eprop

\pf
Observe that the trivial Lagrangian cobordism $L= \R \tms \L \sub \R \tms Y$ admits a ``symplectic push-off" $\tau(L):= \R \tms \tau(\L) \sub \R \tms Y$. It follows by the Lagrangian neighborhood theorem that any Lagrangian concordance $(SY, \l_Y, L)$ also admits a symplectic push-off $(SY, \l_Y, H)$, which is a relative symplectic cobordism from $(Y, \xi, \tau(\L'))$ to $(Y, \xi, \tau(\L))$. Fix $\a'$ arbitrarily and choose $r'$ so that $r' \geq e^{\mc{E}((H, \l_H)^{\a'}_{\a})} r$ (note that $\tau'$ is unique since $H^1(\tau(\L'); \Z)=0$).  The claim now follows from \Cref{corollary:cobordism-map}. 	
\epf	

\pf[Proof of \Cref{theorem:intro-lag-concordance}] 
Suppose for contradiction that $\L'$ is concordant to $\L$. As in the proof of \Cref{corollary:leg-distinguish-infinite-ot}, we have 
\eq \widetilde{CH}_{\bullet}(Y, \xi, \tau(\L); \fk{r}) \neq 0 \eeq 
for a suitable choice $\fk{r} \in \fk{R}(Y, \xi, \tau(\L))$. On the other hand, we have $\widetilde{CH}_{\bullet}(Y, \xi, \tau(\L'); \fk{r}')=0$ for all $\fk{r}' \in \fk{R}(Y, \xi, \tau(\L'))$. This gives a contradiction in view of \Cref{proposition:lag-concordance}. 
\epf

We end by exhibiting examples of Lagrangian cobordisms which cannot be displaced from a codimension $2$ symplectic submanifold. 

\begin{construction} \label{construction:obd-intersection-rigidity}
Let $(Y_0, \xi_0)= \op{obd}(T^*S^{n-1}, \op{id})$ for $n \geq 3$. Let $V \sub (Y_0, \xi_0)$ be the binding and let $\L \sub (Y_0, \xi_0)$ be the zero section of a page, which is a loose Legendrian by \cite[Prop.\ 2.9]{casals-murphy}. Let $\mc{U}_1 \sub Y_0-V$ be a small ball which intersects $\L$ in an $(n-1)$-ball and let $\L'$ be obtained by stabilizing $\L$ inside $\mc{U}_1$. 
	
Let $\mc{U}_2 \sub Y_0- (V \cup \L \cup \mc{U}_1)$ be an open subdomain. Let $(Y, \xi)$ be obtained by attaching a sequence of handles along isotropics contained in $\mc{U}_2$. Observe that $V, \L, \L'$ can be viewed as submanifolds of both $Y_0$ and $Y$; we will not distinguish these embeddings in our notation. We let $(\hat X, \hat \l, \hat{V})$ be the associated relative symplectic cobordism from $(Y, \xi, V)$ to $(Y_0, \xi_0, V)$.
\end{construction}

\pf[Proof of \Cref{theorem:introduction-non-displaceability}] 
Note that we can identify $\fk{R}(Y_0, \xi_0, V)= \fk{R}(Y, \xi, V)$. According to \Cref{theorem-star:legendrian-nonvanishing}, $\mc{L}(Y_0, \xi_0, V, \L; \fk{r}) \neq 0$ for suitable $\fk{r} \in \fk{R}(Y_0, \xi_0, V)$. In constrast, \Cref{proposition:loose-acyclic} implies that $\mc{L}(Y, \xi, V, \L'; \fk{r}) = 0$ since $\L'$ is loose in $Y-V$ by construction. By \Cref{corollary:cobordism-map-legendrian}, the existence of a concordance from $\L'$ to $\L$ which doesn't intersect $\hat{V}$ would imply that there is a unital map of $\Q[U]$-algebras 
\eq \mc{L}(Y, \xi, V, \L'; \fk{r}) \to \mc{L}(Y_0, \xi_0, V, \L; \fk{r}). \eeq 
This gives a contradiction.
\epf

We remark that \Cref{construction:obd-intersection-rigidity} could be generalized in various directions without affecting the validity of \Cref{theorem:introduction-non-displaceability}, but we do not pursue this here.

\appendix

\section{Connected sums of almost-contact manifolds} \label{section:contact-connected-sums}

Let $G$ be a connected\footnote{This assumption is used in the proof of \Cref{proposition-connected-sum-well-defined}.} subgroup of $\SO(n)$. An \emph{almost $G$-structure} on a smooth oriented manifold $M$ is a homotopy class of maps $M \to BG$ lifting the classifying map of the tangent bundle of $M$:
\begin{center}
\begin{tikzcd}
	& BG \arrow[d] \\
	M \arrow[r,"TM" swap]\arrow[ru] & B\SO(n)
\end{tikzcd}
\end{center}
An \emph{almost $G$-manifold} is a manifold equipped with an almost $G$-structure.
\begin{example}
	Taking $G = \mathrm{U}(n) \subset \SO(2n)$ yields the usual notion of an almost complex manifold. Almost-contact manifolds correspond to $G = \mathrm{U}(n) \subset \SO(2n+1)$.
\end{example}

If the $n$-dimensional sphere $S^n$ admits an almost $G$-structure, a result of Kahn \cite[Theorem 2]{kahn-obstr} implies that for any two $n$-dimensional almost $G$-manifolds $M$ and $N$, there exists an almost $G$-structure on $M \# N$ which is compatible with the given ones on $M$ and $N$ in the complement of the disks used to form the connected sum. In general, this structure is not unique, so the connected sum $M \# N$ is not well-defined as an almost $G$-manifold. However, we will show in \ref{section-connect-sum} that a choice of almost $G$-structure $\beta$ on $S^n$ induces a canonical almost $G$-structure on the connected sum of any two almost $G$-manifolds. Hence, any such $\beta$ gives rise to a connected sum operation $(M,N) \mapsto M \#_\beta N$ for almost $G$-manifolds. Moreover, the set of almost $G$-structures on $S^n$ forms a group under this operation (with identity $\beta$). In \cref{section-group-structure}, we will show that this group acts on the set of almost $G$-structures of any $n$-dimensional almost $G$-manifold.

\subsection{Connected sums of almost $G$-manifolds} \label{section-connect-sum}

Let $S^n$ be the unit sphere in $\R^{n+1}$, equipped with its standard orientation as the boundary of the unit disk $D^{n+1}$. We will write its points as pairs $(x,z) \in \R^n \times \R$. Define
\begin{align*}
	D_- &= \{ (x,z) \in S^n \mid z < 1/2 \}, \\
	D_+ &= \{ (x,z) \in S^n \mid z > -1/2 \}, \\
	A &= D_- \cap D_+,\\
	C_\pm &= D_\pm \setminus A.
\end{align*}
Note that $D_-$ and $D_+$ are open disks, $C_-$ and $C_+$ are closed disks, $A$ is an open annulus, and $S^n = D_- \cup D_+ = C_- \sqcup A \sqcup C_+$.

Let $M$ and $N$ be smooth connected oriented $n$-dimensional manifolds. Choose orientation preserving embeddings $i_+ : \overline{D}_+ \to M$ and $i_- : \overline{D}_- \to N$. We define the connected sum $M \# N = M \#_{i_+,i_-} N$ by
$$ M \# N = \bigr( M \setminus i_+(C_+) \sqcup N \setminus i_-(C_-) \bigl)/{\sim} $$
where $i_+(x) \sim i_-(x)$ for every $x \in A$.

We will now explain how to construct a classifying map for the tangent bundle of $M \# N$. The following elementary fact from topology will be useful.
\begin{proposition} \label{proposition-cofibration}
	Let $i : A \to X$ be a cofibration. Assume $A$ is contractible. Then for any connected space $Y$ and continuous maps $F : X \to Y$, $f : A \to Y$, there exists a map $F' : X \to Y$ homotopic to $F$ such that $F' \circ i = f$.
\end{proposition}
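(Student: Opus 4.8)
The statement is a standard fact: a contractible subspace included via a cofibration can have any map on it realized up to homotopy. The plan is to use the homotopy extension property (HEP) together with the contractibility of $A$. First I would recall that since $A$ is contractible, the two maps $f : A \to Y$ and $F \circ i : A \to Y$ are homotopic (both are nullhomotopic, as any map out of a contractible space into a space is homotopic to a constant; here we only need that any two maps out of $A$ into the connected space $Y$ agree up to homotopy, which follows because $A \simeq \mathrm{pt}$). Call such a homotopy $h : A \times [0,1] \to Y$ with $h_0 = F \circ i$ and $h_1 = f$.

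Next, because $i : A \to X$ is a cofibration, the pair $(X,i(A))$ has the homotopy extension property: given the map $F : X \to Y$ (thought of as defined at time $0$) and the homotopy $h$ on $A$ starting at $h_0 = F \circ i$, there is a homotopy $H : X \times [0,1] \to Y$ with $H_0 = F$ and $H \circ (i \times \mathrm{id}) = h$. Then I would set $F' := H_1$. By construction $F'$ is homotopic to $F$ (via $H$), and $F' \circ i = H_1 \circ i = h_1 = f$, which is exactly what is required.

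The one point that needs a word of care is the claim that $f$ and $F \circ i$ are homotopic. Since $A$ is contractible, choose a contraction, i.e. a homotopy $r : A \times [0,1] \to A$ from $\mathrm{id}_A$ to a constant map at some point $a_0 \in A$. Then $f$ is homotopic to the constant map at $f(a_0)$ and $F \circ i$ is homotopic to the constant map at $F(i(a_0))$; since $Y$ is path-connected, these two constant maps are homotopic, and concatenating the three homotopies gives the desired $h$. (This is the only place connectedness of $Y$ is used.)

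There is no real obstacle here; the proof is entirely formal once one invokes the HEP for cofibrations and contractibility of $A$. If one wanted to be maximally careful, the mildly technical step is the bookkeeping in applying HEP in the form ``extend a homotopy defined on a subspace, given its initial value extends globally'' — but this is precisely the definition of cofibration (or an immediate consequence of it), so I would simply cite it. In the paper this proposition is then applied with $X$ a manifold chart, $A$ the contractible annulus or disk appearing in the connected-sum construction, and $Y = BG$, which is path-connected since $G$ is; the hypothesis that $i$ is a cofibration holds because the relevant inclusions of (closures of) open subsets of manifolds are cofibrations (e.g. they admit the structure of CW-pairs, or one uses that a closed subset which is a neighborhood deformation retract gives a cofibration).
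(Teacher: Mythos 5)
Your proof is correct and is the standard argument for this fact; the paper itself offers no proof (the statement is presented as an ``elementary fact from topology''), so there is nothing to compare against. Two small points of bookkeeping are worth noting. First, you correctly observe that the argument needs \emph{path}-connectedness of $Y$ rather than mere connectedness as stated in the proposition: if $Y$ were connected but not path-connected, one could take $A$ a point, $X = A$, and maps $F,f$ landing in distinct path-components, and the conclusion would fail. In the paper's applications $Y = BG$ for $G$ a connected Lie group, hence $Y$ is (weakly equivalent to) a CW complex and the two notions coincide, so the statement is correct as used. Second, your remark that the cofibration hypothesis holds in the paper's applications because the relevant inclusions of annuli and disks into manifolds admit CW-pair (or NDR-pair) structures is exactly the kind of justification the paper leaves implicit, and is correct.
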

Let $\tau_S : S^n \to B\SO(n)$ be a classifying map for $TS^n$. Let $\tau_M$ and $\tau_N$ be classifying maps for $TM$ and $TN$ such that $\tau_M \circ i_+ = \tau_S|_{\overline{D}_+}$ and $\tau_N \circ i_- = \tau_S|_{\overline{D}_-}$ (such maps always exist by \Cref{proposition-cofibration}). Define $\tau_{M \# N}$ to be the unique map $M \# N \to B\SO(n)$ which coincides with $\tau_M$ on $M \setminus i_+(C_+)$ and with $\tau_N$ on $N \setminus i_-(C_-)$.
\begin{proposition} \label{proposition-classifying-tangent-bundle}
	$\tau_{M\# N}$ is a classifying map for $T(M \# N)$.
\end{proposition}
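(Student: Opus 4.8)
The plan is to verify that the map $\tau_{M\#N} : M\#N \to B\SO(n)$ just constructed classifies the tangent bundle of $M\#N$, i.e.\ that $\tau_{M\#N}^*(E\SO(n)) \cong T(M\#N)$ as oriented rank-$n$ bundles. The essential point is purely local: a classifying map for a tangent bundle is determined, up to the appropriate notion of equivalence, by its restriction to an open cover, provided the restrictions agree on overlaps. So the proof should proceed by exhibiting an open cover of $M\#N$ on which $\tau_{M\#N}$ visibly pulls back the tangent bundle.

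Concretely, first I would fix the open cover $M\#N = U_M \cup U_N$, where $U_M$ is the image of $M\setminus i_+(C_+)$ and $U_N$ is the image of $N\setminus i_-(C_-)$; their intersection is (a collar neighborhood of) the gluing annulus $A$, which is connected. On $U_M$ the map $\tau_{M\#N}$ is by construction $\tau_M$ restricted to the open submanifold $M\setminus i_+(C_+) \subset M$, hence classifies $TM|_{M\setminus i_+(C_+)} = T U_M$; similarly on $U_N$ it classifies $TU_N$. Thus on each piece of the cover $\tau_{M\#N}$ pulls back the universal bundle to the tangent bundle. Next I would observe that on $U_M \cap U_N$ both identifications agree: this is exactly where the normalization $\tau_M \circ i_+ = \tau_S|_{\overline D_+}$ and $\tau_N \circ i_- = \tau_S|_{\overline D_-}$ is used, together with the fact that the gluing in $M\#N$ identifies $i_+(x)$ with $i_-(x)$ for $x\in A$ and that the differential of this identification is (homotopic through bundle isomorphisms to) the identity on $TS^n|_A$ under the identifications $TU_M|_A \cong TS^n|_A \cong TU_N|_A$ induced by $i_\pm$. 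In other words, the two pullback isomorphisms $\tau_{M\#N}^*(E\SO(n))|_{U_M\cap U_N} \cong TU_M|_{U_M\cap U_N}$ and $\cong TU_N|_{U_M\cap U_N}$ differ by the transition data of $T(M\#N)$ itself, so they patch to a global isomorphism $\tau_{M\#N}^*(E\SO(n)) \cong T(M\#N)$.

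To make the patching rigorous I would invoke the standard fact that two maps $X \to B\SO(n)$ are homotopic iff the pulled-back bundles are isomorphic, and that a bundle on $X$ together with an open cover and isomorphisms with given bundles on the cover pieces, compatible on overlaps, determines the bundle; applying this to $X = M\#N$ with the two-element cover $\{U_M, U_N\}$ reduces everything to the two local statements above plus the overlap compatibility. Since $A$ is contractible (it deformation retracts to an equator $S^{n-1}$ — actually to be safe one uses that $A$ is connected and the relevant transition functions into $\SO(n)$ are already pinned down by the $\tau_S$ normalization on all of $\overline D_\pm$), there is no room for the overlap isomorphisms to be twisted, and the compatibility is immediate from the construction of $\tau_{M\#N}$.

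The main obstacle is bookkeeping rather than conceptual: one must be careful that the smooth structure on $M\#N$ is set up so that $TU_M$ and $TU_N$ genuinely restrict to the same bundle on the overlap — i.e.\ that the gluing diffeomorphism $i_- \circ i_+^{-1}$ on $A$ is orientation-preserving (which holds since $i_+$ reverses and $i_-$ preserves orientation relative to the sphere, or vice versa, so that $M\#N$ is oriented) and that its derivative, transported via the $\tau_S$-normalized classifying data, is the identity on the level of classifying maps. Once this identification of $TS^n|_A$-structures is in hand, the conclusion that $\tau_{M\#N}$ classifies $T(M\#N)$ is formal. I would therefore write the proof as: (1) recall the two-piece open cover and its contractible-overlap property; (2) note $\tau_{M\#N}$ restricts to a classifying map of the tangent bundle on each piece; (3) check overlap compatibility using the normalizations $\tau_M\circ i_+ = \tau_S|_{\overline D_+}$, $\tau_N\circ i_- = \tau_S|_{\overline D_-}$; (4) conclude by the gluing lemma for bundles/classifying maps.
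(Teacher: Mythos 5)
Your overall strategy matches the paper's: both view $M\#N$ (and $\tau_{M\#N}^*\tilde\gamma_n$) as assembled from two pieces glued along the annulus $A$, and both aim to patch together local bundle isomorphisms. However, there is a genuine gap at the patching step, and you have in fact put your finger on it and then waved it away.

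The issue is your assertion that ``the compatibility is immediate from the construction of $\tau_{M\#N}$,'' together with the initial claim that $A$ is contractible. As you yourself notice parenthetically, $A$ is not contractible (it deformation retracts to an equatorial $S^{n-1}$), so a priori there \emph{is} room for the two local bundle isomorphisms to disagree on the overlap: they could differ by a non-nullhomotopic automorphism of $TS^n|_A$. Your backup remark — that the transition data is ``pinned down by the $\tau_S$ normalization on all of $\overline D_\pm$'' — is closer to the truth but does not close the gap, because the normalization $\tau_M\circ i_+ = \tau_S|_{\overline D_+}$ is an equality of \emph{maps} to $B\SO(n)$; it does not by itself produce a canonical bundle isomorphism $\tau_M^*\tilde\gamma_n \cong TM$ that restricts to a chosen $\phi : TS^n \to \tau_S^*\tilde\gamma_n$ on the embedded disk. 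The set of such isomorphisms on all of $M$ is a torsor over $\mathrm{Aut}(TM)$, and one must show that some element of this torsor has the desired restriction. This is precisely what the paper's \Cref{lemma:extension-bundle-automorphism} supplies: any automorphism of the pullback of a bundle over an embedded disk extends to an automorphism of the bundle over the whole manifold, the proof of which uses the connectivity of $\mathrm{GL}^+(n)$ to trivialize the automorphism near the boundary of a slightly enlarged disk. Without this extension step, ``pick a classifying isomorphism on $U_M$, pick one on $U_N$, observe they agree on $A$'' simply doesn't go through — the two choices will generically disagree on $A$, and you must argue you can correct one of them globally. So the decomposition and the reduction are right, but you should supply (or cite) the extension lemma to make step (3) of your outline honest.
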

We start with an easy topological lemma.
\begin{lemma}\label{lemma:extension-bundle-automorphism}
	Let $E$ be an oriented vector bundle over a manifold $M^n$ and let $i : D^n \to M^n$ be an embedding. Then any automorphism of $i^*E$ can be extended to an automorphism of $E$.
\end{lemma}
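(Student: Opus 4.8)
\textbf{Proof plan for Lemma~\ref{lemma:extension-bundle-automorphism}.}

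The plan is to reduce the statement to a triviality by exploiting the fact that a vector bundle over a disk is trivial. First I would note that since $D^n$ is contractible, the pullback bundle $i^*E$ is trivial, say $i^*E \cong D^n \times \R^k$ where $k = \op{rank}(E)$; fix such a trivialization $\Psi$. Under $\Psi$, an automorphism $\phi$ of $i^*E$ becomes a map $g : D^n \to \mathrm{GL}^+(k,\R)$ (the identity component, since $E$ is oriented and $\phi$ preserves the orientation). Because $\mathrm{GL}^+(k,\R)$ is connected and $D^n$ is contractible, $g$ is homotopic to a constant map; but more to the point, what we want is simply to extend the automorphism, and the obstruction to doing so will vanish because the relevant bundle restricted to a collar of $i(D^n)$ is trivial.

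The key steps, in order, would be: (1) Choose a slightly larger embedded disk $i' : D^n_{1+\epsilon} \to M$ extending $i$, and a tubular-neighborhood-type identification so that $E$ restricted to $i'(D^n_{1+\epsilon})$ is trivialized compatibly with $\Psi$; this uses that $E|_{i'(D^n_{1+\epsilon})}$ is trivial since the base is contractible. (2) In this trivialization, $\phi$ corresponds to $g : D^n \to \mathrm{GL}^+(k,\R)$. Using a smooth bump function $\rho : M \to [0,1]$ which equals $1$ on $i(D^n_{1/2})$, is supported in $i'(D^n_{1+\epsilon})$, and — after an isotopy of the disk — can be arranged so that $g$ is constant near $\partial D^n$ (replacing $g$ by a homotopic map rel a smaller disk, using that $\mathrm{GL}^+(k,\R)$ is connected, so the ``boundary value'' of $g$ can be deformed to the identity without changing $g$ on $D^n_{1/2}$); one then interpolates between $g$ and the identity using $\rho$. (3) Define the global automorphism $\tilde\phi$ of $E$ to be this interpolated map inside $i'(D^n_{1+\epsilon})$ and the identity outside; this is well-defined and smooth because the two definitions agree on the overlap region where $\rho \equiv 0$. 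Then $\tilde\phi|_{i^*E}$ agrees with $\phi$ on $i(D^n_{1/2})$; after precomposing with a diffeomorphism of $D^n$ pushing everything into the sub-disk $D^n_{1/2}$ (or simply rescaling $i$), we get an extension of $\phi$ itself.

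The main obstacle — really the only subtlety — is step~(2): one cannot extend $g : D^n \to \mathrm{GL}^+(k,\R)$ to equal the identity near the boundary while keeping it fixed everywhere, but one does not need to; it suffices to modify $g$ on the annulus $D^n \setminus D^n_{1/2}$, which is possible precisely because $\mathrm{GL}^+(k,\R)$ is connected, so any map on that annulus with prescribed value $g$ on the inner boundary circle can be deformed to have value the identity on the outer boundary. Everything else is routine: triviality of bundles over contractible bases, existence of bump functions, and the gluing of locally-defined automorphisms. I would present this in two or three sentences in the final writeup, since the reader will readily supply the details.
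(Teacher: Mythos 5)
Your plan shares the basic strategy of the paper's proof---trivialize $i^*E$ so that $\phi$ becomes $g : D^n \to \mathrm{GL}^+(k,\R)$, enlarge the embedding to a slightly bigger disk, and cut $g$ off to the identity so that it extends to a global automorphism---but the cutoff takes place on the wrong annulus, and this creates a real gap. You propose to modify $g$ on $D^n \setminus D^n_{1/2}$, i.e.\ strictly inside the domain of $\phi$. The automorphism $\tilde\phi$ of $E$ you then build agrees with $\phi$ only over $i(D^n_{1/2})$, not over all of $i(D^n)$, which is what the lemma requires. The proposed repair (``precomposing with a diffeomorphism of $D^n$ pushing everything into the sub-disk'' or ``rescaling $i$'') does not help: the lemma concerns the given embedding $i$, and an extension of the automorphism of $(i\circ\psi)^*E$ for a shrinking diffeomorphism $\psi$ is simply not an extension of $\phi$. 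The paper performs the cutoff on the \emph{outer} annulus: extend $i$ (using a tubular neighborhood of $i(\partial D^n)$) to an embedding $\tilde{i}:D^n_2\to M$, and extend $g$ over $D^n_2\setminus D^n$ to a map $\tilde g$ that equals $g$ on all of $D^n$ and is constantly $\mathrm{Id}$ near $\partial D^n_2$. Then setting $\tilde\phi=\tilde g$ on $\tilde i(D^n_2)$ and $\mathrm{Id}$ elsewhere gives a global automorphism that genuinely restricts to $\phi$.

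Two smaller imprecisions. The feasibility of the annular extension is not due to connectivity of $\mathrm{GL}^+(k,\R)$ alone: you also need that $g|_{\partial D^n}:S^{n-1}\to\mathrm{GL}^+(k,\R)$ is nullhomotopic, which holds because $g$ is already defined on all of $D^n$; connectivity is then used only to carry the resulting constant to $\mathrm{Id}$. And the phrase ``interpolates between $g$ and the identity using $\rho$'' reads as the convex combination $\rho g+(1-\rho)\mathrm{Id}$, which need not stay in $\mathrm{GL}^+(k,\R)$; the correct operation is to place the nullhomotopy of $g|_{\partial D^n}$ on the outer annulus, not to interpolate pointwise in the space of matrices.
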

\begin{proof}
	Let $\phi$ be an automorphism of $i^*E$. Since $D^n$ is contractible, we can trivialize $i^*E$ and think of $\phi$ as a map $D^n \to \mathrm{GL}^+(n)$. Clearly $\phi|_{\partial D^n}$ is nullhomotopic, and since $\mathrm{GL}^+(n)$ is connected, we can extend $\phi$ to a map $\tilde{\phi} : D_2^n \to \mathrm{GL}^+(n)$ which is constant with value $\mathrm{Id} \in \mathrm{GL}^+(n)$ near $\partial D_2^n$. Using a tubular neighborhood of $i(\partial D^n) \subset M$, we can also extend $i$ to an embedding $\tilde{i} : D_2^n \to M^n$. Then $\tilde{\phi}$ gives us an automorphism of $\tilde{i}^*E$ which is equal to the identity over a neighborhood of $\partial D_2^n \subset D_2^n$ and hence extends trivially to an automorphism of $E$.
\end{proof}

\begin{proof}[Proof of \Cref{proposition-classifying-tangent-bundle}]
	Let $\tilde{\gamma}_n \to B\SO(n)$ be the universal bundle over $B\SO(n)$. We want to show that $T(M \# N)$ is isomorphic to $\tau_{M \# N}^*\tilde{\gamma}_n$.
	
	The tangent bundle $T(M \# N)$ of the connected sum is obtained by gluing $T(M \setminus i_+(C_+))$ and $T(N \setminus i_-(C_-))$ along the maps $di_+ : TA \to T(M \setminus i_+(C_+))$ and $di_- : TA \to T(N \setminus i_-(C_-))$. Because of our assumption that $\tau_M \circ i_+ = \tau_S|_{\overline{D}_+}$ and $\tau_N \circ i_- = \tau_S|_{\overline{D}_-}$, we also have that $\tau_{M \# N}^*\tilde{\gamma}_n$ is obtained by gluing $(\tau_M|_{M \setminus i_+(C_+)})^*\tilde{\gamma}_n$ and $(\tau_N|_{N \setminus i_-(C_-)})^*\tilde{\gamma}_n$ along bundle maps $(\tau_S|_A)^*\tilde{\gamma}_n \to (\tau_M|_{M \setminus i_+(C_+)})^*\tilde{\gamma}_n$ and $(\tau_S|_A)^*\tilde{\gamma}_n \to (\tau_N|_{N \setminus i_-(C_-)})^*\tilde{\gamma}_n$ covering $i_+ : A \to M \setminus i_+(C_+)$ and $i_- : A \to N \setminus i_-(C_-)$ respectively.
	Hence, in order to show that $T(M \# N)$ is isomorphic to $\tau_{M \# N}^*\tilde{\gamma}_n$, it suffices to construct a commutative diagram
	\begin{center}
	\begin{tikzcd}
		T(M \setminus i_+(C_+)) \ar[r,dashed] & (\tau_M|_{M \setminus i_+(C_+)})^*\tilde{\gamma}_n \\
		TA \ar[u,"di_+"]\ar[d,swap,"di_-"]\ar[r,dashed] & (\tau_S|_A)^*\tilde{\gamma}_n \ar[u] \ar[d] \\
		T(N \setminus i_-(C_-)) \ar[r,dashed] & (\tau_N|_{N\setminus i_-(C_-)})^*\tilde{\gamma}_n
	\end{tikzcd}
	\end{center}
	where the horizontal arrows are bundle isomorphisms.

	Start by fixing an isomorphism $\phi : TS^n \to \tau_S^*\tilde{\gamma}_n$, and let the middle arrow of the diagram be the restriction of $\phi$ to $TA$. To get the top and bottom arrows, it suffices to find bundle isomorphisms completing the following commutative squares:
	\begin{center}
	\begin{tikzcd}
		TM \ar[r,dashed] & \tau_M^*\tilde{\gamma}_n \\
		T\overline{D}_+ \ar[u,"di_+"]\ar[r,swap,"\phi"] & (\tau_S|_{\overline{D}_+})^*\tilde{\gamma}_n \ar[u]
	\end{tikzcd}
	\begin{tikzcd}
		T\overline{D}_- \ar[d,swap,"di_-"]\ar[r,"\phi"] & (\tau_S|_{\overline{D}_-})^*\tilde{\gamma}_n \ar[d] \\
		TN \ar[r,dashed] & \tau_N^*\tilde{\gamma}_n 
	\end{tikzcd}
	\end{center}
	This is possible by \Cref{lemma:extension-bundle-automorphism}.
\end{proof}

We are now ready to define the connected sum of two almost $G$-manifolds.
\begin{definition}\label{definition:almost-connected-sum}
	Suppose that $S^n$ admits an almost $G$-structure, and fix a choice $\beta$ of one such structure. Let $\beta_M$ and $\beta_N$ be almost $G$-structures on $M$ and $N$ respectively. We define an almost $G$-structure $\beta_M \#_\beta \beta_N$ on $M \# N$ as follows.

	Pick maps $\tilde{\tau}_S : S^n \to BG$, $\tilde{\tau}_M : M \to BG$ and $\tilde{\tau}_N : N \to BG$ representing $\beta$, $\beta_M$ and $\beta_N$ respectively. By \Cref{proposition-cofibration}, we can assume that $\tilde{\tau}_M \circ i_+ = \tilde{\tau}_S|_{\overline{D}_+}$ and $\tilde{\tau}_N \circ i_- = \tilde{\tau}_S|_{\overline{D}_-}$. Hence, there is a unique map
	\[
		\tilde{\tau}_{M \# N} = \tilde{\tau}_M \#_{\tilde{\tau}_S} \tilde{\tau}_N : M \# N \to BG
	\]
	 which coincides with $\tilde{\tau}_M$ on $M \setminus i_+(C_+)$ and with $\tilde{\tau}_N$ on $N \setminus i_-(C_-)$. By \Cref{proposition-classifying-tangent-bundle}, the composition
	\begin{center}
	\begin{tikzcd}
		M \# N \arrow[r,"\tilde{\tau}_{M\# N}"] & BG \arrow[r] & B\SO(n)
	\end{tikzcd}
	\end{center}
	is a classifying map for $T(M \# N)$. Hence, we can (and do) define $\beta_M \#_\beta \beta_N$ to be the homotopy class of $\tilde{\tau}_{M \# N}$.
\end{definition}

\begin{proposition}\label{proposition-connected-sum-well-defined}
	The almost $G$-structure $\beta_M \#_\beta \beta_N$ is well-defined, i.e. independent of the choice of $\tilde{\tau}_S$, $\tilde{\tau}_M$ and $\tilde{\tau}_N$.
\end{proposition}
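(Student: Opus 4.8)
The plan is to show that any two choices of representatives produce homotopic maps $\tilde{\tau}_{M\# N} : M\# N \to BG$. Fix two triples of representatives $(\tilde{\tau}_S,\tilde{\tau}_M,\tilde{\tau}_N)$ and $(\tilde{\tau}_S',\tilde{\tau}_M',\tilde{\tau}_N')$, both arranged (via \Cref{proposition-cofibration}) so that $\tilde{\tau}_M\circ i_+ = \tilde{\tau}_S|_{\overline D_+}$, $\tilde{\tau}_N\circ i_- = \tilde{\tau}_S|_{\overline D_-}$, and similarly for the primed triple. The first step is to reduce to the case $\tilde{\tau}_S = \tilde{\tau}_S'$. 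Indeed, since $\tilde{\tau}_S$ and $\tilde{\tau}_S'$ both represent $\beta$, there is a homotopy $H_S : S^n\times[0,1]\to BG$ between them; I would use the cofibration property of $\overline D_+ \hookrightarrow M$ (and $\overline D_- \hookrightarrow N$) in its parametrized form to upgrade the homotopy $H_M$ from $\tilde{\tau}_M$ to $\tilde{\tau}_M'$ (which exists since both represent $\beta_M$) to a homotopy that restricts on $i_+(\overline D_+)$ to $H_S$ precomposed with $i_+$, and likewise for $N$; these parametrized homotopies then glue to a homotopy of the connected-sum maps. After this reduction we may assume $\tilde{\tau}_S = \tilde{\tau}_S'$ throughout.

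With $\tilde{\tau}_S$ fixed, the remaining task is: given homotopies $H_M : \tilde{\tau}_M \simeq \tilde{\tau}_M'$ and $H_N : \tilde{\tau}_N \simeq \tilde{\tau}_N'$, both \emph{rel} the condition that the restriction to the gluing disks equals $\tilde{\tau}_S|_{\overline D_\pm}$, produce a homotopy $\tilde{\tau}_{M\# N}\simeq \tilde{\tau}_{M\# N}'$. The subtlety is that $H_M$ and $H_N$ a priori need not agree on the overlap annulus $A$: on $i_+(A)\subset M$, $H_M$ is the constant homotopy at $\tilde{\tau}_S|_A$, and on $i_-(A)\subset N$, $H_N$ is the constant homotopy at $\tilde{\tau}_S|_A$ — so in fact they \emph{do} agree there, both being constant. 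This is precisely why the normalization $\tilde{\tau}_M\circ i_+ = \tilde{\tau}_S|_{\overline D_+}$ (which forces the homotopies to be constant on the gluing region once we also arrange $H_M\circ(i_+\times\mathrm{id}) = \tilde{\tau}_S|_{\overline D_+}\circ\mathrm{pr}$) is the key device. So the second step is to arrange, again by the parametrized cofibration lemma applied to $\overline D_+\hookrightarrow M$, that $H_M$ is stationary on $i_+(\overline D_+)$ with value $\tilde{\tau}_S|_{\overline D_+}$; symmetrically for $H_N$ on $i_-(\overline D_-)$. Then $H_M$ restricted to $M\setminus i_+(C_+)$ and $H_N$ restricted to $N\setminus i_-(C_-)$ agree on the common annulus $i_\pm(A)$ (both are the constant homotopy at $\tilde{\tau}_S|_A$), hence glue to a well-defined continuous map $H_{M\# N} : (M\# N)\times[0,1]\to BG$. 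By construction $H_{M\# N}$ is the required homotopy from $\tilde{\tau}_{M\# N}$ to $\tilde{\tau}_{M\# N}'$, and composing with $BG\to B\SO(n)$ shows it is a homotopy through classifying maps of $T(M\# N)$ (using \Cref{proposition-classifying-tangent-bundle}), so it descends to an equality of almost $G$-structures.

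The main obstacle is purely technical: making the parametrized version of \Cref{proposition-cofibration} precise, i.e.\ showing that for a cofibration $i : A\to X$ with $A$ contractible, any homotopy $H : X\times[0,1]\to Y$ can be deformed (through homotopies of maps $X\to Y$ with the same endpoints up to a further small adjustment) to one whose restriction to $i(A)\times[0,1]$ is a prescribed homotopy of maps $A\to Y$, provided the prescribed homotopy agrees with $H|_{i(A)\times\{0,1\}}$ at the endpoints. This follows from the homotopy extension property applied to the cofibration $(X\times\{0,1\})\cup(A\times[0,1]) \hookrightarrow X\times[0,1]$ together with the contractibility of $A$ (which kills the obstruction to matching the prescribed homotopy on $A$, since $Y$ is connected and $[A,Y]$ is trivial — here connectedness of $G$ is not needed, only of $Y=BG$, which holds as $G$ is connected). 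Once this parametrized statement is in hand, everything else is the routine gluing described above. I would state and prove the parametrized cofibration lemma first, then carry out the two-step reduction, and conclude.
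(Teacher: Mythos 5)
Your two-step reduction is a reasonable reorganization of the argument, and the overall strategy (normalize via the cofibration property, then glue homotopies that agree on the annulus) matches the paper's. But there is a genuine gap in your justification of the "parametrized cofibration lemma," and your parenthetical claim that connectedness of $G$ is not needed is incorrect.

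Concretely, consider the step where you want to deform a homotopy $H: X\times[0,1]\to Y$, rel its endpoints, so that its restriction to $A\times[0,1]$ becomes a prescribed homotopy $H_S$ with matching endpoints. Using the homotopy extension property for the cofibration $(X\times\{0,1\})\cup(A\times[0,1])\hookrightarrow X\times[0,1]$, it suffices to exhibit a homotopy \emph{rel $A\times\{0,1\}$} from $H|_{A\times[0,1]}$ to $H_S$. Both of these are paths in the mapping space $Y^A$ from $H_0|_A$ to $H_1|_A$; two such paths are homotopic rel endpoints if and only if the loop obtained by concatenating one with the reverse of the other is nullhomotopic, i.e.\ the obstruction lies in $\pi_1(Y^A)$. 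Since $A$ is contractible, $Y^A\simeq Y$, so this is $\pi_1(Y)=\pi_1(BG)\cong\pi_0(G)$. This is precisely why the hypothesis that $G$ is connected is needed, and it is exactly where the paper's proof uses it: they compute the obstruction to extending a map from $\overline{D}_+\times\partial I^2$ to $\overline{D}_+\times I^2$ as living in $H^2(\overline{D}_+\times I^2,\overline{D}_+\times\partial I^2;\pi_1(BG))\cong\pi_1(BG)\cong\pi_0(G)$. Your invocation of "$[A,Y]$ is trivial" is the wrong obstruction group: $[A,Y]=\pi_0(Y^A)$ controls whether a \emph{map} on $A$ can be prescribed (that is the content of \Cref{proposition-cofibration}), but the lemma you need concerns \emph{homotopies} with fixed endpoints, and those are controlled by $\pi_1(Y^A)$. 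Also, as a side remark, $BG$ is path-connected for \emph{any} topological group $G$, so connectedness of $BG$ is not the relevant condition; what you actually need is simple-connectivity of $BG$ in degree one, i.e.\ $\pi_1(BG)=\pi_0(G)=0$. Once this obstruction is identified and killed using connectedness of $G$, your two-step argument does go through, and it is essentially the paper's argument split into two applications of the same lemma rather than one application over the square $I^2$.
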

\begin{proof}
	Let $\tilde{\tau}_S^j$, $\tilde{\tau}_M^j$, and $\tilde{\tau}_N^j$ represent $\beta$, $\beta_M$ and $\beta_N$ respectively, where $j \in \{0,1\}$. As in \Cref{definition:almost-connected-sum}, we assume that $\tilde{\tau}_M^j \circ i_+ = \tilde{\tau}_S^j|_{\overline{D}_+}$ and $\tilde{\tau}_N^j \circ i_- = \tilde{\tau}_S^j|_{\overline{D}_-}$.

	Fix a homotopy $\tilde{\tau}_S^t$ between $\tilde{\tau}_S^0$ and $\tilde{\tau}_S^1$. We will show that there exist homotopies $\tilde{\tau}_M^t$ and $\tilde{\tau}_N^t$ such that $\tilde{\tau}_M^t \circ i_+ = \tilde{\tau}_S^t|_{\overline{D}_+}$ and $\tilde{\tau}_N^t \circ i_- = \tilde{\tau}_S^t|_{\overline{D}_-}$. This implies that $\tilde{\tau}_{M\# N}^0$ is homotopic to $\tilde{\tau}_{M\# N}^1$ and hence that $\beta_M \#_\beta \beta_N$ is well-defined.

	Pick an arbitrary homotopy $h : M \times I \to BG$ between $\tilde{\tau}_M^0$ and $\tilde{\tau}_M^1$ and define a map
	\[ g : \overline{D}_+ \times \partial I^2 \to BG \]
	by $g(x,t,0) = h(i_+(x),t)$, $g(x,0,s) = \tilde{\tau}_S^0(x)$, $g(x,1,s) = \tilde{\tau}_S^1(x)$ and $g(x,t,1) = \tilde{\tau}_S^t(x)$. We can extend $g$ to a map $\hat{g} : \overline{D}_+ \times I^2 \to BG$ since the obstruction to doing so lies in
	\[ H^2(\overline{D}_+ \times I^2, \overline{D}_+ \times \partial I^2 ; \pi_1(BG)) \cong \pi_1(BG) \cong \pi_0(G), \]
	which is trivial by our assumption that $G$ is connected.

	Let
	\[
		f : \bigl( M \times (I \times \{0\} \cup \{0\} \times I \cup \{1\} \times I) \bigr) \cup \bigl( i_+(\overline{D}_+) \times I^2 \bigr) \to BG
	\]
	be defined by
	\begin{itemize}
		\item $f(x,t,0) = h(x,t)$, $f(x,0,s) = \tilde{\tau}_M^0(x)$ and $f(x,1,s) = \tilde{\tau}_M^1(x)$ for $x \in M$;
		\item $f(x,t,s) = \hat{g}(i_+^{-1}(x),t,s)$ for $x \in i_+(\overline{D}_+)$.
	\end{itemize}
	Since $i_+ : \overline{D}_+ \to M$ is a cofibration, the domain of $f$ is a retract of $M \times I^2$. We can therefore extend $f$ to a map $\hat{f} : M \times I^2 \to BG$. Restricting $\hat{f}$ to $M \times I \times \{1\}$ then provides us with a homotopy $\tilde{\tau}_M^t$ such that $\tilde{\tau}_M^t \circ i_+ = \tilde{\tau}_S^t|_{\overline{D}_+}$.

	The same argument gives us a homotopy $\tilde{\tau}_N^t$ such that $\tilde{\tau}_N^t \circ i_- = \tilde{\tau}_S^t|_{\overline{D}_-}$, so this completes the proof.
\end{proof}

\begin{definition} \label{definition:connected-sum-almost-g}
	If $M = (M,\beta_M)$ and $N = (N,\beta_N)$ are almost $G$-manifolds, their connected sum (with respect to $\beta$) is the almost $G$-manifold $M \#_\beta N := (M \# N, \beta_M \#_\beta \beta_N)$.
\end{definition}
As usual, there is an ambiguity in the notation $M \#_\beta N$ since the construction of the connected sum involves a choice of embeddings $i_+ : \overline{D}_+ \to M$, $i_- : \overline{D}_- \to N$. However, the result is independent of these choices up to the appropriate notion of equivalence, as one would expect.
\begin{definition}
	A \emph{diffeomorphism of almost $G$-manifolds} $f : (M,\beta_M) \to (N,\beta_N)$ consists of a smooth diffeomorphism $f : M \to N$ such that $f^*\beta_N = \beta_M$.
\end{definition}
\begin{proposition} \label{proposition-diffeomorphism-almost-manifolds}
	The connected sum $M \#_\beta N$ is well-defined up to diffeomorphism of almost $G$-manifolds. More precisely, given any orientation preserving embeddings $i_+, j_+ : \overline{D}_+ \to M$ and $i_-,j_- : \overline{D}_- \to N$, there exists an orientation preserving diffeomorphism $\phi : M \#_{i_+,i_-} N \to M \#_{j_+,j_-} N$ such that
	\[
		\phi^*(\beta_M \#_{j_+,j_-,\beta} \beta_N) = \beta_M \#_{i_+,i_-,\beta} \beta_N
	\]
	for any almost $G$-structures $\beta_M$, $\beta_N$ on $M$, $N$.
\end{proposition}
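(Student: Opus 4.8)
The plan is to reduce the statement to two separate facts: first, that the smooth connected sum $M \# N$ does not depend (up to orientation-preserving diffeomorphism) on the choice of embedded disks, and second, that any such diffeomorphism can be arranged to respect the almost $G$-structures constructed in \Cref{definition:almost-connected-sum}. The first fact is a classical result in differential topology (the uniqueness of connected sum), which follows from the disk theorem of Palais and Cerf: any two orientation-preserving embeddings of a disk into a connected oriented manifold are ambient isotopic. So I would begin by invoking this to produce an ambient isotopy $\Psi_t$ of $M$ carrying $i_+$ to $j_+$ and, independently, an ambient isotopy of $N$ carrying $i_-$ to $j_-$. These isotopies descend to an orientation-preserving diffeomorphism $\phi : M \#_{i_+,i_-} N \to M \#_{j_+,j_-} N$ (one has to check that the isotopies can be taken to be the identity on the annular overlap region $A$ where the gluing happens, or alternatively absorb the discrepancy on $A$ into a diffeomorphism of $S^n$ fixing a neighborhood of $A$; this is routine).

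Next I would address the compatibility with almost $G$-structures. Since $\Psi_1 = \phi$ agrees with the identity outside the disks (away from the gluing locus), the pullback $\phi^*(\beta_M \#_{j_+,j_-,\beta}\beta_N)$ and $\beta_M \#_{i_+,i_-,\beta}\beta_N$ are represented by maps $M \# N \to BG$ that agree on the complement of a disk. The key point is then that $\phi^*\beta_M = \beta_M$ as almost $G$-structures on $M$, because $\phi$ is isotopic to the identity on $M$ (an ambient isotopy induces a homotopy of classifying maps over $BG$, not just over $B\SO(n)$ — one simply composes the lift with the isotopy). The same holds for $N$. Using \Cref{proposition-cofibration} to normalize the lifts so that $\tilde\tau_M \circ i_+ = \tilde\tau_M \circ j_+ = \tilde\tau_S|_{\overline D_+}$ after the isotopy, one sees that $\phi^*\tilde\tau_{M\#_{j}N}$ and $\tilde\tau_{M\#_{i}N}$ are homotopic as maps to $BG$ lifting the tangent classifying map. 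This is essentially the same homotopy-extension argument already carried out in the proof of \Cref{proposition-connected-sum-well-defined}, applied now to the diffeomorphism $\phi$ in place of a change of representatives.

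Concretely, the key steps in order are: (1) invoke the disk theorem to get ambient isotopies of $M$ and of $N$, arranged to fix a neighborhood of the gluing annulus, and assemble $\phi$; (2) observe $\phi$ is orientation-preserving and isotopic to the identity on each summand; (3) conclude $\phi^*\beta_M = \beta_M$, $\phi^*\beta_N = \beta_N$; (4) normalize all the lifting maps via \Cref{proposition-cofibration} so the gluing is literally the one from \Cref{definition:almost-connected-sum}; (5) run the homotopy-extension/obstruction argument (identical in spirit to \Cref{proposition-connected-sum-well-defined}, using $\pi_1(BG) \cong \pi_0(G) = 0$) to show the two almost $G$-structures on $M \# N$ pull back to each other under $\phi$.

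The main obstacle I expect is bookkeeping at the gluing annulus: the disk theorem gives ambient isotopies moving $i_\pm$ to $j_\pm$, but to get a well-defined diffeomorphism of the \emph{connected sums} one must ensure the isotopies restrict compatibly on the overlap $A = D_- \cap D_+$, or equivalently that the induced self-diffeomorphism of $S^n$ (pre- and post-composition with $i_\pm$, $j_\pm$) can be isotoped rel a neighborhood of $A$ to the identity. This uses connectedness of $\mathrm{Diff}^+(S^n)$ relative to a disk only in low dimensions, so the cleanest fix is to not demand the isotopies fix $A$ pointwise but instead to let $\phi$ differ from a ``standard'' gluing diffeomorphism by a diffeomorphism supported near the neck, and then note that such a neck diffeomorphism, being supported in a contractible region, does not affect the homotopy class of the lift to $BG$ — again because $\pi_0(G) = 0$ kills the relevant obstruction. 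Once this is set up correctly, everything else is a direct transcription of arguments already in the appendix.
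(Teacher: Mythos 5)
Your proposal is correct and follows the same route as the paper, whose entire proof is one sentence citing the isotopy extension theorem ``as in the smooth case.'' The only point worth flagging is that your worry about compatibility at the gluing annulus $A$ is unfounded, so the ``fix'' you propose (a correction diffeomorphism supported near the neck, or an isotopy in $\mathrm{Diff}^+(S^n)$ rel a neighborhood of $A$) is unnecessary. The disk theorem gives isotopies of embeddings $i_+ \rightsquigarrow j_+$ in $M$ and $i_- \rightsquigarrow j_-$ in $N$, and the isotopy extension theorem upgrades these to ambient isotopies $\Psi^M_t$ of $M$ and $\Psi^N_t$ of $N$ satisfying $\Psi^M_1 \circ i_+ = j_+$ and $\Psi^N_1 \circ i_- = j_-$ \emph{on all of} $\overline{D}_\pm$. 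Consequently, for $x \in A$ the identification $i_+(x) \sim i_-(x)$ in $M \#_{i_+,i_-} N$ is carried by the pair $(\Psi^M_1,\Psi^N_1)$ exactly to $j_+(x) \sim j_-(x)$ in $M \#_{j_+,j_-} N$, so $\phi$ descends with no discrepancy at the neck. Once that is clarified, the remainder of your argument is exactly right: $\tilde{\tau}_M \circ \Psi^M_1$ is a representative of $\beta_M$ (since $\Psi^M_1$ is isotopic to the identity) satisfying $(\tilde{\tau}_M \circ \Psi^M_1) \circ i_+ = \tilde{\tau}_S|_{\overline{D}_+}$, so $\phi^*(\tilde{\tau}_M \#_{\tilde{\tau}_S} \tilde{\tau}_N)$ is literally a representative of $\beta_M \#_{i_+,i_-,\beta}\beta_N$ in the sense of \Cref{definition:almost-connected-sum}, and \Cref{proposition-connected-sum-well-defined} identifies it with the one built from the original representatives.
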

\begin{proof}
	This follows from the isotopy extension theorem as in the smooth case.
\end{proof}

\rmk[Connected sums of contact manifolds] \label{remark:connected-sum-genuine}
	Suppose that $(M_1, \a_1), (M_2, \a_2)$ are contact manifolds. Then one can form the connected sum $(M_1 \# M_2, \a_1 \# \a_2)$, which is also a contact manifold. The connected sum is obtained by choosing Darboux balls in $M_1, M_2$ and connecting them by a ``neck". This operation can also be understood as a contact surgery along a $0$-sphere. We refer to \cite[Sec.\ 6.2]{bvk} and \cite[Sec.\ 3]{van-koert} for more details.

	Let $\beta \in \alm_{U(n)}(S^{2n-1})$ be the almost-contact structure induced by the standard contact structure on the sphere. Then the operation of connected sum (with respect to $\beta$) of almost $\mathrm{U}(n-1)$-manifolds defined in \Cref{definition:connected-sum-almost-g}, and the operation of connected sum of contact manifolds described above, commute with the forgetful map from contact manifolds to almost-contact manifolds. This can be shown as in the proof of \Cref{proposition-classifying-tangent-bundle}, replacing $\mathrm{BSO(n)}$ with $\mathrm{BU(n-1)}$. 
\ermk

The main properties of the connected sum in the smooth case have analogues for almost $G$-manifolds:
\begin{proposition} \label{proposition:monoid}
	Let $M$, $N$, $P$ be connected almost $G$-manifolds of dimension $n$ and let $\beta$, $\beta'$ be almost $G$-structures on $S^n$.
	\begin{enumerate}
		\item $M \#_\beta (S^n,\beta) \cong M$.
		\item $(M \#_\beta N) \#_{\beta'} P \cong M \#_\beta (N \#_{\beta'} P)$.
	\end{enumerate}
\end{proposition}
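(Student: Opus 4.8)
The plan is to establish both isomorphisms by unwinding the definition of $\#_\beta$ from \Cref{definition:almost-connected-sum,definition:connected-sum-almost-g} and combining it with the corresponding facts for the underlying smooth connected sum, together with the freedom in choosing the classifying maps $\tilde\tau_S$, $\tilde\tau_M$, $\tilde\tau_N$ afforded by \Cref{proposition-cofibration}. I would treat (1) and (2) in turn.

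For (1), recall that $M \#_\beta (S^n,\beta)$ is, as a smooth manifold, the ordinary connected sum $M \# S^n$, which is diffeomorphic to $M$ by the standard smooth argument (absorbing the sphere summand via an isotopy supported near the connect-sum region). So it suffices to check that this diffeomorphism carries the almost $G$-structure $\beta_M \#_\beta \beta$ back to $\beta_M$. To see this, I would use the definition: pick a representative $\tilde\tau_S$ of $\beta$ on $S^n$ and a representative $\tilde\tau_M$ of $\beta_M$ with $\tilde\tau_M \circ i_+ = \tilde\tau_S|_{\overline D_+}$; then, using the \emph{same} map $\tilde\tau_S$ to glue in the $S^n$ summand, the resulting map $\tilde\tau_{M\#S^n}$ literally restricts to $\tilde\tau_M$ on $M \setminus i_+(C_+)$ and to $\tilde\tau_S$ on the (disk-shaped) complement. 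Under the diffeomorphism $M \# S^n \cong M$, which may be taken to be the identity outside a disk and to collapse the sphere summand through a disk, the pulled-back structure agrees with $\tilde\tau_M$ outside a disk and is therefore homotopic to $\tilde\tau_M$ rel that complement by \Cref{proposition-cofibration} (the target $BG$ is connected, the disk is contractible). Hence the two almost $G$-structures agree.

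For (2), again the underlying smooth statement $(M \# N)\# P \cong M \#(N \# P)$ is classical (associativity of the smooth connected sum, via \Cref{proposition-diffeomorphism-almost-manifolds}-type isotopy-extension arguments, which already appears in the proof of that proposition). The content is to match the glued classifying maps. Here I would exploit \Cref{proposition-cofibration} to choose representatives compatibly across all the gluing regions at once: fix $\tilde\tau_S$ representing $\beta$ and $\tilde\tau_S'$ representing $\beta'$, then choose $\tilde\tau_M$, $\tilde\tau_N$, $\tilde\tau_P$ so that $\tilde\tau_M\circ i_+ = \tilde\tau_S|_{\overline D_+}$, and $\tilde\tau_N$ agrees with $\tilde\tau_S$ on the relevant positive disk \emph{and} with $\tilde\tau_S'$ on the relevant negative disk (possible since these two disks in $N$ are disjoint, so the cofibration argument applies to each independently), and $\tilde\tau_P\circ i'_- = \tilde\tau_S'|_{\overline D_-}$. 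With these compatible choices, both iterated connected-sum maps $((\tilde\tau_M \#_{\tilde\tau_S}\tilde\tau_N)\#_{\tilde\tau_S'}\tilde\tau_P)$ and $(\tilde\tau_M \#_{\tilde\tau_S}(\tilde\tau_N \#_{\tilde\tau_S'}\tilde\tau_P))$ are the unique map on the common underlying smooth manifold that equals $\tilde\tau_M$, $\tilde\tau_N$, $\tilde\tau_P$ on the respective pieces; they therefore coincide, and transporting along the smooth associativity diffeomorphism identifies the almost $G$-structures. Well-definedness of the endpoints as homotopy classes is \Cref{proposition-connected-sum-well-defined}.

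The main obstacle I anticipate is bookkeeping rather than anything deep: one must be careful that the disks $i_+,i_-$ used in the successive connect-sum operations can genuinely be chosen disjoint inside the middle manifold $N$, so that the several applications of \Cref{proposition-cofibration} do not interfere with one another, and that the smooth diffeomorphisms realizing the identities (1)–(2) can be taken to be the identity outside a contractible region — which is exactly what lets us conclude equality of homotopy classes. Both points are handled by the isotopy extension theorem exactly as in \Cref{proposition-diffeomorphism-almost-manifolds}, so I would simply invoke that proposition and the connectedness of $BG$ to finish, without spelling out the isotopies.
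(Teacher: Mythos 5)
Your argument for (2) coincides with the paper's: choose disjoint disks $i_-(\overline{D}_-)$ and $j_+(\overline{D}_+)$ in the middle manifold $N$, normalize the representatives so both compatibility conditions hold simultaneously, and observe that both iterated gluings produce the same classifying map on the same underlying smooth manifold. That part is fine.

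For (1), however, the justification has a genuine gap. You pass from ``the pulled-back structure agrees with $\tilde{\tau}_M$ outside a disk'' to ``it is therefore homotopic to $\tilde{\tau}_M$ rel that complement by \Cref{proposition-cofibration}''. But \Cref{proposition-cofibration} does not say this: it lets you homotope a \emph{single} map $F$ so that it agrees with a prescribed $f$ on a contractible cofibered subset $A$, and the homotopy may move $F$ everywhere, not rel the complement of $A$. It does not assert that two maps $M \to BG$ agreeing outside a disk are homotopic --- and that assertion is false in general, since the difference is measured by a class in $\pi_n(BG)$; this is precisely the phenomenon responsible for $\alm_G(S^n)$ being nontrivial. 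Your conclusion is correct, but for a different reason: with $i_-$ the inclusion, the collapsing diffeomorphism restricted to $D_+$ differs from $i_+$ by a diffeomorphism of the disk fixing the boundary, and any such diffeomorphism is topologically isotopic rel boundary to the identity (Alexander's trick), so the two restrictions to the disk are homotopic rel boundary. The paper sidesteps all of this by taking $i_-$ to be the literal inclusion $\overline{D}_- \hookrightarrow S^n$, so that $M \# S^n$ is \emph{canonically} identified with $M$; under this identification the glued map on $i_+(D_+)$ is $\tilde{\tau}_S \circ i_+^{-1} = (\tilde{\tau}_M \circ i_+) \circ i_+^{-1} = \tilde{\tau}_M$, so the glued classifying map equals $\tilde{\tau}_M$ on the nose and no homotopy is needed. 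I would replace your homotopy step with this explicit identification.
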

\begin{proof}
	If one takes $i_- : \overline{D}_- \to S^n$ to be the inclusion map, then the connected sum $M \# S^n$ is canonically identified with $M$ as a smooth manifold. If one further takes $\tilde{\tau}_N = \tilde{\tau}_S$ in \Cref{definition:almost-connected-sum}, then this identification is compatible with the almost $G$-structures on $M \# S^n$ and $M$. This proves that $M \#_\beta (S^n,\beta) \cong M$.

	To prove that $(M \#_\beta N) \#_\beta P \cong M \#_\beta (N \#_\beta P)$, choose embeddings $i_+ : \overline{D}_+ \to M$, $i_- : \overline{D}_- \to N$, $j_+ : \overline{D}_+ \to N$ and $j_- : \overline{D}_- \to P$. If we assume that $i_-$ and $j_+$ have disjoint images, then $i_-$ induces an embedding $\overline{D}_- \to N \#_{j_+,j_-} P$, $j_+$ induces an embedding $\overline{D}_+ \to M \#_{i_+,i_-} N$, and there is a canonical identification of smooth manifolds
	\[
		(M \#_{i_+,i_-} N) \#_{j_+,j_-} P \cong M \#_{i_+,i_-} (N \#_{j_+,j_-} P).
	\]
	Moreover, this identification is compatible with the almost $G$-structures in the sense that for any choice of maps $\tilde{\tau}_S$, $\tilde{\tau}'_S$, $\tilde{\tau}_M$, $\tilde{\tau}_N$, $\tilde{\tau}_P$, the following diagram commutes:
	\begin{center}
	\begin{tikzcd}[row sep = small]
		(M \#_{i_+,i_-} N) \#_{j_+,j_-} P \ar[dd,"\cong"]\ar[rrd,"(\tilde{\tau}_M \#_{\tilde{\tau}_S} \tilde{\tau}_N) \#_{\tilde{\tau}'_S} \tilde{\tau}_P"] & & \\
		& & BG \\
		M \#_{i_+,i_-} (N \#_{j_+,j_-} P) \ar[rru,swap,"\tilde{\tau}_M \#_{\tilde{\tau}_S} (\tilde{\tau}_N \#_{\tilde{\tau}'_S} \tilde{\tau}_P)"] & &
	\end{tikzcd}
	\end{center}
\end{proof}

\subsection{The group of almost $G$-structures on the sphere} \label{section-group-structure}

We will denote the set of almost $G$-structures on a manifold $M$ by $\alm_G(M)$. More generally, if $A \subset M$ is a closed subset and $\beta_0$ is an almost $G$-structure on some open neighborhood of $A$, then $\alm_G(M,A;\beta_0)$ will denote the set of almost $G$-structures on $M$ which agree with $\beta_0$ near $A$. 

In this section, we will show that $\#_\beta$ is a group operation on $\alm_G(S^n)$, with $\beta$ as identity element. The resulting group will be denoted by $\alm_G^\beta(S^n)$. We will then show that $\alm_G^\beta(S^n)$ acts on $\alm_G(M)$, and more generally on $\alm_G(M,A;\beta_0)$ if $M \setminus A$ is connected.

\begin{proposition}\label{proposition:existence-inverses}
	Given any $\beta_1 \in \alm_G(S^n)$, there exists a $\beta_2 \in \alm_G(S^n)$ such that $\beta_1 \#_\beta \beta_2 = \beta$.
\end{proposition}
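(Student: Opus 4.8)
The plan is to realize $\beta_2$ by a ``difference class'' construction, following the standard obstruction-theoretic picture in which $\#_\beta$ corresponds to addition of top-cell differences. Fix once and for all a classifying map $\tau_S:S^n\to B\SO(n)$ for $TS^n$ and a representative $\tilde\tau_S:S^n\to BG$ of $\beta$ lifting $\tau_S$; since all almost $G$-structures on $S^n$ lift $TS^n$, each such structure is represented by some lift of $\tau_S$. The map $BG\to B\SO(n)$ has homotopy fiber $F:=\SO(n)/G$, which is connected because $G$ is a connected subgroup of the connected group $\SO(n)$. Given $\beta_1$, pick any representative lifting $\tau_S$; because $\overline{D}_+$ is contractible, the restrictions of this representative and of $\tilde\tau_S$ to $\overline{D}_+$ are homotopic \emph{through lifts} of $\tau_S|_{\overline{D}_+}$, and a homotopy-extension argument for lifts (cf.\ \Cref{proposition-cofibration}, applied to the Serre fibration given by restriction of lift spaces along the cofibration $\overline{D}_+\hookrightarrow S^n$) produces a representative $\tilde\tau_1$ of $\beta_1$ with $\tilde\tau_1|_{\overline{D}_+}=\tilde\tau_S|_{\overline{D}_+}$.

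Now $\tilde\tau_1$ and $\tilde\tau_S$ agree on $\overline{D}_+\supset \partial C_-$, so over the disk $C_-$ both are lifts of $\tau_S|_{C_-}$ agreeing near the boundary; trivializing the fiber bundle $\tau_S^\ast(BG)$ over the contractible set $C_-$, one gets two maps $C_-\to F$ agreeing near $\partial C_-$, whose difference rel boundary is a well-defined element $d(\beta_1)\in\pi_n(F)$ (independent of the trivialization and of the chosen representative, by the usual obstruction-theory arguments). I then define $\tilde\tau_2:S^n\to BG$ by setting $\tilde\tau_2:=\tilde\tau_S$ on $\overline{D}_-$ and, over the disk $C_+$, choosing a lift of $\tau_S|_{C_+}$ which agrees with $\tilde\tau_S$ near $\partial C_+$ and whose difference class (defined exactly as above) equals $-d(\beta_1)$ with the sign convention that makes the total difference in the computation below vanish. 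This is possible because every element of $\pi_n(F)$ arises as such a rel-boundary difference of lifts over a disk. Set $\beta_2:=[\tilde\tau_2]\in\alm_G(S^n)$.

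Finally I would compute $\beta_1\#_\beta\beta_2$ directly from \Cref{definition:almost-connected-sum}, forming the connected sum with $i_+=\mathrm{id}_{\overline{D}_+}$ on the first sphere (carrying $\beta_1$) and $i_-=\mathrm{id}_{\overline{D}_-}$ on the second (carrying $\beta_2$). The underlying smooth manifold $S^n\#_{\mathrm{id},\mathrm{id}}S^n$ is canonically $S^n=D_-\cup D_+$, the hypotheses $\tilde\tau_1\circ i_+=\tilde\tau_S|_{\overline{D}_+}$ and $\tilde\tau_2\circ i_-=\tilde\tau_S|_{\overline{D}_-}$ hold by construction, and the resulting map $\tilde\tau_{1\#2}$ coincides with $\tilde\tau_1$ on $S^n\setminus C_+=D_-$ and with $\tilde\tau_2$ on $S^n\setminus C_-=D_+$. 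Hence $\tilde\tau_{1\#2}$ is a lift of $\tau_S$ agreeing with $\tilde\tau_S$ outside the two disjoint disks $\operatorname{int}(C_-)$ and $\operatorname{int}(C_+)$, with difference classes $d(\beta_1)$ and $-d(\beta_1)$ there; by additivity of differences over disjoint disks the total difference of $\tilde\tau_{1\#2}$ from $\tilde\tau_S$ is $d(\beta_1)-d(\beta_1)=0\in\pi_n(F)$, so $\tilde\tau_{1\#2}\simeq\tilde\tau_S$ and $\beta_1\#_\beta\beta_2=\beta$. The main obstacle is not conceptual but the bookkeeping around the disk-difference classes --- well-definedness of $d(\beta_1)$, realizability of every class in $\pi_n(F)$, and additivity over disjoint disks --- all of which is routine obstruction theory for sections of fibrations and can either be cited or checked by hand; together with \Cref{proposition:monoid} this gives that $(\alm_G(S^n),\#_\beta)$ is a group with identity $\beta$.
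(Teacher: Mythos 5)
Your proof is correct, but it takes a genuinely different route from the paper's.

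The paper's proof stays entirely at the level of maps to $BG$ and never introduces the fiber $F = \SO(n)/G$ or its homotopy groups. It first normalizes: by \Cref{proposition-cofibration}, $\beta_1$ can be represented by $\langle \tau_1^-,\tau_S^A,\tau_S^+\rangle$ and every $\beta_2$ by $\langle \tau_S^-,\tau_S^A,\tau_2^+\rangle$ (in the notation of the paper's proof), so $\beta_1 \#_\beta \beta_2$ is represented by $\langle \tau_1^-,\tau_S^A,\tau_2^+\rangle$. It then applies \Cref{proposition-cofibration} a second time, to the cofibration $\overline{D}_-\hookrightarrow S^n$ and $F=\tilde\tau_S$: this homotopes $\tilde\tau_S$ to a map $F'$ agreeing with $\tau_1^-$ on $C_-$ and with $\tau_S^A$ on $\overline A$, and one simply sets $\tau_2^+ := F'|_{C_+}$. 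Since $F' = \langle\tau_1^-,\tau_S^A,\tau_2^+\rangle \simeq \tilde\tau_S$ by construction, the proposition is proved with no computation. Your approach instead decodes the inverse via the obstruction class $d(\beta_1)\in\pi_n(F)$ obtained by trivializing $\tau_S^*(BG)$ over the disk $C_-$, and then reconstructs $\tilde\tau_2$ disk by disk so that the two difference classes cancel. This is correct in spirit, but it forces you to verify several obstruction-theoretic facts (every class in $\pi_n(F)$ is realized as a rel-boundary difference over a disk, differences over disjoint disks add, and a vanishing total difference implies vertical homotopy), and to identify $\pi_n(F)$ at basepoints on $\partial C_-$ and $\partial C_+$ via a path in $\overline A$. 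For $n\ge 3$ the annulus $\overline A \simeq S^{n-1}\times I$ is simply connected so this identification is canonical and all lower difference groups $H^k(S^n,\overline A;\pi_k(F))$ vanish for $0<k<n$, so the argument does close up --- but it is considerably more machinery than the paper's two applications of \Cref{proposition-cofibration}. One also gets a cleaner statement by dropping your parenthetical claim that $d(\beta_1)$ is independent of the chosen representative: this is not needed (you fix a representative and never change it), and it is doubtful as stated, since two representatives of $\beta_1$ agreeing with $\tilde\tau_S$ on $\overline D_+$ need not be homotopic rel $\overline D_+$ even though they are freely homotopic. The upside of your route is that it identifies the abstract group $\alm_G^\beta(S^n)$ with a concrete invariant (differences in $\pi_n(F)$), which can be useful for computation; the downside is the extra bookkeeping and the implicit restriction to $n\ge 3$.
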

\begin{proof}
	Recall the decomposition $S^n = C_- \cup A \cup C_+$ introduced at the beginning of section~\ref{section-connect-sum}. We will use the notation $\langle \tau^-, \tau^A, \tau^+ \rangle$ to denote the unique (assuming it exists) map $S^n \to BG$ which coincides with the given maps $\tau^- : C_- \to BG$, $\tau^A : \overline{A} \to BG$ and $\tau^+ : C_+ \to BG$ on $C_-$, $\overline{A}$ and $C_+$ respectively.

	Let $\tilde{\tau}_S = \langle \tau_S^-, \tau_S^A, \tau_S^+ \rangle$ be a representative for $\beta$. Given $\beta_1$ and $\beta_2$ in $\alm_G(S^n)$, we can choose representatives of the form $\langle \tau_1^-, \tau_S^A, \tau_S^+ \rangle$ and $\langle \tau_S^-, \tau_S^A, \tau_2^+ \rangle$ by \Cref{proposition-cofibration}. Then $\beta_1 \#_\beta \beta_2$ is represented by $\langle \tau_1^-, \tau_S^A, \tau_2^+ \rangle$. Hence, all we need to show is that for any $\tau_1^- : C_- \to BG$, there exists $\tau_2^+ : C_+ \to BG$ such that $\langle \tau_1^-, \tau_S^A, \tau_2^+ \rangle$ is homotopic to $\tilde{\tau}_S$. This again follows from \Cref{proposition-cofibration}.
\end{proof}

\begin{corollary} \label{corollary:group-structure-sphere}
	$(\alm_G(S^n),\#_\beta)$ is a group with identity $\beta$.
\end{corollary}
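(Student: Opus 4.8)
\textbf{Proof plan for \Cref{corollary:group-structure-sphere}.}

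The statement is that $(\alm_G(S^n), \#_\beta)$ is a group with identity $\beta$. Essentially all of the ingredients are already in place, so the plan is to assemble them. First, associativity: this is immediate from \Cref{proposition:monoid}(2), applied in the special case $M = N = P = S^n$ with the two structures on the sphere both taken to be arbitrary elements of $\alm_G(S^n)$. One subtlety is that \Cref{proposition:monoid}(2) produces an identification $(M \#_\beta N) \#_{\beta'} P \cong M \#_\beta (N \#_{\beta'} P)$ of almost $G$-manifolds, whereas here I want an honest equality in $\alm_G(S^n)$; but the underlying diffeomorphism in the proof of \Cref{proposition:monoid}(2) is the canonical identification of iterated connected sums (coming from choosing the disk embeddings to have disjoint images), and when all three summands are $S^n$ with the chosen disks arranged in standard position, this diffeomorphism is (isotopic to) the identity of $S^n$. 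Hence the two iterated sums agree as elements of $\alm_G(S^n)$, giving associativity.

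Second, the identity: \Cref{proposition:monoid}(1) gives $\beta_1 \#_\beta (S^n, \beta) \cong \beta_1$ for any $\beta_1 \in \alm_G(S^n)$, and as in the associativity step the underlying diffeomorphism is the canonical identification $M \# S^n \cong M$, which for $M = S^n$ is isotopic to the identity; so $\beta_1 \#_\beta \beta = \beta_1$ in $\alm_G(S^n)$. For the left identity $\beta \#_\beta \beta_1 = \beta_1$ one can either run the mirror-image argument (using $S^n \# M \cong M$, which holds by the same reasoning with the roles of $i_+$ and $i_-$ swapped) or, more cheaply, observe that $\#_\beta$ on $\alm_G(S^n)$ is commutative: swapping the two summands of $S^n \# S^n$ is realized by an orientation-preserving self-diffeomorphism of $S^n$ interchanging $C_-$ and $C_+$ (e.g. the reflection $(x,z) \mapsto (x,-z)$ followed by an orientation-fixing rotation), and by \Cref{proposition-diffeomorphism-almost-manifolds} together with the symmetry of \Cref{definition:almost-connected-sum} in $\tilde\tau_M$ and $\tilde\tau_N$ this identifies $\beta_1 \#_\beta \beta_2$ with $\beta_2 \#_\beta \beta_1$. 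Either way, $\beta$ is a two-sided identity.

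Third, inverses: this is precisely \Cref{proposition:existence-inverses}, which furnishes for each $\beta_1$ a $\beta_2$ with $\beta_1 \#_\beta \beta_2 = \beta$; combined with commutativity (or a symmetric argument) this $\beta_2$ is also a left inverse. Since $\#_\beta$ is a well-defined binary operation on $\alm_G(S^n)$ by \Cref{proposition-connected-sum-well-defined} and \Cref{proposition-classifying-tangent-bundle}, the four group axioms are verified. The only genuine point requiring care — and the one I would write out carefully — is the passage from ``isomorphism of almost $G$-manifolds'' to ``equality in $\alm_G(S^n)$'', i.e. checking that the relevant structural diffeomorphisms of $S^n$ in \Cref{proposition:monoid} and \Cref{proposition-diffeomorphism-almost-manifolds} are isotopic to the identity (hence act trivially on homotopy classes of maps $S^n \to BG$ lifting $TS^n$). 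This is where the standard-position choice of the embedded disks $C_\pm$ is used; everything else is formal.
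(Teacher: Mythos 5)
Your proof is correct and follows the same route as the paper, which disposes of the corollary in one line by citing \Cref{proposition:monoid} (for identity and associativity) and \Cref{proposition:existence-inverses} (for inverses); you have simply unpacked that reference. The one genuine subtlety you flag -- passing from an isomorphism of almost $G$-manifolds in \Cref{proposition:monoid} to an honest equality in $\alm_G(S^n)$ -- is handled in the paper's surrounding discussion, where $i_-$ is fixed to be the inclusion $\overline{D}_- \hookrightarrow S^n$ so that $M \# S^n$ is canonically identified with $M$, and the diffeomorphism $\phi$ from \Cref{proposition-diffeomorphism-almost-manifolds} is observed to be choosable isotopic to the identity. Your commutativity shortcut is a fine extra, though the specific map $(x,z)\mapsto(x,-z)$ is orientation-reversing and needs to be composed with a reflection in an $x$-coordinate (not merely a ``rotation''); alternatively one can skip commutativity altogether, since Proposition \ref{proposition:monoid}(1) and the symmetric statement $(S^n,\beta)\#_\beta M\cong M$ (same proof with $i_\pm$ swapped) together give the two-sided identity, and likewise for inverses.
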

\begin{proof}
	This follows from \Cref{proposition:monoid} and \Cref{proposition:existence-inverses}.
\end{proof}
\begin{remark} \label{remark:indep-beta-somorphism}
	The group $(\alm_G(S^n),\#_\beta)$ is independent of $\beta$ up to isomorphism. Indeed, given any $x,y, \beta, \beta' \in \alm_G(S^n)$, it follows from \Cref{proposition:monoid} that
	\[
		(x \#_\beta \beta') \#_{\beta'} (y \#_\beta \beta') = (x \#_\beta (\beta' \#_{\beta'} y)) \#_\beta \beta' = (x \#_\beta y) \#_\beta \beta',
	\]
	which implies that the map
	\begin{align*}
		(\alm_G(S^n),\#_\beta) &\to (\alm_G(S^n),\#_{\beta'}) \\
		x &\mapsto x \#_\beta \beta'
	\end{align*}
	is a group isomorphism.
\end{remark}

Given orientation preserving embeddings $i_+ : \overline{D}_+ \to M$ and $i_- : \overline{D}_- \to S^n$, the results of section~\ref{section-connect-sum} give us a well-defined map
\[
	\alm_G(M) \times \alm_G^\beta(S^n) \to \alm_G(M \#_{i_+,i_-} S^n).
\]
For the remainder of this section, we will take $i_-$ to be the inclusion map $\overline{D}_- \hookrightarrow S^n$. Then $M \#_{i_+,i_-} S^n$ is canonically identified with $M$ (regardless of what $i_+$ is) and we get a map
\begin{equation} \label{connected-sum-action}
	\alm_G(M) \times \alm_G^\beta(S^n) \to \alm_G(M).
\end{equation}
By \Cref{proposition:monoid}, this is a group action. Note that the diffeomorphism $\phi : M \to M$ appearing in the statement of \Cref{proposition-diffeomorphism-almost-manifolds} (applied to $N = S^n$) can be chosen to be isotopic to the identity, which implies that the map \eqref{connected-sum-action} is independent of $i_+$.

If we assume that the image of the embedding $i_+ : \overline{D}_+ \to M$ is disjoint from $A$, then it follows directly from \Cref{definition:almost-connected-sum} that the subset $\alm_G(M,A;\beta_0) \subset \alm_G(M)$ is invariant under the map \eqref{connected-sum-action}. If $M \setminus A$ is connected, then the resulting action on $\alm_G(M,A;\beta_0)$ doesn't depend on the choice of $i_+$.

\begin{bibdiv}
\begin{biblist}*{labels={alphabetic}}

\bib{abraham}{article}{
   author={Abraham, R.},
   title={Bumpy metrics},
   conference={
      title={Global Analysis (Proc. Sympos. Pure Math., Vol. XIV, Berkeley,
      Calif., 1968)},
   },
   book={
      publisher={Amer. Math. Soc., Providence, R.I.},
   },
   date={1970},
   pages={1--3}
}

\bib{abw}{article}{
   author={Albers, Peter},
   author={Bramham, Barney},
   author={Wendl, Chris},
   title={On nonseparating contact hypersurfaces in symplectic 4-manifolds},
   journal={Algebr. Geom. Topol.},
   volume={10},
   date={2010},
   number={2},
   pages={697--737}
}

\bib{allday}{article}{
   author={Allday, Christopher},
   title={On the rational homotopy of fixed point sets of torus actions},
   journal={Topology},
   volume={17},
   date={1978},
   number={1},
   pages={95--100}
}

\bib{alves-pirnapasov}{article}{
    title={Reeb orbits that force topological entropy},
    author={Alves, Marcelo}
    author={Pirnapasov, Abror},
    journal={Ergodic Theory and Dynamical Systems},
    publisher={Cambridge University Press}, 
    year={2021},
    pages={1--44}
}

\bib{baker-onaran}{article}{
   author={Baker, Kenneth L.},
   author={Onaran, Sinem},
   title={Nonlooseness of nonloose knots},
   journal={Algebr. Geom. Topol.},
   volume={15},
   date={2015},
   number={2},
   pages={1031--1066}
}

\bib{baldwin-sivek}{article}{
   author={Baldwin, John A.},
   author={Sivek, Steven},
   title={Invariants of Legendrian and transverse knots in monopole knot
   homology},
   journal={J. Symplectic Geom.},
   volume={16},
   date={2018},
   number={4},
   pages={959--1000}
}

\bib{bao-honda}{article}{
   author={Bao, Erkao},
   author={Honda, Ko},
   title={Definition of cylindrical contact homology in dimension three},
   journal={J. Topol.},
   volume={11},
   date={2018},
   number={4},
   pages={1002--1053}
}

\bib{bee}{article}{
   author={Bourgeois, Fr\'{e}d\'{e}ric},
   author={Ekholm, Tobias},
   author={Eliashberg, Yasha},
   title={Effect of Legendrian surgery},
   note={With an appendix by Sheel Ganatra and Maksim Maydanskiy},
   journal={Geom. Topol.},
   volume={16},
   date={2012},
   number={1},
   pages={301--389}
   }

\bib{bem}{article}{
   author={Borman, Matthew Strom},
   author={Eliashberg, Yakov},
   author={Murphy, Emmy},
   title={Existence and classification of overtwisted contact structures in
   all dimensions},
   journal={Acta Math.},
   volume={215},
   date={2015},
   number={2},
   pages={281--361}
}

\bib{bourgeoisnotes}{article}{
 author={Bourgeois, Fr\'{e}d\'{e}ric},
 title={Introduction to Contact Homology},
 eprint={https://www.imo.universite-paris-saclay.fr/~bourgeois/papers/Berder.pdf}
 }

\bib{bourgeois-thesis}{book}{
   author={Bourgeois, Frederic},
   title={A Morse-Bott approach to contact homology},
   note={Thesis (Ph.D.)--Stanford University},
   publisher={ProQuest LLC, Ann Arbor, MI},
   date={2002},
   pages={123}
}

\bib{bvk}{article}{
   author={Bourgeois, Fr\'{e}d\'{e}ric},
   author={van Koert, Otto},
   title={Contact homology of left-handed stabilizations and plumbing of
   open books},
   journal={Commun. Contemp. Math.},
   volume={12},
   date={2010},
   number={2},
   pages={223--263}
}

\bib{bo}{article}{
   author={Bourgeois, Fr\'{e}d\'{e}ric},
   author={Oancea, Alexandru},
   title={An exact sequence for contact- and symplectic homology},
   journal={Invent. Math.},
   volume={175},
   date={2009},
   number={3},
   pages={611--680}
}

\bib{brasselet-suwa}{article}{
   author={Brasselet, Jean-Paul},
   author={Suwa, Tatsuo},
   title={Local and global coincidence homology classes},
   journal={J. Fixed Point Theory Appl.},
   volume={23},
   date={2021},
   number={2},
   pages={Paper No. 24, 18}
 }

\bib{bowden}{thesis}{
 author={Bowden, Jonathan},
 title={Two-dimensional foliations on four-manifolds},
 type={Ph.D thesis},
 eprint={https://edoc.ub.uni-muenchen.de/12551/1/Bowden_Jonathan.pdf},
 date={2010}
}

\bib{casals-etnyre}{article}{
   author={Casals, Roger},
   author={Etnyre, John B.},
   title={Non-simplicity of isocontact embeddings in all higher dimensions},
   journal={Geom. Funct. Anal.},
   volume={30},
   date={2020},
   number={1},
   pages={1--33}
}

\bib{casals-murphy}{article}{
   author={Casals, Roger},
   author={Murphy, Emmy},
   title={Legendrian fronts for affine varieties},
   journal={Duke Math. J.},
   volume={168},
   date={2019},
   number={2},
   pages={225--323}
}

\bib{c-m-p}{article}{
   author={Casals, Roger},
   author={Murphy, Emmy},
   author={Presas, Francisco},
   title={Geometric criteria for overtwistedness},
   journal={J. Amer. Math. Soc.},
   volume={32},
   date={2019},
   number={2},
   pages={563--604}
}
\bib{casals-pancholi-presas}{article}{
author={Casals, Roger},
author={Pancholi, Dishant},
author={Presas, Francisco},
title={The Legendrian Whitney Trick},
journal={Geom.\ Topol.\ },
volume={21},
pages={3229--3256},
date={2021}
}

\bib{chantraine}{article}{
   author={Chantraine, Baptiste},
   title={Lagrangian concordance of Legendrian knots},
   journal={Algebr. Geom. Topol.},
   volume={10},
   date={2010},
   number={1},
   pages={63--85}
}

\bib{c-d-g-g}{article}{
   author={Chantraine, Baptiste},
   author={Dimitroglou Rizell, Georgios},
   author={Ghiggini, Paolo},
   author={Golovko, Roman},
   title={Floer theory for Lagrangian cobordisms},
   journal={J. Differential Geom.},
   volume={114},
   date={2020},
   number={3},
   pages={393--465}
}

\bib{cieliebak-eliashberg}{book}{
   author={Cieliebak, Kai},
   author={Eliashberg, Yakov},
   title={From Stein to Weinstein and back},
   series={American Mathematical Society Colloquium Publications},
   volume={59},
   note={Symplectic geometry of affine complex manifolds},
   publisher={American Mathematical Society, Providence, RI},
   date={2012},
   pages={xii+364}
}

\bib{connes}{article}{
   author={Connes, Alain},
   title={Noncommutative differential geometry},
   journal={Inst. Hautes \'{E}tudes Sci. Publ. Math.},
   number={62},
   date={1985},
   pages={257--360}
}


\bib{dgz}{article}{
   author={D\"{o}rner, Max},
   author={Geiges, Hansj\"{o}rg},
   author={Zehmisch, Kai},
   title={Open books and the Weinstein conjecture},
   journal={Q. J. Math.},
   volume={65},
   date={2014},
   number={3},
   pages={869--885}
}

\bib{ekholm}{article}{
   author={Ekholm, Tobias},
   title={Rational SFT, linearized Legendrian contact homology, and
   Lagrangian Floer cohomology},
   conference={
      title={Perspectives in analysis, geometry, and topology},
   },
   book={
      series={Progr. Math.},
      volume={296},
      publisher={Birkh\"{a}user/Springer, New York},
   },
   date={2012},
   pages={109--145}
}

\bib{ekholm2}{article}{
   author={Ekholm, Tobias}
   title={Holomorphic curves for Legendrian surgery},
 eprint={arXiv:1906.07228},
date={2019}
}

\bib{ekholm-etnyre-sullivan}{article}{
   author={Ekholm, Tobias},
   author={Etnyre, John},
   author={Sullivan, Michael},
   title={Legendrian contact homology in $P\times\Bbb R$},
   journal={Trans. Amer. Math. Soc.},
   volume={359},
   date={2007},
   number={7},
   pages={3301--3335},
   issn={0002-9947},
   review={\MR{2299457}},
   doi={10.1090/S0002-9947-07-04337-1},
}

\bib{ekholm-etnyre-sullivan-jdg}{article}{
   author={Ekholm, Tobias},
   author={Etnyre, John},
   author={Sullivan, Michael},
   title={The contact homology of Legendrian submanifolds in ${\Bbb
   R}^{2n+1}$},
   journal={J. Differential Geom.},
   volume={71},
   date={2005},
   number={2},
   pages={177--305}
}

\bib{ekholm-etnyre-ng-sullivan}{article}{
   author={Ekholm, Tobias},
   author={Etnyre, John},
   author={Ng, Lenhard},
   author={Sullivan, Michael},
   title={Filtrations on the knot contact homology of transverse knots},
   journal={Math. Ann.},
   volume={355},
   date={2013},
   number={4},
   pages={1561--1591}
}

\bib{ekholm-etnyre-ng-sullivan1}{article}{
   author={Ekholm, Tobias},
   author={Etnyre, John B.},
   author={Ng, Lenhard},
   author={Sullivan, Michael G.},
   title={Knot contact homology},
   journal={Geom. Topol.},
   volume={17},
   date={2013},
   number={2},
   pages={975--1112}
}

\bib{ekholm-honda-kalman}{article}{
   author={Ekholm, Tobias},
   author={Honda, Ko},
   author={K\'{a}lm\'{a}n, Tam\'{a}s},
   title={Legendrian knots and exact Lagrangian cobordisms},
   journal={J. Eur. Math. Soc. (JEMS)},
   volume={18},
   date={2016},
   number={11},
   pages={2627--2689}
}

\bib{etnyre}{article}{
   author={Etnyre, John B.},
   title={Legendrian and transversal knots},
   conference={
      title={Handbook of knot theory},
   },
   book={
      publisher={Elsevier B. V., Amsterdam},
   },
   date={2005},
   pages={105--185}
}

\bib{egl}{article}{
   author={Eliashberg, Yakov},
   author={Ganatra, Sheel},
   author={Lazarev, Oleg},
   title={Flexible Lagrangians},
   journal={Int. Math. Res. Not.},   
   note={rny078},
   date={2018},
   }

\bib{sft-egh}{article}{
   author={Eliashberg, Y.},
   author={Givental, A.},
   author={Hofer, H.},
   title={Introduction to symplectic field theory},
   note={GAFA 2000 (Tel Aviv, 1999)},
   journal={Geom. Funct. Anal.},
   date={2000},
   number={Special Volume},
   pages={560--673}
}   

\bib{eliashberg-mishachev}{book}{
   author={Eliashberg, Y.},
   author={Mishachev, N.},
   title={Introduction to the $h$-principle},
   series={Graduate Studies in Mathematics},
   volume={48},
   publisher={American Mathematical Society, Providence, RI},
   date={2002},
   pages={xviii+206}
}   
   
\bib{eliashberg-murphy-caps}{article}{
   author={Eliashberg, Yakov},
   author={Murphy, Emmy},
   title={Lagrangian caps},
   journal={Geom. Funct. Anal.},
   volume={23},
   date={2013},
   number={5},
   pages={1483--1514}
}
   
\bib{eliashberg-murphy}{article}{
   author={Eliashberg, Yakov},
   author={Murphy, Emmy},
   title={Making cobordisms symplectic},
   journal={J. Amer. Math. Soc. (to appear)}
eprint={arXiv:1504.06312},
date={2020}}

\bib{etnyre}{article}{
   author={Etnyre, John B.},
   title={On knots in overtwisted contact structures},
   journal={Quantum Topol.},
   volume={4},
   date={2013},
   number={3},
   pages={229--264}
}

\bib{etnyre-ng}{article}{
   author={Etnyre, John B.},
   author={Ng, Lenhard L.},
   title={Legendrian contact homology in $\mathbb{R}^3$},
eprint={arXiv:1811.10966},
date={2019}}

\bib{etnyre-velavick}{article}{
   author={Etnyre, John B.},
   author={Vela-Vick, David Shea},
   title={Torsion and open book decompositions},
   journal={Int. Math. Res. Not. IMRN},
   date={2010},
   number={22},
   pages={4385--4398}
}

\bib{etnyre-ozbagci}{article}{
   author={Etnyre, John B.},
   author={Ozbagci, Burak},
   title={Invariants of contact structures from open books},
   journal={Trans. Amer. Math. Soc.},
   volume={360},
   date={2008},
   number={6},
   pages={3133--3151}
}

\bib{geiges}{book}{
   author={Geiges, Hansj\"{o}rg},
   title={An introduction to contact topology},
   series={Cambridge Studies in Advanced Mathematics},
   volume={109},
   publisher={Cambridge University Press, Cambridge},
   date={2008},
   pages={xvi+440}
}

\bib{gerstenhaber-wilkerson}{article}{
   author={Gerstenhaber, Murray},
   author={Wilkerson, Clarence W.},
   title={On the deformation of rings and algebras. V. Deformation of
   differential graded algebras},
   conference={
      title={Higher homotopy structures in topology and mathematical
      physics},
      address={Poughkeepsie, NY},
      date={1996},
   },
   book={
      series={Contemp. Math.},
      volume={227},
      publisher={Amer. Math. Soc., Providence, RI},
   },
   date={1999},
   pages={89--101}
}

\bib{girouxicm}{article}{
   author={Giroux, Emmanuel},
   title={G\'{e}om\'{e}trie de contact: de la dimension trois vers les dimensions
   sup\'{e}rieures},
   language={French, with French summary},
   conference={
      title={Proceedings of the International Congress of Mathematicians,
      Vol. II},
      address={Beijing},
      date={2002},
   },
   book={
      publisher={Higher Ed. Press, Beijing},
   },
   date={2002},
   pages={405--414}
}

\bib{griffiths-morgan}{book}{
   author={Griffiths, Phillip},
   author={Morgan, John},
   title={Rational homotopy theory and differential forms},
   series={Progress in Mathematics},
   volume={16},
   edition={2},
   publisher={Springer, New York},
   date={2013},
   pages={xii+224}
}

\bib{degroote}{article}{
   author={De Groote, C\'{e}dric},
   title={Personal communication}
}

\bib{guttcz}{article}{
title={The Conley-Zehnder index for a path of symplectic matrices},
author={Gutt, Jean},
eprint={arXiv:1201.3728v2 [math.DG]},
date={2012}}

\bib{hinich}{article}{
   author={Hinich, Vladimir},
   title={Homological algebra of homotopy algebras},
   journal={Comm. Algebra},
   volume={25},
   date={1997},
   number={10},
   pages={3291--3323}
  }

\bib{hirsch}{article}{
   author={Hirsch, Morris W.},
   title={Smooth regular neighborhoods},
   journal={Ann. of Math. (2)},
   volume={76},
   date={1962},
   pages={524--530}
}

\bib{hirschhorn}{book}{
   author={Hirschhorn, Philip S.},
   title={Model categories and their localizations},
   series={Mathematical Surveys and Monographs},
   volume={99},
   publisher={American Mathematical Society, Providence, RI},
   date={2003},
   pages={xvi+457}
}

\bib{h-w-z1}{article}{
   author={Hofer, H.},
   author={Wysocki, K.},
   author={Zehnder, E.},
   title={A general Fredholm theory. I. A splicing-based differential
   geometry},
   journal={J. Eur. Math. Soc. (JEMS)},
   volume={9},
   date={2007},
   number={4},
   pages={841--876}
}

\bib{h-w-z2}{article}{
   author={Hofer, Helmut},
   author={Wysocki, Krzysztof},
   author={Zehnder, Eduard},
   title={A general Fredholm theory. II. Implicit function theorems},
   journal={Geom. Funct. Anal.},
   volume={19},
   date={2009},
   number={1},
   pages={206--293}
}

\bib{h-w-z3}{article}{
   author={Hofer, Helmut},
   author={Wysocki, Kris},
   author={Zehnder, Eduard},
   title={A general Fredholm theory. III. Fredholm functors and polyfolds},
   journal={Geom. Topol.},
   volume={13},
   date={2009},
   number={4},
   pages={2279--2387}
}

\bib{honda-huang}{thesis}{
 author={Honda, Ko},
 author={Huang, Yang},
 title={Convex hypersurface Theory in contact topology},
 eprint={arXiv:1907.06025},
 date={2019}
}

\bib{h-m-s}{article}{
   author={Hryniewicz, Umberto},
   author={Momin, Al},
   author={Salom\~{a}o, Pedro A. S.},
   title={A Poincar\'{e}-Birkhoff theorem for tight Reeb flows on $S^3$},
   journal={Invent. Math.},
   volume={199},
   date={2015},
   number={2},
   pages={333--422}
}

\bib{hutchings}{article}{
   author={Hutchings, Michael},
   title={Mean action and the Calabi invariant},
   journal={J. Mod. Dyn.},
   volume={10},
   date={2016},
   pages={511--539}
}

\bib{hutchings-nelson}{article}{
   author={Hutchings, Michael},
   author={Nelson, Jo},
   title={Cylindrical contact homology for dynamically convex contact forms
   in three dimensions},
   journal={J. Symplectic Geom.},
   volume={14},
   date={2016},
   number={4},
   pages={983--1012}
}

\bib{kahn-obstr}{article}{
    AUTHOR = {Kahn, Peter J.},
     TITLE = {Obstructions to extending almost {$X$}-structures},
   JOURNAL = {Illinois J. Math.},
  FJOURNAL = {Illinois Journal of Mathematics},
    VOLUME = {13},
      YEAR = {1969},
     PAGES = {336--357},
      ISSN = {0019-2082},
   MRCLASS = {55.50},
  MRNUMBER = {0258037},
MRREVIEWER = {A. Morimoto},
       URL = {http://projecteuclid.org/euclid.ijm/1334250797},
}

\bib{klingenberg}{book}{
   author={Klingenberg, Wilhelm},
   title={Lectures on closed geodesics},
   edition={3},
   publisher={Mathematisches Institut der Universit\"{a}t Bonn, Bonn},
   date={1977},
   pages={210 pp. (not consecutively paged)},
   review={\MR{0461361}},
}

\bib{klukas}{article}{
   author={Klukas, Mirko},
   title={Open books and exact symplectic cobordisms},
   journal={Internat. J. Math.},
   volume={29},
   date={2018},
   number={4},
   pages={1850026, 19}
}

\bib{lazarev}{article}{
   author={Lazarev, Oleg},
   title={Maximal contact and symplectic structures},
   journal={J. Topol.},
   volume={13},
   date={2020},
   number={3},
   pages={1058--1083}
}

\bib{lazarev-markl}{article}{
   author={Lazarev, Andrey},
   author={Markl, Martin},
   title={Disconnected rational homotopy theory},
   journal={Adv. Math.},
   volume={283},
   date={2015},
   pages={303--361}
}

\bib{li}{article}{
author={Li, Wenyuan},
title={Lagrangian cobordism functor in microlocal sheaf theory},
eprint={https://arxiv.org/pdf/2108.10914.pdf [math.SG]},
date={2021}}

\bib{lipshitz-ng-sarkar}{article}{
   author={Lipshitz, Robert},
   author={Ng, Lenhard},
   author={Sarkar, Sucharit},
   title={On transverse invariants from Khovanov homology},
   journal={Quantum Topol.},
   volume={6},
   date={2015},
   number={3},
   pages={475--513},
   issn={1663-487X},
   review={\MR{3392962}},
   doi={10.4171/QT/69},
}
\bib{l-o-s-s}{article}{
   author={Lisca, Paolo},
   author={Ozsv\'{a}th, Peter},
   author={Stipsicz, Andr\'{a}s I.},
   author={Szab\'{o}, Zolt\'{a}n},
   title={Heegaard Floer invariants of Legendrian knots in contact
   three-manifolds},
   journal={J. Eur. Math. Soc. (JEMS)},
   volume={11},
   date={2009},
   number={6},
   pages={1307--1363}
}

\bib{loday}{book}{
   author={Loday, Jean-Louis},
   title={Cyclic homology},
   series={Grundlehren der Mathematischen Wissenschaften [Fundamental
   Principles of Mathematical Sciences]},
   volume={301},
   edition={2}
   publisher={Springer-Verlag, Berlin},
   date={1998},
   pages={xx+513}
}

\bib{loday-vallette}{book}{
  title={Algebraic operads},
  author={Loday, Jean-Louis},
  author={Vallette, Bruno},
  volume={346},
  year={2012},
  publisher={Springer Science \& Business Media}
}

\bib{mcduff-sal-intro}{book}{
title={Introduction to symplectic topology. Third Edition},
author={McDuff, Dusa},
author={Salamon, Dietmar},
publisher={Oxford University Press},
series={Oxford Graduate Texts in Mathematics},
date={2017}}

\bib{mcduff-sal-jcurves}{book}{
   author={McDuff, Dusa},
   author={Salamon, Dietmar},
   title={$J$-holomorphic curves and symplectic topology},
   series={American Mathematical Society Colloquium Publications},
   volume={52},
   edition={2},
   publisher={American Mathematical Society, Providence, RI},
   date={2012},
   pages={xiv+726},
}

\bib{momin}{article}{
   author={Momin, Al},
   title={Contact Homology of Orbit Complements and Implied Existence},
   journal={J. Mod. Dyn.},
   volume={5},
   date={2011},
   number={3},
   pages={409--472}
}

\bib{moreno-siefring}{article}{
title={Holomorphic curves in the presence of holomorphic hypersurface foliations},
author={Moreno, Agustin},
author={Siefring, Richard},
eprint={arXiv:1902.02700v1 [math.SG]},
date={2019}}

\bib{murphy}{article}{
title={Loose Legendrian embeddings in high dimensional contact manifolds},
author={Murphy, Emmy},
eprint={arXiv:1201.2245v5 [math.SG]},
date={2019}}

\bib{ng}{article}{
   author={Ng, Lenhard},
   title={Framed knot contact homology},
   journal={Duke Math. J.},
   volume={141},
   date={2008},
   number={2},
   pages={365--406}
}

\bib{ng-intro}{article}{
   author={Ng, Lenhard},
   title={A topological introduction to knot contact homology},
   conference={
      title={Contact and symplectic topology},
   },
   book={
      series={Bolyai Soc. Math. Stud.},
      volume={26},
      publisher={J\'{a}nos Bolyai Math. Soc., Budapest},
   },
   date={2014},
   pages={485--530}
}

\bib{aug-sheaves}{article}{
   author={Ng, Lenhard},
   author={Rutherford, Dan},
   author={Shende, Vivek},
   author={Sivek, Steven},
   author={Zaslow, Eric},
   title={Augmentations are sheaves},
   journal={Geom. Topol.},
   volume={24},
   date={2020},
   number={5},
   pages={2149--2286}
}

\bib{o-s-t}{article}{
   author={Ozsv\'{a}th, Peter},
   author={Szab\'{o}, Zolt\'{a}n},
   author={Thurston, Dylan},
   title={Legendrian knots, transverse knots and combinatorial Floer
   homology},
   journal={Geom. Topol.},
   volume={12},
   date={2008},
   number={2},
   pages={941--980}
}

\bib{pan}{article}{
   author={Pan, Yu},
   title={The augmentation category map induced by exact Lagrangian
   cobordisms},
   journal={Algebr. Geom. Topol.},
   volume={17},
   date={2017},
   number={3},
   pages={1813--1870}
}

\bib{pancholi-pandit}{article}{
    title={Iso-contact embeddings of manifolds in co-dimension $2$},
    author={Pancholi, Dishant},
    author={Pandit, Suhas},
    year={2019},
   journal={J. Symplectic Geom. (to appear)},
    eprint={arxiv:1808.04059},
}

\bib{pardon-vfc}{article}{
   author={Pardon, John},
   title={An algebraic approach to virtual fundamental cycles on moduli
   spaces of pseudo-holomorphic curves},
   journal={Geom. Topol.},
   volume={20},
   date={2016},
   number={2},
   pages={779--1034}
}

\bib{pardon}{article}{
   author={Pardon, John},
   title={Contact homology and virtual fundamental cycles},
   journal={J. Amer. Math. Soc.},
   volume={32},
   date={2019},
   number={3},
   pages={825--919}
}

\bib{plamenevskaya}{article}{
   author={Plamenevskaya, Olga},
   title={Transverse knots and Khovanov homology},
   journal={Math. Res. Lett.},
   volume={13},
   date={2006},
   number={4},
   pages={571--586}
}

\bib{polt-book}{book}{
   author={Polterovich, Leonid},
   title={The geometry of the group of symplectic diffeomorphisms},
   series={Lectures in Mathematics ETH Z\"urich},
   publisher={Birkh\"auser Verlag, Basel},
   date={2001},
   pages={xii+132},
   isbn={3-7643-6432-7}}

\bib{ranicki}{book}{
   author={Ranicki, Andrew},
   title={High-dimensional knot theory},
   series={Springer Monographs in Mathematics},
   note={Algebraic surgery in codimension 2;
   With an appendix by Elmar Winkelnkemper},
   publisher={Springer-Verlag, New York},
   date={1998},
   pages={xxxvi+646}
}   
   
\bib{robbin-salamon}{article}{
   author={Robbin, Joel},
   author={Salamon, Dietmar},
   title={The Maslov index for paths},
   journal={Topology},
   volume={32},
   date={1993},
   number={4},
   pages={827--844}
}

\bib{sabloff-traynor}{article}{
   author={Sabloff, Joshua M.},
   author={Traynor, Lisa},
   title={Obstructions to Lagrangian cobordisms between Legendrians via
   generating families},
   journal={Algebr. Geom. Topol.},
   volume={13},
   date={2013},
   number={5},
   pages={2733--2797}
}

\bib{siefring}{article}{
title={Intersection theory of punctured pseudoholomorphic curves},
author={Siefring, Richard},
journal={Geom. Topol.},
volume={4},
date={2011},
pages={2351--2457}}

\bib{siefringwendl}{article}{
title={Pseudoholomorphic curves, intersections, and Morse-Bott asymptotics},
author={Siefring, Richard},
author={Wendl, Chris},
journal={In preparation}}

\bib{ustilovsky}{book}{
   author={Ustilovsky, Ilya},
   title={Contact homology and contact structures on $S^{4m+1}$},
   note={Thesis (Ph.D.)--Stanford University},
   publisher={ProQuest LLC, Ann Arbor, MI},
   date={199}
}

\bib{vallette}{article}{
   author={Vallette, Bruno},
   title={Homotopy theory of homotopy algebras},
   language={English, with English and French summaries},
   journal={Ann. Inst. Fourier (Grenoble)},
   volume={70},
   date={2020},
   number={2},
   pages={683--738}
}

\bib{van-koert}{article}{
   author={van Koert, Otto},
   title={Lecture notes on stabilization of contact open books},
   journal={M\"{u}nster J. Math.},
   volume={10},
   date={2017},
   number={2},
   pages={425--455}
}

\bib{wendllocalfilling}{article}{
   author={Wendl, Chris},
   title={A hierarchy of local symplectic filling obstructions for contact
   3-manifolds},
   journal={Duke Math. J.},
   volume={162},
   date={2013},
   number={12},
   pages={2197--2283}
}

\bib{wendlautomatic}{article}{
title={Automatic transversality and orbifolds of punctured holomorphic curves in dimension four},
author={Wendl, Chris},
journal={Comment. Math. Helv.},
volume={85},
number={2},
date={2010},
pages={347--407}}


\bib{wendlintersection}{book}{
   author={Wendl, Chris},
   title={Lectures on contact 3-manifolds, holomorphic curves and
   intersection theory},
   series={Cambridge Tracts in Mathematics},
   volume={220},
   publisher={Cambridge University Press, Cambridge},
   date={2020},
   pages={viii+185}
}

\bib{wendlcurves}{article}{
title={Lectures on Holomorphic Curves in Symplectic and Contact Geometry},
author={Wendl, Chris},
eprint={arXiv:1011.1690 [math.SG]},
date={2014-05}}

\bib{wendlsft}{article}{
title={Lectures on Symplectic Field Theory},
author={Wendl, Chris},
eprint={arXiv:1612.01009 [math.SG]},
date={2016-12}}

\bib{zhou}{article}{
title={Infinite non contact isotopic embeddings in $(S^{2n-1}, \xi_{\op{std}})$ for $n \geq 4$},
author={Zhou, Zhengyi},
eprint={arXiv:2112.07905[math.SG]},
date={2021}}

\end{biblist}
\end{bibdiv}

\end{document}